\documentclass[11pt]{amsart}
\usepackage[foot]{amsaddr}
\usepackage[a4paper,margin=2.65cm]{geometry}
\usepackage{amsmath,amssymb,amsthm,mathtools,mathrsfs}
\mathtoolsset{showonlyrefs}
\usepackage[renew-dots,renew-matrix]{nicematrix}
\usepackage{enumitem}
\usepackage[hidelinks]{hyperref}
\usepackage{microtype}

\usepackage[
backend=biber,
style=numeric,
sorting=nyt,
giveninits=true,
maxnames=99,
doi=true,
url=false,
isbn=false
]{biblatex}

\DeclareFieldFormat[article]{title}{\mkbibemph{#1}}

\DeclareFieldFormat[article]{journaltitle}{#1}

\DeclareFieldFormat[article]{volume}{\textbf{#1}}

\DeclareFieldFormat{pages}{#1}

\renewbibmacro{in:}{%
	\ifentrytype{article}
	{}
	{\printtext{\bibstring{in}\intitlepunct}}%
}

\renewbibmacro*{journal+issuetitle}{%
	\usebibmacro{journal}%
	\setunit*{\addspace}%
	\printfield{volume}%
	\setunit{\addspace}%
	\printtext[parens]{\printfield{year}}%
	\setunit{\addspace}%
	\printfield{pages}%
	\newunit
}

\renewbibmacro*{note+pages}{%
	\ifentrytype{article}
	{}
	{\printfield{note}%
		\setunit{\bibpagespunct}%
		\printfield{pages}}%
}
\usepackage[T1]{fontenc}
\usepackage{textcomp}
\usepackage{newtxtext}
\usepackage{newtxmath}
\usepackage[bb=boondox,cal=boondoxo,scr=boondoxo]{mathalfa}

\newcommand{\dz}{\,\mathrm{d}z}
\newcommand{\one}{\mathbf{1}}
\newcommand{\pFq}[5]{\;{}_{#1}F_{#2}\left(\begin{matrix}#3\\#4\end{matrix};#5\right)}
\newcommand{\WB}{W}
\newcommand{\CB}{C}
\newcommand{\MB}{\mathscr M}
\newcommand{\LC}{\operatorname{LC}}
\newcommand{\Torus}{\mathbb T}
\newcommand{\N}{\mathbb N}
\newcommand{\Poly}{\mathbb P}
\renewcommand{\top}{\mathsf T}

\theoremstyle{plain}
\newtheorem{theorem}{Theorem}[section]
\newtheorem{proposition}[theorem]{Proposition}
\newtheorem{lemma}[theorem]{Lemma}
\newtheorem{corollary}[theorem]{Corollary}

\newtheoremstyle{definitionstyle}
{6pt}
{12pt}
{\normalfont}
{}
{\bfseries}
{.}
{0.5em}
{}

\theoremstyle{definitionstyle}
\newtheorem{definition}[theorem]{Definition}
\newtheorem{example}[theorem]{Example}

\newtheoremstyle{remarkstyle}
{6pt}
{12pt}
{\normalfont}
{}
{\itshape}
{.}
{0.5em}
{}

\theoremstyle{remarkstyle}
\newtheorem{remark}[theorem]{Remark}

\title[Bessel-like mixed-type multiple orthogonality]
{Bessel-Like Multiple Orthogonal Polynomials of Mixed Type}
\author{Manuel Ma\~nas}
\address{Department of Theoretical Physics, Faculty of Physical Sciences,
	Complutense University of Madrid, 28040 Madrid, Spain}
\email{manuel.manas@ucm.es}
\date{August 20, 2026}

\hypersetup{
	pdftitle={Bessel-Like Multiple Orthogonal Polynomials of Mixed Type},
	pdfauthor={Manuel Manas}
}

\begin{document}

\begin{abstract}
	This article constructs a Bessel-like family of mixed-type multiple
	orthogonal polynomials for a \(q\times p\) matrix weight on the unit circle.
	The weight is not of rank-one product form; for generic regular parameters
	it has maximal rank
	\(\min\{q,p\}\) outside a finite subset of the circle. Its reciprocal-Gamma
	moments recover the multiple Bessel system when \(q=1\) and the Bessel-like
	system of Wolfs when \(p=1\). The same matrix weight arises as a scaled
	Markov--Stieltjes limit of a rank-one Jacobi-like system, although the
	interval measures themselves have no finite limit.

	For balanced index pairs with a near-diagonal row multi-index, explicit
	formulas are obtained for the mixed \(A\)- and \(B\)-polynomial vectors;
	the column multi-index is otherwise unrestricted, subject to the stated
	parameter admissibility conditions. Their orthogonality and weak
	normality are proved, and componentwise strong normality is characterized.
	Their components have terminating generalized
	hypergeometric representations; the
	\(B\)-components also have finite Kamp\'e de F\'eriet representations and a
	matrix Rodrigues-type formula. In the one-row reduction, the sole Kamp\'e de
	F\'eriet block, evaluated at \((-z,1)\), reduces to a generalized
	hypergeometric polynomial governed by
	a reflected type-II multiple Hahn polynomial.

	Finite Gamma--Pochhammer formulas give the near-diagonal and step-line
	recurrence coefficients. The corresponding banded recurrence matrix has a
	bidiagonal Christoffel factorization. The lower factors are evaluated from
	transformed polynomial vectors, and the upper factors are given by finite
	tau-determinants. When \(q=1\), every Christoffel step stays
	within the multiple Bessel family and the complete factorization follows
	from Gamma--Vandermonde determinants. Thus the matrix weight, polynomial
	vectors, normality conditions, recurrence coefficients, and bidiagonal
	factors are all given by explicit formulas.
\end{abstract}

\keywords{Bessel polynomials; mixed-type multiple orthogonal polynomials;
	matrix weights; hypergeometric functions; recurrence relations; bidiagonal
	factorization}

\subjclass[2020]{Primary 33C45; Secondary 41A21, 42C05, 33C20, 15A23, 47B36}

\maketitle

\tableofcontents

\section{Introduction}
\label{sec:introduction}

Multiple orthogonal polynomials arose from simultaneous rational
approximation and Hermite--Pad\'e approximation. They extend classical
orthogonality from one measure or moment functional to a system of measures;
standard references include the monograph of Nikishin and Sorokin, Ismail's
book, and the account of Mart\'inez-Finkelshtein and Van Assche
\cite{NikishinSorokin1991,Ismail2005,MartinezVanAssche2016}. Their scope now
extends well beyond this original setting. They occur in Diophantine
approximation and in
irrationality constructions in the line of Ap\'ery's proof for
\(\zeta(2)\) and \(\zeta(3)\), in models of non-intersecting Brownian motions,
and in integrable systems governed by multicomponent Toda hierarchies; see,
among others,
\cite{Apery1979,NikishinSorokin1991,DaemsKuijlaars2007,AlvarezFidalgoManas2011}.
At the algebraic level, the corresponding systems of measures give rise to
structured moment matrices and higher-order recurrence relations. Mixed-type
multiple orthogonal polynomials also play a fundamental role in the spectral
theory of banded matrices with positive bidiagonal factorizations, where they
enter naturally in the corresponding Favard-type spectral representations
\cite{BranquinhoFoulquieManas2023Spectral,BranquinhoFoulquieManas2026Unbounded}.
This gives a broader spectral motivation for the recurrence and factorization
questions considered here; the Bessel-like factorization constructed below is
algebraic, and no positivity is asserted.
Mixed-type multiple orthogonality is closely related to matrix Hermite--Pad\'e
approximation and matrix orthogonality; see the works of Sorokin and Van
Iseghem
\cite{SorokinVanIseghem1997,SorokinVanIseghem1999,SorokinVanIseghem2000}, the
mixed-type Nikishin construction of Fidalgo, L\'opez-Garc\'ia,
L\'opez-Lagomasino and Sorokin \cite{FidalgoLopezLopezSorokin2010}, and the
framework of Daems and Kuijlaars \cite{DaemsKuijlaars2007}. A formulation
through Gauss--Borel factorization and the multicomponent Toda hierarchy was
developed in \cite{AlvarezFidalgoManas2011}. On the step-line, the two
polynomial vectors form dual biorthogonal sequences governed by a banded
recurrence matrix.

Explicit formulas are well established in the one-sided type-I and type-II
settings. Van Assche and Coussement described several classical continuous
multiple families, while Arves\'u, Coussement, and Van Assche developed the
corresponding classical discrete families and obtained explicit type-II
systems \cite{VanAsscheCoussement2001,ArvesuCoussementVanAssche2003}.
Beckermann, Coussement, and Van Assche subsequently constructed the multiple
Wilson and Jacobi--Pi\~neiro families
\cite{BeckermannCoussementVanAssche2005}. More recently, explicit
hypergeometric representations for type-I multiple Hahn polynomials were
derived in \cite{BranquinhoDiazFoulquieManas2023Hahn}, and integral and
hypergeometric representations for both types and an arbitrary number of
weights were obtained in
\cite{BranquinhoDiazFoulquieManasWolfs2025}. Explicit mixed-type systems, in
which two multi-indices act on opposite sides of a matrix of measures, are
much less common. Besides the Gaussian system of Daems and Kuijlaars already
mentioned, representative explicit examples include Zhang's restricted
\(2\times2\) construction associated with modified Bessel
functions~\cite{Zhang2016BesselMixed}, the exponential-integral system of Van
Assche and Wolfs~\cite{VanAsscheWolfs2023}, the mixed-type Pi\~neiro system
of Branquinho, D\'iaz, Foulqui\'e-Moreno, and
Ma\~nas~\cite{PineiroMixed2026}, and the Jacobi-like and Laguerre-like
systems of Ma\~nas~\cite{JacobiLaguerreMixed}. All these constructions have
separable, hence rank-one, matrices of
weights. Such a matrix does not capture a full interaction between several
row and column components. Full-rank examples for which the polynomial
vectors, normality conditions, recurrence coefficients, and factorizations
of the associated recurrence matrices can all be determined explicitly are
therefore especially valuable.

Bessel polynomials provide a natural setting for such a construction. They
combine an explicit hypergeometric structure with a nonstandard contour
orthogonality. Their extensions to multiple
orthogonality include the multiple Bessel system of Aptekarev, Branquinho and
Van Assche \cite{ABV2003} and the Bessel-like system studied by Wolfs
\cite{Wolfs2024}. In the orientation used here, these appear as the one-row
case \(q=1\) and the one-column case \(p=1\), respectively. The problem
addressed here is to pass simultaneously beyond both one-sided boundaries:
to construct a Bessel-like matrix weight of maximal rank, rather than a
separable rank-one product, while retaining the explicit structure of the
cases \(q=1\) and \(p=1\).

The construction uses reciprocal-Gamma moments with noninteger displacements
in the column parameters. For generic regular parameters, the resulting
\(q\times p\) matrix has maximal rank \(\min\{q,p\}\) away from finitely many
points of the unit circle. At the same time, it recovers the two Bessel
systems just mentioned when \(q=1\) or \(p=1\). It is the reciprocal-Gamma
counterpart of the Jacobi-like and Laguerre-like mixed systems in
\cite{JacobiLaguerreMixed}. It is also obtained by confluence from a rank-one
system: after one passes from a Jacobi-like interval system to its
Markov--Stieltjes contour
representative and rescales the spectral variable, the matrix converges
locally uniformly to the present Bessel-like weight. The interval measures
themselves have no finite measure limit, whereas the complex contour
representative has a well-defined limit.

The choice of contour representation is necessary in the Bessel case.
Classical generalized Bessel polynomials admit several
orthogonality realizations. Krall and Frink used a single-valued Laurent
weight on the unit circle \cite{KrallFrink1949}; Burchnall obtained an
elementary closed-contour formula in the integral-exponent case
\cite{Burchnall1951}; and Exton treated nonintegral exponents through a
cut-contour formulation \cite{Exton1986}. For nonintegral powers, an
expression integrated around the circle must be accompanied by a choice of
branch and by the corresponding
boundary values across the cut. This point is essential when comparing the
present moments with the multiple Bessel formulas in \cite{ABV2003}. After an
Exton-type interpretation, an explicit normalization, and a rescaling of the
spectral variable, those functionals give exactly the reciprocal-Gamma
moments of the one-row reduction.

The contour notation should not be confused with the standard sesquilinear
theory of orthogonal polynomials on the unit circle
\cite{MinguezVanAssche2008}. The pairing in this article is bilinear and has
no complex conjugation. Thus
\(\langle zf,g\rangle=\langle f,zg\rangle,\)
whereas
\(\langle zf,g\rangle_{\mathrm{sesq}}
=\langle f,z^{-1}g\rangle_{\mathrm{sesq}}.\)
The circle is used as a Laurent-coefficient contour realization of the
reciprocal-Gamma bimoments, not as the support of a positive sesquilinear
measure. The bilinear structure leads on the step-line to dual recurrences
governed by transposed band matrices rather than to a Szeg\H{o} or CMV
structure.

Beyond constructing the weight, the article gives an explicit analysis of
the associated mixed system. For balanced index pairs whose row multi-index
is near the diagonal, the \(A\)- and \(B\)-polynomial vectors are evaluated
by terminating generalized hypergeometric formulas. The column multi-index
need not be near the diagonal, provided the stated regularity and
admissibility conditions on the parameters hold. Their mixed orthogonality
and weak normality are
proved, and strong normality is characterized component by component,
including the exceptional loci on which a \(B\)-component loses degree. The
\(B\)-components also have finite Kamp\'e de F\'eriet representations and a
matrix Rodrigues-type formula. When \(q=1\), the sole Kamp\'e de F\'eriet
block, evaluated at \((-z,1)\), reduces to a single generalized
hypergeometric polynomial, and its
unit-difference parameters are determined by a reflected type-II multiple
Hahn polynomial.

The recurrence coefficients can also be computed explicitly. A finite
Gamma--Pochhammer pairing formula gives the entries of the canonical
\((p,q)\)-banded step-line recurrence matrix and the local \(A\)- and
\(B\)-recurrences of length \(p+q+1\). Applying the mixed Christoffel
factorization of \cite{BranquinhoFoulquieManas2026}, the successive column
Christoffel transformations stay within the Bessel-like family and give the
lower factors explicitly. For
\(q>1\), the upper factors are finite \(\tau^B\)-determinants. For \(q=1\),
both the column and row Christoffel transformations stay within the multiple
Bessel family, and
Gamma--Vandermonde determinants give the complete bidiagonal factorization.
Thus the weight, polynomial vectors, normality conditions, recurrence
coefficients, and bidiagonal factors are all given by explicit formulas.

The paper is organized as follows. Section~\ref{sec:mixed-framework}
introduces the general mixed-type framework, the moment matrix, the step-line
recurrence, and the notation used throughout. Section~\ref{sec:bessel-like-system}
defines the full-rank Bessel-like matrix, computes its reciprocal-Gamma
moments, proves maximal rank, and establishes the Jacobi confluence.
Section~\ref{sec:final-hypergeometric-bessel-forms} derives the explicit
hypergeometric polynomial vectors, their mixed orthogonality, and the weak
and strong normality results. The Rodrigues-type representation is obtained
in Section~\ref{sec:matrix-differential-B}, and the one-column and one-row
reductions are identified in Section~\ref{sec:final-reductions}.
Sections~\ref{sec:final-recurrences} and
\ref{sec:final-bidiagonal-factorizations} give, respectively, the local and
step-line recurrences and the bidiagonal factorizations, including the first
mixed-type row case \(q=2\) and the numerical example with \(p=3\), \(q=2\).
The final section summarizes the results and records the remaining problems.

\section{Mixed-type framework and notation}
\label{sec:mixed-framework}

This section fixes the general framework and notation used throughout the
paper. It first introduces the two polynomial vectors associated with a
rectangular matrix of contour measures, together with the corresponding
notions of normality. The moment matrix, its Gauss--Borel factorization, and
the induced step-line recurrence are then recalled before the notation needed
for the Bessel-like construction is collected. The specific full-rank matrix
of weights and its basic analytic properties are developed in the following
section.

	\subsection{Mixed-type multiple orthogonality for a matrix of measures}
	\label{sec:general-mixed}

	Let
	\[
	\mathrm d\boldsymbol\mu(z)
	=
	W(z)\frac{\dz}{2\pi\mathrm i},
	\qquad
	W(z)
	=
	\begin{bmatrix}
		w_{1,1}(z)&w_{1,2}(z)&\cdots&w_{1,p}(z)\\
		w_{2,1}(z)&w_{2,2}(z)&\cdots&w_{2,p}(z)\\
		\vdots&\vdots&\ddots&\vdots\\
		w_{q,1}(z)&w_{q,2}(z)&\cdots&w_{q,p}(z)
	\end{bmatrix},
	\qquad z\in\Torus.
\]
	be a \(q\times p\) matrix of contour measures.  Write
\(\Poly_N\coloneq\{P\in\mathbb C[z]:\deg P\le N\},\) \(N\in\N_0,\)
	and set \(\Poly_{-1}\coloneq\{0\}\).  For a row polynomial vector
	\(\mathbf B\in\mathbb C^{1\times q}[z]\) and a column polynomial vector
	\(\mathbf A\in\mathbb C^{p\times1}[z]\), set
	\begin{equation}
		\label{eq:general-pairing}
		\left\langle \mathbf B,\mathbf A\right\rangle
		\coloneq
		\oint_{\Torus}
		\mathbf B(z)W(z)\mathbf A(z)\frac{\dz}{2\pi\mathrm i}.
	\end{equation}
	
	Let
	\[
	\boldsymbol n=\begin{bNiceMatrix}n_1&\Cdots&n_p\end{bNiceMatrix}\in\N_0^p,
	\qquad
	\boldsymbol m=\begin{bNiceMatrix}m_1&\Cdots&m_q\end{bNiceMatrix}\in\N_0^q.
	\]
	If \(|\boldsymbol n|=|\boldsymbol m|+1\), the mixed \(A\)-polynomial vector, expanded on the column side, is the
	column vector
	\[
	\mathbf A_{\boldsymbol n,\boldsymbol m}(z)
	=
	\begin{bNiceMatrix}
		A_{\boldsymbol n,\boldsymbol m}^{(1)}(z)\\
		\Vdots\\
		A_{\boldsymbol n,\boldsymbol m}^{(p)}(z)
	\end{bNiceMatrix},\qquad 	A_{\boldsymbol n,\boldsymbol m}^{(i)}\in\Poly_{n_i-1},
	\qquad i\in\{1,\ldots,p\}.
		\]
		It satisfies
	\begin{equation}
		\label{eq:general-A-orth}
		\sum_{i=1}^{p}
		\oint_{\Torus}
		z^\ell w_{j,i}(z)A_{\boldsymbol n,\boldsymbol m}^{(i)}(z)
		\frac{\dz}{2\pi\mathrm i}
		=0,
		\qquad
		\ell\in\{0,\ldots,m_j-1\},
		\quad j\in\{1,\ldots,q\}.
	\end{equation}
	If \(|\boldsymbol m|=|\boldsymbol n|+1\), the mixed \(B\)-polynomial vector, expanded on the row side, is the
	row vector
	\[
	\mathbf B_{\boldsymbol n,\boldsymbol m}(z)
	=
	\begin{bNiceMatrix}
		B_{\boldsymbol n,\boldsymbol m}^{(1)}(z)&\Cdots&B_{\boldsymbol n,\boldsymbol m}^{(q)}(z)
	\end{bNiceMatrix},\qquad 	B_{\boldsymbol n,\boldsymbol m}^{(j)}\in\Poly_{m_j-1},
	\qquad j\in\{1,\ldots,q\}.
		\]
		It satisfies
	\begin{equation}
		\label{eq:general-B-orth}
		\sum_{j=1}^{q}
		\oint_{\Torus}
		z^\ell B_{\boldsymbol n,\boldsymbol m}^{(j)}(z)w_{j,i}(z)
		\frac{\dz}{2\pi\mathrm i}
		=0,
		\qquad
		\ell\in\{0,\ldots,n_i-1\},
		\quad i\in\{1,\ldots,p\}.
	\end{equation}
	When \(p=1\), the \(A\)-polynomial vector has one component and contains the usual scalar type-II multiple orthogonal polynomial for the \(q\) measures in the single column.  When \(q=1\), the \(B\)-polynomial vector is the scalar type-II object of the dual multiple Bessel case.
\begin{definition}[Balanced index pairs and normality]
	\label{def:balanced-normality}
	Let \(\boldsymbol n\in\N_0^p\) and \(\boldsymbol m\in\N_0^q\).  The index pair
	\((\boldsymbol n,\boldsymbol m)\) is called \(A\)-balanced when
\(|\boldsymbol n|=|\boldsymbol m|+1,\)
	and it is called \(B\)-balanced when
\(|\boldsymbol m|=|\boldsymbol n|+1.\)
	When the side is clear from the context, the pair is simply said to be
	balanced.
	
	An \(A\)-balanced index pair is called weakly normal for the
	\(A\)-polynomial problem if the corresponding \(A\)-polynomial vector exists
	and is uniquely determined up to multiplication by a nonzero constant.
	It is called strongly normal if, in addition, every component with a nontrivial
	prescribed polynomial space is nonzero and attains its prescribed maximal
	degree, that is,
\(\deg A_{\boldsymbol n,\boldsymbol m}^{(i)}=n_i-1,\) \(i\in\{1,\ldots,p\},\) \(n_i\ge1.\)
	Similarly, a \(B\)-balanced index pair is called weakly normal for the
	\(B\)-polynomial problem if the corresponding \(B\)-polynomial vector exists
	and is uniquely determined up to multiplication by a nonzero constant.
	It is called strongly normal if, in addition, every component with a nontrivial
	prescribed polynomial space is nonzero and attains its prescribed maximal
	degree, that is,
\(\deg B_{\boldsymbol n,\boldsymbol m}^{(j)}=m_j-1,\) \(j\in\{1,\ldots,q\},\) \(m_j\ge1.\)
\end{definition}

\begin{remark}
	In much of the literature on multiple orthogonality, normality refers to
	existence and uniqueness, up to the natural normalization, of the solution
	associated with a prescribed multi-index.  This property is called weak
	normality here, in order to distinguish it from the stronger componentwise
	degree condition.  Thus strong normality implies weak normality, but not
	conversely in general.
\end{remark}
	
	\subsection{Moment matrix and the step-line}
	
Introduce the block monomial vector
\[
X_{[d]}(z)
\coloneq
\begin{bNiceMatrix}
	I_d\\ zI_d\\ z^2I_d\\ \Vdots
\end{bNiceMatrix},
\]
and let \(\Lambda_{[d]}\) be the block shift characterized by
\(\Lambda_{[d]}X_{[d]}(z)=zX_{[d]}(z).\)
Equivalently,
\[
\Lambda_{[d]}
=
\begin{bNiceMatrix}[margin=5pt]
	0&I_d&0&0&\Cdots\\
	0&0&I_d&0&\Cdots\\
	0&0&0&I_d&\Ddots\\
	\Vdots[shorten-end=-10pt]&\Vdots[shorten-end=-10pt]&\Vdots[shorten-end=-10pt]&\Ddots[shorten-end=-15pt]&\Ddots\\
	\phantom{AA} &\phantom{AA} &\phantom{AA} &\phantom{AA} &\phantom{AA} 
\end{bNiceMatrix}.
\]
The moment matrix is
\begin{equation}
	\label{eq:general-moment-matrix}
	\mathscr M
	\coloneq
	\oint_{\Torus}
	X_{[q]}(z)W(z)X_{[p]}^{\top}(z)
	\frac{\dz}{2\pi\mathrm i}.
\end{equation}
Since the scalar powers on both sides enter through their sum, it satisfies
\begin{equation}
	\label{eq:general-hankel}
	\Lambda_{[q]}\mathscr M
	=
	\mathscr M\Lambda_{[p]}^{\top}.
\end{equation}
For \(d\in\mathbb N\), a multi-index \(\boldsymbol\nu\in\mathbb N_0^d\)
is called near-diagonal if
\(|\nu_i-\nu_j|\le 1,\) \(i,j\in\{1,\ldots,d\}.\)
For each \(N\in\mathbb N_0\), write \(N=da+r\), with
\(a\in\mathbb N_0\) and \(r\in\{0,\ldots,d-1\}\). Denote by
\(\boldsymbol\sigma_d(N)\) the standard near-diagonal multi-index of norm \(N\),
defined by
\[
\boldsymbol\sigma_d(N):=
a\one_d+\sum_{\alpha=1}^r\boldsymbol e_\alpha
=
\begin{bNiceMatrix}
	a+1&
	\Cdots&
	a+1&
	a&
	\Cdots&
	a
\end{bNiceMatrix}^\top,
\]
where the first block has length \(r\) and the second block has length
\(d-r\); the empty sum is understood as zero. Thus
\(\boldsymbol\sigma_d(N)\) has \(r\) components equal to \(a+1\) and \(d-r\)
components equal to \(a\), the larger components being placed first in the
standard cyclic order.
At level \(N\), the step-line selects a unique column
\(c_N\in\{1,\ldots,p\}\) and a unique row
\(r_N\in\{1,\ldots,q\}\), determined by
\begin{equation}
	\label{eq:general-active-indices}
	\boldsymbol\sigma_p(N+1)-\boldsymbol\sigma_p(N)=\boldsymbol e_{c_N},
	\qquad
	\boldsymbol\sigma_q(N+1)-\boldsymbol\sigma_q(N)=\boldsymbol e_{r_N}.
\end{equation}
Thus \(r_N\) is the row component whose degree increases on the
\(B\)-side at level \(N\), while \(c_N\) is the column component whose
degree increases on the dual \(A\)-side.

Assuming that the leading principal truncations of \(\mathscr M\) are
nonsingular, write its Gauss--Borel factorization in the form
\begin{equation}
	\label{eq:general-GB-factorization}
	\mathscr M=S^{-1}H\bar S^{-\top},
\end{equation}
where \(S\) and \(\bar S\) are lower unitriangular and \(H\) is diagonal.
This factorization fixes the step-line normalization: the
\(B\)-vectors are taken from the lower unitriangular factor and are monic in
the row \(r_N\) selected by the step-line, while the diagonal factor is
absorbed into the dual \(A\)-vectors.  In the explicit near-diagonal formulas below, the
remaining scalar freedom is used to choose the constants compatibly with this
convention.  Consequently, when a near-diagonal formula is specialized to a
step-line pair, it gives the canonically normalized step-line vector
directly, with no additional Gauss--Borel rescaling.
Define
\begin{equation}
	\label{eq:general-step-vectors}
	\mathbf B_N
	\coloneq
	\mathbf B_{\boldsymbol\sigma_p(N),\boldsymbol\sigma_q(N+1)},
	\qquad
	\mathbf A_N
	\coloneq
	\mathbf A_{\boldsymbol\sigma_p(N+1),\boldsymbol\sigma_q(N)}.
\end{equation}
By the compatible normalization just described, these are already the
Gauss--Borel step-line vectors.  The \(B\)-vector at level \(N\) is monic in
the row \(r_N\) defined in \eqref{eq:general-active-indices}, whereas the
\(A\)-vectors are the dual normalized ones.  They satisfy
\begin{equation}
	\label{eq:general-biorthogonality}
	\left\langle\mathbf B_N,\mathbf A_M\right\rangle
	=\delta_{N,M},
	\qquad N,M\in\N_0.
\end{equation}
Thus the step-line is a specialization of the near-diagonal formulas, not a
separate family obtained by a later renormalization.
	\begin{proposition}[Step-line recurrence matrix]
		\label{prop:general-step-recurrence}
		Under the preceding Gauss--Borel assumption, there exists a \((p,q)\)-banded
		matrix \(T\), in the sense of \(p\) subdiagonals and \(q\) superdiagonals,
		such that, for every \(N\in\N_0\),
		\begin{equation}
			\label{eq:general-step-recurrence}
			z\mathbf B_N(z)
			=
			\sum_{\substack{k=-p\\N+k\ge0}}^{q}
			t_{N,k}\mathbf B_{N+k}(z),
			\qquad
			t_{N,k}
			=
			\left\langle z\mathbf B_N,\mathbf A_{N+k}\right\rangle.
		\end{equation}
		Dually,
		\[
		z\mathbf A_N(z)
		=
		\sum_{\substack{k=-q\\N+k\ge0}}^{p}
			t_{N+k,-k}\mathbf A_{N+k}(z).
		\]
		The recurrence matrix is represented by
\(T=S\Lambda_{[q]}S^{-1} =H\bar S^{-\top}\Lambda_{[p]}^{\top}\bar S^{\top}H^{-1}.\)
	\end{proposition}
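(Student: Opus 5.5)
The plan is to work entirely at the level of the Gauss--Borel factorization \eqref{eq:general-GB-factorization} and the Hankel-type symmetry \eqref{eq:general-hankel}. First identify the polynomial vectors with the rows of \(S\) and the columns of \(H^{-1}\bar S\). Set
\[
\mathbf B(z)\coloneq S X_{[q]}(z),
\qquad
\mathbf A(z)\coloneq \bar S^{\top}H^{-1}X_{[p]}(z).
\]
Here the \(N\)th entry of \(\mathbf B\) is read as the row vector obtained by grouping the scalar monomials \(z^kI_q\) in the step-line order. Then \(\oint \mathbf B W \mathbf A^{\top}\tfrac{\dz}{2\pi\mathrm i}=S\mathscr M\bar S^{\top}H^{-1}=I\), which is \eqref{eq:general-biorthogonality}. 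Lower triangularity of \(S\) shows that \(\mathbf B_N\) has the degree profile \(\boldsymbol\sigma_q(N+1)\) and is monic in row \(r_N\). Biorthogonality against \(\mathbf A_M\), \(M<N\), which span the same space as the truncated monomials up to \(\boldsymbol\sigma_p(N)\), gives exactly the \(B\)-orthogonality \eqref{eq:general-B-orth}. Hence these vectors coincide with \eqref{eq:general-step-vectors} by the uniqueness implied by nonsingular principal truncations. The analogous statement holds for \(\mathbf A_N\).

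Next define \(T\coloneq S\Lambda_{[q]}S^{-1}\). From \(\Lambda_{[q]}X_{[q]}=zX_{[q]}\) we get \(z\mathbf B=zSX_{[q]}=S\Lambda_{[q]}S^{-1}\mathbf B=T\mathbf B\). Biorthogonality then yields \(t_{N,N+k}=\langle z\mathbf B_N,\mathbf A_{N+k}\rangle\), which is the stated coefficient formula; I would index \(t_{N,k}\) for the entry at column \(N+k\). Using \eqref{eq:general-hankel} and the factorization,
\[
S\Lambda_{[q]}S^{-1}
= S\Lambda_{[q]}\mathscr M\bar S^{\top}H^{-1}
= S\mathscr M\Lambda_{[p]}^{\top}\bar S^{\top}H^{-1}
= H\bar S^{-\top}\Lambda_{[p]}^{\top}\bar S^{\top}H^{-1},
\]
which gives the second representation of \(T\). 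Transposing, \(z\mathbf A^{\top}\) equals the corresponding expression with \(T^{\top}\) acting, since \(\Lambda_{[p]}X_{[p]}=zX_{[p]}\) and \(\mathbf A=\bar S^{\top}H^{-1}X_{[p]}\). This gives \(z\mathbf A=T^{\top}\mathbf A\), i.e.\ \(z\mathbf A_N=\sum_k t_{N+k,-k}\mathbf A_{N+k}\).

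The main point is the band structure. From the first representation, \(S\) is lower triangular and \(\Lambda_{[q]}\), in scalar indexing, is the shift by \(q\) positions. So \(T\) has at most \(q\) superdiagonals, since \(S\Lambda_{[q]}S^{-1}\) is lower triangular plus \(q\) superdiagonals. From the second representation, \(\Lambda_{[p]}^{\top}\) is the shift down by \(p\) and the conjugating factors are upper triangular, so \(T\) has at most \(p\) subdiagonals.

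Alternatively, one can argue directly. Since \(z\mathbf B_N\) has degree profile \(\boldsymbol\sigma_q(N+1)+\one_q\subseteq\boldsymbol\sigma_q(N+q+1)\), only indices up to \(N+q\) appear. Also \(\langle z\mathbf B_N,\mathbf A_M\rangle=\langle\mathbf B_N,z\mathbf A_M\rangle\) vanishes whenever \(z\mathbf A_M\) lies in the span of \(\mathbf A_0,\dots,\mathbf A_{N-1}\), which holds for \(M+p<N\) because the profile of \(z\mathbf A_M\) is contained in \(\boldsymbol\sigma_p(M+p+1)\). The obstacle is purely bookkeeping: checking that adding \(\one_d\) to \(\boldsymbol\sigma_d(K)\) gives exactly \(\boldsymbol\sigma_d(K+d)\), so that the shift by \(d\) scalar positions matches multiplication by \(z\) on the step-line. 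This is immediate from the definition of \(\boldsymbol\sigma_d\).
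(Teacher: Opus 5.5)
Your proposal is correct and follows essentially the paper's route: you dress \eqref{eq:general-hankel} with the Gauss--Borel factors to get both representations of \(T\), read the \(q\) superdiagonals off \(S\Lambda_{[q]}S^{-1}\) and the \(p\) subdiagonals off \(H\bar S^{-\top}\Lambda_{[p]}^{\top}\bar S^{\top}H^{-1}\), and extract \(t_{N,k}\) and the dual recurrence from biorthogonality and \(T^{\top}\); your degree-counting alternative is a valid extra check. One slip to fix: the \(A\)-side generating matrix must be \(H^{-1}\bar S X_{[p]}(z)\), whose \(N\)th row is \(\mathbf A_N^{\top}\), not \(\bar S^{\top}H^{-1}X_{[p]}(z)\), and with this correction your identities \(\oint_{\Torus}\mathbf B W\mathbf A^{\top}\frac{\dz}{2\pi\mathrm i}=S\mathscr M\bar S^{\top}H^{-1}=I\) and \(z\mathbf A=T^{\top}\mathbf A\) hold exactly as you state them.
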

	
	\begin{proof}
		Equation~\eqref{eq:general-hankel}, dressed by the Gauss--Borel factors,
		gives the two matrix representations of \(T\).  The bandwidth follows from
		the block shifts.  Pairing the \(B\)-side recurrence with
		\(\mathbf A_{N+k}\) and using \eqref{eq:general-biorthogonality} gives
\(t_{N,k} = \left\langle z\mathbf B_N,\mathbf A_{N+k}\right\rangle.\)
		Since multiplication by \(z\) on the \(A\)-side is represented by
		\(T^{\top}\), its \(N\)-th component is
		\[
		z\mathbf A_N(z)
		=
		\sum_{\substack{k=-q\\N+k\ge0}}^{p}
		T_{N+k,N}\mathbf A_{N+k}(z)
		=
		\sum_{\substack{k=-q\\N+k\ge0}}^{p}
		t_{N+k,-k}\mathbf A_{N+k}(z).
		\]
	\end{proof}
	
	The Bessel-like system below supplies explicit hypergeometric polynomial
	vectors for a balanced class of general index pairs.  Its general
	recurrence coefficients and the specialization of
	\eqref{eq:general-step-recurrence} are evaluated explicitly in
	Section~\ref{sec:final-general-recurrence}.
	
\subsection{Notation}
For \(d\in\mathbb N\), denote by
\[
\one_d\coloneq\begin{bNiceMatrix}1&\Cdots&1\end{bNiceMatrix}\in\mathbb C^d
\]
the vector whose \(d\) entries are equal to one. Use the notation
\[
\Gamma(s\one_r+\boldsymbol a)\coloneq\prod_{\rho=1}^{r}\Gamma(s+a_\rho),
\qquad
\Gamma(s\one_q+\boldsymbol b)\coloneq\prod_{i=1}^{q}\Gamma(s+b_i),
\]
with the empty product equal to one. In particular,
\[
\Gamma(s\one_r+\boldsymbol a+\one_r)
=
\prod_{\rho=1}^{r}\Gamma(s+a_\rho+1),
\qquad
\Gamma(s\one_q+\boldsymbol b+\one_q)
=
\prod_{i=1}^{q}\Gamma(s+b_i+1).
\]
For vectors \(\boldsymbol c\) and \(\boldsymbol m\) of the same length, with
integer components in \(\boldsymbol m\), write
\((\boldsymbol c)_{\boldsymbol m}\coloneq\prod_h(c_h)_{m_h}.\)
If the subscript is a scalar, it is understood componentwise; thus
\((\boldsymbol c)_k=(\boldsymbol c)_{k\one}\).

Use the Euler differential operator
\[
\vartheta\coloneq z\frac{\mathrm d}{\mathrm dz}.
\]
For a polynomial \(F\in\mathbb C[z]\), let
\([z^k]F(z)\) denote the coefficient of \(z^k\) in \(F(z)\). Also let
\[
\Delta F(t)\coloneq F(t+1)-F(t)
\]
denote the forward-difference operator, and let \(\Delta^m\) denote its
\(m\)-fold iteration.

For vectors of the same length, inequalities are understood componentwise;
that is,
\(\boldsymbol a\le \boldsymbol b\) \(\Longleftrightarrow\) \(a_i\le b_i\) \(\text{for every }i.\)
The generalized hypergeometric function is denoted by
\begin{equation}
	\label{eq:generalized-hypergeometric-definition}
	\pFq{p}{q}
	{\boldsymbol a}
	{\boldsymbol b}
	{z}
	\coloneq
	\sum_{k=0}^{\infty}
	\frac{(\boldsymbol a)_k}{(\boldsymbol b)_k}
	\frac{z^k}{k!},
\end{equation}
where the series is understood in its domain of convergence, or by analytic
continuation when appropriate.  The hypergeometric series occurring as
polynomial components below are terminating series.

The standard bivariate Kamp\'e de F\'eriet series~\cite{SrivastavaKarlsson1985}
will also be used. For finite parameter strings
\(\boldsymbol A,\boldsymbol B,\boldsymbol C,\boldsymbol D,\boldsymbol E,\boldsymbol F\), write
\begin{equation}
	\label{eq:standard-KdF-definition}
	F_{\lvert\boldsymbol D\rvert:\lvert\boldsymbol E\rvert;\lvert\boldsymbol F\rvert}
	^{\lvert\boldsymbol A\rvert:\lvert\boldsymbol B\rvert;\lvert\boldsymbol C\rvert}
	\left[
	\begin{array}{c}
		\boldsymbol A:\boldsymbol B;\boldsymbol C\\
		\boldsymbol D:\boldsymbol E;\boldsymbol F
	\end{array}
	\middle|x,y
	\right]
	\coloneq
	\sum_{u=0}^{\infty}\sum_{\lambda=0}^{\infty}
	\frac{
		(\boldsymbol A)_{u+\lambda}
		(\boldsymbol B)_u
		(\boldsymbol C)_\lambda
	}{
		(\boldsymbol D)_{u+\lambda}
		(\boldsymbol E)_u
		(\boldsymbol F)_\lambda
	}
	\frac{x^u}{u!}\frac{y^\lambda}{\lambda!}.
\end{equation}
Empty parameter strings are interpreted as contributing the factor one.
The series is understood formally whenever convergence is not under
consideration; all instances used below are terminating. The generalized
hypergeometric series is recovered by setting \(y=0\) and suppressing the
corresponding parameter blocks.
	
	\section{The full-rank matrix of weights}
	\label{sec:bessel-like-system}
	
Let
\[
0\le r<q,
\qquad
\boldsymbol a=\begin{bNiceMatrix}a_1&\Cdots&a_r\end{bNiceMatrix},
\qquad
\boldsymbol b=\begin{bNiceMatrix}b_1&\Cdots&b_q\end{bNiceMatrix},
\]
and let
\[
\boldsymbol\kappa=\begin{bNiceMatrix}\kappa_1&\Cdots&\kappa_p\end{bNiceMatrix}\in\mathbb C^p.
\]

Following Wolfs~\cite{Wolfs2024}, call
\[
f_0(z;\boldsymbol a,\boldsymbol b)
\coloneq
\sum_{k=0}^{\infty}
\frac{\Gamma(k\one_r+\boldsymbol a+\one_r)}
{\Gamma(k\one_q+\boldsymbol b+\one_q)}z^k
=
\frac{\Gamma(\boldsymbol a+\one_r)}
{\Gamma(\boldsymbol b+\one_q)}
\pFq{r+1}{q}
{1,\boldsymbol a+\one_r}
{\boldsymbol b+\one_q}
{z}
\]
the base hypergeometric moment-generating function. In the Bessel regime
\(r<q\), it is an entire function of \(z\).
\begin{definition}[Regular Bessel-like parameters]
	\label{def:final-parameter-regularity}
	The Bessel-like parameters \((\boldsymbol a,\boldsymbol b,\boldsymbol\kappa)\) are called
	regular if
	\[
	0\le r<q,
	\qquad
	b_j-b_h\notin\mathbb Z
	\quad (j\ne h),
	\qquad
	\kappa_i-\kappa_h\notin\mathbb Z
	\quad (i\ne h).
	\]
\end{definition}

It is useful to distinguish regularity from the absence of poles in a particular formula.
Given a displayed expression \(\mathcal F\), a regular parameter tuple is
called \(\mathcal F\)-admissible if every Gamma factor occurring explicitly
in \(\mathcal F\), and both Gamma factors defining each
parameter-dependent Pochhammer quotient in \(\mathcal F\), are finite. For a
result involving several expressions, admissibility means simultaneous
admissibility for all of them. Unless an extension by analytic continuation
is stated explicitly, every formula-based assertion below is made on this
corresponding admissible subset of the regular parameter domain.

Thus parameter-dependent Pochhammer symbols are interpreted as Gamma
quotients and are nonzero on the stated admissible domain. In contrast,
Pochhammer symbols with integer arguments independent of the parameters,
such as \((-n_i+1)_k\) or \((-\ell)_{n_i}\), retain their usual finite-product
meaning; their zeros are structural and produce the terminating sums.
Admissibility concerns the parameters only, whereas balancedness concerns
only the sizes of the two multi-indices.

The following definition is modeled on the Bessel-like weights of
Wolfs~\cite{Wolfs2024}.  The multiple Bessel case will be described
later at the level of its moment functionals.  The standard mixed-type
orientation is retained: rows are indexed by \(j\in\{1,\ldots,q\}\), columns by
\(i\in\{1,\ldots,p\}\), so the matrix of weights is \(q\times p\).

\begin{definition}[Bessel-like mixed matrix of weights]
	For each row index  $j\in\{1,\ldots,q\}$ and column index
	$i\in\{1,\ldots,p\}$, define
	\begin{equation}
		\label{eq:final-weight}
		w_{j,i}(z)
		\coloneq
		f_0\left(
		z;
		\boldsymbol a+\kappa_i\one_r,
		\boldsymbol b+\kappa_i\one_q+\boldsymbol e_j
		\right),
		\qquad z\in\mathbb C,
	\end{equation}
	where $\boldsymbol e_j\in\mathbb C^q$ is the $j$-th standard basis vector.  The
	Bessel-like mixed matrix of weights is the $q\times p$ matrix
	\begin{equation}
		\label{eq:final-matrix-weight-direct}
		\WB(z)
		\coloneq
		\begin{bmatrix}
			w_{1,1}(z)&w_{1,2}(z)&\cdots&w_{1,p}(z)\\
			w_{2,1}(z)&w_{2,2}(z)&\cdots&w_{2,p}(z)\\
			\vdots&\vdots&\ddots&\vdots\\
			w_{q,1}(z)&w_{q,2}(z)&\cdots&w_{q,p}(z)
		\end{bmatrix}.
	\end{equation}
	The matrix \(\WB\) is the direct coefficient representative on the unit
	circle.  Its Laurent coefficient moments are extracted with the contour form
	\begin{equation}
		\label{eq:final-direct-contour-form}
		\WB(z)\frac{\dz}{2\pi\mathrm i},
		\qquad z\in\Torus .
	\end{equation}
\end{definition}

The reciprocal representative introduced below contains the additional
factor \(z^{-1}\); it is that representative which will be used in the
mixed Gauss--Borel pairing.

\subsection{Analyticity and maximal rank}

The matrix representatives \(\WB\) and \(\CB\) are studied next. First,
the analyticity of \(\WB\) is established; the reciprocal relation between
\(\WB\) and \(\CB\) then gives the analyticity of \(\CB\). Finally, both
representatives are shown to have maximal rank on \(\Torus\) outside a
finite set.

\begin{proposition}[Analyticity and moments of the direct weights]
\label{prop:final-moments}
	Assume that the Bessel-like parameters are regular in the
	sense of Definition~\ref{def:final-parameter-regularity}.  Then, for
	every $j\in\{1,\ldots,q\}$ and $i\in\{1,\ldots,p\}$,
	\begin{equation}
		\label{eq:final-direct-weight-series}
		w_{j,i}(z)
		=
		\sum_{k=0}^{\infty}
		\frac{
			\Gamma((k+\kappa_i)\one_r+\boldsymbol a+\one_r)
		}{
			\Gamma((k+\kappa_i)\one_q+\boldsymbol b+\boldsymbol e_j+\one_q)
		}
		z^k.
	\end{equation}
	In particular, each entry of \(\WB\) is an entire function of \(z\).
	Moreover, for $s\in\N$,
	\begin{equation}
		\label{eq:final-moments}
		\oint_{\Torus}
		z^{-s}w_{j,i}(z)
		\frac{\dz}{2\pi\mathrm i}
		=
		\frac{
			\Gamma((s+\kappa_i)\one_r+\boldsymbol a)
		}{
			\Gamma((s+\kappa_i)\one_q+\boldsymbol b+\boldsymbol e_j)
		}.
	\end{equation}
\end{proposition}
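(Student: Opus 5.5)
The plan is to read off the series \eqref{eq:final-direct-weight-series} directly from the definition of \(f_0\), then prove entireness by a Stirling estimate on the coefficients, and finally obtain \eqref{eq:final-moments} by Cauchy coefficient extraction on \(\Torus\). No earlier result beyond the definitions of \(f_0\) and \(w_{j,i}\) is needed.

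\emph{Series.} By \eqref{eq:final-weight}, \(w_{j,i}\) is \(f_0\) evaluated at the shifted parameters \(\boldsymbol a\mapsto\boldsymbol a+\kappa_i\one_r\) and \(\boldsymbol b\mapsto\boldsymbol b+\kappa_i\one_q+\boldsymbol e_j\). Substituting these into the defining series of \(f_0\) gives the general term
\[
\frac{\Gamma(k\one_r+\boldsymbol a+\kappa_i\one_r+\one_r)}{\Gamma(k\one_q+\boldsymbol b+\kappa_i\one_q+\boldsymbol e_j+\one_q)}\,z^k .
\]
Regrouping \(k\one+\kappa_i\one=(k+\kappa_i)\one\) yields \eqref{eq:final-direct-weight-series}. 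As stipulated in the paper, this is read on the admissible set, so that the finitely many numerator Gamma factors are finite. The denominator causes no difficulty, since \(1/\Gamma\) is entire and a vanishing reciprocal only kills a coefficient.

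\emph{Entireness.} Let \(c_k\) denote the \(k\)-th coefficient. I would estimate \(c_{k+1}/c_k\), or equivalently apply Stirling to \(|c_k|^{1/k}\). Using \(\Gamma(k+\alpha+1)/\Gamma(k+\alpha)=k+\alpha\), the ratio for large \(k\) (where all arguments have positive real part) is
\[
\frac{\prod_{\rho=1}^{r}(k+\kappa_i+a_\rho+1)}{\prod_{h=1}^{q}(k+\kappa_i+b_h+\delta_{h,j}+1)} = O\bigl(k^{\,r-q}\bigr)\longrightarrow0,
\]
because \(r<q\). The ratio argument needs \(c_k\neq0\) eventually. This holds because the denominator Gamma arguments eventually have positive real part, so the reciprocals are nonzero for large \(k\). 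The radius of convergence is therefore infinite, and each \(w_{j,i}\) is entire.

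\emph{Moments.} Since \(w_{j,i}\) is entire, the power series converges uniformly on \(\Torus\) and may be integrated termwise. Using \(\oint_{\Torus}z^{k-s}\,\dz/(2\pi\mathrm i)=\delta_{k,s-1}\), the integral picks out the coefficient \(c_{s-1}\), which is well defined because \(s\ge1\). Setting \(k=s-1\) in \eqref{eq:final-direct-weight-series}, the shift \((s-1+\kappa_i)\one+\one=(s+\kappa_i)\one\) turns \(c_{s-1}\) into exactly the right-hand side of \eqref{eq:final-moments}.

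The only delicate step is the bookkeeping of finiteness: possible poles of \(\Gamma(k+\kappa_i+a_\rho+1)\) for small \(k\). I would handle this by invoking the admissibility convention, noting that only finitely many \(k\) can be affected. Regularity itself (non-integer differences of the \(b_j\) and of the \(\kappa_i\)) plays no role in this proposition.
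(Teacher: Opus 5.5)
Your proposal is correct and follows the paper's proof essentially step for step: the series is read off from the definition of \(f_0\) with shifted parameters, entireness comes from the ratio \(c_{k+1}/c_k=\mathrm{O}(k^{r-q})\) with \(r<q\), and the moment is the coefficient \(c_{s-1}\) extracted by the contour integral. Your extra remarks are accurate refinements of points the paper leaves implicit: that \(c_k\neq0\) for large \(k\), so the ratio test applies, and that regularity is never actually used.
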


\begin{proof}
	Formula~\eqref{eq:final-direct-weight-series} is just the defining series
	of \(f_0\) with the shifted parameters appearing in
	\eqref{eq:final-weight}.  Let
\(c_{k;j,i} \coloneq \frac{ \Gamma((k+\kappa_i)\one_r+\boldsymbol a+\one_r) }{ \Gamma((k+\kappa_i)\one_q+\boldsymbol b+\boldsymbol e_j+\one_q) }.\)
	Using \(\Gamma(z+1)=z\Gamma(z)\), the quotient of two consecutive
	coefficients is
	\[
		\frac{c_{k+1;j,i}}{c_{k;j,i}}
		=
		\frac{
			\prod_{\rho=1}^{r}(k+\kappa_i+a_\rho+1)
		}{
			\prod_{\ell=1}^{q}(k+\kappa_i+b_\ell+\delta_{j\ell}+1)
		}.
	\]
	Hence
\(\left| \frac{c_{k+1;j,i}z^{k+1}}{c_{k;j,i}z^k} \right| = |z|\,\mathrm{O}\left(k^{r-q}\right),\) \(k\to\infty .\)
	Since the Bessel-like regime \(r<q\) is assumed, this quantity tends to
	zero for every fixed \(z\in\mathbb C\).  Thus the series has infinite
	radius of convergence, and \(w_{j,i}\) is entire.  Since this
	argument is uniform for each fixed pair \((j,i)\), the matrix \(\WB\) is a
	\(q\times p\) entire matrix-valued function.
	
	It remains to identify the moments.  Since \(w_{j,i}\) is
	holomorphic in a neighborhood of \(\Torus\), the contour integral extracts
	the coefficient of \(z^{-1}\).  Fix \(s\in\N\). Multiplying
	\eqref{eq:final-direct-weight-series} by \(z^{-s}\) gives
\(z^{-s}w_{j,i}(z) = \sum_{k=0}^{\infty}c_{k;j,i}z^{k-s}.\)
	The coefficient of \(z^{-1}\) occurs for \(k=s-1\).  Therefore
	\[
		\oint_{\Torus}
		z^{-s}w_{j,i}(z)
		\frac{\dz}{2\pi\mathrm i}
		=
		c_{s-1;j,i}
		=
		\frac{
			\Gamma((s+\kappa_i)\one_r+\boldsymbol a)
		}{
			\Gamma((s+\kappa_i)\one_q+\boldsymbol b+\boldsymbol e_j)
		},
	\]
	which is \eqref{eq:final-moments}.
\end{proof}

The explicit formulas below are most conveniently written with ordinary
polynomials in the variable \(z\).  For this reason, the
reciprocal Laurent representation of the same coefficient sequence is also used.
This is  only a change of
presentation of the moments in \eqref{eq:final-moments}.

\begin{definition}[Reciprocal Laurent representation]
	For $j\in\{1,\ldots,q\}$ and $i\in\{1,\ldots,p\}$, define
	\begin{equation}
		\label{eq:final-cauchy-representative}
		C_{j,i}(z)
		\coloneq
		\frac{1}{z}
		w_{j,i}\left(\frac{1}{z}\right)
		=
		\frac{1}{z}
		f_0\left(
		\frac{1}{z};
		\boldsymbol a+\kappa_i\one_r,
		\boldsymbol b+\kappa_i\one_q+\boldsymbol e_j
		\right).
	\end{equation}
	Write
\begin{equation}
	\label{eq:final-matrix-weight}
	\CB(z)
	\coloneq
	\begin{bmatrix}
		C_{1,1}(z) & C_{1,2}(z) & \cdots & C_{1,p}(z)\\
		C_{2,1}(z) & C_{2,2}(z) & \cdots & C_{2,p}(z)\\
		\vdots & \vdots & \ddots & \vdots\\
		C_{q,1}(z) & C_{q,2}(z) & \cdots & C_{q,p}(z)
	\end{bmatrix}.
\end{equation}
\end{definition}

\begin{proposition}[Reciprocal form of the same moment sequence]
	For $z$ in a neighborhood of infinity,
	\begin{equation}
		\label{eq:final-weight-series}
		C_{j,i}(z)
		=
		\sum_{k=0}^{\infty}
		\frac{
			\Gamma((k+\kappa_i)\one_r+\boldsymbol a+\one_r)
		}{
			\Gamma((k+\kappa_i)\one_q+\boldsymbol b+\boldsymbol e_j+\one_q)
		}
		z^{-k-1}.
	\end{equation}
	Consequently, for $s\in\N$,
	\begin{equation}
		\label{eq:final-reciprocal-moments}
		\oint_{\Torus}
		z^{s-1}C_{j,i}(z)
		\frac{\dz}{2\pi\mathrm i}
		=
		\oint_{\Torus}
		z^{-s}w_{j,i}(z)
		\frac{\dz}{2\pi\mathrm i}.
	\end{equation}
\end{proposition}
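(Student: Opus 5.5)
The plan is to substitute the series of Proposition~\ref{prop:final-moments} into the defining relation \eqref{eq:final-cauchy-representative}, and then change variables in the contour integral. By \eqref{eq:final-direct-weight-series}, each \(w_{j,i}\) is entire and is represented by its Taylor series
\[
w_{j,i}(u)=\sum_{k\ge0}c_{k;j,i}u^k,
\]
which converges for every \(u\in\mathbb C\). Setting \(u=1/z\) with \(z\neq0\) and multiplying by \(z^{-1}\) gives \eqref{eq:final-weight-series}. The resulting Laurent series converges absolutely and locally uniformly on \(\mathbb C\setminus\{0\}\), which covers in particular a neighborhood of infinity and the unit circle. So the first claim is immediate, and no new analytic input is needed beyond the infinite radius of convergence already proved.

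For the moment identity, I will compute both sides as residues, that is, as coefficients of \(z^{-1}\). Because the series \eqref{eq:final-weight-series} converges uniformly on \(\Torus\), the integral may be taken term by term. Multiplying by \(z^{s-1}\) gives the terms
\[
c_{k;j,i}\,z^{s-k-2},
\]
and the coefficient of \(z^{-1}\) comes from \(k=s-1\). Hence the left-hand side of \eqref{eq:final-reciprocal-moments} equals \(c_{s-1;j,i}\). In the proof of Proposition~\ref{prop:final-moments}, exactly this coefficient was identified as the right-hand side of \eqref{eq:final-reciprocal-moments}, and it is the Gamma quotient in \eqref{eq:final-moments}.

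As a cross-check, and as an alternative argument, one can substitute \(z=1/u\) directly in the integral. The map \(u\mapsto 1/u\) sends \(\Torus\) to itself but reverses its orientation, and \(\dz=-u^{-2}\,\mathrm du\). Combining these, the two signs cancel, and the integrand becomes
\[
u^{1-s}\,u\,w_{j,i}(u)\,u^{-2}=u^{-s}w_{j,i}(u),
\]
which is the right-hand side of \eqref{eq:final-reciprocal-moments}.

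The only point requiring care is the justification of the term-by-term integration, or equivalently the bookkeeping of orientation in the change of variables. Uniform convergence on the compact set \(\Torus\), which follows from \(w_{j,i}\) being entire, settles it. I do not expect any genuine obstacle.
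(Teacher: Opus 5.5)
Your proposal is correct and follows the same route as the paper: substitute $1/z$ into the everywhere-convergent Taylor series of $w_{j,i}$, multiply by $z^{-1}$, and observe that both contour integrals extract the same coefficient $c_{s-1;j,i}$. Your $z=1/u$ cross-check is also valid, since the orientation reversal cancels the sign from $\mathrm dz=-u^{-2}\,\mathrm du$; it is an optional confirmation rather than a genuinely different argument.
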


\begin{proof}
	Substituting \(z^{-1}\) for \(z\) in
	\eqref{eq:final-direct-weight-series} and multiplying by \(z^{-1}\) gives
	\eqref{eq:final-weight-series}.  The coefficient of \(z^{-s}\) in
	\eqref{eq:final-weight-series} is exactly the coefficient of \(z^{s-1}\)
	in \eqref{eq:final-direct-weight-series}.  Thus the two contour integrals
	in \eqref{eq:final-reciprocal-moments} extract the same coefficient.
\end{proof}

\begin{proposition}[Contiguity of the reciprocal representatives]
	\label{prop:Bessel-reciprocal-contiguity}
	For a common shift parameter \(\kappa\), define
	\[
		w_j^{[\kappa]}(z)
		\coloneq
		f_0\left(
		z;
		\boldsymbol a+\kappa\one_r,
		\boldsymbol b+\kappa\one_q+\boldsymbol e_j
		\right),
		\qquad
		j\in\{1,\ldots,q\},
	\]
	and
	\[
		C_j^{[\kappa]}(z)
		\coloneq
		\frac{1}{z}
		w_j^{[\kappa]}\left(\frac{1}{z}\right).
	\]
	Then
	\[
		w_j^{[\kappa+1]}(z)
		=
		\frac{
			w_j^{[\kappa]}(z)-w_j^{[\kappa]}(0)
		}{z},
	\]
	and
	\[
		C_j^{[\kappa+1]}(z)
		=
		zC_j^{[\kappa]}(z)-w_j^{[\kappa]}(0).
	\]
	Consequently,
	\[
		\oint_{\Torus}
		z^nC_j^{[\kappa+1]}(z)
		\frac{\dz}{2\pi\mathrm i}
		=
		\oint_{\Torus}
		z^{n+1}C_j^{[\kappa]}(z)
		\frac{\dz}{2\pi\mathrm i},
		\qquad
		n\in\N_0.
	\]
\end{proposition}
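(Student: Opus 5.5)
The plan is to work entirely at the level of power series coefficients, using the explicit expansion \eqref{eq:final-direct-weight-series} of Proposition~\ref{prop:final-moments}, specialised to a common shift \(\kappa\). Write
\[
c_k^{[\kappa]}\coloneq
\frac{\Gamma((k+\kappa)\one_r+\boldsymbol a+\one_r)}
{\Gamma((k+\kappa)\one_q+\boldsymbol b+\boldsymbol e_j+\one_q)},
\qquad
w_j^{[\kappa]}(z)=\sum_{k\ge0}c_k^{[\kappa]}z^k .
\]
The key observation is that \(k\) and \(\kappa\) enter only through \(k+\kappa\). Hence \(c_k^{[\kappa+1]}=c_{k+1}^{[\kappa]}\) for every \(k\in\N_0\), with no Gamma-function manipulation needed.

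For the first identity, shift the summation index. This gives
\[
w_j^{[\kappa+1]}(z)=\sum_{k\ge0}c_{k+1}^{[\kappa]}z^k=\frac{1}{z}\sum_{k\ge1}c_k^{[\kappa]}z^k
=\frac{w_j^{[\kappa]}(z)-c_0^{[\kappa]}}{z}.
\]
Here \(c_0^{[\kappa]}=w_j^{[\kappa]}(0)\). The identity holds as an equality of entire functions for \(z\neq0\), and at \(z=0\) by removability.

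For the second identity, substitute \(1/z\) into the first and multiply by \(1/z\):
\[
C_j^{[\kappa+1]}(z)=\frac1z\cdot z\bigl(w_j^{[\kappa]}(1/z)-w_j^{[\kappa]}(0)\bigr)
=w_j^{[\kappa]}(1/z)-w_j^{[\kappa]}(0).
\]
Since \(w_j^{[\kappa]}(1/z)=zC_j^{[\kappa]}(z)\) by definition, this is \(zC_j^{[\kappa]}(z)-w_j^{[\kappa]}(0)\).

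For the moment identity, multiply the second identity by \(z^n\) and integrate over \(\Torus\). The constant term contributes \(w_j^{[\kappa]}(0)\oint_{\Torus} z^n\,\frac{\dz}{2\pi\mathrm i}\). This vanishes for every \(n\in\N_0\), because \(z^n\) is holomorphic inside the circle; equivalently, its Laurent coefficient of \(z^{-1}\) is zero. What remains is \(\oint_{\Torus} z^{n+1}C_j^{[\kappa]}(z)\,\frac{\dz}{2\pi\mathrm i}\).

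There is no genuine obstacle. The only points requiring care are the following.

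\begin{enumerate}
\item The representatives \(C_j^{[\kappa]}\) are holomorphic on \(\mathbb C\setminus\{0\}\), since \(w_j^{[\kappa]}\) is entire by Proposition~\ref{prop:final-moments}. So the contour integrals are well defined as Laurent-coefficient extractions.
\item The parameters must make the Gamma quotients \(c_k^{[\kappa]}\) finite, which is the standing admissibility convention.
\item The restriction \(n\ge0\) is exactly what kills the constant term.
\end{enumerate}
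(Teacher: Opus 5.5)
Your proof is correct and follows the paper's argument essentially step for step. Both use the coefficient shift \(c_k^{[\kappa+1]}=c_{k+1}^{[\kappa]}\) and reindex to get the first identity, substitute \(z\mapsto 1/z\) and multiply by \(z^{-1}\) for the second, and use the vanishing of \(\oint_{\Torus} z^n\,\frac{\dz}{2\pi\mathrm i}\) for \(n\in\N_0\) to kill the constant term in the moment identity. Your remarks on holomorphy in \(\mathbb C\setminus\{0\}\) and on finiteness of the Gamma quotients are sound additions that the paper leaves implicit.
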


\begin{proof}
	Write
	\[
		w_j^{[\kappa]}(z)
		=
		\sum_{k=0}^{\infty}
		c_{k;j}^{[\kappa]}z^k,
		\qquad
		c_{k;j}^{[\kappa]}
		\coloneq
		\frac{
			\Gamma((k+\kappa)\one_r+\boldsymbol a+\one_r)
		}{
			\Gamma((k+\kappa)\one_q+\boldsymbol b+\boldsymbol e_j+\one_q)
		}.
	\]
	For every \(k\in\N_0\),
	\[
		c_{k;j}^{[\kappa+1]}
		=
		c_{k+1;j}^{[\kappa]}.
	\]
	Hence
	\[
		w_j^{[\kappa+1]}(z)
		=
		\sum_{k=0}^{\infty}
		c_{k+1;j}^{[\kappa]}z^k
		=
		\frac{
			w_j^{[\kappa]}(z)-w_j^{[\kappa]}(0)
		}{z}.
	\]
	Replacing \(z\) by \(z^{-1}\) and multiplying by \(z^{-1}\) gives
	\[
		C_j^{[\kappa+1]}(z)
		=
		zC_j^{[\kappa]}(z)-w_j^{[\kappa]}(0).
	\]
	Finally, multiplication by \(z^n\) and contour integration yield
	\[
		\oint_{\Torus}
		z^nC_j^{[\kappa+1]}(z)
		\frac{\dz}{2\pi\mathrm i}
		=
		\oint_{\Torus}
		z^{n+1}C_j^{[\kappa]}(z)
		\frac{\dz}{2\pi\mathrm i},
	\]
	because the constant term \(w_j^{[\kappa]}(0)\) has zero contour
	pairing against \(z^n\) for every \(n\in\N_0\).
\end{proof}

\begin{remark}[Relation with the two Bessel reductions]
	The weights of the mixed Bessel-like system are the entries of \(\WB\)
	restricted to \(\Torus\).  This follows the Bessel-like construction of
	Wolfs~\cite{Wolfs2024}: when \(p=1\), after the common parameter shift
	\((\boldsymbol a,\boldsymbol b)\mapsto(\boldsymbol a+\kappa_1\one_r,
	\boldsymbol b+\kappa_1\one_q)\), the weights become
\(w_{j,1}(z) = f_0\left(z;\boldsymbol a+\kappa_1\one_r, \boldsymbol b+\kappa_1\one_q+\boldsymbol e_j\right),\) \(j\in\{1,\ldots,q\},\)
	which are Wolfs' Bessel-like weights on the unit circle.  The Laurent
		presentation in \eqref{eq:final-cauchy-representative} is used only to
		rewrite the same coefficient sequence in the moment computations below.
	
	The other case, \(q=1\), is the multiple Bessel case.  In that
	case the row direction is scalar and the remaining column parameters
	\(\kappa_i\) produce the family of shifted Bessel moment functionals
	described in Proposition~\ref{prop:multiple-bessel-case}.
	
\end{remark}

\begin{proposition}[Rank of the Bessel-like weight matrix on the unit circle]
	\label{prop:final-rank-circle}
	Let \(m=\min\{p,q\}\).  Assume that there are ordered indices
	\(1\le j_1<\cdots<j_m\le q\) and
	\(1\le i_1<\cdots<i_m\le p\) such that the numbers
	\(b_{j_1},\ldots,b_{j_m}\) are pairwise distinct, the numbers
	\(\kappa_{i_1},\ldots,\kappa_{i_m}\) are pairwise distinct, and
\(1+\kappa_{i_\beta}+b_{j_\alpha}\ne0,\) \(\alpha,\beta\in\{1,\ldots,m\},\)
	with all involved Gamma factors finite.  Then a maximal minor of
	\(\WB\) is not identically zero.  Consequently,
\(\operatorname{rank}\WB(z)=m\)
	for every \(z\in\Torus\) except, possibly, for finitely many points.
	The same assertion holds for the reciprocal Laurent representative \(\CB\).
\end{proposition}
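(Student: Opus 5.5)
The plan is to evaluate one specific maximal minor at the origin. Consider the \(m\times m\) minor
\[
D(z)\coloneq\det\bigl[w_{j_\alpha,i_\beta}(z)\bigr]_{\alpha,\beta=1}^{m}.
\]
By Proposition~\ref{prop:final-moments} every entry of \(\WB\) is entire, so \(D\) is entire. It therefore suffices to show \(D\not\equiv0\), and the simplest way to do this is to show \(D(0)\ne0\). The origin is not on \(\Torus\), but that does not matter: once \(D\) is a nonzero entire function, its zeros are isolated, and so only finitely many of them lie on the compact set \(\Torus\). Off those points the minor is nonzero, so \(\operatorname{rank}\WB(z)\ge m\). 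The reverse inequality \(\operatorname{rank}\WB(z)\le m=\min\{p,q\}\) is trivial for a \(q\times p\) matrix.

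To compute \(D(0)\), take the \(k=0\) coefficient in \eqref{eq:final-direct-weight-series}. Using \(\Gamma(x+1)=x\Gamma(x)\) in the \(j\)-th denominator factor gives
\[
w_{j,i}(0)=\frac{\Gamma(\kappa_i\one_r+\boldsymbol a+\one_r)}{\Gamma(\kappa_i\one_q+\boldsymbol b+\one_q)}\cdot\frac{1}{1+\kappa_i+b_j}.
\]
The first factor depends only on the column index \(i\). Factoring it out of each column \(\beta\) shows that \(D(0)\) equals
\[
\prod_{\beta=1}^m\frac{\Gamma(\kappa_{i_\beta}\one_r+\boldsymbol a+\one_r)}{\Gamma(\kappa_{i_\beta}\one_q+\boldsymbol b+\one_q)}
\]
times the Cauchy determinant \(\det\bigl[(1+\kappa_{i_\beta}+b_{j_\alpha})^{-1}\bigr]\). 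The hypothesis \(1+\kappa_{i_\beta}+b_{j_\alpha}\neq0\) guarantees that every entry of this Cauchy matrix is defined.

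Next I show both factors are nonzero. The prefactor is nonzero because the numerator Gammas are finite by assumption and Gamma never vanishes, while the denominator Gammas are finite, so their reciprocals are nonzero. For the Cauchy determinant I write \(x_\alpha=b_{j_\alpha}\) and \(y_\beta=1+\kappa_{i_\beta}\). The classical Cauchy formula gives
\[
\frac{\prod_{\alpha<\alpha'}(x_{\alpha'}-x_\alpha)\prod_{\beta<\beta'}(y_{\beta'}-y_\beta)}{\prod_{\alpha,\beta}(x_\alpha+y_\beta)},
\]
which is nonzero by the distinctness of the \(b_{j_\alpha}\) and of the \(\kappa_{i_\beta}\). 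This step is the only real content of the argument. The point to check is that the \(\boldsymbol e_j\)-shift in the weight is exactly what produces the Cauchy kernel, which the computation above confirms.

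Finally, for \(\CB\) the corresponding minor is
\[
\det\bigl[C_{j_\alpha,i_\beta}(z)\bigr]=z^{-m}D(1/z).
\]
This is holomorphic in \(\mathbb C\setminus\{0\}\) and vanishes on \(\Torus\) exactly at the images under \(z\mapsto 1/z\) of the zeros of \(D\) on \(\Torus\). These are finitely many, and the same rank conclusion follows.
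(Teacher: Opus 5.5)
Your proof is correct and is essentially the paper's argument. The paper extracts the coefficient of \(z^{-m}\) in the corresponding \(m\times m\) minor of \(\CB\); under \(z\mapsto z^{-1}\) this is exactly your value \(D(0)\), and the paper factors it into the same column Gamma prefactors times the same Cauchy determinant produced by the \(\boldsymbol e_j\)-shift. The only difference is the direction of transfer: you apply the identity theorem to the entire minor of \(\WB\) and then pass to \(\CB\), while the paper uses holomorphy of the \(\CB\)-minor on an annulus around \(\Torus\) and then passes to \(\WB\).
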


\begin{proof}
	It suffices to prove the assertion for \(\CB\); the statement for \(\WB\) follows from
	\eqref{eq:final-cauchy-representative}, because the transformation
	\(z\mapsto z^{-1}\) preserves nonidentical vanishing and maps \(\Torus\)
	onto itself.

	From \eqref{eq:final-weight-series}, the coefficient of \(z^{-1}\) in
	\(C_{j,i}\) is
\(c_{j,i} = \frac{\Gamma((1+\kappa_i)\one_r+\boldsymbol a)} {\Gamma((1+\kappa_i)\one_q+\boldsymbol b+\boldsymbol e_j)}.\)
	Since
	\[
		\Gamma((1+\kappa_i)\one_q+\boldsymbol b+\boldsymbol e_j)
		=
		\Gamma(2+\kappa_i+b_j)
		\prod_{\substack{h=1\\ h\ne j}}^{q}
		\Gamma(1+\kappa_i+b_h),
	\]
	this coefficient can be written as
	\[
		c_{j,i}
		=
		R_i\frac{1}{1+\kappa_i+b_j},
		\qquad
		R_i\coloneq
		\frac{\Gamma((1+\kappa_i)\one_r+\boldsymbol a)}
		{\Gamma((1+\kappa_i)\one_q+\boldsymbol b)}.
	\]
	Consider the ordered submatrix determined by the row list
	\(J=(j_1,\ldots,j_m)\) and the column list
	\(I=(i_1,\ldots,i_m)\).  The coefficient of
	\(z^{-m}\) in \(\det \CB_{J,I}(z)\) is
	\[
		\left(\prod_{\beta=1}^{m}R_{i_\beta}\right)
		\det\left[
		\frac{1}{1+\kappa_{i_\beta}+b_{j_\alpha}}
		\right]_{\alpha,\beta=1}^{m}.
	\]
	The last determinant is a Cauchy determinant.  Under the stated assumptions it
	is nonzero, because
	\[
		\det\left[
		\frac{1}{1+\kappa_{i_\beta}+b_{j_\alpha}}
		\right]_{\alpha,\beta=1}^{m}
		=
		\frac{
		\prod_{1\le\alpha<\alpha'\le m}(b_{j_{\alpha'}}-b_{j_\alpha})
		\prod_{1\le\beta<\beta'\le m}(\kappa_{i_{\beta'}}-\kappa_{i_\beta})
		}{
		\prod_{\alpha=1}^{m}\prod_{\beta=1}^{m}
		(1+\kappa_{i_\beta}+b_{j_\alpha})
		}.
	\]
	Hence \(\det \CB_{J,I}\) is not identically
	zero.

	Each entry of \(\CB\) is holomorphic in \(\mathbb C\setminus\{0\}\), and so
	each minor is holomorphic in an open annulus containing \(\Torus\).  If the
	minor \(\det \CB_{J,I}\) had infinitely many zeros on \(\Torus\), compactness
	of \(\Torus\) would give an accumulation point on \(\Torus\); the identity
	theorem would then force this minor to vanish identically in the annulus,
	contradicting the previous paragraph.  Thus the exceptional set on
	\(\Torus\) is finite.  Away from this finite set, one maximal minor is
	nonzero, and therefore \(\operatorname{rank}\CB(z)=m\).  The same conclusion
	holds for \(\WB\).
\end{proof}

\begin{remark}[Why this phenomenon is specific to the Bessel realization on the unit circle]
	For a real-line Mellin weight, shifting the argument of the Mellin transform is multiplication by a power:
\(\mathcal M[x^{\kappa}w](s)=\mathcal M[w](s+\kappa),\)
	whenever both Mellin transforms are defined.
	Consequently, the analogous Gamma-parameter shifts in the Jacobi-like and Laguerre-like systems of \cite{JacobiLaguerreMixed} give the rank-one matrices already present in the real-line construction.  The Bessel presentation on the unit circle is different: in the reciprocal Cauchy representation, its moments are extracted from Laurent coefficients.  Shifting the Gamma parameters by a nonintegral \(\kappa_i\) again gives a legitimate reciprocal Laurent representative, but it cannot be represented in the same Laurent framework by multiplication by \(z^{\kappa_i}\).  This is why the present weight matrix goes beyond the rank-one form \(w_{j,i}(z)=v_j(z)u_i(z)\).
\end{remark}

\subsection{Jacobi confluence through the Markov--Stieltjes matrix}
\label{subsec:Jacobi-Markov-Bessel-confluence}

The maximal-rank matrix introduced above has a precise confluent
origin in the rank-one Jacobi-like system of
\cite{JacobiLaguerreMixed}.  The confluence does not take place as a
pointwise limit of the Jacobi matrix densities: such a limit would necessarily
have rank at most one.  Instead, one first passes from the interval
orthogonality to its exactly equivalent complex-contour realization through
the Markov--Stieltjes matrix.  After the appropriate rescaling, these
Markov--Stieltjes representatives converge to the reciprocal matrix \(\CB\),
or equivalently to the entire direct representative \(\WB\).

Put \(d\coloneq q-r\) and introduce the confluent parameters
\(\boldsymbol\lambda=\begin{bNiceMatrix}\lambda_1&\Cdots&\lambda_d\end{bNiceMatrix}\) and
\(\boldsymbol a^{\,\mathrm J,\boldsymbol\lambda}
\coloneq\begin{bNiceMatrix}\boldsymbol a&\boldsymbol\lambda\end{bNiceMatrix}\in\mathbb C^q\).

For \(u,v\in\mathbb C\) satisfying
\(\operatorname{Re}u>-1\) and
\(\operatorname{Re}(v-u)>0\), define the normalized beta weight
\[
\mathcal B_{u,v}(x)
\coloneq
\frac{x^u(1-x)^{v-u-1}}{\Gamma(v-u)},
\qquad 0<x<1,
\]
where the powers are defined using the real logarithm on \((0,1)\). Its
Mellin transform is
\[
\int_0^1x^{s-1}\mathcal B_{u,v}(x)\,\mathrm dx
=
\frac{\Gamma(s+u)}{\Gamma(s+v)},
\qquad
\operatorname{Re}(s+u)>0.
\]

For integrable functions \(f\) and \(g\) supported on \((0,1)\), their Mellin
convolution is
\[
(f*g)(x)
\coloneq
\int_x^1
f(t)g\left(\frac{x}{t}\right)\frac{\mathrm dt}{t},
\qquad 0<x<1.
\]
Multiple convolutions are defined recursively by
\(f_1*\cdots*f_q\coloneq(f_1*\cdots*f_{q-1})*f_q\), with the convention that
the convolution reduces to \(f_1\) when \(q=1\). Whenever the corresponding
Mellin transforms exist, the Mellin transform of \(f*g\) is the product of
the Mellin transforms of \(f\) and \(g\).

Assume that
\(\operatorname{Re}a_h>-1\) and
\(\operatorname{Re}(b_h-a_h)>0\) for \(h\in\{1,\ldots,r\}\), and that
\(\operatorname{Re}\lambda_\ell>-1\) and
\(\operatorname{Re}(b_{r+\ell}-\lambda_\ell)>0\) for
\(\ell\in\{1,\ldots,d\}\). Writing
\(a_h^{\,\mathrm J,\boldsymbol\lambda}\) for the \(h\)-th component of
\(\boldsymbol a^{\,\mathrm J,\boldsymbol\lambda}\), define the Jacobi-like row weights by
\[
w_j^{\mathrm J,\boldsymbol\lambda}
\coloneq
\mathcal B_{a_1^{\,\mathrm J,\boldsymbol\lambda},\,b_1+\delta_{j,1}}
*\cdots*
\mathcal B_{a_q^{\,\mathrm J,\boldsymbol\lambda},\,b_q+\delta_{j,q}},
\qquad
j\in\{1,\ldots,q\}.
\]
Each \(w_j^{\mathrm J,\boldsymbol\lambda}\) is an integrable complex-valued weight
on \((0,1)\), and the Mellin convolution rule gives
\begin{equation}
	\label{eq:Jacobi-confluent-Mellin-transform}
	\begin{aligned}
	\int_0^1x^{s-1}w_j^{\mathrm J,\boldsymbol\lambda}(x)\,\mathrm dx
	&=
	\frac{
		\Gamma(s\one_r+\boldsymbol a)
		\prod_{\ell=1}^{d}\Gamma(s+\lambda_\ell)
	}{
		\Gamma(s\one_q+\boldsymbol b+\boldsymbol e_j)
	},\\
	\operatorname{Re}s
	&>
	\max\left\{
	-\min_{1\le h\le r}\operatorname{Re}a_h,
	-\min_{1\le\ell\le d}\operatorname{Re}\lambda_\ell
	\right\}.
	\end{aligned}
\end{equation}

The Jacobi power parameters are identified with the Bessel column shifts,
and the corresponding rank-one matrix of interval measures is defined by
\begin{equation}
	\label{eq:Jacobi-confluent-matrix-measure}
	\mathrm d\mathcal J^{\boldsymbol\lambda}(x)
	\coloneq
\begin{bNiceMatrix}
	w_1^{\mathrm J,\boldsymbol\lambda}(x)\\
	\Vdots\\
	w_q^{\mathrm J,\boldsymbol\lambda}(x)
\end{bNiceMatrix}
\begin{bNiceMatrix}
	x^{\kappa_1}&\Cdots&x^{\kappa_p}
\end{bNiceMatrix}
	\,\mathrm dx,
\end{equation}
where \(x^{\kappa_i}\coloneq\exp(\kappa_i\log x)\), with the real logarithm
on \((0,1)\). Hence its pointwise rank is at most one.

To let the parameters \(\lambda_\ell\) tend to infinity while keeping the
Jacobi-like weights integrable on \((0,1)\), assume that
\(\operatorname{Re}(\kappa_i+a_h+1)>0\) for
\(i\in\{1,\ldots,p\}\) and \(h\in\{1,\ldots,r\}\), and choose real numbers
\(c_\ell\) satisfying
\[
\max\left\{
-1,
\max_{1\le i\le p}
\bigl(-1-\operatorname{Re}\kappa_i\bigr)
\right\}
<
c_\ell
<
\operatorname{Re}b_{r+\ell},
\qquad
\ell\in\{1,\ldots,d\}.
\]
Set
\[
\lambda_\ell(T)\coloneq c_\ell+\mathrm iT,
\qquad
\ell\in\{1,\ldots,d\},
\quad
T>0.
\]
Since the real part of \(\lambda_\ell(T)\) is the fixed number \(c_\ell\),
the beta factors defining \(w_j^{\mathrm J,\boldsymbol\lambda(T)}\) remain
integrable on \((0,1)\), and so do the functions
\(x^{\kappa_i}w_j^{\mathrm J,\boldsymbol\lambda(T)}(x)\). Therefore
\(\mathrm d\mathcal J^{\boldsymbol\lambda(T)}\) is a finite matrix of complex
measures for every \(T>0\). At the same time,
\[
\min_{1\le\ell\le d}|\lambda_\ell(T)|
\longrightarrow\infty,
\qquad
T\longrightarrow+\infty.
\]

The moments of \eqref{eq:Jacobi-confluent-matrix-measure} are therefore
\begin{equation}
	\label{eq:Jacobi-confluent-unscaled-moments}
	\mu_{j,i}^{\mathrm J,\boldsymbol\lambda}(n)
	\coloneq
	\int_0^1
	x^n\,\mathrm d\mathcal J_{j,i}^{\boldsymbol\lambda}(x)
	=
	\frac{
		\Gamma((n+\kappa_i)\one_r+\boldsymbol a+\one_r)
		\prod_{\ell=1}^{d}
		\Gamma(n+\kappa_i+\lambda_\ell+1)
	}{
		\Gamma((n+\kappa_i)\one_q+\boldsymbol b+\boldsymbol e_j+\one_q)
	}.
\end{equation}
This identity holds for every \(n\in\N_0\).

The Markov--Stieltjes matrix of \(\mathrm d\mathcal J^{\boldsymbol\lambda}\) is
\begin{equation}
\label{eq:Jacobi-Markov-Stieltjes-matrix}
\mathsf S^{\mathrm J,\boldsymbol\lambda}(\zeta)
\coloneq
\int_0^1\frac{\mathrm d\mathcal J^{\boldsymbol\lambda}(x)}{\zeta-x},
\qquad \zeta\in\mathbb C\setminus[0,1].
\end{equation}
Set
\[
\Lambda_{\boldsymbol\lambda}
\coloneq
\prod_{h=1}^{d}\lambda_h,
\qquad
\mathsf D_{\boldsymbol\lambda}
\coloneq
\operatorname{diag}\left[
\prod_{h=1}^{d}\Gamma(\lambda_h+\kappa_i+1)
\right]_{i=1}^{p}.
\]
Introduce the rescaled matrix measure by push-forward under
\(x\mapsto x/\Lambda_{\boldsymbol\lambda}\) and by the column normalization
\(\mathsf D_{\boldsymbol\lambda}^{-1}\):
\begin{equation}
\label{eq:scaled-Jacobi-measure-functional-definition}
\int P(y)\,\mathrm d\widehat{\mathcal J}^{\boldsymbol\lambda}(y)
\coloneq
\int_0^1P\left(\frac{x}{\Lambda_{\boldsymbol\lambda}}\right)
\mathrm d\mathcal J^{\boldsymbol\lambda}(x)\mathsf D_{\boldsymbol\lambda}^{-1},
\qquad P\in\mathbb C[y].
\end{equation}
Its support is the segment
\(\Delta_{\boldsymbol\lambda}\coloneq\Lambda_{\boldsymbol\lambda}^{-1}[0,1]\), and its
Markov--Stieltjes matrix is
\begin{equation}
\label{eq:scaled-Jacobi-Markov-matrix}
\widehat{\CB}^{\,\boldsymbol\lambda}(z)
\coloneq
\int_{\Delta_{\boldsymbol\lambda}}
\frac{\mathrm d\widehat{\mathcal J}^{\boldsymbol\lambda}(y)}{z-y}
=
\Lambda_{\boldsymbol\lambda}\mathsf S^{\mathrm J,\boldsymbol\lambda}
\bigl(\Lambda_{\boldsymbol\lambda}z\bigr)\mathsf D_{\boldsymbol\lambda}^{-1},
\qquad
z\in\mathbb C\setminus\Delta_{\boldsymbol\lambda}.
\end{equation}

\begin{proposition}[Interval-to-circle equivalence]
	\label{prop:interval-contour-Markov-equivalence}
	For \(T\) sufficiently large, let
	\(\mathbf B\in\mathbb C^{1\times q}[z]\) and
	\(\mathbf A\in\mathbb C^{p\times1}[z]\). Then
	\begin{equation}
		\label{eq:interval-contour-bilinear-equivalence}
		\int_{\Delta_{\boldsymbol\lambda(T)}}
		\mathbf B(y)\,
		\mathrm d\widehat{\mathcal J}^{\boldsymbol\lambda(T)}(y)\,
		\mathbf A(y)
		=
		\oint_{\Torus}
		\mathbf B(z)
		\widehat{\CB}^{\,\boldsymbol\lambda(T)}(z)
		\mathbf A(z)
		\frac{\dz}{2\pi\mathrm i}.
	\end{equation}
	Consequently, the interval and unit-circle realizations induce the same
	bilinear form on polynomial vectors.
\end{proposition}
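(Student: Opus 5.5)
The identity will follow from a Cauchy integral argument applied entrywise. By bilinearity it suffices to prove it for the scalar entry \((j,i)\) with \(\mathbf B=z^{\ell}\boldsymbol e_j^{\top}\) and \(\mathbf A=z^{k}\boldsymbol e_i\), i.e.
\[
\int_{\Delta_{\boldsymbol\lambda(T)}} y^{n}\,\mathrm d\widehat{\mathcal J}^{\boldsymbol\lambda(T)}_{j,i}(y)
=\oint_{\Torus} z^{n}\,\widehat{\CB}^{\,\boldsymbol\lambda(T)}_{j,i}(z)\frac{\dz}{2\pi\mathrm i},
\qquad n=\ell+k\in\N_0.
\]

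The first step is to make the hypothesis ``\(T\) sufficiently large'' precise. We have \(|\Lambda_{\boldsymbol\lambda(T)}|=\prod_h|c_h+\mathrm iT|\to\infty\), so \(|\Lambda_{\boldsymbol\lambda(T)}|>1\) for large \(T\). The support \(\Delta_{\boldsymbol\lambda(T)}=\Lambda_{\boldsymbol\lambda(T)}^{-1}[0,1]\) is a segment from \(0\) to a point of modulus less than one. It therefore lies in the open unit disc, and \(\Torus\) winds once around it. If \(d=0\), then \(\Lambda=1\) and the segment touches \(\Torus\) at \(1\); I will treat that case with a slightly larger circle, or simply note that the confluence is only meant for \(d\ge1\).

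Next I will verify that \(\widehat{\CB}^{\,\boldsymbol\lambda}\) is well defined and holomorphic off the segment. Definition \eqref{eq:scaled-Jacobi-measure-functional-definition} shows that \(\widehat{\mathcal J}^{\boldsymbol\lambda}\) is a finite complex measure supported on \(\Delta_{\boldsymbol\lambda}\); finiteness comes from the integrability of \(x^{\kappa_i}w_j^{\mathrm J,\boldsymbol\lambda(T)}\) established before the statement. The substitution \(y=x/\Lambda\) then turns \(\int_0^1\frac{\mathrm d\mathcal J(x)}{\Lambda z-x}\,\Lambda\,\mathsf D^{-1}\) into \(\int\frac{\mathrm d\widehat{\mathcal J}(y)}{z-y}\), which confirms the second equality in \eqref{eq:scaled-Jacobi-Markov-matrix}. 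Holomorphy in \(z\) off the segment follows by differentiating under the integral, since the measure is finite and \(1/(z-y)\) is bounded for \(z\) away from the support.

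The main step is the exchange of integrals. I will insert the definition of \(\widehat{\CB}\) into the contour integral and apply Fubini. This is legitimate because \(\Torus\times\Delta_{\boldsymbol\lambda}\) is compact and \(|z-y|\ge\operatorname{dist}(\Torus,\Delta_{\boldsymbol\lambda})>0\), so the integrand is bounded against the total variation of the finite measure. The result is
\[
\int_{\Delta_{\boldsymbol\lambda}}\Bigl(\oint_{\Torus}\frac{z^{n}}{z-y}\frac{\dz}{2\pi\mathrm i}\Bigr)\mathrm d\widehat{\mathcal J}_{j,i}(y).
\]
For \(y\) inside the disc, the Cauchy integral formula gives the inner integral as \(y^n\), which yields the claimed identity. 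The same argument works directly at the level of matrix products, because \(\mathbf B(z)\mathbf A(z)\) entries are polynomials.

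The only delicate point is ensuring that the segment stays strictly inside \(\Torus\) with a positive distance to it, uniformly in the Fubini estimate, which is exactly what the largeness of \(T\) provides. As a sanity check, expanding \(1/(z-y)=\sum_k y^kz^{-k-1}\) for \(|z|=1>|y|\) gives \(\widehat{\CB}(z)=\sum_k \widehat\mu(k)z^{-k-1}\), and the contour integral extracts the moment \(\widehat\mu(n)\). This is the same coefficient-extraction mechanism as in \eqref{eq:final-reciprocal-moments}.
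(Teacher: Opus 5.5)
Your proposal is correct and follows essentially the paper's route. Because the segment \(\Delta_{\boldsymbol\lambda(T)}\) lies in the open unit disk at positive distance from \(\Torus\), Fubini applies entrywise against the finite measure, and the Cauchy integral formula then returns \(\mathbf B(y)\,\mathrm d\widehat{\mathcal J}(y)\,\mathbf A(y)\); your reduction to monomials by bilinearity is only cosmetic, and your hedge about \(d=0\) is unnecessary, since \(d=q-r\ge1\) by the standing assumption \(r<q\).
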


\begin{proof}
	Since \(\Delta_{\boldsymbol\lambda}\) is compact and contained in the open unit
	disk, one has
\(	\delta
	\coloneq
	\operatorname{dist}(\Torus,\Delta_{\boldsymbol\lambda})
	>0\).
	The polynomial vectors \(\mathbf B\) and \(\mathbf A\) are bounded on
	\(\Torus\), and
	\(\mathrm d\widehat{\mathcal J}^{\boldsymbol\lambda}\) is a finite matrix of
	complex measures. Hence each scalar entry of
	\[
	\frac{
		\mathbf B(z)\,
		\mathrm d\widehat{\mathcal J}^{\boldsymbol\lambda}(y)\,
		\mathbf A(z)
	}{
		z-y
	}
	\]
	is absolutely integrable with respect to arc length on \(\Torus\) and the
	total variation of the corresponding measure, so Fubini's theorem applies
	entrywise. Therefore,
	\[
	\oint_{\Torus}
	\mathbf B(z)\widehat{\CB}^{\,\boldsymbol\lambda}(z)\mathbf A(z)
	\frac{\dz}{2\pi\mathrm i}
	=
	\int_{\Delta_{\boldsymbol\lambda}}
	\left(
	\oint_{\Torus}
	\frac{
		\mathbf B(z)\,
		\mathrm d\widehat{\mathcal J}^{\boldsymbol\lambda}(y)\,
		\mathbf A(z)
	}{
		z-y
	}
	\frac{\dz}{2\pi\mathrm i}
	\right).
	\]
	For each fixed \(y\in\Delta_{\boldsymbol\lambda}\), the numerator is entire in
	\(z\), and \(y\) lies inside \(\Torus\). The Cauchy integral formula then
	gives
	\[
	\oint_{\Torus}
	\frac{
		\mathbf B(z)\,
		\mathrm d\widehat{\mathcal J}^{\boldsymbol\lambda}(y)\,
		\mathbf A(z)
	}{
		z-y
	}
	\frac{\dz}{2\pi\mathrm i}
	=
	\mathbf B(y)\,
	\mathrm d\widehat{\mathcal J}^{\boldsymbol\lambda}(y)\,
	\mathbf A(y),
	\]
	which proves the identity.
\end{proof}

	Put
\(L_{\boldsymbol\lambda}
\coloneq
\min_{1\le h\le d}|\lambda_h|\).
The moments of the rescaled measure are
\begin{equation}
	\label{eq:exact-rescaled-Jacobi-moments}
	\begin{aligned}
	\widehat\mu_{j,i}^{\,\mathrm J,\boldsymbol\lambda}(n)
	&\coloneq
	\int_{\Delta_{\boldsymbol\lambda}}
	y^n\,\mathrm d\widehat{\mathcal J}_{j,i}^{\boldsymbol\lambda}(y)\\
	&=
	\frac{
		\mu_{j,i}^{\mathrm J,\boldsymbol\lambda}(n)
	}{
		\Lambda_{\boldsymbol\lambda}^{n}
		\prod_{h=1}^{d}
		\Gamma(\lambda_h+\kappa_i+1)
	}\\
	&=\frac{
		\Gamma((n+\kappa_i)\one_r+\boldsymbol a+\one_r)
	}{
		\Gamma((n+\kappa_i)\one_q+\boldsymbol b+\boldsymbol e_j+\one_q)
	}
	\prod_{h=1}^{d}
	\frac{(\lambda_h+\kappa_i+1)_n}{\lambda_h^n}.
	\end{aligned}
\end{equation}

In particular, for every fixed \(n\in\N_0\),
\(j\in\{1,\ldots,q\}\), and \(i\in\{1,\ldots,p\}\),
\begin{equation}
	\label{eq:rescaled-Jacobi-moment-convergence}
	\widehat\mu_{j,i}^{\,\mathrm J,\boldsymbol\lambda}(n)
	\xlongrightarrow[
	L_{\boldsymbol\lambda}\to\infty
	]{}
	\frac{\Gamma((n+\kappa_i)\one_r+\boldsymbol a+\one_r)}
	{\Gamma((n+\kappa_i)\one_q+\boldsymbol b+\boldsymbol e_j+\one_q)}.
\end{equation}

Its direct reciprocal representative is
\begin{equation}
	\label{eq:scaled-Jacobi-direct-representative}
	\widehat{\WB}^{\,\boldsymbol\lambda}(z)
	\coloneq
	\int_{\Delta_{\boldsymbol\lambda}}
	\frac{\mathrm d\widehat{\mathcal J}^{\boldsymbol\lambda}(y)}{1-yz}
	=
	\frac{1}{z}
	\widehat{\CB}^{\,\boldsymbol\lambda}\left(\frac{1}{z}\right),
	\qquad z\ne0,
\end{equation}
where the integral on the left gives its analytic continuation to \(z=0\).
Since
\(\Delta_{\boldsymbol\lambda}
=\Lambda_{\boldsymbol\lambda}^{-1}[0,1]\),
expansion of the Cauchy kernel gives
\begin{equation}
	\label{eq:scaled-Jacobi-direct-series}
	\widehat W_{j,i}^{\,\boldsymbol\lambda}(z)
	=
	\sum_{n=0}^{\infty}
	\widehat\mu_{j,i}^{\mathrm J,\boldsymbol\lambda}(n)z^n,
	\qquad
	|z|<|\Lambda_{\boldsymbol\lambda}|.
\end{equation}
Using \eqref{eq:exact-rescaled-Jacobi-moments}, this series can equivalently
be written as
\begin{equation}
	\label{eq:scaled-Jacobi-hypergeometric-representative}
	\widehat W_{j,i}^{\,\boldsymbol\lambda}(z)
	=
	\frac{\Gamma(\kappa_i\one_r+\boldsymbol a+\one_r)}
	{\Gamma(\kappa_i\one_q+\boldsymbol b+\boldsymbol e_j+\one_q)}
	\pFq{q+1}{q}
	{1,\,\kappa_i\one_r+\boldsymbol a+\one_r,\,
		\kappa_i\one_d+\boldsymbol\lambda+\one_d}
	{\kappa_i\one_q+\boldsymbol b+\boldsymbol e_j+\one_q}
	{\dfrac{z}{\Lambda_{\boldsymbol\lambda}}}.
\end{equation}
\begin{theorem}[Jacobi-to-Bessel Markov--Stieltjes confluence]
\label{thm:Jacobi-Bessel-Markov-confluence}
Along the admissible path \(\boldsymbol\lambda=\boldsymbol\lambda(T)\) introduced above,
assume that all parameter-dependent Gamma factors remain finite. Then, as
\(T\to+\infty\), or equivalently \(L_{\boldsymbol\lambda}\to\infty\),
\begin{equation}
\label{eq:direct-compact-open-convergence}
\widehat{\WB}^{\,\boldsymbol\lambda}\longrightarrow\WB
\end{equation}
locally uniformly on \(\mathbb C\), i.e.,  for every
compact set \(K\subset\mathbb C\), one has
\(K\subset\{z:|z|<|\Lambda_{\boldsymbol\lambda}|\}\) for all sufficiently large
\(\boldsymbol\lambda\), and
\(\sup_{z\in K}\|\widehat{\WB}^{\,\boldsymbol\lambda}(z)-\WB(z)\|\to0\).
Equivalently,
\begin{equation}
\label{eq:reciprocal-compact-open-convergence}
\widehat{\CB}^{\,\boldsymbol\lambda}\longrightarrow\CB
\end{equation}
locally uniformly on \(\mathbb C^*\).  In particular,
\begin{equation}
\label{eq:reciprocal-uniform-circle-convergence}
\sup_{z\in\Torus}
\left\|\widehat{\CB}^{\,\boldsymbol\lambda}(z)-\CB(z)\right\|
\longrightarrow0.
\end{equation}
\end{theorem}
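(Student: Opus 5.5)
Both \(\widehat{\WB}^{\,\boldsymbol\lambda}\) and \(\WB\) are given by explicit power series. By \eqref{eq:scaled-Jacobi-direct-series} and \eqref{eq:exact-rescaled-Jacobi-moments}, the \(n\)-th coefficient of \(\widehat W^{\,\boldsymbol\lambda}_{j,i}\) is \(c_{n;j,i}\,\pi_{n,i}(\boldsymbol\lambda)\). Here \(c_{n;j,i}\) is the coefficient from Proposition~\ref{prop:final-moments}, and
\[
\pi_{n,i}(\boldsymbol\lambda)\coloneq\prod_{h=1}^{d}\frac{(\lambda_h+\kappa_i+1)_n}{\lambda_h^n}.
\]
The plan is therefore a dominated-convergence (Tannery-type) argument for power series on a disk \(|z|\le R\). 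Since there are finitely many entries \((j,i)\), it suffices to treat one entry at a time. The first point is trivial: \(|\Lambda_{\boldsymbol\lambda(T)}|\ge L_{\boldsymbol\lambda}^d\to\infty\), so every compact \(K\subset\{|z|\le R\}\) lies in the disk of convergence for large \(T\).

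Pointwise convergence \(\pi_{n,i}(\boldsymbol\lambda)\to1\) for fixed \(n\) is \eqref{eq:rescaled-Jacobi-moment-convergence}. The main obstacle is a bound on \(\pi_{n,i}\) that is uniform in \(T\) and grows at most geometrically in \(n\). Writing
\[
\frac{(\lambda_h+\kappa_i+1)_n}{\lambda_h^n}=\prod_{k=0}^{n-1}\Bigl(1+\frac{\kappa_i+1+k}{\lambda_h}\Bigr),
\]
I would use \(|\lambda_h|\ge L\ge1\) to get
\[
|\pi_{n,i}|\le\prod_{k=0}^{n-1}\Bigl(1+\frac{|\kappa_i|+1+k}{L}\Bigr)^{d}.
\]
This is not geometric in \(n\) uniformly in \(L\), since for \(k\gg L\) the factor grows like \(k/L\). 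It is nevertheless dominated by \(\prod_{k<n}(|\kappa_i|+2+k)^d=(|\kappa_i|+2)_n^{\,d}\lesssim (n!)^{d}C^n\).

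The key step is to use the Bessel decay of \(c_{n;j,i}\). From the ratio computed in Proposition~\ref{prop:final-moments}, \(|c_{n;j,i}|\le C_\varepsilon\,\varepsilon^n (n!)^{r-q}\) for any \(\varepsilon>0\). Since \(q-r=d\), the product \(|c_{n;j,i}|(|\kappa_i|+2)_n^{d}R^n\) behaves like \(n^{O(1)}(\varepsilon' R)^n\), which is summable. This is exactly where the exponent \(d\) in the confluence matches the Bessel gap \(q-r\), and it is the delicate point. I would verify it via the ratio test on \(M_n\coloneq|c_{n;j,i}|(|\kappa_i|+2)_n^dR^n\): the ratio is
\[
R\,\frac{\prod_\rho|n+\kappa_i+a_\rho+1|\,(n+|\kappa_i|+2)^d}{\prod_\ell|n+\kappa_i+b_\ell+\delta_{j\ell}+1|}\to R,
\]
so \(\sum M_n\) does \emph{not} converge for \(R\ge1\). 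Hence the crude bound must be refined.

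The refinement is to split the bound by comparing \(k\) with \(L\). Use \(1+x\le e^{x}\) for the terms with \(k\le\sqrt L\), and for \(k>\sqrt L\) bound each factor by \((|\kappa_i|+2+k)/\sqrt L\). This gives
\[
|\pi_{n,i}|\le e^{C}\bigl(C_1 n/\sqrt L\bigr)^{dn}
\]
for large \(n\), so \(M_n\) acquires an extra factor \(L^{-dn/2}\) beyond the threshold. Alternatively, and more simply, split the series at \(N\): the tail \(\sum_{n>N}|c_n||\pi_n|R^n\) is bounded using \(|\pi_n|\le\prod_{k<n}(1+(|\kappa_i|+1+k)/L)^d\le (1+(|\kappa_i|+n)/L)^{dn}\). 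For \(n\le \delta L\) this is at most \(e^{d(|\kappa_i|+n)n/L}\le e^{C+d\delta n}\), which is geometric and summable against \(\varepsilon^n\). For \(n>\delta L\) the factor is at most \((2n/L)^{dn}\le(2/\delta)^{dn}\), still geometric uniformly in \(L\), because \(|c_n|\) decays superexponentially. Consequently the tail is uniformly small, the finitely many head terms converge by \eqref{eq:rescaled-Jacobi-moment-convergence}, and \eqref{eq:direct-compact-open-convergence} follows.

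Finally, \eqref{eq:reciprocal-compact-open-convergence} follows from \eqref{eq:scaled-Jacobi-direct-representative} and \eqref{eq:final-cauchy-representative}: a compact \(K\subset\mathbb C^*\) maps under \(z\mapsto1/z\) to a compact set, and the factor \(1/z\) is bounded on \(K\). Taking \(K=\Torus\) gives \eqref{eq:reciprocal-uniform-circle-convergence}.
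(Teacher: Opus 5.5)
Your proof is correct once you commit to the \(\sqrt L\)-splitting, and it takes a genuinely different route from the paper.

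\emph{Comparison.} The paper never estimates the multiplier \(\pi_{n,i}\) (its \(R^{\boldsymbol\lambda}_{n;i}\)) on its own. It bounds the ratio of consecutive coefficients of \(\widehat W^{\,\boldsymbol\lambda}_{j,i}\). There the Bessel factor, of order \(n^{-d}\), and the confluent factor \(\prod_h|1+(n+\kappa_i+1)/\lambda_h|\le M_i^d(1+(n+1)/L_{\boldsymbol\lambda})^d\) combine into one uniform estimate \(C\bigl(1/(n+1)+1/L_{\boldsymbol\lambda}\bigr)^d\). Choosing first \(N\) and then \(L_0\) makes the term ratio at most \(1/2\) on \(|z|\le R\) for all \(n\ge N\). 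The tail is then bounded by twice its first term, \(2|c_{N;j,i}\pi_{N,i}|R^N\to 2|c_{N;j,i}|R^N\). No Stirling estimate or case split is needed, and the matching \(d=q-r\) is visible in one line.

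Your Tannery-type argument needs two regimes instead. For \(n\lesssim\sqrt L\) you have \(|\pi_{n,i}|\le e^C\). For \(n\gtrsim\sqrt L\), the bound \(|\pi_{n,i}|\le e^C(C_1n/\sqrt L)^{dn}\) must be played against \(|c_{n;j,i}|\le A\,n^{\gamma}(n!)^{-d}\) via \(n^{dn}\le e^{dn}(n!)^d\). This gives terms \(\lesssim n^{\gamma}\bigl((C_1e)^dRL^{-d/2}\bigr)^n\), whose sum over \(n>\sqrt L\) tends to zero. This works, controls the multiplier itself, and never divides by coefficients. It is longer, though, and the role of \(d=q-r\) is hidden inside Stirling.

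\emph{Errors to remove.} Two statements in your write-up are false.
\begin{enumerate}
\item The bound \(|c_{n;j,i}|\le C_\varepsilon\varepsilon^n(n!)^{r-q}\) fails for \(\varepsilon<1\). The ratio asymptotics show that \(|c_{n;j,i}|(n!)^{d}\) behaves like a power of \(n\), which cannot be absorbed into \(\varepsilon^n\). This is exactly why your crude majorant \(\sum M_n\) diverges, as you then noticed.
\item More seriously, your ``more simple'' alternative is wrong as written. For \(n>\delta L\) you only get \(1+(|\kappa_i|+n)/L\le C_\delta n/L\) with \(C_\delta=1+|\kappa_i|+1/\delta\); your \(2n/L\) requires \(n\ge L+|\kappa_i|\). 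Moreover, \(n>\delta L\) gives the \emph{lower} bound \(2n/L>2\delta\), while \((2n/L)^{dn}\le(2/\delta)^{dn}\) needs \(n\le L/\delta\). The inequality runs the wrong way on the tail. For fixed \(L\) this factor grows like \((n!)^d\), so it is not geometric uniformly in \(L\), and mere superexponential decay of \(c_{n;j,i}\) does not control it. The step is repaired only by using the exact rate \((n!)^{-d}\), which gives terms \(\lesssim n^{\gamma}\bigl((C_\delta e)^dR/L^{d}\bigr)^n\) for \(n>\delta L\). That is the same mechanism as your \(\sqrt L\) split, so present that version as the proof.
\end{enumerate}
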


\begin{proof}
	Fix \(j\in\{1,\ldots,q\}\) and \(i\in\{1,\ldots,p\}\), and set
	\[
	c_{n;j,i}
	\coloneq
	\frac{\Gamma((n+\kappa_i)\one_r+\boldsymbol a+\one_r)}
	{\Gamma((n+\kappa_i)\one_q+\boldsymbol b+\boldsymbol e_j+\one_q)},
	\qquad
	R_{n;i}^{\boldsymbol\lambda}
	\coloneq
	\prod_{h=1}^{d}
	\frac{(\lambda_h+\kappa_i+1)_n}{\lambda_h^n}.
	\]
	Then
	\[
	\widehat W_{j,i}^{\,\boldsymbol\lambda}(z)
	=
	\sum_{n=0}^{\infty}
	c_{n;j,i}R_{n;i}^{\boldsymbol\lambda}z^n,
	\qquad
	W_{j,i}(z)
	=
	\sum_{n=0}^{\infty}c_{n;j,i}z^n.
	\]
	For every fixed \(n\in\N_0\),
	\[
	R_{n;i}^{\boldsymbol\lambda}
	=
	\prod_{h=1}^{d}
	\prod_{\ell=0}^{n-1}
	\left(
	1+\frac{\kappa_i+1+\ell}{\lambda_h}
	\right)
	\xlongrightarrow[L_{\boldsymbol\lambda}\to\infty]{}1
	\]
	Thus every coefficient of
	\(\widehat W_{j,i}^{\,\boldsymbol\lambda}\) converges to the corresponding
	coefficient of \(W_{j,i}\). To obtain uniform convergence on compact
	sets, however, the infinitely many terms with large \(n\) must also be
	controlled simultaneously.

The quotient of two consecutive coefficients of
\(\widehat W_{j,i}^{\,\boldsymbol\lambda}\) is
\[
\frac{c_{n+1;j,i}R_{n+1;i}^{\boldsymbol\lambda}}
{c_{n;j,i}R_{n;i}^{\boldsymbol\lambda}}
=
\frac{
	\prod_{\rho=1}^{r}
	(n+\kappa_i+a_\rho+1)
}{
	\prod_{\ell=1}^{q}
	(n+\kappa_i+b_\ell+\delta_{j\ell}+1)
}
\prod_{h=1}^{d}
\left(
1+\frac{n+\kappa_i+1}{\lambda_h}
\right).
\]
Since \(d=q-r\), define
\[
\mathfrak C_{j,i}
\coloneq
\sup_{n\in\N_0}
(n+1)^d
\left|
\frac{
	\prod_{\rho=1}^{r}
	(n+\kappa_i+a_\rho+1)
}{
	\prod_{\ell=1}^{q}
	(n+\kappa_i+b_\ell+\delta_{j\ell}+1)
}
\right|.
\]
The standing assumptions ensure that the factors in the denominator do
not vanish for \(n\in\N_0\). Moreover,
\[
(n+1)^d
\left|
\frac{
	\prod_{\rho=1}^{r}
	(n+\kappa_i+a_\rho+1)
}{
	\prod_{\ell=1}^{q}
	(n+\kappa_i+b_\ell+\delta_{j\ell}+1)
}
\right|
=
\frac{
	\prod_{\rho=1}^{r}
	\left|
	1+\frac{\kappa_i+a_\rho}{n+1}
	\right|
}{
	\prod_{\ell=1}^{q}
	\left|
	1+\frac{\kappa_i+b_\ell+\delta_{j\ell}}{n+1}
	\right|
}
\longrightarrow1
\]
as \(n\to\infty\). Hence \(\mathfrak C_{j,i}<\infty\), and
\[
\left|
\frac{
	\prod_{\rho=1}^{r}
	(n+\kappa_i+a_\rho+1)
}{
	\prod_{\ell=1}^{q}
	(n+\kappa_i+b_\ell+\delta_{j\ell}+1)
}
\right|
\le
\frac{\mathfrak C_{j,i}}{(n+1)^d},
\qquad n\in\N_0.
\]

Now set
\(M_i\coloneq\max\{1,|\kappa_i+1|\}\).
If \(L_{\boldsymbol\lambda}\ge1\), then, for every
\(h\in\{1,\ldots,d\}\),
\[
\left|
1+\frac{n+\kappa_i+1}{\lambda_h}
\right|
\le
1+\frac{n+|\kappa_i+1|}{L_{\boldsymbol\lambda}}
\le
M_i\left(
1+\frac{n+1}{L_{\boldsymbol\lambda}}
\right).
\]
Therefore,
\[
\prod_{h=1}^{d}
\left|
1+\frac{n+\kappa_i+1}{\lambda_h}
\right|
\le
M_i^d
\left(
1+\frac{n+1}{L_{\boldsymbol\lambda}}
\right)^d.
\]
Combining the last two estimates yields
\begin{equation}
	\label{eq:uniform-confluent-ratio-bound}
	\left|
	\frac{c_{n+1;j,i}R_{n+1;i}^{\boldsymbol\lambda}}
	{c_{n;j,i}R_{n;i}^{\boldsymbol\lambda}}
	\right|
	\le
	\mathfrak C_{j,i}M_i^d
	\left(
	\frac{1}{n+1}
	+
	\frac{1}{L_{\boldsymbol\lambda}}
	\right)^d,
	\qquad
	n\in\N_0,\quad L_{\boldsymbol\lambda}\ge1.
\end{equation}
	Set	\(C\coloneq \mathfrak C_{j,i}M_i^d\).
	
	Let \(R>0\) and \(\varepsilon>0\). It remains to prove uniform convergence on the
	closed disk \(\{z\in\mathbb C:|z|\le R\}\). Choose \(N\) sufficiently
	large that
	\[
	RC\left(\frac{1}{n+1}\right)^d
	\le\frac14,
	\qquad n\ge N,
\qquad
	2|c_{N;j,i}|R^N<\frac{\varepsilon}{6}.
	\]
The second condition is possible because, for the fixed value of \(R\),
the convergence of the series
\[
W_{j,i}(R)=\sum_{n=0}^{\infty}c_{n;j,i}R^n
\]
implies
\(|c_{n;j,i}|R^n\longrightarrow0\)
as \(n\to\infty\).
Hence \(N\) can be chosen sufficiently large so that
\(2|c_{N;j,i}|R^N<\frac{\varepsilon}{6}\).
	
	Next choose \(L_0\) sufficiently large that, whenever
	\(L_{\boldsymbol\lambda}\ge L_0\),
	\[
	RC\left(
	\frac{1}{n+1}
	+
	\frac{1}{L_{\boldsymbol\lambda}}
	\right)^d
	\le\frac12,
	\qquad n\ge N,
\qquad
	2|c_{N;j,i}R_{N;i}^{\boldsymbol\lambda}|R^N
	<\frac{\varepsilon}{3}.
	\]
The latter follows from
\(R_{N;i}^{\boldsymbol\lambda}
\longrightarrow1\)
  as 
\(L_{\boldsymbol\lambda}\longrightarrow\infty\),
since \(N\) is now fixed.
	
	Call
\(	\sum_{n=N}^{\infty}
	c_{n;j,i}R_{n;i}^{\boldsymbol\lambda}z^n\)
	the tail, or remainder, of the series for
	\(\widehat W_{j,i}^{\,\boldsymbol\lambda}\) after the first \(N\) terms.
	For \(|z|\le R\), estimate
	\eqref{eq:uniform-confluent-ratio-bound} gives
	\[
	\left|
	\frac{
		c_{n+1;j,i}R_{n+1;i}^{\boldsymbol\lambda}z^{n+1}
	}{
		c_{n;j,i}R_{n;i}^{\boldsymbol\lambda}z^n
	}
	\right|
	\le\frac12,
	\qquad n\ge N.
	\]
	Thus every term of the tail has modulus at most one half of the preceding
	one. The entire tail is therefore bounded by the geometric series
	generated by its first term:
	\[
	\sup_{|z|\le R}
	\left|
	\sum_{n=N}^{\infty}
	c_{n;j,i}R_{n;i}^{\boldsymbol\lambda}z^n
	\right|
	\le
	|c_{N;j,i}R_{N;i}^{\boldsymbol\lambda}|R^N
	\sum_{m=0}^{\infty}2^{-m}
	=
	2|c_{N;j,i}R_{N;i}^{\boldsymbol\lambda}|R^N
	<
	\frac{\varepsilon}{3}.
	\]
	The same argument, now using
	\[
	\left|\frac{c_{n+1;j,i}}{c_{n;j,i}}\right|
	\le
	\frac{C}{(n+1)^d},
	\]
	shows that the tail of the limiting series satisfies
	\[
	\sup_{|z|\le R}
	\left|
	\sum_{n=N}^{\infty}c_{n;j,i}z^n
	\right|
	\le
	2|c_{N;j,i}|R^N
	<
	\frac{\varepsilon}{6}.
	\]
	
	It remains only to compare the first \(N\) terms. Since
	\(R_{n;i}^{\boldsymbol\lambda}\to1\) for each
	\(n\in\{0,\ldots,N-1\}\), and this is a finite set, there exists
	\(L_1\ge L_0\) such that
	\[
	\sup_{|z|\le R}
	\left|
	\sum_{n=0}^{N-1}
	c_{n;j,i}
	\bigl(R_{n;i}^{\boldsymbol\lambda}-1\bigr)z^n
	\right|
	<
	\frac{\varepsilon}{3}
	\]
	whenever \(L_{\boldsymbol\lambda}\ge L_1\). Splitting both series into their
	first \(N\) terms and their respective tails therefore gives
	\[
	\sup_{|z|\le R}
	\left|
	\widehat W_{j,i}^{\,\boldsymbol\lambda}(z)-W_{j,i}(z)
	\right|
	<\varepsilon
	\]
	for all sufficiently large \(L_{\boldsymbol\lambda}\). Since \(R>0\) is
	arbitrary, this proves locally uniform convergence on \(\mathbb C\).
	As the matrix has only finitely many entries, it also proves
	\eqref{eq:direct-compact-open-convergence} for the full matrix.
	
	Finally, the reciprocal relations
	\[
	\widehat{\CB}^{\,\boldsymbol\lambda}(z)
	=
	\frac1z
	\widehat{\WB}^{\,\boldsymbol\lambda}\left(\frac1z\right),
	\qquad
	\CB(z)
	=
	\frac1z\WB\left(\frac1z\right)
	\]
	transfer this convergence to the reciprocal representatives. Indeed, let
	\(K\subset\mathbb C^*\) be compact and set
	\(K^{-1}\coloneq\{z^{-1}:z\in K\}\).
	Since \(K\) is separated from the origin, \(K^{-1}\) is also compact.
	Moreover, for sufficiently large \(\boldsymbol\lambda\),
\(	\sup_{z\in K}\frac1{|z|}
	<
	|\Lambda_{\boldsymbol\lambda}|\),
	so that \(\widehat{\WB}^{\,\boldsymbol\lambda}(z^{-1})\) is represented by its
	convergent power series for every \(z\in K\). Therefore,
	\[
	\sup_{z\in K}
	\left\|
	\widehat{\CB}^{\,\boldsymbol\lambda}(z)-\CB(z)
	\right\|
	\le
	\left(\sup_{z\in K}\frac1{|z|}\right)
	\sup_{\zeta\in K^{-1}}
	\left\|
	\widehat{\WB}^{\,\boldsymbol\lambda}(\zeta)-\WB(\zeta)
	\right\|
	\xlongrightarrow[L_{\boldsymbol\lambda}\to\infty]{}
	0.
	\]
	This proves \eqref{eq:reciprocal-compact-open-convergence}. Taking
	\(K=\Torus\) gives
	\eqref{eq:reciprocal-uniform-circle-convergence}.
\end{proof}

The geometry of the two representatives explains why an entire matrix
appears in the limit.  The possible singular set of
\(\widehat{\CB}^{\,\boldsymbol\lambda}\) is contained in the scaled segment
\(\Delta_{\boldsymbol\lambda}=\Lambda_{\boldsymbol\lambda}^{-1}[0,1]\), which collapses
to the origin.  Under the reciprocal change \(z\mapsto z^{-1}\), the
corresponding cut becomes the ray \(\Lambda_{\boldsymbol\lambda}[1,\infty)\); for
positive \(\Lambda_{\boldsymbol\lambda}\), this is
\([\Lambda_{\boldsymbol\lambda},\infty)\).  Thus the possible finite singularities
of \(\widehat{\WB}^{\,\boldsymbol\lambda}\) escape to infinity, and the locally
uniform limit is the entire matrix \(\WB\).

For every sufficiently large \(\boldsymbol\lambda\), the segment
\(\Delta_{\boldsymbol\lambda}\) is enclosed by \(\Torus\).  Proposition
\ref{prop:interval-contour-Markov-equivalence} gives
\begin{equation}
\label{eq:scaled-Jacobi-circle-pairing}
\int_{\Delta_{\boldsymbol\lambda}}
\mathbf B(y)\,\mathrm d\widehat{\mathcal J}^{\boldsymbol\lambda}(y)\,\mathbf A(y)
=
\oint_{\Torus}
\mathbf B(z)\widehat{\CB}^{\,\boldsymbol\lambda}(z)\mathbf A(z)
\frac{\mathrm dz}{2\pi\mathrm i}.
\end{equation}
By \eqref{eq:reciprocal-uniform-circle-convergence}, for every pair of fixed
polynomial vectors,
\begin{equation}
\label{eq:pairing-convergence-to-Bessel}
\oint_{\Torus}
\mathbf B(z)\widehat{\CB}^{\,\boldsymbol\lambda}(z)\mathbf A(z)
\frac{\mathrm dz}{2\pi\mathrm i}
\longrightarrow
\oint_{\Torus}
\mathbf B(z)\CB(z)\mathbf A(z)
\frac{\mathrm dz}{2\pi\mathrm i}.
\end{equation}
The right-hand side is precisely the Bessel-like bilinear form used in
\eqref{eq:final-moment-entry}.  Thus it is the complex contour orthogonality,
rather than a limiting interval measure, that survives the confluence.

For completeness, the polynomial vectors associated with the scaled Jacobi
problem are related to the original Jacobi-like polynomial vectors by
\begin{equation}
	\label{eq:scaled-Jacobi-polynomial-vectors}
	\widehat A_{\boldsymbol n,\boldsymbol m}^{\mathrm J,\boldsymbol\lambda,(i)}(z)
	=
	\left(
	\prod_{h=1}^{d}
	\Gamma(\lambda_h+\kappa_i+1)
	\right)
	A_{\boldsymbol n,\boldsymbol m}^{\,\mathrm J,\boldsymbol\lambda,(i)}
	\bigl(\Lambda_{\boldsymbol\lambda}z\bigr),
	\qquad
	\widehat B_{\boldsymbol n,\boldsymbol m}^{\,\mathrm J,\boldsymbol\lambda,(j)}(z)
	=
	B_{\boldsymbol n,\boldsymbol m}^{\,\mathrm J,\boldsymbol\lambda,(j)}
	\bigl(\Lambda_{\boldsymbol\lambda}z\bigr).
\end{equation}
Indeed, substitution of \eqref{eq:scaled-Jacobi-polynomial-vectors} into the
scaled \(A\)- and \(B\)-orthogonality relations gives, respectively, the
original Jacobi equations multiplied by nonzero scalar factors.

\begin{corollary}[Convergence of the polynomial vectors]
	\label{cor:Jacobi-Bessel-polynomial-confluence}
	Fix an \(A\)- or \(B\)-balanced index pair, and assume that the
	corresponding Bessel-like problem is weakly normal. Choose a coefficient
	of the Bessel-like polynomial vector which is nonzero and normalize this
	coefficient to \(1\). Then, for all sufficiently large
	\(L_{\boldsymbol\lambda}\), the same normalization uniquely determines the
	corresponding scaled Jacobi polynomial vector in
	\eqref{eq:scaled-Jacobi-polynomial-vectors}. Under this normalization,
	each of its coefficients converges to the corresponding coefficient of
	the Bessel-like polynomial vector as
	\(L_{\boldsymbol\lambda}\to\infty\).
\end{corollary}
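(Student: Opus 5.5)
The plan is to reduce everything to a finite-dimensional linear algebra statement about the coefficient matrices of the orthogonality conditions, and then use continuity of these matrices in \(\boldsymbol\lambda\).

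Consider a \(B\)-balanced pair \((\boldsymbol n,\boldsymbol m)\); the \(A\)-case is identical after transposing the roles of rows and columns. Write the unknown \(\mathbf B\) through its coefficient vector \(\beta\in\mathbb C^{|\boldsymbol m|}\), with one block for each \(B^{(j)}\in\Poly_{m_j-1}\). The conditions \eqref{eq:general-B-orth} then form a homogeneous system \(M\beta=0\), where \(M\) has size \(|\boldsymbol n|\times|\boldsymbol m|=(|\boldsymbol m|-1)\times|\boldsymbol m|\). Its entries are bimoments \(\oint z^{k+\ell}C_{j,i}\frac{\dz}{2\pi\mathrm i}\). By \eqref{eq:final-reciprocal-moments} and the moment formula, these equal \(c_{k+\ell;j,i}\), the Taylor coefficients of \(\WB\).

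For the scaled Jacobi problem, Proposition~\ref{prop:interval-contour-Markov-equivalence} gives the same description with matrix \(M^{\boldsymbol\lambda}\). Its entries are the rescaled moments \(\widehat\mu^{\,\mathrm J,\boldsymbol\lambda}_{j,i}(k+\ell)=c_{k+\ell;j,i}R^{\boldsymbol\lambda}_{k+\ell;i}\) of \eqref{eq:exact-rescaled-Jacobi-moments}. By \eqref{eq:rescaled-Jacobi-moment-convergence}, \(M^{\boldsymbol\lambda}\to M\) entrywise, and only finitely many entries are involved. Alternatively, the convergence follows from \eqref{eq:pairing-convergence-to-Bessel} applied to monomial vectors. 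The scaled polynomial vectors are exactly the kernel vectors of \(M^{\boldsymbol\lambda}\), by \eqref{eq:scaled-Jacobi-polynomial-vectors} and the remark following it.

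Weak normality of the Bessel-like problem means \(\dim\ker M=1\), that is, \(\operatorname{rank}M=|\boldsymbol m|-1\). Suppose the chosen coefficient is coordinate \(c\) of the kernel vector \(\beta^\ast\), and \(\beta^\ast_c\ne0\). Delete column \(c\) to obtain the square matrix \(M_{\hat c}\).

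The key step is to show that \(\det M_{\hat c}\ne0\). If instead \(M_{\hat c}\gamma=0\) for some \(\gamma\ne0\), then \(\gamma\) extended by zero in slot \(c\) would be a kernel vector independent of \(\beta^\ast\), because \(\beta^\ast_c\ne0\). This contradicts \(\dim\ker M=1\).

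Continuity of the determinant then gives \(\det M^{\boldsymbol\lambda}_{\hat c}\ne0\) for large \(L_{\boldsymbol\lambda}\). Consequently the normalized system \(M^{\boldsymbol\lambda}\beta=0\), \(\beta_c=1\), is equivalent to \(M^{\boldsymbol\lambda}_{\hat c}\beta_{\hat c}=-M^{\boldsymbol\lambda}_{c}\), where \(M^{\boldsymbol\lambda}_{c}\) denotes column \(c\). This system has a unique solution. The scaled Jacobi kernel is therefore also one-dimensional, with nonzero \(c\)-coordinate, which gives the uniqueness assertion.

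By Cramer's rule, each coordinate is a ratio of determinants whose entries converge. The denominator has the nonzero limit \(\det M_{\hat c}\). Hence each coordinate converges to the corresponding coordinate of the solution of the limiting system, which is \(\beta^\ast/\beta^\ast_c\).

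The main point requiring care is not analytic but bookkeeping. One must check that the scaled Jacobi vectors in \eqref{eq:scaled-Jacobi-polynomial-vectors} are governed by exactly the same index spaces and the same matrix structure, with the factor \(\mathsf D_{\boldsymbol\lambda}^{-1}\) already absorbed into \(\widehat{\mathcal J}\). One must also check that the Bessel-like bilinear form coincides with the finite moment matrix \(M\) used above. Once these identifications are in place, the argument is elementary finite-dimensional continuity.
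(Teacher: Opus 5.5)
Your proposal is correct and follows essentially the same route as the paper's proof. Both arguments write the finitely many orthogonality conditions as a linear system whose coefficients are moments converging by \eqref{eq:rescaled-Jacobi-moment-convergence}, normalize a nonzero coefficient to obtain a square system, pass nonsingularity to large \(L_{\boldsymbol\lambda}\) by continuity of the determinant, and conclude with Cramer's rule; your extension-by-zero argument, showing that deleting the normalized column leaves a nonsingular matrix, makes explicit a step the paper only asserts.
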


\begin{proof}
	Write the components of the scaled Jacobi and Bessel-like polynomial
	vectors in the monomial basis. Since the index pair is fixed, only
	finitely many polynomial coefficients are involved. Substitution of these
	expansions into the defining orthogonality conditions gives finitely many
	linear equations for those coefficients. The coefficients of these
	equations are precisely moments of the corresponding matrix measures.
	
	By \eqref{eq:exact-rescaled-Jacobi-moments}, every scaled Jacobi moment
	appearing in these equations converges to the corresponding Bessel-like
	moment as \(L_{\boldsymbol\lambda}\to\infty\). Hence all coefficients of the
	finite Jacobi linear equations converge to those of the limiting
	Bessel-like equations.
	
	Weak normality of the Bessel-like problem means that its polynomial vector
	is unique up to multiplication by a nonzero scalar. After fixing a
	coefficient which is nonzero in the limiting vector and normalizing it to
	\(1\), the remaining polynomial coefficients are therefore the unique
	solution of a nonsingular square linear system.
	
	The determinant of this limiting system is nonzero. Since the entries of
	the corresponding scaled Jacobi system converge to the limiting entries,
	its determinant also converges to this nonzero value and is therefore
	nonzero for all sufficiently large \(L_{\boldsymbol\lambda}\). Thus the same
	normalization uniquely determines the scaled Jacobi polynomial vector.
	Finally, Cramer's rule shows that each of its coefficients converges to
	the corresponding coefficient of the normalized Bessel-like polynomial
	vector.
\end{proof}

\subsubsection{Disappearance of the interval measure and maximal rank}

The preceding result must not be interpreted as weak convergence of the
rank-one interval measures.  Suppose that, along a path on which the measures
in \eqref{eq:scaled-Jacobi-measure-functional-definition} are finite, they
converged weakly to a finite matrix measure \(\mathrm d\nu\).  Since
\(\Delta_{\boldsymbol\lambda}=\Lambda_{\boldsymbol\lambda}^{-1}[0,1]\to\{0\}\), the
limiting measure would be supported at the origin and hence
\(\int y^n\,\mathrm d\nu(y)=0\) for every \(n\ge1\).  On the other hand,
\eqref{eq:rescaled-Jacobi-moment-convergence} gives
\begin{equation*}
\int_{\Delta_{\boldsymbol\lambda}}
 y^n\,\mathrm d\widehat{\mathcal J}_{j,i}^{\boldsymbol\lambda}(y)
\longrightarrow
\frac{\Gamma((n+\kappa_i)\one_r+\boldsymbol a+\one_r)}
{\Gamma((n+\kappa_i)\one_q+\boldsymbol b+\boldsymbol e_j+\one_q)},
\end{equation*}
which is nonzero on the admissible regular parameter domain.  Hence no such finite
measure limit exists.

Accordingly, the confluence has the following structure:
\begingroup
\small
\[
\begin{array}{ccc}
\text{rank-one scaled Jacobi matrix measure on }\Delta_{\boldsymbol\lambda}
&
\overset{\textnormal{Cauchy}}{\longleftrightarrow}
&
\text{contour weight }
\widehat{\CB}^{\,\boldsymbol\lambda}(z)\dfrac{\mathrm dz}{2\pi\mathrm i}
\\[2mm]
\text{no limit as a finite matrix measure}
&&
\Big\downarrow\textnormal{ locally uniform convergence}
\\[2mm]
&&
\text{Bessel contour weight }
\CB(z)\dfrac{\mathrm dz}{2\pi\mathrm i}.
\end{array}
\]
\endgroup
The horizontal arrow is an exact equality of polynomial bilinear forms, not
an asymptotic correspondence.  The vertical arrow is the analytic confluence
established in Theorem~\ref{thm:Jacobi-Bessel-Markov-confluence}.

The maximal-rank assertion of Proposition~\ref{prop:final-rank-circle} is
therefore a structural part of the limiting orthogonality.  Before
confluence, the interval density has pointwise rank one.  That interval
realization has no limiting matrix measure.  What survives is the unit-circle
contour representative \(\CB\), or equivalently its entire reciprocal
representative \(\WB\), and these matrices have generic maximal rank
\(\min\{p,q\}\).  Thus the limiting orthogonality is genuinely matrix-valued
on the circle and cannot be represented there by a single-valued pointwise
factorization \(C_{j,i}(z)=v_j(z)u_i(z)\) or
\(W_{j,i}(z)=\widetilde v_j(z)\widetilde u_i(z)\).  The increase of rank is
not produced by the reciprocal change between \(\WB\) and \(\CB\), which
contain exactly the same coefficient sequence.  The rank-one interval measure
has no finite measure limit, whereas its equivalent complex-contour
orthogonality converges to a nondegenerate matrix of maximal rank.

When \(q=1\), necessarily \(r=0\) and \(d=1\).  The construction above
reduces to the Markov--Stieltjes form of the
Jacobi--Pi\~neiro-to-multiple-Bessel confluence described by Aptekarev,
Branquinho and Van Assche~\cite{ABV2003}.  The present \(q\times p\)
construction shows that, beyond this one-row boundary, the limiting complex
orthogonality is represented by a matrix weight of maximal rank.

The Jacobi confluence allows some properties of each fixed Bessel-like
mixed orthogonality problem to be obtained by passage to the limit. It does
not, however, provide all the results developed below. The confluence will
therefore be used as an independent interpretation of the Bessel-like
system, while deriving the explicit polynomial vectors, normality,
recurrences, and factorizations directly from the hypergeometric Bessel-like
matrix of weights and its reciprocal-Gamma moments.

\section{Hypergeometric representation}
\label{sec:final-hypergeometric-bessel-forms}

The preceding section defines the Bessel-like matrix of weights and the
corresponding moment functionals.  The associated mixed orthogonality
problems are solved for the balanced index pairs needed below.  The
\(A\)-components are terminating generalized hypergeometric polynomials,
whereas the \(B\)-components are finite combinations of terminating
hypergeometric blocks.

\subsection{Explicit polynomial vectors and mixed orthogonality}
\label{subsec:explicit-polynomial-vectors}
	
	For \(\boldsymbol n\in\N_0^p\) and \(\boldsymbol m\in\N_0^q\), write
	\[
	\mathbf A_{\boldsymbol n,\boldsymbol m}(z)
	=
	\begin{bNiceMatrix}
		A_{\boldsymbol n,\boldsymbol m}^{(1)}(z)\\ \Vdots\\ A_{\boldsymbol n,\boldsymbol m}^{(p)}(z)
	\end{bNiceMatrix},
	\qquad
	\mathbf B_{\boldsymbol n,\boldsymbol m}(z)
	=
	\begin{bNiceMatrix}
		B_{\boldsymbol n,\boldsymbol m}^{(1)}(z)&\Cdots&B_{\boldsymbol n,\boldsymbol m}^{(q)}(z)
	\end{bNiceMatrix}.
	\]
	The pairing used in this section is the general pairing
	\eqref{eq:general-pairing} with the weight matrix \(W\) replaced by the
	reciprocal representative \(\CB\).  Denote the bimoment array of this
	pairing by \(\MB\).  Its entries are
	\begin{equation}
		\label{eq:final-moment-entry}
		\MB_{(u,j),(v,i)}
		=
		\oint_{\Torus}
		z^{u+v}C_{j,i}(z)\frac{\dz}{2\pi\mathrm i}
		=
		\frac{
			\Gamma((u+v+1+\kappa_i)\one_r+\boldsymbol a)
		}{
			\Gamma((u+v+1+\kappa_i)\one_q+\boldsymbol b+\boldsymbol e_j)
		},
		\qquad u,v\in\N_0.
	\end{equation}
	The right vector polynomial has component constraints
\(A_{\boldsymbol n,\boldsymbol m}^{(i)}\in\Poly_{n_i-1},\) \(i\in\{1,\ldots,p\},\)
	and satisfies
	\begin{equation}
		\label{eq:final-A-orth}
		\sum_{i=1}^{p}
		\oint_{\Torus}
		z^{\ell}C_{j,i}(z)
		A_{\boldsymbol n,\boldsymbol m}^{(i)}(z)
		\frac{\dz}{2\pi\mathrm i}=0,
		\qquad
		\ell\in\{0,\ldots,m_j-1\},
		\quad j\in\{1,\ldots,q\}.
	\end{equation}
	The left vector polynomial has component constraints
\(B_{\boldsymbol n,\boldsymbol m}^{(j)}\in\Poly_{m_j-1},\) \(j\in\{1,\ldots,q\},\)
	and satisfies
	\begin{equation}
		\label{eq:final-B-orth}
		\sum_{j=1}^{q}
		\oint_{\Torus}
		z^{\ell}B_{\boldsymbol n,\boldsymbol m}^{(j)}(z)
		C_{j,i}(z)
		\frac{\dz}{2\pi\mathrm i}=0,
		\qquad
		\ell\in\{0,\ldots,n_i-1\},
		\quad i\in\{1,\ldots,p\}.
	\end{equation}

For a vector \(\boldsymbol c=(c_1,\ldots,c_d)\), write \(\boldsymbol c^{\,*i}\) for
the vector obtained by removing its \(i\)-th component.

\medskip
\noindent\textbf{Definition (Near-diagonal row multi-indices).}
A multi-index \(\boldsymbol m\in\N_0^q\) is said to be \emph{near the diagonal} if
\begin{equation}
	\label{eq:final-near-diagonal-row-index}
	\left|m_j-m_h\right|\le1,
	\qquad j,h\in\{1,\ldots,q\}.
\end{equation}
For such a multi-index, the following elementary extrema will be used:
\begin{equation}
	\label{eq:final-row-active-indices}
	j_{\min}(\boldsymbol m)
	\coloneq
	\min\{j\in\{1,\ldots,q\}:m_j=\min_h m_h\},
	\qquad
	m_{\min}(\boldsymbol m)
	\coloneq
	m_{j_{\min}(\boldsymbol m)},
\end{equation}
\begin{equation}
	\label{eq:final-row-extreme-indices}
	j_{\max}(\boldsymbol m)
	\coloneq
	\max\{j\in\{1,\ldots,q\}:m_j=\max_h m_h\},
	\qquad
	m_{\max}(\boldsymbol m)
	\coloneq
	m_{j_{\max}(\boldsymbol m)}.
\end{equation}
Thus \(m_{\min}(\boldsymbol m)=\min_h m_h\) and
\(m_{\max}(\boldsymbol m)=\max_h m_h\).  On the step-line these extrema coincide
with the rows selected by the next increase and by the monic component,
respectively.  The near-diagonal formulas below use only
\(j_{\min},m_{\min},j_{\max},m_{\max}\).
The formula for the \(B\)-polynomial vector below uses the finite Bessel reconstruction of simple
fractions in the Gamma-quotient moment basis. The coefficients
\(\pi_{H,K}\), defined later in \eqref{eq:final-B-pi}, are partial-fraction data
combined with terminating hypergeometric polynomial blocks. The contribution
corresponding to \(K=0\) collapses to an explicit constant term. This is the
Bessel-type analogue of the polynomial reconstruction used by Wolfs in
the one-column case: the same partial-fraction coefficients are combined
with hypergeometric polynomial blocks instead of monomials.

The explicit near-diagonal representatives are normalized as follows.
On the \(A\)-side, normalization is by the first moment beyond the imposed
orthogonality in the row \(j_{\min}\); on the \(B\)-side, it is fixed by
requiring the component \(B^{(j_{\max})}\) to be monic.  After specialization to the
balanced step-line, this gives the monic \(B\)-vector supplied by the lower
unitriangular Gauss--Borel factor and the dual \(A\)-vector with the diagonal
normalization absorbed on the \(A\)-side.  For general near-diagonal pairs this
is the corresponding extension of the same scalar choice.

\begin{theorem}[Explicit \(A\)- and \(B\)-polynomial vectors]
	\label{thm:final-explicit}
	Let \(\boldsymbol n\in\N_0^p\) and \(\boldsymbol m\in\N_0^q\). Assume that the
	Bessel-like parameters are regular in the sense of
	Definition~\ref{def:final-parameter-regularity} and simultaneously
	admissible for the formulas displayed below.
	
	\begin{enumerate}[label=\textnormal{(\roman*)}]
		
		\item
		Assume that \((\boldsymbol n,\boldsymbol m)\) is \(A\)-balanced in the sense of
		Definition~\ref{def:balanced-normality}, and that \(\boldsymbol m\)
		is near the diagonal in the sense of
		\eqref{eq:final-near-diagonal-row-index}. For every
		\(i\in\{1,\ldots,p\}\) with \(n_i\ge1\), set
		\begin{equation}
			\label{eq:final-A-prefactor}
			C_{\boldsymbol n,\boldsymbol m;i}
			\coloneq
			(-1)^{|\boldsymbol n|}
			\frac{\Gamma(\kappa_i\one_q+\boldsymbol b+\one_q)}
			{\Gamma(\kappa_i\one_r+\boldsymbol a+\one_r)}
			\frac{
				\prod_{j=1}^{q}(\kappa_i+b_j+1)_{m_j}
			}{
				(-1)^{n_i-1}(n_i-1)!
				\prod_{\substack{\ell=1\\\ell\ne i}}^{p}
				(\kappa_i-\kappa_\ell-n_\ell+1)_{n_\ell}
			}.
		\end{equation}
		For every nonzero scalar \(\nu\), define
		\begingroup
		\small
		\begin{equation}
			\label{eq:final-A-components}
			\begin{aligned}
			A_{\boldsymbol n,\boldsymbol m}^{(i)}(z)
			&=
			\nu\,
			C_{\boldsymbol n,\boldsymbol m;i}\\
			&\quad\times
			\pFq{q+p}{r+p-1}
			{
				-n_i+1,\ 
				\kappa_i\one_q+\boldsymbol b+\boldsymbol m+\one_q,\
				\substack{
				\kappa_i\one_{p-1}-\boldsymbol\kappa^{\,*i}\\
				{}-\boldsymbol n^{\,*i}+\one_{p-1}}
			}
			{
				\kappa_i\one_r+\boldsymbol a+\one_r,\
				\substack{
				\kappa_i\one_{p-1}-\boldsymbol\kappa^{\,*i}\\
				{}+\one_{p-1}}
			}
			{z},
			\end{aligned}
		\end{equation}
		\endgroup
		for \(i\in\{1,\ldots,p\}\) with \(n_i\ge1\). If \(n_i=0\), set
		\(A_{\boldsymbol n,\boldsymbol m}^{(i)}\equiv0\). For \(n_i\ge1\), the
		hypergeometric series terminates because of the numerator parameter
		\(-n_i+1\), and hence
		\begin{equation}
			\label{eq:final-A-degree}
			\deg A_{\boldsymbol n,\boldsymbol m}^{(i)}\le n_i-1.
		\end{equation}
		Then the polynomials \(A_{\boldsymbol n,\boldsymbol m}^{(i)}\) satisfy the
		\(A\)-orthogonality conditions \eqref{eq:final-A-orth}, namely
		\begin{equation}
			\sum_{i=1}^{p}
			\oint_{\Torus}
			z^\ell
			C_{j,i}(z)
			A_{\boldsymbol n,\boldsymbol m}^{(i)}(z)
			\frac{\dz}{2\pi\mathrm i}
			=0,
			\qquad
			j\in\{1,\ldots,q\},
			\qquad
			\ell\in\{0,\ldots,m_j-1\}.
		\end{equation}
		
		To fix the scalar normalization, write
		\begin{equation}
			\label{eq:final-A-min-data}
			j_{\min}\coloneq j_{\min}(\boldsymbol m),
			\qquad
			m_{\min}\coloneq m_{\min}(\boldsymbol m).
		\end{equation}
		If \(\nu\) is chosen as
		\begin{equation}
			\label{eq:final-A-nu}
			\nu_{\boldsymbol n,\boldsymbol m}^{A}
			\coloneq
			-
			\frac{
				\prod_{i=1}^{p}
				(\kappa_i+b_{j_{\min}}+m_{\min}+1)_{n_i}
			}{
				(\boldsymbol a-b_{j_{\min}}\one_r-m_{\min}\one_r)_{m_{\min}}
				\prod_{\substack{h=1\\m_h=m_{\min}+1}}^{q}
				(b_h-b_{j_{\min}})
			},
		\end{equation}
		then the normalized vector satisfies
		\begin{equation}
			\label{eq:final-A-normalizing-moment}
			\sum_{i=1}^{p}
			\oint_{\Torus}
			z^{m_{\min}}
			C_{j_{\min},i}(z)
			A_{\boldsymbol n,\boldsymbol m}^{(i)}(z)
			\frac{\dz}{2\pi\mathrm i}
			=1.
		\end{equation}
		
		\item
		Assume that \((\boldsymbol n,\boldsymbol m)\) is \(B\)-balanced in the sense of
		Definition~\ref{def:balanced-normality}, and that \(\boldsymbol m\)
		is near the diagonal in the sense of
		\eqref{eq:final-near-diagonal-row-index}. For
		\(H\in\{1,\ldots,q\}\) and \(K\in\{0,\ldots,m_H-1\}\), set
		\begin{equation}
			\label{eq:final-B-pi}
			\pi_{H,K}
			\coloneq
			-
			\frac{
				\prod_{i=1}^{p}(\kappa_i+b_H+K+1)_{n_i}
			}{
				(-1)^K K!(m_H-K-1)!
				\prod_{\substack{I=1\\I\ne H}}^{q}
				(b_I-b_H-K)_{m_I}
			}.
		\end{equation}
		For each \(j\in\{1,\ldots,q\}\) with \(m_j\ge1\), the specialization
		\(H=j\), \(K=0\) gives
		\begin{equation}
			\label{eq:final-B-pi-zero}
			\pi_{j,0}
			=
			-
			\frac{
				\prod_{i=1}^{p}(\kappa_i+b_j+1)_{n_i}
			}{
				(m_j-1)!
				\prod_{\substack{I=1\\I\ne j}}^{q}
				(b_I-b_j)_{m_I}
			}.
		\end{equation}
		When \(m_j=0\), set \(\pi_{j,0}\coloneq0\).
		
		For every nonzero scalar \(\nu\) and every
		\(j\in\{1,\ldots,q\}\) with \(m_j\ge1\), define
		\begin{multline}
			\label{eq:final-B-components}
			B_{\boldsymbol n,\boldsymbol m}^{(j)}(z)
			=
			\nu
			\Biggl[
			\pi_{j,0}
			+
			\frac{(\boldsymbol a-b_j\one_r)_1}
			{(\boldsymbol b^{\,*j}-b_j\one_{q-1})_1}
			\sum_{H=1}^{q}
			\sum_{K=1}^{m_H-1}
			\pi_{H,K}
			\frac{
				(\boldsymbol b^{\,*j}-(b_H+K)\one_{q-1})_1
			}{
				(\boldsymbol a-(b_H+K)\one_r)_1
			}
			\\
			\times
			\pFq{q+1}{r}
			{
				1,\ \boldsymbol b-(b_H+K)\one_q+\one_q-\boldsymbol e_j
			}
			{
				\boldsymbol a-(b_H+K)\one_r+\one_r
			}
			{z}
			\Biggr].
		\end{multline}
		If \(m_j=0\), set \(B_{\boldsymbol n,\boldsymbol m}^{(j)}\equiv0\). For every
		\(j\in\{1,\ldots,q\}\) with \(m_j\ge1\), one has
		\begin{equation}
			\label{eq:final-B-degree}
			\deg B_{\boldsymbol n,\boldsymbol m}^{(j)}\le m_j-1.
		\end{equation}
		The components \(B_{\boldsymbol n,\boldsymbol m}^{(j)}\) satisfy the
		\(B\)-orthogonality conditions \eqref{eq:final-B-orth}, namely
		\begin{equation}
			\sum_{j=1}^{q}
			\oint_{\Torus}
			z^\ell
			B_{\boldsymbol n,\boldsymbol m}^{(j)}(z)
			C_{j,i}(z)
			\frac{\dz}{2\pi\mathrm i}
			=0,
			\qquad
			i\in\{1,\ldots,p\},
			\qquad
			\ell\in\{0,\ldots,n_i-1\}.
		\end{equation}
		
		To fix the monic normalization, write
		\begin{equation}
			\label{eq:final-B-max-data}
			j_{\max}\coloneq j_{\max}(\boldsymbol m),
			\qquad
			m_{\max}\coloneq m_{\max}(\boldsymbol m).
		\end{equation}
		If \(\nu\) is chosen as
		\begin{equation}
			\label{eq:final-B-nu}
			\nu_{\boldsymbol n,\boldsymbol m}^{B}
			\coloneq
			-
			\frac{
				(\boldsymbol a-b_{j_{\max}}\one_r-(m_{\max}-1)\one_r)_{m_{\max}}
				\prod_{\substack{h=1\\h\ne j_{\max},\ m_h=m_{\max}}}^{q}
				(b_h-b_{j_{\max}})
			}{
				(\boldsymbol a-b_{j_{\max}}\one_r)_1
				\prod_{i=1}^{p}
				(\kappa_i+b_{j_{\max}}+m_{\max})_{n_i}
			},
		\end{equation}
		then the component \(B_{\boldsymbol n,\boldsymbol m}^{(j_{\max})}\) is monic.
		After step-line specialization, this is the monic \(B\)-vector
		supplied by the lower unitriangular Gauss--Borel factor.
		
	\end{enumerate}
\end{theorem}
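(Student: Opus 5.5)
The plan is to reduce both orthogonality statements to finite identities for rational functions of a single variable, using the moment formula \eqref{eq:final-moment-entry}.

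\textbf{Part (i).} Expand \(A^{(i)}_{\boldsymbol n,\boldsymbol m}(z)=\sum_k \alpha_{i,k}z^k\) from \eqref{eq:final-A-components}. The \((j,\ell)\) pairing is then
\[
\sum_{i,k}\alpha_{i,k}\,\frac{\Gamma((\ell+k+1+\kappa_i)\one_r+\boldsymbol a)}{\Gamma((\ell+k+1+\kappa_i)\one_q+\boldsymbol b+\boldsymbol e_j)} .
\]
The Pochhammer factors in the coefficients of the \({}_{q+p}F_{r+p-1}\), together with the prefactor \(C_{\boldsymbol n,\boldsymbol m;i}\), are chosen so that, writing \(s=k+\kappa_i\), each term \(\alpha_{i,k}\) times the moment equals \(\operatorname{Res}_{t=s}\) of one meromorphic function of \(t\). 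The candidate is
\[
F_{j,\ell}(t)=\frac{\Gamma((t+\ell+1)\one_r+\boldsymbol a)\,\prod_h (t+b_h+1)_{m_h}}{\Gamma((t+\ell+1)\one_q+\boldsymbol b+\boldsymbol e_j)\,\prod_{i=1}^p\prod_{u=0}^{n_i-1}(t-\kappa_i-u)} .
\]
The factors \((\kappa_i-\kappa_\ell-n_\ell+1)_{n_\ell}\) and \((-1)^{n_i-1}(n_i-1)!\) are exactly the residue data of \(1/\prod(t-\kappa_i-u)\) at \(t=\kappa_i+k\). This is where regularity (\(\kappa_i-\kappa_h\notin\mathbb Z\)) makes the poles simple.

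Using \(\Gamma(x+1)=x\Gamma(x)\) and the bound \(\ell\le m_j-1\), the Gamma ratio times \(\prod_h(t+b_h+1)_{m_h}\) reduces to \(\Gamma(t\one_r+\boldsymbol a+(\ell+1)\one_r)/\Gamma(t\one_q+\boldsymbol b+\one_q)\) multiplied by a polynomial in \(t\). Near-diagonality guarantees \(m_h\ge \ell\) for all \(h\ne j\), so no Gamma poles in the \(b\)-direction survive. The resulting function is therefore a rational function: the \(\Gamma(\cdot+\boldsymbol a)\) factors cancel against the \(\boldsymbol a+\one_r\) denominator parameters after the shift. Its degree is \(\le r(\ell+1)+|\boldsymbol m|-q\ell-\ldots-|\boldsymbol n|\le -2\), since \(r<q\) and \(|\boldsymbol n|=|\boldsymbol m|+1\). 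The sum of all residues then vanishes, which gives orthogonality.

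The normalization \eqref{eq:final-A-normalizing-moment} follows by taking \(\ell=m_{\min}\), \(j=j_{\min}\). Now exactly one extra pole appears, coming from a \(b\)-factor. Its residue, computed directly, produces the reciprocal of \eqref{eq:final-A-nu}.

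\textbf{Part (ii).} Here the approach is dual, but I would work with generating functions. First, pair each block \({}_{q+1}F_r(\ldots;z)\) in \eqref{eq:final-B-components} with \(z^\ell C_{j,i}\). Using Proposition~\ref{prop:Bessel-reciprocal-contiguity}, the Gamma-quotient sums should collapse via Gauss/partial-fraction telescoping into a simple fraction \(\propto 1/(\kappa_i+b_H+K+\ell+1)\) with \(j\)-independent prefactors. The factors \((\boldsymbol a-b_j\one_r)_1/(\boldsymbol b^{*j}-b_j\one_{q-1})_1\) are exactly the corrections that convert the \(\boldsymbol e_j\)-shifted moment to a common one.

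The \(B\)-pairing thus becomes \(\sum_{H,K}\pi_{H,K}\,g(b_H+K)\) for a function \(g(x)=\Gamma\text{-ratio}/(x+\kappa_i+\ell+1)\). The coefficients \(\pi_{H,K}\) are precisely the partial-fraction coefficients of
\[
\frac{\prod_i(\kappa_i+t+1)_{n_i}}{\prod_{I}\prod_{K}(t-b_I-K)} .
\]
Orthogonality for \(\ell\le n_i-1\) then follows from the residue theorem again: the factor \(\prod(\kappa_i+t+1)_{n_i}\) cancels the pole at \(t=-\kappa_i-\ell-1\), and the balance \(|\boldsymbol m|=|\boldsymbol n|+1\) gives decay of order \(t^{-2}\).

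The degree bound \(\deg B^{(j)}\le m_j-1\) needs separate treatment, because each block \({}_{q+1}F_r\) is not individually terminating. I expect this to be the \textbf{main obstacle}. My plan is to expand in powers of \(z\). The coefficient of \(z^u\) is \(\sum_{H,K}\pi_{H,K}\,\rho_u(b_H+K)\) with \(\rho_u\) rational, and it vanishes for \(u\ge m_j\) by the same residue-sum argument. This works because \(\rho_u\) carries the factor \(\prod_{h\ne j}(b_h-x+1)_u\), and near-diagonality lets this factor absorb the poles for \(u\ge m_j\).

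Finally, monicity follows by reading off the \(u=m_{\max}-1\) coefficient for \(j=j_{\max}\), where only the unique surviving residue contributes. This yields \eqref{eq:final-B-nu}.
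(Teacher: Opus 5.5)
The proposal has a genuine gap in part (ii). The step you treat as routine is the claim that pairing \emph{each} hypergeometric block of \eqref{eq:final-B-components} with \(z^\ell C_{j,i}\) collapses to a simple fraction proportional to \(1/(\kappa_i+b_H+K+\ell+1)\). That step is the whole content of the \(B\)-side proof, and block by block it is false.

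Put \(t=\ell+1+\kappa_i\). By \eqref{eq:final-moment-entry}, the \(z^d\)-term of the \(j\)-th component contributes \(\frac{\Gamma(t\one_r+\boldsymbol a)}{\Gamma(t\one_q+\boldsymbol b)}\cdot\frac{(t\one_r+\boldsymbol a)_d}{(t\one_q+\boldsymbol b)_d(t+b_j+d)}\). A single block gives a combination of these with spurious poles on the other \(b\)-strings. Take \(q=2\), \(r=0\), \((H,K)=(1,1)\), and strip off \(\pi_{1,1}\) and \(1/\Gamma(t\one_q+\boldsymbol b)\):
\begin{itemize}
\item the \(j=1\) contribution is \(\frac{1}{t+b_1+1}-\frac{1}{(b_2-b_1)(t+b_2)}\);
\item the \(j=2\) contribution is \(\frac{1}{(b_2-b_1)(t+b_2)}\).
\end{itemize}
Only their sum over rows is a simple fraction.

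What you actually need is Lemma~\ref{lem:final-B-simple-fraction-reconstruction}. For fixed \((H,K)\) and \(c=b_H+K\), the sum over \emph{all} rows \(j\) of these rational functions, weighted by \(\omega_j=(\boldsymbol a-b_j\one_r)_1/(\boldsymbol b^{\,*j}-b_j\one_{q-1})_1\) and by the block coefficients, equals \(1/(t+c)\). This does not follow from Proposition~\ref{prop:Bessel-reciprocal-contiguity}, which only trades \(\kappa\mapsto\kappa+1\) for multiplication by \(z\) inside one row. It is also not a Gauss-type summation. The paper proves it as follows:
\begin{itemize}
\item extend all \(d\)-sums to the common range \(\{0,\ldots,K\}\); the added terms vanish;
\item prove the barycentric identity \eqref{eq:final-B-barycentric-identity} by comparing residues;
\item apply it at each level \(d\) with \(x=t+d\), \(y=c-d\), obtaining telescoping differences \(\bigl(M_d(c)F_d(t)-M_{d+1}(c)F_{d+1}(t)\bigr)/(t+c)\), where \(M_d(c)=(\boldsymbol b-c\one_q)_d/(\boldsymbol a-c\one_r)_d\) and \(F_d(t)=(t\one_r+\boldsymbol a)_d/(t\one_q+\boldsymbol b)_d\);
\item use the limiting form of the identity at \(d=K\), where \(y=b_H\), and close with \(M_{K+1}(c)=0\).
\end{itemize}
The constants \(\pi_{j,0}\) supply the \(K=0\) fractions directly. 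Once this lemma is in place, your final step is exactly the paper's: evaluate the partial-fraction expansion of Lemma~\ref{lem:final-B-partial-fractions} at \(t=\ell+1+\kappa_i\), which is a zero of \(\prod_h(\kappa_h+1-t)_{n_h}\).

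There are two secondary points. First, the degree bound is not an obstacle. For \(K\ge1\) the \(H\)-th numerator parameter of each block is \(1-K-\delta_{j,H}\le0\), so every block is a polynomial of degree at most \(K-1+\delta_{j,H}\). Near-diagonality gives \(K-1+\delta_{j,H}\le m_j-1\), so no residue argument over \((H,K)\) is needed.

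Second, in part (i) your route is the paper's, but your \(F_{j,\ell}\) is missing the factor \((-1)^{|\boldsymbol n|}\Gamma(t\one_q+\boldsymbol b+\one_q)/\Gamma(t\one_r+\boldsymbol a+\one_r)\). This factor encodes \(C_{\boldsymbol n,\boldsymbol m;i}\) and the hypergeometric Pochhammer symbols. Without it, the residues at \(t=\kappa_i+k\) are not the coefficients of \(A^{(i)}_{\boldsymbol n,\boldsymbol m}\), and \(F_{j,\ell}\) keeps the non-rational ratio \(\Gamma(t\one_r+\boldsymbol a+(\ell+1)\one_r)/\Gamma(t\one_q+\boldsymbol b+\one_q)\). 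With the factor, the tested function is rational: it is \((-1)^{|\boldsymbol n|}\prod_h(t+b_h+1)_{m_h}\,(t\one_r+\boldsymbol a+\one_r)_\ell\) divided by \(\prod_i\prod_u(t-\kappa_i-u)\,(t\one_q+\boldsymbol b+\one_q)_\ell\,(t+b_j+\ell+1)\). Its degree deficit is exactly \(2+(q-r)\ell\). Your count, with exponent \(r(\ell+1)\) in the numerator, would lose the \(\mathrm{O}(t^{-2})\) bound at \(\ell=0\) when \(r\ge1\).
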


The proof of the \(B\)-part of
Theorem~\ref{thm:final-explicit} requires two auxiliary rational
identities. The first one identifies the coefficients
\(\pi_{H,K}\) as the residues in the partial-fraction decomposition of
the rational function that encodes the \(B\)-orthogonality conditions.

\begin{lemma}[Partial-fraction decomposition]
	\label{lem:final-B-partial-fractions}
	Assume that \((\boldsymbol n,\boldsymbol m)\) is \(B\)-balanced and that the
	Bessel-like parameters are regular. Consider
	\[
	R(t) \coloneq - \frac{ \prod_{i=1}^{p}(\kappa_i+1-t)_{n_i} }{ \prod_{j=1}^{q}(t+b_j)_{m_j} }.
	\]
	Then
	\begin{equation}
		\label{eq:final-B-partial-fractions}
		R(t)
		=
		\sum_{H=1}^{q}\sum_{K=0}^{m_H-1}
		\frac{\pi_{H,K}}{t+b_H+K}.
	\end{equation}
\end{lemma}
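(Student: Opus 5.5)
The identity \eqref{eq:final-B-partial-fractions} is a standard partial-fraction expansion. The plan is to check three things: that \(R\) is a proper rational function with simple poles, that those poles are exactly the points \(t=-b_H-K\), and that the residues are the \(\pi_{H,K}\).

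First, degrees. The numerator \(\prod_i(\kappa_i+1-t)_{n_i}\) is a polynomial in \(t\) of degree \(|\boldsymbol n|\). The denominator \(\prod_j(t+b_j)_{m_j}=\prod_{H}\prod_{K=0}^{m_H-1}(t+b_H+K)\) has degree \(|\boldsymbol m|=|\boldsymbol n|+1\) by \(B\)-balancedness. Hence \(R(t)\to0\) as \(t\to\infty\), and there is no polynomial part.

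Second, the poles are simple. Two roots \(-b_H-K\) and \(-b_{H'}-K'\) coincide only if \(H=H'\) and \(K=K'\). Indeed, for \(H\ne H'\) a coincidence would force \(b_H-b_{H'}\in\mathbb Z\), which regularity excludes, and within one block the roots are separated by nonzero integers. So \(R=\sum_{H,K}\operatorname{Res}_{t=-b_H-K}R/(t+b_H+K)\), and it remains to compute each residue.

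Third, the residue at \(t_0=-b_H-K\).

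For the numerator,
\[
(\kappa_i+1-t_0)_{n_i}=(\kappa_i+b_H+K+1)_{n_i}.
\]

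For the denominator factors with \(I\ne H\), evaluation gives
\[
(t_0+b_I)_{m_I}=(b_I-b_H-K)_{m_I}.
\]

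For the \(H\)-th factor, remove the vanishing term \(t+b_H+K\). The remaining product is
\[
\prod_{k\ne K,\,0\le k\le m_H-1}(k-K)=\Bigl(\prod_{k=0}^{K-1}(k-K)\Bigr)\Bigl(\prod_{k=K+1}^{m_H-1}(k-K)\Bigr)=(-1)^KK!\,(m_H-K-1)!.
\]

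Combining these with the overall minus sign reproduces \eqref{eq:final-B-pi} exactly. In particular, the case \(K=0\) gives \eqref{eq:final-B-pi-zero}.

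The only delicate point is the careful sign bookkeeping in the \((-1)^KK!\) factor. One should also note that admissibility is not needed here beyond regularity, since the \(\pi_{H,K}\) are finite whenever the poles are distinct. If some \(m_H=0\), that block is simply empty, consistent with the convention \(\pi_{H,0}=0\).
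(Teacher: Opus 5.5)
Your proposal is correct and follows essentially the same route as the paper's proof: properness of \(R\) from \(B\)-balancedness, distinctness of the simple poles \(-b_H-K\) from regularity, and identification of each coefficient as the residue. The residue computation, including the factor \((-1)^KK!\,(m_H-K-1)!\) from the \(H\)-th block, matches the paper's.
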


\begin{proof}
	By the regularity assumptions in
	Definition~\ref{def:final-parameter-regularity}, one has
	\(b_H-b_I\notin\mathbb Z\) whenever \(H\ne I\). Hence the points
	\(-b_H-K\), with \(H\in\{1,\ldots,q\}\) and
	\(K\in\{0,\ldots,m_H-1\}\), are pairwise distinct. Indeed,
	\(-b_H-K=-b_I-L\) would imply \(b_H-b_I=L-K\in\mathbb Z\).
	If \(H\ne I\), this contradicts regularity, while if \(H=I\), it
	implies \(K=L\).
	
	Since \(|\boldsymbol m|=|\boldsymbol n|+1\), the degree of the denominator of
	\(R(t)\) is one greater than the degree of its numerator. Hence
	\(R(t)\) is a proper rational function and its partial-fraction
	decomposition has no polynomial part. Since all its poles are simple,
	there exist coefficients \(c_{H,K}\) such that
	\[
	R(t) = \sum_{H=1}^{q}\sum_{K=0}^{m_H-1} \frac{c_{H,K}}{t+b_H+K}.
	\]
	The coefficient corresponding to the pole \(t=-b_H-K\) is its residue:
	\begin{align*}
		c_{H,K}
		&=
		\operatorname*{Res}_{t=-b_H-K}R(t)
		\\
		&=
		\lim_{t\to-b_H-K}(t+b_H+K)R(t)
		\\
		&=
		-
		\frac{
			\prod_{i=1}^{p}
			(\kappa_i+b_H+K+1)_{n_i}
		}{
			\prod_{\substack{L=0\\L\ne K}}^{m_H-1}(L-K)
			\prod_{\substack{I=1\\I\ne H}}^{q}
			(b_I-b_H-K)_{m_I}
		}.
	\end{align*}
	Indeed, after the factor \(t+b_H+K\) is removed from the \(H\)-th
	denominator block \((t+b_H)_{m_H}\), evaluation at
	\(t=-b_H-K\) gives
	\[
	\prod_{\substack{L=0\\L\ne K}}^{m_H-1}(L-K)
	=
	(-1)^K K!(m_H-K-1)!.
	\]
	Therefore
	\[
	c_{H,K}
	=
	-
	\frac{
		\prod_{i=1}^{p}
		(\kappa_i+b_H+K+1)_{n_i}
	}{
		(-1)^K K!(m_H-K-1)!
		\prod_{\substack{I=1\\I\ne H}}^{q}
		(b_I-b_H-K)_{m_I}
	}
	=
	\pi_{H,K}.
	\]
	Consequently,
	\eqref{eq:final-B-partial-fractions} follows.
\end{proof}

The second auxiliary identity reconstructs each simple fraction occurring
in \eqref{eq:final-B-partial-fractions} as a finite linear combination of
the rational terms that arise from the moments of the hypergeometric
components in \eqref{eq:final-B-components}. This is the step that makes
it possible to pass from the partial-fraction decomposition to the
polynomial identity used in the \(B\)-orthogonality proof.

\begin{lemma}[Finite reconstruction of a simple fraction]
	\label{lem:final-B-simple-fraction-reconstruction}
	Fix \(H\in\{1,\ldots,q\}\) and
	\(K\in\{0,\ldots,m_H-1\}\), and set \(c\coloneq b_H+K\). Then
	\begin{equation}
		\label{eq:final-B-simple-fraction-reconstruction}
		\begin{aligned}
		\frac{1}{t+c}
		&=
		\sum_{j=1}^{q}
		\frac{(\boldsymbol a-b_j\one_r)_1}
		{(\boldsymbol b^{\,*j}-b_j\one_{q-1})_1}
		\frac{(\boldsymbol b^{\,*j}-c\one_{q-1})_1}
		{(\boldsymbol a-c\one_r)_1}\\
		&\quad\times
		\sum_{d=0}^{K-1+\delta_{j,H}}
		\frac{
			(\boldsymbol b-c\one_q+\one_q-\boldsymbol e_j)_d
		}{
			(\boldsymbol a-c\one_r+\one_r)_d
		}
		\frac{(t\one_r+\boldsymbol a)_d}
		{(t\one_q+\boldsymbol b)_d(t+b_j+d)}.
		\end{aligned}
	\end{equation}
\end{lemma}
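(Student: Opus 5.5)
The plan is to prove \eqref{eq:final-B-simple-fraction-reconstruction} by a telescoping argument. I first work with a generic parameter \(c\) and a uniform upper summation limit \(D\), and specialize to \(c=b_H+K\) only at the end.

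The first ingredient is the partial-fraction expansion of the Gamma-quotient ratio
\[
P(s)\coloneq\frac{(s\one_r+\boldsymbol a)_1}{(s\one_q+\boldsymbol b)_1}
=\frac{\prod_{\rho=1}^r(s+a_\rho)}{\prod_{h=1}^q(s+b_h)} .
\]
Since \(r<q\), \(P\) is proper. By regularity its poles \(-b_j\) are simple, exactly as in the proof of Lemma~\ref{lem:final-B-partial-fractions}. Hence
\[
P(s)=\sum_{j=1}^q\frac{\gamma_j}{s+b_j},
\qquad
\gamma_j=\frac{(\boldsymbol a-b_j\one_r)_1}{(\boldsymbol b^{\,*j}-b_j\one_{q-1})_1},
\]
and these \(\gamma_j\) are precisely the outer prefactors in the lemma.

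Next I rewrite the \(j\)-dependent coefficients. Using \((\boldsymbol b^{\,*j}-c\one_{q-1})_1(\boldsymbol b-c\one_q+\one_q-\boldsymbol e_j)_d=(\boldsymbol b-c\one_q)_{d+1}/(b_j-c+d)\) and \((\boldsymbol a-c\one_r)_1(\boldsymbol a-c\one_r+\one_r)_d=(\boldsymbol a-c\one_r)_{d+1}\), the \((j,d)\) summand becomes
\[
\gamma_j\,\frac{(\boldsymbol b-c\one_q)_{d+1}}{(\boldsymbol a-c\one_r)_{d+1}}\,
\frac{(t\one_r+\boldsymbol a)_d}{(t\one_q+\boldsymbol b)_d}\,
\frac{1}{(b_j-c+d)(t+b_j+d)} .
\]
Then I apply
\[
\frac{1}{(b_j-c+d)(t+b_j+d)}=\frac{1}{t+c}\left(\frac{1}{b_j-c+d}-\frac{1}{t+b_j+d}\right)
\]
and sum over \(j\) with the expansion of \(P\). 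This gives \(\frac{1}{t+c}\bigl(P(d-c)-P(t+d)\bigr)\).

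Now set
\[
F_d(t)\coloneq\frac{(\boldsymbol b-c\one_q)_d\,(t\one_r+\boldsymbol a)_d}{(\boldsymbol a-c\one_r)_d\,(t\one_q+\boldsymbol b)_d}.
\]
The \(j\)-summed term at level \(d\) is then exactly \(\frac{1}{t+c}\bigl(F_d(t)-F_{d+1}(t)\bigr)\). Summing \(d=0,\dots,D\) telescopes to \(\frac{1-F_{D+1}(t)}{t+c}\). This holds for generic \(c\), where no denominator \(b_j-c+d\) vanishes, and it holds for every \(D\).

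Finally I specialize \(c=b_H+K\) with \(D=K\). Each summand in the original form has no factor \(1/(b_j-c+d)\), so it is polynomial in \(c\) up to the finite factors \((\boldsymbol a-c\one_r)_{d+1}\). The specialization is therefore legitimate by continuity on the admissible domain. Two vanishing facts remain. First, for \(j\ne H\) the \(d=K\) term contains the factor \((b_H-c)_{K+1}=(-K)_{K+1}=0\). So extending the inner sum from \(K-1\) to \(K=K-1+\delta_{j,H}\) for those \(j\) changes nothing, which reconciles the nonuniform upper limit in the statement. Second, \(F_{K+1}\) contains \((b_H-c)_{K+1}=0\), so the telescoped sum equals \(\frac{1}{t+c}\).

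I expect the main obstacle to be this bookkeeping at the degenerate point \(c=b_H+K\). There the factor \(b_H-c+d\) vanishes at \(d=K\), so the splitting step is invalid pointwise. This is why I argue at generic \(c\) and pass to the limit, checking that the \(j=H\), \(d=K\) summand of the lemma is itself finite.
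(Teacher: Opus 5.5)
Your proof is correct, and its skeleton matches the paper's. The two share the same rewriting of the \((j,d)\) coefficient and the same barycentric sum over \(j\): your \(P\) and \(\gamma_j\) are the paper's \(\varrho\) and \(\omega_j\). The telescoping quantity is also the same (your \(F_d\) is the paper's \(M_d(c)F_d(t)\)), and so is the final vanishing factor \((-K)_{K+1}=0\).

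The differences are in execution.

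\begin{itemize}
\item \emph{The barycentric identity.} The paper proves \(\sum_j\omega_j\frac{(\boldsymbol b^{\,*j}-y\one_{q-1})_1}{(\boldsymbol a-y\one_r)_1}\frac{1}{x+b_j}=\frac{1-\varrho(x)/\varrho(-y)}{x+y}\) by matching residues at \(x=-b_j\) and using decay at infinity. You obtain it directly from the elementary split of \(\frac{1}{(b_j-c+d)(t+b_j+d)}\) together with the partial-fraction expansion of \(P\).

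\item \emph{The top level \(d=K\).} This is the more substantive difference. The paper works at \(c=b_H+K\) throughout. At \(d=K\) one has \(y=c-d=b_H\) and \(\varrho(-y)\) blows up, so the paper must pass to the limit \(y\to b_H\) separately on each side of the barycentric identity. You instead prove the truncated identity with explicit remainder \(\frac{1-F_{D+1}(t)}{t+c}\) for generic \(c\) and every \(D\). You then specialize once by rationality in \(c\). This needs only \((\boldsymbol a-c\one_r)_{K+1}\ne0\) at \(c=b_H+K\), which is the same admissibility the paper uses.

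\item \emph{The extra terms for \(j\ne H\).} You group the prefactor \((\boldsymbol b^{\,*j}-c\one_{q-1})_1\) with the Pochhammer symbol. This kills the extra \(d=K\) terms through the single factor \((b_H-c)_{K+1}\), covering \(K=0\) and \(K\ge1\) at once. The paper treats these two cases separately.
\end{itemize}

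Your route is shorter and gives a more general remainder formula. The paper's route keeps every step a pointwise identity at the actual parameter values and isolates a reusable two-variable barycentric identity.
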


\begin{proof}
	To prove this identity, introduce
	\[
	\omega_j
	\coloneq
	\frac{(\boldsymbol a-b_j\one_r)_1}
	{(\boldsymbol b^{\,*j}-b_j\one_{q-1})_1},
	\qquad
	\varrho(x)
	\coloneq
	\frac{(x\one_r+\boldsymbol a)_1}{(x\one_q+\boldsymbol b)_1}.
	\]
	First, the following barycentric identity is established:
	\begin{equation}
		\label{eq:final-B-barycentric-identity}
		\sum_{j=1}^{q}
		\omega_j
		\frac{
			(\boldsymbol b^{\,*j}-y\one_{q-1})_1
		}{
			(\boldsymbol a-y\one_r)_1
		}
		\frac{1}{x+b_j}
		=
		\frac{1-\varrho(x)/\varrho(-y)}{x+y}.
	\end{equation}
	Assume first that the displayed denominators are nonzero. The apparent
	singularity of the right-hand side at \(x=-y\) is removable. Apart from
	this point, both sides are proper rational functions of \(x\), with
	possible poles only at \(-b_1,\ldots,-b_q\). Since
	\(\omega_j=\operatorname*{Res}_{x=-b_j}\varrho(x)\), the residue of the
	right-hand side at \(x=-b_j\) is
	\(-\frac{\omega_j}{\varrho(-y)(y-b_j)}\).
	Now
	\[
	\varrho(-y)
	=
	\frac{(\boldsymbol a-y\one_r)_1}{(\boldsymbol b-y\one_q)_1},
	\qquad
	(\boldsymbol b-y\one_q)_1
	=
	-(y-b_j)(\boldsymbol b^{\,*j}-y\one_{q-1})_1,
	\]
	and therefore
	\[
	-\frac{\omega_j}{\varrho(-y)(y-b_j)}
	=
	\omega_j
	\frac{
		(\boldsymbol b^{\,*j}-y\one_{q-1})_1
	}{
		(\boldsymbol a-y\one_r)_1
	}.
	\]
	This is precisely the residue of the left-hand side at \(x=-b_j\).
	Consequently, the difference between the two sides has no finite poles.
	Since \(r<q\), one has \(\varrho(x)\to0\) as \(x\to\infty\), and both
	sides tend to zero. Their difference therefore vanishes identically,
	which proves \eqref{eq:final-B-barycentric-identity} whenever the
	displayed denominators are nonzero.
	
	The identity \eqref{eq:final-B-barycentric-identity} will also be applied when
	\(y=b_H\). Its two sides are therefore evaluated as
	\(y\to b_H\). Consider first the left-hand side. If \(j\ne H\), then
	\(b_H\) remains among the components of \(\boldsymbol b^{\,*j}\), so
	\((\boldsymbol b^{\,*j}-y\one_{q-1})_1\) contains the factor \(b_H-y\).
	All the other factors in that summand remain finite by regularity and the
	standing admissibility convention, and
	hence the summand tends to zero. For \(j=H\), the coefficient tends to
	\[
	\omega_H
	\frac{
		(\boldsymbol b^{\,*H}-b_H\one_{q-1})_1
	}{
		(\boldsymbol a-b_H\one_r)_1
	}
	=1.
	\]
	Thus the left-hand side tends to \(1/(x+b_H)\).
	
	For the right-hand side, one has
	\[
	\frac{1}{\varrho(-y)}
	=
	\frac{(\boldsymbol b-y\one_q)_1}{(\boldsymbol a-y\one_r)_1}.
	\]
	Its numerator contains the factor \(b_H-y\), whereas its denominator
	is finite and nonzero at \(y=b_H\) by the standing admissibility convention. Hence
	\(1/\varrho(-y)\to0\), and the right-hand side also tends to
	\(1/(x+b_H)\). Therefore, when \(y=b_H\) occurs below,
	\eqref{eq:final-B-barycentric-identity} remains valid after taking this
	limit.
	
	Return to
	\eqref{eq:final-B-simple-fraction-reconstruction}. Its right-hand side
	is a finite nested sum: one first sums over \(j\), and, for each fixed
	\(j\), one then sums over the index \(d\) in
	\(\sum_{d=0}^{K-1+\delta_{j,H}}\).
	The upper limit of the sum over \(d\) depends on \(j\). More precisely,
	\[
	K-1+\delta_{j,H}
	=
	\begin{cases}
		K,   & j=H,\\
		K-1, & j\ne H.
	\end{cases}
	\]
	Thus the term with \(d=K\) is present from the outset when \(j=H\), but
	it is not present when \(j\ne H\).
	
	The purpose is to reverse the order in which the two finite sums are
	performed. The next step is to fix \(d\), collect the terms corresponding to all
	\(j\in\{1,\ldots,q\}\), and apply the barycentric identity to the
	resulting sum over \(j\). To do this, the sum over \(d\) must have the
	same range for every \(j\). It is therefore enough to show that all these sums may
	be written over \(d\in\{0,\ldots,K\}\) without changing their values.
	
	Suppose first that \(K\ge1\) and \(j\ne H\). Replacing the original
	range \(d\in\{0,\ldots,K-1\}\) by \(d\in\{0,\ldots,K\}\) introduces
	only the term with \(d=K\). This additional term is zero. Since
	\(c=b_H+K\) and \(j\ne H\), the \(H\)-th component of
	\(\boldsymbol b-c\one_q+\one_q-\boldsymbol e_j\) is
	\(b_H-c+1=b_H-(b_H+K)+1=1-K\).
	Consequently, the vectorial Pochhammer symbol
	\((\boldsymbol b-c\one_q+\one_q-\boldsymbol e_j)_K\) contains the scalar factor
	\((1-K)_K=(1-K)(2-K)\cdots(-1)\,0=0\).
	Hence the term with \(d=K\) vanishes.
	
	Consider now \(K=0\) and \(j\ne H\). The original upper limit is then
	\(-1\), so the original sum over \(d\) contains no terms. If it is written
	over the common range \(d\in\{0\}\), the only possible contribution
	is the term with \(d=0\). All Pochhammer symbols of order zero in this
	term are equal to one, but the prefactor contains
\(	(\boldsymbol b^{\,*j}-c\one_{q-1})_1
	=
	(\boldsymbol b^{\,*j}-b_H\one_{q-1})_1\).
	Because \(j\ne H\), deleting the \(j\)-th component from \(\boldsymbol b\)
	does not delete the component \(b_H\). The displayed product therefore
	contains the factor \(b_H-b_H=0\), and the term with \(d=0\) also
	vanishes.
	
	It follows that, for every \(j\), the sum over \(d\) in
	\eqref{eq:final-B-simple-fraction-reconstruction} may be written over
	the common range \(d\in\{0,\ldots,K\}\). Since all sums are finite, the terms may now be regrouped by first fixing \(d\) and then summing over
	\(j\in\{1,\ldots,q\}\).
	
	For \(d\in\{0,\ldots,K+1\}\), define
	\[
	M_d(c)
	\coloneq
	\frac{(\boldsymbol b-c\one_q)_d}{(\boldsymbol a-c\one_r)_d},
	\qquad
	F_d(t)
	\coloneq
	\frac{(t\one_r+\boldsymbol a)_d}{(t\one_q+\boldsymbol b)_d}.
	\]
	Fix \(d\in\{0,\ldots,K\}\). For each
	\(j\in\{1,\ldots,q\}\), the term corresponding to the pair \((j,d)\)
	on the right-hand side of
	\eqref{eq:final-B-simple-fraction-reconstruction} is
	\[
	\omega_j
	\frac{(\boldsymbol b^{\,*j}-c\one_{q-1})_1}
	{(\boldsymbol a-c\one_r)_1}
	\frac{(\boldsymbol b-c\one_q+\one_q-\boldsymbol e_j)_d}
	{(\boldsymbol a-c\one_r+\one_r)_d}
	\frac{(t\one_r+\boldsymbol a)_d}
	{(t\one_q+\boldsymbol b)_d(t+b_j+d)}.
	\]
	The quotient containing \(t\) is
	\(\frac{(t\one_r+\boldsymbol a)_d}{(t\one_q+\boldsymbol b)_d}=F_d(t)\),
	so the preceding term can be written as
	\[
	\omega_j
	\frac{(\boldsymbol b^{\,*j}-c\one_{q-1})_1}
	{(\boldsymbol a-c\one_r)_1}
	\frac{(\boldsymbol b-c\one_q+\one_q-\boldsymbol e_j)_d}
	{(\boldsymbol a-c\one_r+\one_r)_d}
	\frac{F_d(t)}{t+b_j+d}.
	\]
	Next, rewrite the remaining coefficient in the precise form required
	by the barycentric identity.
	
	The \(j\)-th component of
	\(\boldsymbol b-c\one_q+\one_q-\boldsymbol e_j\) is \(b_j-c\), because the \(j\)-th
	component of \(\one_q-\boldsymbol e_j\) is zero. For \(h\ne j\), the
	\(h\)-th component is \(b_h-c+1\). Hence
	\[
	(\boldsymbol b-c\one_q+\one_q-\boldsymbol e_j)_d
	=
	(b_j-c)_d
	(\boldsymbol b^{\,*j}-c\one_{q-1}+\one_{q-1})_d.
	\]
	Substituting this identity into the coefficient gives
	\begin{equation*}
		\frac{(\boldsymbol b^{\,*j}-c\one_{q-1})_1}
		{(\boldsymbol a-c\one_r)_1}
		\frac{(\boldsymbol b-c\one_q+\one_q-\boldsymbol e_j)_d}
		{(\boldsymbol a-c\one_r+\one_r)_d}
		=
		\frac{
			(b_j-c)_d
			(\boldsymbol b^{\,*j}-c\one_{q-1})_1
			(\boldsymbol b^{\,*j}-c\one_{q-1}+\one_{q-1})_d
		}{
			(\boldsymbol a-c\one_r)_1
			(\boldsymbol a-c\one_r+\one_r)_d
		}.
	\end{equation*}
	For a scalar parameter \(x\), one has
	\((x)_1(x+1)_d=(x)_{d+1}\).
	Applying this identity componentwise to the vectorial Pochhammer
	symbols in the numerator and denominator of the right-hand side yields
	\[
	\frac{
		(b_j-c)_d
		(\boldsymbol b^{\,*j}-c\one_{q-1})_{d+1}
	}{
		(\boldsymbol a-c\one_r)_{d+1}
	}.
	\]
	Split each Pochhammer symbol of order \(d+1\) into a Pochhammer
	symbol of order \(d\) and its last factor. Thus
	\begin{align*}
	(\boldsymbol b^{\,*j}-c\one_{q-1})_{d+1}
	&=
	(\boldsymbol b^{\,*j}-c\one_{q-1})_d
	(\boldsymbol b^{\,*j}-c\one_{q-1}+d\one_{q-1})_1,
	\\
	(\boldsymbol a-c\one_r)_{d+1}
	&=
	(\boldsymbol a-c\one_r)_d
	(\boldsymbol a-c\one_r+d\one_r)_1.
	\end{align*}
	Since
	\[
	\boldsymbol b^{\,*j}-c\one_{q-1}+d\one_{q-1}
	=
	\boldsymbol b^{\,*j}-(c-d)\one_{q-1},
	\qquad
	\boldsymbol a-c\one_r+d\one_r
	=
	\boldsymbol a-(c-d)\one_r,
	\]
	the preceding quotient becomes
	\[
	\frac{
		(b_j-c)_d
		(\boldsymbol b^{\,*j}-c\one_{q-1})_d
	}{
		(\boldsymbol a-c\one_r)_d
	}
	\frac{
		(\boldsymbol b^{\,*j}-(c-d)\one_{q-1})_1
	}{
		(\boldsymbol a-(c-d)\one_r)_1
	}.
	\]
	
	The first quotient can now be identified with \(M_d(c)\). Indeed,
	\((b_j-c)_d\) is the Pochhammer symbol associated with the \(j\)-th
	component of \(\boldsymbol b-c\one_q\), whereas
	\((\boldsymbol b^{\,*j}-c\one_{q-1})_d\) is the product of the Pochhammer
	symbols associated with all the remaining components. Therefore
	\[
	(b_j-c)_d
	(\boldsymbol b^{\,*j}-c\one_{q-1})_d
	=
	(\boldsymbol b-c\one_q)_d,
	\]
	and hence
	\[
	\frac{
		(b_j-c)_d
		(\boldsymbol b^{\,*j}-c\one_{q-1})_d
	}{
		(\boldsymbol a-c\one_r)_d
	}
	=
	M_d(c).
	\]
	This proves that
	\[
	\frac{(\boldsymbol b^{\,*j}-c\one_{q-1})_1}
	{(\boldsymbol a-c\one_r)_1}
	\frac{(\boldsymbol b-c\one_q+\one_q-\boldsymbol e_j)_d}
	{(\boldsymbol a-c\one_r+\one_r)_d}
	=
	M_d(c)
	\frac{
		(\boldsymbol b^{\,*j}-(c-d)\one_{q-1})_1
	}{
		(\boldsymbol a-(c-d)\one_r)_1
	}.
	\]
	
	Consequently, the complete term corresponding to the fixed pair
	\((j,d)\) is
	\[
	M_d(c)F_d(t)\,
	\omega_j
	\frac{
		(\boldsymbol b^{\,*j}-(c-d)\one_{q-1})_1
	}{
		(\boldsymbol a-(c-d)\one_r)_1
	}
	\frac{1}{t+d+b_j}.
	\]
	For the fixed value of \(d\), the factors \(M_d(c)\) and \(F_d(t)\) do
	not depend on \(j\). Therefore the sum of all the terms corresponding
	to this value of \(d\) is
	\[
	M_d(c)F_d(t)
	\sum_{j=1}^{q}
	\omega_j
	\frac{
		(\boldsymbol b^{\,*j}-(c-d)\one_{q-1})_1
	}{
		(\boldsymbol a-(c-d)\one_r)_1
	}
	\frac{1}{t+d+b_j}.
	\]
	
	Assume first that \(d<K\). Then apply
	\eqref{eq:final-B-barycentric-identity} with
	\(x=t+d\),
	\(y=c-d\).
	Indeed, with this choice,
	\[
	x+b_j=t+d+b_j,
	\qquad
	x+y=t+d+c-d=t+c,
	\qquad
	-y=d-c.
	\]
	The sum over \(j\) is therefore
	\(\frac{1-\varrho(t+d)/\varrho(d-c)}{t+c}\),
	and the total contribution corresponding to the fixed value of \(d\)
	is
	\[
	M_d(c)F_d(t)
	\frac{1-\varrho(t+d)/\varrho(d-c)}{t+c}.
	\]
	This expression gives a telescoping difference. Indeed, from
	the definition of \(F_d(t)\),
	\begin{equation*}
		F_{d+1}(t)
		=
		\frac{(t\one_r+\boldsymbol a)_{d+1}}{(t\one_q+\boldsymbol b)_{d+1}}
		=
		\frac{(t\one_r+\boldsymbol a)_d}{(t\one_q+\boldsymbol b)_d}
		\frac{((t+d)\one_r+\boldsymbol a)_1}{((t+d)\one_q+\boldsymbol b)_1}
		=
		F_d(t)\varrho(t+d).
	\end{equation*}
	Similarly, from the definition of \(M_d(c)\),
	\begin{align*}
		M_{d+1}(c)
		&=
		\frac{(\boldsymbol b-c\one_q)_{d+1}}
		{(\boldsymbol a-c\one_r)_{d+1}}
		\\
		&=
		\frac{(\boldsymbol b-c\one_q)_d}
		{(\boldsymbol a-c\one_r)_d}
		\frac{(\boldsymbol b-c\one_q+d\one_q)_1}
		{(\boldsymbol a-c\one_r+d\one_r)_1}
		\\
		&=
		M_d(c)
		\frac{(\boldsymbol b+(d-c)\one_q)_1}
		{(\boldsymbol a+(d-c)\one_r)_1}
		\\
		&=
		\frac{M_d(c)}{\varrho(d-c)}.
	\end{align*}
	It follows that
	\[
	M_d(c)F_d(t)
	\frac{\varrho(t+d)}{\varrho(d-c)}
	=
	M_{d+1}(c)F_{d+1}(t).
	\]
	Hence, for \(d<K\), the contribution corresponding to the fixed value
	of \(d\) is
	\[
	\frac{
		M_d(c)F_d(t)-M_{d+1}(c)F_{d+1}(t)
	}{
		t+c
	}.
	\]
	It remains to consider \(d=K\). In this case,
	\(c-d=b_H+K-K=b_H\).
	The limiting form of
	\eqref{eq:final-B-barycentric-identity} established above must therefore
	be used. With
	\(x=t+K\) and \(y=b_H\), that limiting identity gives
	\[
	\sum_{j=1}^{q}
	\omega_j
	\frac{
		(\boldsymbol b^{\,*j}-b_H\one_{q-1})_1
	}{
		(\boldsymbol a-b_H\one_r)_1
	}
	\frac{1}{t+K+b_j}
	=
	\frac{1}{t+K+b_H}
	=
	\frac{1}{t+c}.
	\]
	Therefore the total contribution corresponding to \(d=K\) is
	\(\frac{M_K(c)F_K(t)}{t+c}\).
	On the other hand,
	\[
	M_{K+1}(c)
	=
	\frac{(\boldsymbol b-c\one_q)_{K+1}}
	{(\boldsymbol a-c\one_r)_{K+1}}
	=
	0,
	\]
	because the \(H\)-th Pochhammer symbol in its numerator is
	\((b_H-c)_{K+1}=(-K)_{K+1}=0\).
	Thus the contribution corresponding to \(d=K\) may also be written as
	\[
	\frac{
		M_K(c)F_K(t)-M_{K+1}(c)F_{K+1}(t)
	}{
		t+c
	}.
	\]
	This proves that, for every
	\(d\in\{0,\ldots,K\}\), the contribution obtained by fixing \(d\) and
	summing over \(j\in\{1,\ldots,q\}\) is
	\begin{equation}
		\label{eq:final-B-level-difference}
		\frac{
			M_d(c)F_d(t)-M_{d+1}(c)F_{d+1}(t)
		}{
			t+c
		}.
	\end{equation}
	It remains only to sum these contributions over \(d\in\{0,\ldots,K\}\). I
	obtain
	\begin{multline*}
		\sum_{d=0}^{K}
		\frac{
			M_d(c)F_d(t)-M_{d+1}(c)F_{d+1}(t)
		}{
			t+c
		}
		=
		\frac{1}{t+c}
		\Bigl(
		M_0(c)F_0(t)-M_1(c)F_1(t)
		+M_1(c)F_1(t)
		\\
		-M_2(c)F_2(t)
		+\cdots
		+M_K(c)F_K(t)-M_{K+1}(c)F_{K+1}(t)
		\Bigr).
	\end{multline*}
	Every intermediate term appears once with a positive sign and once with
	a negative sign, so all such terms cancel. Therefore
	\[
	\sum_{d=0}^{K}
	\frac{
		M_d(c)F_d(t)-M_{d+1}(c)F_{d+1}(t)
	}{
		t+c
	}
	=
	\frac{
		M_0(c)F_0(t)-M_{K+1}(c)F_{K+1}(t)
	}{
		t+c
	}.
	\]
	Finally,
	\(M_0(c)=1\),
	\(F_0(t)=1\),
	\(M_{K+1}(c)=0\).
	The preceding expression is therefore \(1/(t+c)\), which proves
	\eqref{eq:final-B-simple-fraction-reconstruction}.
\end{proof}

\begin{proof}[Proof of Theorem~\ref{thm:final-explicit}]
	The two assertions are proved separately.
	
	\noindent\emph{Proof of \textnormal{(i)}.}
	Begin by constructing an auxiliary polynomial vector. Set
	\[
	D_{\boldsymbol\kappa,\boldsymbol n}(t)
	\coloneq
	\prod_{i=1}^{p}
	\prod_{k=0}^{n_i-1}
	(t-\kappa_i-k),
	\qquad
	P_{\boldsymbol b,\boldsymbol m}(t)
	\coloneq
	\prod_{j=1}^{q}
	(t+b_j+1)_{m_j}.
	\]
	The noninteger-separation part of regularity ensures that the nodes
	\(\kappa_i+k\),
	\(i\in\{1,\ldots,p\}\),
	\(k\in\{0,\ldots,n_i-1\}\),
	are pairwise distinct. Define
	\[
	\mathcal R(t)
	\coloneq
	(-1)^{|\boldsymbol n|}
	\frac{P_{\boldsymbol b,\boldsymbol m}(t)}
	{D_{\boldsymbol\kappa,\boldsymbol n}(t)}
	\frac{\Gamma(t\one_q+\boldsymbol b+\one_q)}
	{\Gamma(t\one_r+\boldsymbol a+\one_r)}.
	\]
	For \(i\in\{1,\ldots,p\}\) and
	\(k\in\{0,\ldots,n_i-1\}\), put
	\[
	\mathsf c_{i,k}
	\coloneq
	\operatorname*{Res}_{t=\kappa_i+k}\mathcal R(t),
	\qquad
	Q_i(z)
	\coloneq
	\sum_{k=0}^{n_i-1}\mathsf c_{i,k}z^k.
	\]
	If \(n_i=0\), the sum defining \(Q_i\) is empty; set
	\(Q_i\equiv0\).
	
	First, compute the coefficients \(\mathsf c_{i,k}\). At
	\(t=\kappa_i+k\), the denominator factors as
	\[
	D_{\boldsymbol\kappa,\boldsymbol n}(t)
	=
	(t-\kappa_i-k)
	\prod_{\substack{k'=0\\k'\ne k}}^{n_i-1}
	(t-\kappa_i-k')
	\prod_{\substack{\ell=1\\\ell\ne i}}^{p}
	\prod_{s=0}^{n_\ell-1}
	(t-\kappa_\ell-s).
	\]
	Hence
	\begin{align*}
		\mathsf c_{i,k}
		&=
		\lim_{t\to\kappa_i+k}
		(t-\kappa_i-k)\mathcal R(t)
		\\
		&=
		(-1)^{|\boldsymbol n|}
		\frac{
			P_{\boldsymbol b,\boldsymbol m}(\kappa_i+k)
		}{
			\prod_{\substack{k'=0\\k'\ne k}}^{n_i-1}
			(k-k')
			\prod_{\substack{\ell=1\\\ell\ne i}}^{p}
			\prod_{s=0}^{n_\ell-1}
			(\kappa_i+k-\kappa_\ell-s)
		}
		\frac{
			\Gamma((\kappa_i+k)\one_q+\boldsymbol b+\one_q)
		}{
			\Gamma((\kappa_i+k)\one_r+\boldsymbol a+\one_r)
		}.
	\end{align*}
	The factors belonging to the \(i\)-th block satisfy
	\[
	\prod_{\substack{k'=0\\k'\ne k}}^{n_i-1}(k-k')
	=
	\left(
	\prod_{k'=0}^{k-1}(k-k')
	\right)
	\left(
	\prod_{k'=k+1}^{n_i-1}(k-k')
	\right)
	=
	(-1)^{n_i-1-k}k!(n_i-k-1)!.
	\]
	
	Rewrite the remaining factors to separate their
	\(k\)-independent parts from their \(k\)-dependence. For every
	\(\ell\ne i\), the contribution of the \(\ell\)-th block of
	\(D_{\boldsymbol\kappa,\boldsymbol n}\) is
	\begin{equation*}
		\prod_{s=0}^{n_\ell-1}
		(\kappa_i+k-\kappa_\ell-s)
		=
		(\kappa_i+k-\kappa_\ell-n_\ell+1)_{n_\ell}
		=
		(\kappa_i-\kappa_\ell-n_\ell+1)_{n_\ell}
		\frac{
			(\kappa_i-\kappa_\ell+1)_k
		}{
			(\kappa_i-\kappa_\ell-n_\ell+1)_k
		}.
	\end{equation*}
	Thus, after inversion, this block contributes the
	\(k\)-independent factor
	\(\frac{1}{
		(\kappa_i-\kappa_\ell-n_\ell+1)_{n_\ell}
	}\)
	and the \(k\)-dependent quotient
	\(\frac{
		(\kappa_i-\kappa_\ell-n_\ell+1)_k
	}{
		(\kappa_i-\kappa_\ell+1)_k
	}\).
	
	For every \(j\in\{1,\ldots,q\}\), the corresponding factor in
	\(P_{\boldsymbol b,\boldsymbol m}(\kappa_i+k)\) can be written as
	\[
	(\kappa_i+k+b_j+1)_{m_j}
	=
	(\kappa_i+b_j+1)_{m_j}
	\frac{
		(\kappa_i+b_j+m_j+1)_k
	}{
		(\kappa_i+b_j+1)_k
	}.
	\]
	On the other hand,
	\[
	\Gamma(\kappa_i+k+b_j+1)
	=
	\Gamma(\kappa_i+b_j+1)
	(\kappa_i+b_j+1)_k.
	\]
	The factors \((\kappa_i+b_j+1)_k\) therefore cancel. Consequently, the
	combined contribution of \(P_{\boldsymbol b,\boldsymbol m}(\kappa_i+k)\) and the
	Gamma factors in the numerator is
	\[
	\Gamma(\kappa_i\one_q+\boldsymbol b+\one_q)
	\prod_{j=1}^{q}(\kappa_i+b_j+1)_{m_j}
	\prod_{j=1}^{q}(\kappa_i+b_j+m_j+1)_k.
	\]
	Finally, for every \(\rho\in\{1,\ldots,r\}\),
	\[
	\Gamma(\kappa_i+k+a_\rho+1)
	=
	\Gamma(\kappa_i+a_\rho+1)
	(\kappa_i+a_\rho+1)_k.
	\]
	Collecting all these factors gives
	\begin{multline*}
		\mathsf c_{i,k}
		=
		(-1)^{|\boldsymbol n|}
		\frac{\Gamma(\kappa_i\one_q+\boldsymbol b+\one_q)}
		{\Gamma(\kappa_i\one_r+\boldsymbol a+\one_r)}
		\frac{
			\prod_{j=1}^{q}(\kappa_i+b_j+1)_{m_j}
		}{
			(-1)^{n_i-1-k}k!(n_i-k-1)!
			\prod_{\substack{\ell=1\\\ell\ne i}}^{p}
			(\kappa_i-\kappa_\ell-n_\ell+1)_{n_\ell}
		}
		\\
		\times
		\frac{
			\prod_{j=1}^{q}(\kappa_i+b_j+m_j+1)_k
			\prod_{\substack{\ell=1\\\ell\ne i}}^{p}
			(\kappa_i-\kappa_\ell-n_\ell+1)_k
		}{
			\prod_{\rho=1}^{r}(\kappa_i+a_\rho+1)_k
			\prod_{\substack{\ell=1\\\ell\ne i}}^{p}
			(\kappa_i-\kappa_\ell+1)_k
		}.
	\end{multline*}
	Using
\(	\frac{(-n_i+1)_k}{k!}
	=
	\frac{(-1)^k(n_i-1)!}
	{k!(n_i-k-1)!}\),
	this coefficient may be rewritten as
	\[
	\mathsf c_{i,k}
	=
	C_{\boldsymbol n,\boldsymbol m;i}
	\frac{
		(-n_i+1)_k
		\prod_{j=1}^{q}
		(\kappa_i+b_j+m_j+1)_k
		\prod_{\substack{\ell=1\\\ell\ne i}}^{p}
		(\kappa_i-\kappa_\ell-n_\ell+1)_k
	}{
		\prod_{\rho=1}^{r}
		(\kappa_i+a_\rho+1)_k
		\prod_{\substack{\ell=1\\\ell\ne i}}^{p}
		(\kappa_i-\kappa_\ell+1)_k
	}
	\frac{1}{k!}.
	\]
	Therefore, for \(n_i\ge1\), the residue-defined polynomial \(Q_i\)
	admits the hypergeometric representation
	\[
	Q_i(z)
	=
	C_{\boldsymbol n,\boldsymbol m;i}
	\pFq{q+p}{r+p-1}
	{
		-n_i+1,\ 
		\kappa_i\one_q+\boldsymbol b+\boldsymbol m+\one_q,\
		\kappa_i\one_{p-1}-\boldsymbol\kappa^{\,*i}
		-\boldsymbol n^{\,*i}+\one_{p-1}
	}
	{
		\kappa_i\one_r+\boldsymbol a+\one_r,\
		\kappa_i\one_{p-1}-\boldsymbol\kappa^{\,*i}
		+\one_{p-1}
	}
	{z}.
	\]
	In particular,
	\(\deg Q_i\le n_i-1\).
	
	Set
\(\mathbf Q_{\boldsymbol n,\boldsymbol m}
	\coloneq
	\begin{bNiceMatrix}
		Q_1&\Cdots&Q_p
	\end{bNiceMatrix}^{\top}\).
	Next, it is shown that \(\mathbf Q_{\boldsymbol n,\boldsymbol m}\) satisfies the required
	orthogonality relations.
	Fix
	\(j\in\{1,\ldots,q\}\),
	\(\ell\in\{0,\ldots,m_j-1\}\).
	Using
	\(Q_i(z)
	=
	\sum_{k=0}^{n_i-1}\mathsf c_{i,k}z^k\),
	the corresponding moment of
	\(\mathbf Q_{\boldsymbol n,\boldsymbol m}\) can be written as
	\begin{align*}
		\sum_{i=1}^{p}
		\oint_{\Torus}
		z^\ell C_{j,i}(z)Q_i(z)
		\frac{\dz}{2\pi\mathrm i}
		&=
		\sum_{i=1}^{p}
		\sum_{k=0}^{n_i-1}
		\mathsf c_{i,k}
		\oint_{\Torus}
		z^{\ell+k}C_{j,i}(z)
		\frac{\dz}{2\pi\mathrm i}
		\\
		&=
		\sum_{i=1}^{p}
		\sum_{k=0}^{n_i-1}
		\mathsf c_{i,k}
		\frac{
			\Gamma((\kappa_i+k+\ell+1)\one_r+\boldsymbol a)
		}{
			\Gamma((\kappa_i+k+\ell+1)\one_q+\boldsymbol b+\boldsymbol e_j)
		},
	\end{align*}
	where the second equality uses the moment formula
	\eqref{eq:final-moment-entry}.
	
	Since
	\(\mathsf c_{i,k}
	=
	\operatorname*{Res}_{t=\kappa_i+k}\mathcal R(t)\),
	each term in the last double sum is the residue at
	\(t=\kappa_i+k\) of
	\begin{equation}
		\label{eq:final-A-orthogonality-residue-function}
		\mathcal R_{j,\ell}(t)
		\coloneq
		\mathcal R(t)
		\frac{\Gamma((t+\ell+1)\one_r+\boldsymbol a)}
		{\Gamma((t+\ell+1)\one_q+\boldsymbol b+\boldsymbol e_j)}.
	\end{equation}
	Hence the required moment is the sum of the residues of
	\(\mathcal R_{j,\ell}\) at the nodes \(t=\kappa_i+k\).
	
	Substitution of the definition of \(\mathcal R\) gives
	\[
	\mathcal R_{j,\ell}(t)
	=
	(-1)^{|\boldsymbol n|}
	\frac{P_{\boldsymbol b,\boldsymbol m}(t)}
	{D_{\boldsymbol\kappa,\boldsymbol n}(t)}
	\frac{\Gamma(t\one_q+\boldsymbol b+\one_q)}
	{\Gamma(t\one_r+\boldsymbol a+\one_r)}
	\frac{\Gamma((t+\ell+1)\one_r+\boldsymbol a)}
	{\Gamma((t+\ell+1)\one_q+\boldsymbol b+\boldsymbol e_j)}.
	\]
	Now
	\[
	\begin{aligned}
		\frac{\Gamma((t+\ell+1)\one_r+\boldsymbol a)}
		{\Gamma(t\one_r+\boldsymbol a+\one_r)}
		&=
		(t\one_r+\boldsymbol a+\one_r)_\ell,
		\\
		\frac{\Gamma(t\one_q+\boldsymbol b+\one_q)}
		{\Gamma((t+\ell+1)\one_q+\boldsymbol b+\boldsymbol e_j)}
		&=
		\frac{1}
		{(t\one_q+\boldsymbol b+\one_q)_\ell(t+b_j+\ell+1)}.
	\end{aligned}
	\]
	Therefore
	\begin{equation}
		\label{eq:final-A-residue-rational-form}
		\mathcal R_{j,\ell}(t)
		=
		(-1)^{|\boldsymbol n|}
		\frac{
			P_{\boldsymbol b,\boldsymbol m}(t)
			(t\one_r+\boldsymbol a+\one_r)_\ell
		}{
			D_{\boldsymbol\kappa,\boldsymbol n}(t)
			(t\one_q+\boldsymbol b+\one_q)_\ell
			(t+b_j+\ell+1)
		}.
	\end{equation}
	Since \(\boldsymbol m\) is near the diagonal and
	\(\ell\le m_j-1\), one has \(\ell\le m_h\) for every
	\(h\in\{1,\ldots,q\}\). Therefore, for each \(h\),
	\[
	(t+b_h+1)_{m_h}
	=
	(t+b_h+1)_\ell
	(t+b_h+\ell+1)_{m_h-\ell}.
	\]
	Hence
	\[
	P_{\boldsymbol b,\boldsymbol m}(t)
	=
	(t\one_q+\boldsymbol b+\one_q)_\ell
	\prod_{h=1}^{q}
	(t+b_h+\ell+1)_{m_h-\ell},
	\]
	so \((t\one_q+\boldsymbol b+\one_q)_\ell\) divides
	\(P_{\boldsymbol b,\boldsymbol m}(t)\). After this cancellation, the factor
	corresponding to the \(j\)-th component is
	\((t+b_j+\ell+1)_{m_j-\ell}\),
	which is divisible by \(t+b_j+\ell+1\). Thus all possible auxiliary
	poles in \eqref{eq:final-A-residue-rational-form} are removable, and
	its only finite poles are the simple poles at the zeros of
	\(D_{\boldsymbol\kappa,\boldsymbol n}\).
	
	The degree of the numerator in
	\eqref{eq:final-A-residue-rational-form} minus the degree of the
	denominator is
	\[
	|\boldsymbol m|+r\ell-\bigl(|\boldsymbol n|+q\ell+1\bigr)
	=
	-2-(q-r)\ell.
	\]
	Here the identity \(|\boldsymbol n|=|\boldsymbol m|+1\) has been used. Since \(0\le r<q\),
	\(\mathcal R_{j,\ell}(t)
	=
	\mathrm{O}(t^{-2})\),
	\(t\to\infty\).
	Hence the sum of its residues at all finite poles is zero. This sum is
	\[
	\sum_{i=1}^{p}
	\sum_{k=0}^{n_i-1}
	\mathsf c_{i,k}
	\frac{\Gamma((\ell+k+1+\kappa_i)\one_r+\boldsymbol a)}
	{\Gamma((\ell+k+1+\kappa_i)\one_q+\boldsymbol b+\boldsymbol e_j)},
	\]
	which, by \eqref{eq:final-moment-entry}, is precisely
	\[
	\sum_{i=1}^{p}
	\oint_{\Torus}
	z^\ell C_{j,i}(z)Q_i(z)
	\frac{\dz}{2\pi\mathrm i}.
	\]
	Therefore
	\[
	\sum_{i=1}^{p}
	\oint_{\Torus}
	z^\ell C_{j,i}(z)Q_i(z)
	\frac{\dz}{2\pi\mathrm i}
	=
	0,
	\qquad
	\ell\in\{0,\ldots,m_j-1\},
	\]
	and \(\mathbf Q_{\boldsymbol n,\boldsymbol m}\) satisfies the required orthogonality
	relations.
	
	It remains to compute the moment at the first level not covered by the
	orthogonality conditions. Take \(j=j_{\min}\) and
	\(\ell=m_{\min}\). Since \(\boldsymbol m\) is near the diagonal,
	\(m_h\in\{m_{\min},m_{\min}+1\}\) for every
	\(h\in\{1,\ldots,q\}\). Therefore,
	\[
	\frac{P_{\boldsymbol b,\boldsymbol m}(t)}
	{(t\one_q+\boldsymbol b+\one_q)_{m_{\min}}}
	=
	\prod_{h=1}^{q}
	(t+b_h+m_{\min}+1)_{m_h-m_{\min}}
	=
	\prod_{\substack{h=1\\m_h=m_{\min}+1}}^{q}
	(t+b_h+m_{\min}+1).
	\]
	Hence
	\[
	\mathcal R_{j_{\min},m_{\min}}(t)
	=
	(-1)^{|\boldsymbol n|}
	\frac{
		(t\one_r+\boldsymbol a+\one_r)_{m_{\min}}
	}{
		D_{\boldsymbol\kappa,\boldsymbol n}(t)
		(t+b_{j_{\min}}+m_{\min}+1)
	}
	\prod_{\substack{h=1\\m_h=m_{\min}+1}}^{q}
	(t+b_h+m_{\min}+1).
	\]
	Since \(m_{j_{\min}}=m_{\min}\), the product in the numerator does not
	contain the factor \(t+b_{j_{\min}}+m_{\min}+1\). Thus, apart from the
	simple poles at the zeros of \(D_{\boldsymbol\kappa,\boldsymbol n}\), the only
	additional pole is \(t_0=-b_{j_{\min}}-m_{\min}-1\).
	
	Let
	\[
	N_A
	\coloneq
	\sum_{i=1}^{p}
	\oint_{\Torus}
	z^{m_{\min}}C_{j_{\min},i}(z)Q_i(z)
	\frac{\dz}{2\pi\mathrm i}.
	\]
	The same degree computation gives
	\(\mathcal R_{j_{\min},m_{\min}}(t)
	=
	\mathrm{O}(t^{-2})\),
	\(t\to\infty\).
	Therefore the sum of all its finite residues is zero, and hence
	\(N_A
	=
	-\operatorname*{Res}_{t=t_0}
	\mathcal R_{j_{\min},m_{\min}}(t)\).
	At \(t=t_0\),
	\[
	D_{\boldsymbol\kappa,\boldsymbol n}(t_0)
	=
	(-1)^{|\boldsymbol n|}
	\prod_{i=1}^{p}
	(\kappa_i+b_{j_{\min}}+m_{\min}+1)_{n_i}.
	\]
	Moreover,
	\[
	(t_0\one_r+\boldsymbol a+\one_r)_{m_{\min}}
	=
	(\boldsymbol a-b_{j_{\min}}\one_r-m_{\min}\one_r)_{m_{\min}},
	\]
	while, for every \(h\) such that \(m_h=m_{\min}+1\),
	\(t_0+b_h+m_{\min}+1=b_h-b_{j_{\min}}\).
	Therefore
	\[
	\operatorname*{Res}_{t=t_0}
	\mathcal R_{j_{\min},m_{\min}}(t)
	=
	\frac{
		(\boldsymbol a-b_{j_{\min}}\one_r-m_{\min}\one_r)_{m_{\min}}
	}{
		\prod_{i=1}^{p}
		(\kappa_i+b_{j_{\min}}+m_{\min}+1)_{n_i}
	}
	\prod_{\substack{h=1\\m_h=m_{\min}+1}}^{q}
	(b_h-b_{j_{\min}}).
	\]
	Consequently,
	\[
	N_A
	=
	-
	\frac{
		(\boldsymbol a-b_{j_{\min}}\one_r-m_{\min}\one_r)_{m_{\min}}
	}{	\prod_{i=1}^{p}
		(\kappa_i+b_{j_{\min}}+m_{\min}+1)_{n_i}
	}
	\prod_{\substack{h=1\\m_h=m_{\min}+1}}^{q}
	(b_h-b_{j_{\min}}).
	\]
	By regularity and the standing admissibility convention, none of the
	Pochhammer symbols or linear factors in this
	expression vanishes, and its denominator is nonzero. Hence
	\(N_A\ne0\). Therefore every nonzero scalar multiple
	\[
	\mathbf A_{\boldsymbol n,\boldsymbol m}
	=
	\nu\mathbf Q_{\boldsymbol n,\boldsymbol m},
	\qquad
	\nu\ne0,
	\]
	satisfies the degree bounds and the orthogonality relations. Since the
	components of \(\mathbf Q_{\boldsymbol n,\boldsymbol m}\) have the hypergeometric
	representations obtained above, this gives
	\eqref{eq:final-A-components} and \eqref{eq:final-A-degree}.
	Finally, choosing
	\[
	\nu=\nu_{\boldsymbol n,\boldsymbol m}^{A}=N_A^{-1}
	\]
	gives \eqref{eq:final-A-nu} and the normalizing condition
	\eqref{eq:final-A-normalizing-moment}.

	\noindent\emph{Proof of \textnormal{(ii)}.}
	For each \(j\in\{1,\ldots,q\}\) with \(m_j\ge1\), let \(Q_j\) denote
	the expression inside brackets in \eqref{eq:final-B-components} with
	\(\nu=1\).
	If \(m_j=0\), set \(Q_j\equiv0\).

	For \(j,H\in\{1,\ldots,q\}\) and
	\(K\in\{1,\ldots,m_H-1\}\), define
	\begin{equation}
		\label{eq:final-B-Psi}
		\Psi_{j;H,K}(z)
		\coloneq
		\frac{
			(\boldsymbol b^{\,*j}-(b_H+K)\one_{q-1})_1
		}{
			(\boldsymbol a-(b_H+K)\one_r)_1
		}
		\pFq{q+1}{r}
		{
			1,\ \boldsymbol b-(b_H+K)\one_q+\one_q-\boldsymbol e_j
		}
		{
			\boldsymbol a-(b_H+K)\one_r+\one_r
		}
		{z}.
	\end{equation}
	Its \(H\)-th numerator parameter is
	\(1-K-\delta_{j,H}\), so \(\Psi_{j;H,K}\) is a terminating polynomial
	of degree at most \(K-1+\delta_{j,H}\). Its finite expansion is
	\begin{equation}
		\label{eq:final-B-Psi-finite}
		\Psi_{j;H,K}(z)
		=
		\frac{
			(\boldsymbol b^{\,*j}-(b_H+K)\one_{q-1})_1
		}{
			(\boldsymbol a-(b_H+K)\one_r)_1
		}
		\sum_{d=0}^{K-1+\delta_{j,H}}
		\frac{
			(\boldsymbol b-(b_H+K)\one_q+\one_q-\boldsymbol e_j)_d
		}{
			(\boldsymbol a-(b_H+K)\one_r+\one_r)_d
		}
		z^d.
	\end{equation}
	Since \(\boldsymbol m\) is near the diagonal and \(K\le m_H-1\), one has
	\(K-1+\delta_{j,H}\le m_j-1\). Hence
	\(\deg Q_j\le m_j-1\), and write
	\[
		Q_j(z)=\sum_{d=0}^{m_j-1}Q_j[d]z^d.
	\]
	
	By Lemma~\ref{lem:final-B-partial-fractions}, the rational function
	\[
	R(t)
	\coloneq
	-
	\frac{
		\prod_{i=1}^{p}(\kappa_i+1-t)_{n_i}
	}{
		\prod_{j=1}^{q}(t+b_j)_{m_j}
	}
	\]
	has the partial-fraction decomposition
	\eqref{eq:final-B-partial-fractions}. Moreover,
	Lemma~\ref{lem:final-B-simple-fraction-reconstruction} gives
	\eqref{eq:final-B-simple-fraction-reconstruction} for every
	\(H\in\{1,\ldots,q\}\) and
	\(K\in\{0,\ldots,m_H-1\}\).
	
	For every term occurring in
	\eqref{eq:final-B-simple-fraction-reconstruction}, the multi-index
	\(\boldsymbol m-d\one_q-\boldsymbol e_j\) belongs to \(\N_0^q\). Indeed, suppose
	first that \(j=H\). Then
	\[
	d\le K\le m_H-1=m_j-1,
	\]
	so the \(j\)-th component \(m_j-d-1\) is nonnegative. Moreover,
	near-diagonality gives \(m_h\ge m_H-1\) for every \(h\ne H\), and hence
	\(d\le m_h\). Thus all the remaining components \(m_h-d\) are also
	nonnegative.
	
	Suppose now that \(j\ne H\). Whenever such a term occurs,
	\(d\le K-1\le m_H-2\).
	Near-diagonality implies \(m_j\ge m_H-1\), and therefore
	\(d\le m_j-1\). It also implies \(m_h\ge m_H-1\) for every
	\(h\ne j\), so \(d\le m_h\). Hence
	\(\boldsymbol m-d\one_q-\boldsymbol e_j\in\N_0^q\) in this case as well.
	Consequently,
	\[
	\frac{(t\one_q+\boldsymbol b)_{\boldsymbol m}}
	{(t\one_q+\boldsymbol b)_d(t+b_j+d)}
	=
	(t\one_q+\boldsymbol b+d\one_q+\boldsymbol e_j)_{
		\boldsymbol m-d\one_q-\boldsymbol e_j}.
	\]
	Multiplying \eqref{eq:final-B-simple-fraction-reconstruction} by
	\((t\one_q+\boldsymbol b)_{\boldsymbol m}\) gives
	\begin{multline}
		\label{eq:final-B-polynomial-reconstruction}
		\frac{(t\one_q+\boldsymbol b)_{\boldsymbol m}}{t+b_H+K}
		=
		\sum_{j=1}^{q}
		\frac{(\boldsymbol a-b_j\one_r)_1}
		{(\boldsymbol b^{\,*j}-b_j\one_{q-1})_1}
		\frac{
			(\boldsymbol b^{\,*j}-(b_H+K)\one_{q-1})_1
		}{
			(\boldsymbol a-(b_H+K)\one_r)_1
		}
		\\
		\times
		\sum_{d=0}^{K-1+\delta_{j,H}}
		\frac{
			(\boldsymbol b-(b_H+K)\one_q+\one_q-\boldsymbol e_j)_d
		}{
			(\boldsymbol a-(b_H+K)\one_r+\one_r)_d
		}
		(t\one_r+\boldsymbol a)_d
		(t\one_q+\boldsymbol b+d\one_q+\boldsymbol e_j)_{
			\boldsymbol m-d\one_q-\boldsymbol e_j}.
	\end{multline}
	
	Multiplying \eqref{eq:final-B-partial-fractions} by the common
	denominator
\(	(t\one_q+\boldsymbol b)_{\boldsymbol m}
	=
	\prod_{h=1}^{q}(t+b_h)_{m_h}\),
	clears all the simple fractions. Next substitute, for each pair
	\((H,K)\), the reconstruction formula
	\eqref{eq:final-B-polynomial-reconstruction}. Every term produced by
	this substitution is a scalar multiple of a polynomial of the form
	\[
	(t\one_r+\boldsymbol a)_d
	(t\one_q+\boldsymbol b+d\one_q+\boldsymbol e_j)_{
		\boldsymbol m-d\one_q-\boldsymbol e_j},
	\qquad
	j\in\{1,\ldots,q\}.
	\]
	Indeed,
	\[
	(t\one_q+\boldsymbol b+d\one_q+\boldsymbol e_j)_{
		\boldsymbol m-d\one_q-\boldsymbol e_j}
	=
	\frac{
		(t\one_q+\boldsymbol b)_{\boldsymbol m}
	}{
		(t\one_q+\boldsymbol b)_d(t+b_j+d)
	},
	\]
	so this factor is exactly the polynomial obtained after multiplying
	the corresponding rational basis element by the common denominator
	\((t\one_q+\boldsymbol b)_{\boldsymbol m}\).
	
	All the sums involved are finite, so their order may be interchanged
	and the terms having the same indices \(j\) and \(d\) may be collected
	together. By the definition of
	\(Q_j(z)=\sum_{d=0}^{m_j-1}Q_j[d]z^d\),
	the sum of all scalar coefficients multiplying
	\((t\one_r+\boldsymbol a)_d
	(t\one_q+\boldsymbol b+d\one_q+\boldsymbol e_j)_{
		\boldsymbol m-d\one_q-\boldsymbol e_j}\)
	is precisely \(Q_j[d]\). Thus the right-hand side obtained after
	clearing denominators and performing the reconstruction is
	\[
	\sum_{j=1}^{q}\sum_{d=0}^{m_j-1}
	Q_j[d]
	(t\one_r+\boldsymbol a)_d
	(t\one_q+\boldsymbol b+d\one_q+\boldsymbol e_j)_{
		\boldsymbol m-d\one_q-\boldsymbol e_j}.
	\]
	On the other hand, by the definition of the rational function in
	\eqref{eq:final-B-partial-fractions},
	\[
	(t\one_q+\boldsymbol b)_{\boldsymbol m}R(t)
	=
	-
	\prod_{i=1}^{p}(\kappa_i+1-t)_{n_i}.
	\]
	Equating these two expressions for
	\((t\one_q+\boldsymbol b)_{\boldsymbol m}R(t)\) gives
	\begin{equation}
		\label{eq:final-B-polynomial-identity}
		\sum_{j=1}^{q}\sum_{d=0}^{m_j-1}
		Q_j[d]
		(t\one_r+\boldsymbol a)_d
		(t\one_q+\boldsymbol b+d\one_q+\boldsymbol e_j)_{
			\boldsymbol m-d\one_q-\boldsymbol e_j}
		=
		-
		\prod_{i=1}^{p}(\kappa_i+1-t)_{n_i},
	\end{equation}
	For \(K=0\) in
	\eqref{eq:final-B-polynomial-reconstruction}, only the term \(j=H\)
	survives, and its two prefactors cancel:
	\[
	\frac{(\boldsymbol a-b_H\one_r)_1}
	{(\boldsymbol b^{\,*H}-b_H\one_{q-1})_1}
	\frac{(\boldsymbol b^{\,*H}-b_H\one_{q-1})_1}
	{(\boldsymbol a-b_H\one_r)_1}
	=1.
	\]
	Thus the \(K=0\) blocks contribute precisely the constant coefficients
	\(\pi_{j,0}\), see \eqref{eq:final-B-pi-zero}.
	
	Fix \(i\in\{1,\ldots,p\}\), let \(s\ge1\), and put
	\(t=s+\kappa_i\). By \eqref{eq:final-moment-entry},
	\[
	\sum_{j=1}^{q}
	\oint_{\Torus}
	z^{s-1}Q_j(z)C_{j,i}(z)
	\frac{\dz}{2\pi\mathrm i}
	=
	\sum_{j=1}^{q}\sum_{d=0}^{m_j-1}
	Q_j[d]
	\frac{\Gamma((t+d)\one_r+\boldsymbol a)}
	{\Gamma((t+d)\one_q+\boldsymbol b+\boldsymbol e_j)}.
	\]
	Moreover,
	\[
	\frac{\Gamma((t+d)\one_r+\boldsymbol a)}
	{\Gamma((t+d)\one_q+\boldsymbol b+\boldsymbol e_j)}
	=
	\frac{\Gamma(t\one_r+\boldsymbol a)}
	{\Gamma(t\one_q+\boldsymbol b+\boldsymbol m)}
	(t\one_r+\boldsymbol a)_d
	(t\one_q+\boldsymbol b+d\one_q+\boldsymbol e_j)_{
		\boldsymbol m-d\one_q-\boldsymbol e_j}.
	\]
	Equation~\eqref{eq:final-B-polynomial-identity} gives
	\begin{equation}
		\label{eq:final-B-moment-identity}
		\sum_{j=1}^{q}
		\oint_{\Torus}
		z^{s-1}Q_j(z)C_{j,i}(z)
		\frac{\dz}{2\pi\mathrm i}
		=
		-
		\frac{
			\Gamma((s+\kappa_i)\one_r+\boldsymbol a)
		}{
			\Gamma((s+\kappa_i)\one_q+\boldsymbol b+\boldsymbol m)
		}
		\prod_{h=1}^{p}
		(\kappa_h-\kappa_i+1-s)_{n_h}.
	\end{equation}
	Taking \(s=\ell+1\), with
	\(\ell\in\{0,\ldots,n_i-1\}\), the factor corresponding to \(h=i\)
	is \((-\ell)_{n_i}=0\). Thus the \(B\)-orthogonality relations hold.
	
	The finite summation ranges give \(\deg Q_j\le m_j-1\). If \(m_j=0\),
	the theorem defines \(B_{\boldsymbol n,\boldsymbol m}^{(j)}\equiv0\). Therefore
	\(\mathbf B_{\boldsymbol n,\boldsymbol m}=\nu\begin{bNiceMatrix}
		Q_1&\Cdots&Q_q	\end{bNiceMatrix}\) satisfies all the
	asserted degree bounds and orthogonality relations.
	
	It remains to compute the monic normalization. Let
	\(j=j_{\max}\) and \(m_{\max}=m_{j_{\max}}\). If \(m_{\max}=1\), then
	\(Q_{j_{\max}}\) is constant and its leading coefficient is
	\(\pi_{j_{\max},0}\). Assume \(m_{\max}\ge2\). A block \((H,K)\) can
	contribute to the coefficient of \(z^{m_{\max}-1}\) only if
	\(K-1+\delta_{j_{\max},H}\ge m_{\max}-1\). Since
	\(K\le m_H-1\le m_{\max}-1\), this is possible only for
	\(H=j_{\max}\) and \(K=m_{\max}-1\). Hence only the term with
	\(H=j_{\max}\) and \(K=m_{\max}-1\) contributes to the coefficient of
	\(z^{m_{\max}-1}\) in \(Q_{j_{\max}}(z)\). Therefore, that coefficient is
	\begin{equation*}
		\frac{(\boldsymbol a-b_{j_{\max}}\one_r)_1}
		{(\boldsymbol b^{\,*j_{\max}}-b_{j_{\max}}\one_{q-1})_1}
		\pi_{j_{\max},m_{\max}-1}
		[z^{m_{\max}-1}]\Psi_{j_{\max};j_{\max},m_{\max}-1}(z).
	\end{equation*}
	By \eqref{eq:final-B-Psi-finite},
	\begin{multline*}
		[z^{m_{\max}-1}]
		\Psi_{j_{\max};j_{\max},m_{\max}-1}(z)
		=
		\frac{
			(\boldsymbol b^{\,*j_{\max}}
			-(b_{j_{\max}}+m_{\max}-1)\one_{q-1})_1
		}{
			(\boldsymbol a
			-(b_{j_{\max}}+m_{\max}-1)\one_r)_1
		}
		\\
		\times
		\frac{
			(\boldsymbol b
			-(b_{j_{\max}}+m_{\max}-1)\one_q
			+\one_q-\boldsymbol e_{j_{\max}})_{m_{\max}-1}
		}{
			(\boldsymbol a
			-(b_{j_{\max}}+m_{\max}-1)\one_r
			+\one_r)_{m_{\max}-1}
		}.
	\end{multline*}
	The cancellations are made explicit. The \(j_{\max}\)-th entry in the
	vectorial Pochhammer symbol in the numerator is
		\((-m_{\max}+1)_{m_{\max}-1}
		=
		(-1)^{m_{\max}-1}(m_{\max}-1)!\),
	which cancels the corresponding factorial factor in
	\(\pi_{j_{\max},m_{\max}-1}\). Moreover,
	\begin{align*}
	&\bigl(\boldsymbol a-b_{j_{\max}}\one_r
	-(m_{\max}-1)\one_r\bigr)_1
	\bigl(\boldsymbol a-b_{j_{\max}}\one_r
	-(m_{\max}-2)\one_r\bigr)_{m_{\max}-1}\\
	&\qquad=
	\bigl(\boldsymbol a-b_{j_{\max}}\one_r
	-(m_{\max}-1)\one_r\bigr)_{m_{\max}}.
	\end{align*}
	Finally, for every \(h\ne j_{\max}\), near-diagonality gives
	\(m_h\in\{m_{\max},m_{\max}-1\}\), and
	\begingroup
	\small
	\[
	\begin{aligned}
	&
	\frac{
		(b_h-b_{j_{\max}}-m_{\max}+1)
		(b_h-b_{j_{\max}}-m_{\max}+2)_{m_{\max}-1}
	}{
		(b_h-b_{j_{\max}}-m_{\max}+1)_{m_h}
		(b_h-b_{j_{\max}})
	}
	\\
	&\qquad=
	\begin{cases}
		\dfrac{1}{b_h-b_{j_{\max}}},&m_h=m_{\max},\\[2mm]
		1,&m_h=m_{\max}-1.
	\end{cases}
	\end{aligned}
	\]
	Substituting \eqref{eq:final-B-pi} and applying these identities gives
	\begin{align*}
	[z^{m_{\max}-1}]Q_{j_{\max}}(z)
	&=
	-
	\frac{
		(\boldsymbol a-b_{j_{\max}}\one_r)_1
	}{
		(\boldsymbol a-b_{j_{\max}}\one_r
		-(m_{\max}-1)\one_r)_{m_{\max}}
	}
	\\
	&\quad\times
	\frac{
		\prod_{i=1}^{p}
		(\kappa_i+b_{j_{\max}}+m_{\max})_{n_i}
	}{
		\prod_{\substack{h=1\\
				h\ne j_{\max},\ m_h=m_{\max}}}^{q}
		(b_h-b_{j_{\max}})
	}.
	\end{align*}
	\endgroup
	The same formula follows directly from \(\pi_{j_{\max},0}\) when
	\(m_{\max}=1\). This coefficient is finite and nonzero by regularity and the
	standing admissibility convention.
	Therefore the choice \(\nu=\nu_{\boldsymbol n,\boldsymbol m}^{B}\) in
	\eqref{eq:final-B-nu} makes
	\(B_{\boldsymbol n,\boldsymbol m}^{(j_{\max})}\) monic.
\end{proof}

The explicit formula for the \(B\)-polynomial vector obtained above is already
finite and directly computable. However, its coefficients contain nested
finite sums whose structure is naturally recognized as a terminating
bivariate Kamp\'e de F\'eriet series.

\subsection{Kamp\'e de F\'eriet representation of the
\texorpdfstring{\(B\)}{B}-components}
\label{subsec:B-Kampe-de-Feriet}

Each pole-family contribution is rewritten in this form. This reformulation is not needed for the
orthogonality proof, but it displays the hypergeometric structure of the
\(B\)-components explicitly and gives a compact closed form for the blocks
appearing in each component.

\begin{corollary}[Kamp\'e de F\'eriet form of the \(B\)-components]
	\label{cor:final-B-KdF}
	Under the assumptions of
	Theorem~\ref{thm:final-explicit}\textnormal{(ii)}, let
	\(j\in\{1,\ldots,q\}\). If \(m_j=0\), then
	\(B_{\boldsymbol n,\boldsymbol m}^{(j)}\equiv0\). If \(m_j\ge1\), then
	\begin{equation}
		\label{eq:final-B-KdF-representation}
		B_{\boldsymbol n,\boldsymbol m}^{(j)}(z)
		=
		\nu
		\Big(
		\pi_{j,0}\mathcal K_{j}(z)
		+
		\sum_{\substack{H=1\\H\ne j,\ m_H\ge2}}^{q}
		\mathcal C_{j,H}
		\mathcal K_{j,H}(z)
		\Big),
	\end{equation}
	where
	\begingroup
	\scriptsize
	\begin{equation}
		\label{eq:final-B-KdF-diagonal}
		\begin{aligned}
		\mathcal K_{j}(z)
		&\coloneq\\[-1mm]
		&\quad
		F_{p+r:0;q-1}^{p+q:1;r}
		\left[
		\begin{array}{c}
			\substack{
			1-m_j,\ \boldsymbol\kappa+(b_j+1)\one_p+\boldsymbol n,\\
			b_j\one_{q-1}-\boldsymbol b^{\,*j}
			+\one_{q-1}-\boldsymbol m^{\,*j}
			:1;\ b_j\one_r-\boldsymbol a}
			\\
			\substack{
			\boldsymbol\kappa+(b_j+1)\one_p,\ 
			b_j\one_r-\boldsymbol a+\one_r,\\
			:\text{---};\ b_j\one_{q-1}-\boldsymbol b^{\,*j}}
		\end{array}
		\middle|
		(-1)^{q-r}z,1
		\right],
		\end{aligned}
	\end{equation}
	\endgroup
	and, for \(H\ne j\) with \(m_H\ge2\),
	\begin{equation}
		\label{eq:final-B-KdF-offdiagonal-prefactor}
		\mathcal C_{j,H}
		\coloneq
		\pi_{H,1}
		\frac{(\boldsymbol a-b_j\one_r)_1}
		{(\boldsymbol b^{\,*j}-b_j\one_{q-1})_1}
		\frac{
			(\boldsymbol b^{\,*j}-(b_H+1)\one_{q-1})_1
		}{
			(\boldsymbol a-(b_H+1)\one_r)_1
		},
	\end{equation}
	while
	\begin{equation}
		\label{eq:final-B-KdF-offdiagonal}
		\resizebox{\textwidth}{!}{$
		\begin{multlined}[t]
			\mathcal K_{j,H}(z)
			\coloneq
			F_{p+r:0;q-1}^{p+q:1;r}
			\left[
			\begin{array}{c}
				2-m_H,\,
				\boldsymbol\kappa+(b_H+2)\one_p+\boldsymbol n,\,
				b_H\one_{q-1}-\boldsymbol b^{\,*H}
				+2\one_{q-1}-\boldsymbol m^{\,*H}
				:1;\,
				b_H\one_r-\boldsymbol a+\one_r
				\\
				\boldsymbol\kappa+(b_H+2)\one_p,\,
				b_H\one_r-\boldsymbol a+2\one_r
				:\text{---};\,
				b_H\one_{q-1}-\boldsymbol b^{\,*H}
				+\one_{q-1}+\boldsymbol e_j^{\,*H}
			\end{array}
			\middle|
			(-1)^{q-r}z,1
			\right].
		\end{multlined}
		$}
	\end{equation}
	Here \(\boldsymbol e_j^{\,*H}\) denotes the vector obtained from
	\(\boldsymbol e_j\) by deleting its \(H\)-th component. The parameter
	\(1-m_j\) makes \(\mathcal K_j\) terminate on
	\(u+\lambda\le m_j-1\), while \(2-m_H\) makes
	\(\mathcal K_{j,H}\) terminate on
	\(u+\lambda\le m_H-2\).
\end{corollary}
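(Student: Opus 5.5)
The plan is to start from the explicit formula \eqref{eq:final-B-components} and group its double sum over \((H,K)\) by the pole family \(H\). For fixed \(j\) and \(H\), write \(\pi_{H,K}\) from \eqref{eq:final-B-pi} as \(\pi_{H,K_0}\) times a ratio of Pochhammer symbols in \(K\). Here \(K_0=0\) when \(H=j\), and \(K_0=1\) when \(H\ne j\), because the \(H\ne j\), \(K=0\) terms vanish as in the proof of Lemma~\ref{lem:final-B-simple-fraction-reconstruction}. Concretely, the following shift identities give \(K\)-dependent quotients:
\[
(\kappa_i+b_H+K+1)_{n_i}=(\kappa_i+b_H+1)_{n_i}\frac{(\kappa_i+b_H+1+n_i)_K}{(\kappa_i+b_H+1)_K},
\qquad
\frac{1}{(m_H-K-1)!}=\frac{(1-m_H)_K(-1)^K}{(m_H-1)!},
\qquad
(b_I-b_H-K)_{m_I}=(b_I-b_H)_{m_I}\frac{(b_H-b_I+1-m_I)_K}{(b_H-b_I+1)_K}.
\]
The \(K\)-dependent prefactors \((\boldsymbol b^{\,*j}-(b_H+K)\one_{q-1})_1/(\boldsymbol a-(b_H+K)\one_r)_1\) are handled the same way: each linear factor \(x-K\) is rewritten as \(x(1-x)_K/(-x)_K\).

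Next I expand each \({}_{q+1}F_r\) block as the finite sum \eqref{eq:final-B-Psi-finite} over \(d\), and reindex by \(u\coloneq d\) and \(\lambda\coloneq K-d\) (respectively \(K-1-d\) in the off-diagonal case). The Pochhammer symbols \((\boldsymbol b-(b_H+K)\one_q+\one_q-\boldsymbol e_j)_d\) and \((\boldsymbol a-(b_H+K)\one_r+\one_r)_d\) have arguments depending on \(K=u+\lambda\). Using reflection-type identities of the form \((x-K)_d=(-1)^d(1-x+K-d)_d\) together with \((y)_{u+\lambda}/(y)_\lambda=(y+\lambda)_u\), I rewrite them as quotients of Pochhammer symbols indexed by \(u+\lambda\), \(u\), and \(\lambda\) separately. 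The sign \((-1)^{(q+1)d-rd}\) together with the alternating factors collected in the previous step should yield the argument \((-1)^{q-r}z\) for the \(u\)-variable and \(1\) for the \(\lambda\)-variable. The single "\(1\)" upper parameter in the \(u\)-block records the leading \(1\) of the \({}_{q+1}F_r\). Matching the resulting double sum term by term with \eqref{eq:standard-KdF-definition} then identifies \(\mathcal K_j\) for \(H=j\), with prefactor \(\pi_{j,0}\). The two prefactors cancel there exactly as in the \(K=0\) computation in the proof of Theorem~\ref{thm:final-explicit}. For \(H\ne j\) the same matching identifies \(\mathcal K_{j,H}\) with prefactor \(\mathcal C_{j,H}\). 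The deleted vector \(\boldsymbol e_j^{\,*H}\) appears because the \(j\)-th component carries the shift \(-\boldsymbol e_j\).

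Termination is then read off from \((1-m_j)_{u+\lambda}\) or \((2-m_H)_{u+\lambda}\), using \(K\le m_H-1\).

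I expect the main obstacle to be the bookkeeping in the second step. The \(K\)-dependence sits simultaneously in the \(d\)-Pochhammer arguments and in the prefactors, and each factor has to be split correctly into its \((u+\lambda)\)-, \(u\)-, and \(\lambda\)-parts, with consistent signs. I would first check the case \(q=2\), \(r=0\), \(p=1\) numerically to confirm the parameter strings and the sign \((-1)^{q-r}\) before writing the general bookkeeping.
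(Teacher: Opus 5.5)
Your route is the same as the paper's. You group by the pole family \(H\), take the base point \(K=0\) for \(H=j\) and \(K=1\) for \(H\ne j\), set \(u=d\), \(\lambda=K-d\) (resp.\ \(K-1-d\)), and split every factor into Pochhammer symbols of orders \(u+\lambda\), \(u\), \(\lambda\). As written, however, the first step contains an error that would make the final matching fail: your third shift identity is inverted. From \((x-K)_m=(x)_m\,(1-x)_K/(1-x-m)_K\) one gets
\[
(b_I-b_H-K)_{m_I}=(b_I-b_H)_{m_I}\,\frac{(b_H-b_I+1)_K}{(b_H-b_I+1-m_I)_K}.
\]
For \(m_I=K=1\) the left side is \(b_I-b_H-1\), whereas your right side is \((b_I-b_H)^2/(b_I-b_H-1)\). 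Since this factor sits in the denominator of \(\pi_{H,K}\), the correct version puts \(\prod_{I\ne H}(b_H-b_I+1-m_I)_K/(b_H-b_I+1)_K\) into \(\pi_{H,K}/\pi_{H,0}\). That is what places \(b_j\one_{q-1}-\boldsymbol b^{\,*j}+\one_{q-1}-\boldsymbol m^{\,*j}\) in the coupled numerator of \(\mathcal K_j\). Its denominator \((1+b_j-b_I)_K\) is then cancelled exactly by the \((1+b_j-b_I)_K\) produced when you rewrite \((\boldsymbol b^{\,*j}-(b_j+K)\one_{q-1})_1\) via \(x(1-x)_K/(-x)_K\). With your version, the \(\boldsymbol m\)-shifted string lands in the coupled denominator and the factors \((1+b_j-b_I)_K\) square instead of cancelling, so you cannot reach \eqref{eq:final-B-KdF-diagonal}. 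The same applies to \eqref{eq:final-B-KdF-offdiagonal}, where the correct ratio is \((2+b_H-b_I-m_I)_s/(2+b_H-b_I)_s\) with \(s=K-1\).

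The sign count also needs fixing. The only signs come from reflecting the \(q\) scalar factors of \((\boldsymbol b-(b_H+K)\one_q+\one_q-\boldsymbol e_j)_u\) and the \(r\) scalar factors of \((\boldsymbol a-(b_H+K)\one_r+\one_r)_u\), which gives exactly \((-1)^{(q-r)u}\). The parameter \(1\) is never reflected, since the finite expansion has no \((1)_d/d!\) at all. The \((-1)^K\) in the denominator of \(\pi_{H,K}\) cancels exactly against the \((-1)^K\) in your second identity. So there are no leftover alternating factors to absorb the extra \((-1)^d\) in your \((-1)^{(q+1)d-rd}\), and an uncancelled \((-1)^K=(-1)^{u+\lambda}\) would in any case turn the second argument into \(-1\).

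Finally, for \(H\ne j\) the \(H\)-th factor of \((\boldsymbol b^{\,*j}-(b_H+K)\one_{q-1})_1\) is \(-K\), for which \(x(1-x)_K/(-x)_K\) degenerates (\(x=0\)). All identities in that family must therefore be re-based in \(s=K-1\), writing \(-K=-(2)_s/(1)_s\). The \((2)_s\) then cancels the \(1/(1+s)!=1/(2)_s\) from \(\pi_{H,1+s}/\pi_{H,1}\). The \((1)_s=s!\) cancels the \(s!\) in \((-s)_u=(-1)^u s!/\lambda!\), which comes from the \(H\)-th component of the \(d\)-Pochhammer symbol; this is where the \(1/\lambda!\) of the Kamp\'e de F\'eriet series originates. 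In the diagonal family, the same \((-s)_u\) cancels the \(s!\) in \((1-m_j)_s/s!\). With these corrections your computation reproduces the paper's.
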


\begin{proof}
	Fix \(j\in\{1,\ldots,q\}\) and expand the terminating polynomials
	\(\Psi_{j;H,K}\) according to
	\eqref{eq:final-B-Psi-finite}. The pole families are treated separately
	with \(H=j\) and \(H\ne j\).
	
	Assume first that \(H=j\). Set \(K=u+\lambda\) and \(d=u\). The
	conditions \(0\le d\le K\) and \(K\le m_j-1\) are equivalent to
	\(u,\lambda\in\N_0\) and \(u+\lambda\le m_j-1\). This change of
	variables also incorporates the collapsed term \(K=0\): the point
	\(u=\lambda=0\) contributes precisely \(\pi_{j,0}\).
	Put \(s\coloneq u+\lambda\). Directly from
	\eqref{eq:final-B-pi},
	\begin{equation*}
		\frac{\pi_{j,s}}{\pi_{j,0}}
		=
		\frac{(1-m_j)_s}{s!}
		\frac{
			\bigl(\boldsymbol\kappa+(b_j+1)\one_p+\boldsymbol n\bigr)_s
		}{
			\bigl(\boldsymbol\kappa+(b_j+1)\one_p\bigr)_s
		}
		\frac{
			\bigl(b_j\one_{q-1}-\boldsymbol b^{\,*j}
			+\one_{q-1}-\boldsymbol m^{\,*j}\bigr)_s
		}{
			\bigl(b_j\one_{q-1}-\boldsymbol b^{\,*j}
			+\one_{q-1}\bigr)_s
		}.
	\end{equation*}
	The terminating parameter supplied by the \(H=j\) entry is \(-s\).
	Since \(s-u=\lambda\),
		\((-s)_u=(-1)^u\dfrac{s!}{\lambda!}\).
	Thus the factor \(s!\) cancels the factorial in the preceding quotient,
	while \(\lambda!\) becomes the second factorial of the bivariate series.
	
	Using
	\((a+s)_n=(a)_n(a+n)_s/(a)_s\) and
	\((a-s)_n=(a)_n(1-a)_s/(1-a-n)_s\), with
	\(s=u+\lambda\), in the factors coming from
	\eqref{eq:final-B-pi}, and then applying the same identities to the
	factors in \eqref{eq:final-B-Psi-finite}, the summand corresponding to
	\((u,\lambda)\) becomes
	\begin{multline*}
		\pi_{j,0}\,
		\frac{
			(1-m_j)_{u+\lambda}
			\bigl(\boldsymbol\kappa+(b_j+1)\one_p+\boldsymbol n\bigr)_{u+\lambda}
		}{
			\bigl(\boldsymbol\kappa+(b_j+1)\one_p\bigr)_{u+\lambda}
		}
		\\
		\times
		\frac{
			\bigl(
			b_j\one_{q-1}-\boldsymbol b^{\,*j}
			+\one_{q-1}-\boldsymbol m^{\,*j}
			\bigr)_{u+\lambda}
		}{
			\bigl(b_j\one_r-\boldsymbol a+\one_r\bigr)_{u+\lambda}
		}
		\frac{
			(1)_u\bigl(b_j\one_r-\boldsymbol a\bigr)_\lambda
		}{
			\bigl(b_j\one_{q-1}-\boldsymbol b^{\,*j}\bigr)_\lambda
		}
		\frac{\bigl((-1)^{q-r}z\bigr)^u}{u!}
		\frac{1}{\lambda!}.
	\end{multline*}
	The cancellations arising when the two
	shift identities are applied to
	\eqref{eq:final-B-pi} and
	\eqref{eq:final-B-Psi-finite} are as follows. In the coupled \(u+\lambda\) block,
	the \(j\)-th entry of
	\(b_j\one_q-\boldsymbol b+\boldsymbol e_j\) is \(1\), while its remaining entries
	form the string \(b_j\one_{q-1}-\boldsymbol b^{\,*j}\). Thus this \(q\)-string,
	together with
	\(b_j\one_{q-1}-\boldsymbol b^{\,*j}+\one_{q-1}\), cancels the scalar
	parameter \(1\) and the corresponding two \((q-1)\)-strings in the
	denominator. The common string \(b_j\one_r-\boldsymbol a\) also cancels in the
	coupled block. In the \(\lambda\)-dependent block, the common parameter
	\(1\) cancels, leaving \(b_j\one_r-\boldsymbol a\) in the numerator and
	\(b_j\one_{q-1}-\boldsymbol b^{\,*j}\) in the denominator.
	
	The reflected Pochhammer factors contribute
	\((-1)^{(q-r)u}\), which combines with \(z^u\) to give
	\(\bigl((-1)^{q-r}z\bigr)^u\). Hence the preceding expression is
	exactly the \((u,\lambda)\)-term of
	\(\pi_{j,0}\mathcal K_j(z)\). The coupled parameter \(1-m_j\)
	restricts the sum to \(u+\lambda\le m_j-1\).
	
	Assume now that \(H\ne j\). Set \(K=1+u+\lambda\) and \(d=u\). The
	conditions \(0\le d\le K-1\) and \(K\le m_H-1\) are equivalent to
	\(u,\lambda\in\N_0\) and \(u+\lambda\le m_H-2\). In particular, this
	family is empty unless \(m_H\ge2\). At \(u=\lambda=0\), one has
	\(K=1\) and \(d=0\), and the corresponding coefficient is
	\(\mathcal C_{j,H}\) by its definition.
	Put again \(s\coloneq u+\lambda\). The shift from \(K=1\) to
	\(K=1+s\) gives
	\begin{equation*}
		\frac{\pi_{H,1+s}}{\pi_{H,1}}
		=
		\frac{(2-m_H)_s}{(2)_s}
		\frac{
			\bigl(\boldsymbol\kappa+(b_H+2)\one_p+\boldsymbol n\bigr)_s
		}{
			\bigl(\boldsymbol\kappa+(b_H+2)\one_p\bigr)_s
		}
		\frac{
			\bigl(b_H\one_{q-1}-\boldsymbol b^{\,*H}
			+2\one_{q-1}-\boldsymbol m^{\,*H}\bigr)_s
		}{
			\bigl(b_H\one_{q-1}-\boldsymbol b^{\,*H}
			+2\one_{q-1}\bigr)_s
		}.
	\end{equation*}
	The \(H\)-th numerator parameter is now \(1-K=-s\), and therefore
		\((-s)_u=(-1)^u\frac{s!}{\lambda!}\).
	These two identities account explicitly for the coupled terminating
	parameter and for the factorial split in the off-diagonal block.
	
	Applying the same two shift identities, now with
	\(s=u+\lambda\) and base point \(K=1\), the summand corresponding to
	\((u,\lambda)\) becomes
	\begin{multline*}
		\mathcal C_{j,H}\,
		\frac{
			(2-m_H)_{u+\lambda}
			\bigl(\boldsymbol\kappa+(b_H+2)\one_p+\boldsymbol n\bigr)_{u+\lambda}
		}{
			\bigl(\boldsymbol\kappa+(b_H+2)\one_p\bigr)_{u+\lambda}
		}
		\\
		\times
		\frac{
			\bigl(
			b_H\one_{q-1}-\boldsymbol b^{\,*H}
			+2\one_{q-1}-\boldsymbol m^{\,*H}
			\bigr)_{u+\lambda}
		}{
			\bigl(b_H\one_r-\boldsymbol a+2\one_r\bigr)_{u+\lambda}
		}
		\frac{
			(1)_u
			\bigl(b_H\one_r-\boldsymbol a+\one_r\bigr)_\lambda
		}{
			\bigl(
			b_H\one_{q-1}-\boldsymbol b^{\,*H}
			+\one_{q-1}+\boldsymbol e_j^{\,*H}
			\bigr)_\lambda
		}
		\\
		\times
		\frac{\bigl((-1)^{q-r}z\bigr)^u}{u!}
		\frac{1}{\lambda!}.
	\end{multline*}
	The cancellations arising from the application
	of the shift identities to \eqref{eq:final-B-pi} and
	\eqref{eq:final-B-Psi-finite} are described next. Since \(H\ne j\), the \(H\)-th entry of
	\(b_H\one_q-\boldsymbol b+\one_q+\boldsymbol e_j\) is \(1\), its \(j\)-th entry is
	\(b_H-b_j+2\), and its remaining entries are \(b_H-b_h+1\).
	Together with the string
	\(b_H\one_{q-1}-\boldsymbol b^{\,*j}+2\one_{q-1}\), these entries cancel the
	scalar parameter \(2\), the string
	\(b_H\one_{q-1}-\boldsymbol b^{\,*H}+2\one_{q-1}\), and the string
	\(b_H\one_{q-1}-\boldsymbol b^{\,*j}+\one_{q-1}\) in the coupled denominator.
	The common string \(b_H\one_r-\boldsymbol a+\one_r\) also cancels in that
	block.
	
	In the \(\lambda\)-dependent block, the numerator contains the scalar
	parameter \(1\). This cancels the \(H\)-th entry of
	\(b_H\one_q-\boldsymbol b+\one_q+\boldsymbol e_j\). After deleting that entry, the
	remaining denominator string is
	\(b_H\one_{q-1}-\boldsymbol b^{\,*H}
	+\one_{q-1}+\boldsymbol e_j^{\,*H}\). As in the diagonal case, the reflected
	Pochhammer factors contribute \((-1)^{(q-r)u}\). Therefore the
	preceding expression is exactly the \((u,\lambda)\)-term of
	\(\mathcal C_{j,H}\mathcal K_{j,H}(z)\). The coupled parameter
	\(2-m_H\) restricts the sum to \(u+\lambda\le m_H-2\).
	
	Summing the contribution \(\pi_{j,0}\mathcal K_j(z)\) of the family
	\(H=j\) and the contributions
	\(\mathcal C_{j,H}\mathcal K_{j,H}(z)\) of the families \(H\ne j\)
	gives \eqref{eq:final-B-KdF-representation}.
\end{proof}

\subsection{Weak and strong normality}
\label{subsec:weak-strong-normality}

Theorem~\ref{thm:final-explicit} constructs explicit solutions of the
near-diagonal mixed problems. It is shown below that, under the same regularity
assumptions, these solutions exhaust the corresponding solution spaces. The
argument is entirely finite. On the \(A\)-side, the orthogonality conditions
force the numerator of a rational-Gamma interpolant to be divisible by a
prescribed polynomial. On the \(B\)-side, they force a reconstruction
polynomial to vanish at a prescribed set of nodes, while a partial-fraction
argument shows that this polynomial determines the original vector uniquely.

\begin{proposition}[Finite rational characterization and weak normality]
	\label{prop:finite-rational-characterization}
	Assume that the Bessel-like parameters are regular in the sense of
	Definition~\ref{def:final-parameter-regularity}. Then, for the
	near-diagonal balanced index pairs covered by
	Theorem~\ref{thm:final-explicit}, the corresponding mixed problems are
	weakly normal. Equivalently, their solution spaces are one-dimensional,
	and any two nonzero solutions differ by multiplication by a nonzero
	constant.
	
	More precisely, consider first an \(A\)-balanced pair. Every column
	vector
	\(\mathbf P
	=
	\begin{bNiceMatrix}
		P^{(1)}&
		\Cdots&
		P^{(p)}
	\end{bNiceMatrix}^\top\),
	with \(P^{(i)}\in\Poly_{n_i-1}\) when \(n_i\ge1\) and
	\(P^{(i)}\equiv0\) when \(n_i=0\), satisfying
	\eqref{eq:final-A-orth}, is a scalar multiple of
	\(\mathbf A_{\boldsymbol n,\boldsymbol m}\).
	
	For a \(B\)-balanced pair, every row vector
\(	\mathbf Q
	=
	\begin{bNiceMatrix}
		Q^{(1)}&\Cdots&Q^{(q)}
	\end{bNiceMatrix}\),
	with \(Q^{(j)}\in\Poly_{m_j-1}\) when \(m_j\ge1\) and
	\(Q^{(j)}\equiv0\) when \(m_j=0\), satisfying
	\eqref{eq:final-B-orth}, is a scalar multiple of
	\(\mathbf B_{\boldsymbol n,\boldsymbol m}\).
\end{proposition}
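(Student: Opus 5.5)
The plan is to reduce both problems to finite polynomial statements in an auxiliary variable \(t\), following the residue and partial-fraction mechanisms used in the proof of Theorem~\ref{thm:final-explicit}. On each side I will attach to an arbitrary candidate vector a single polynomial. The orthogonality relations will pin that polynomial down to a scalar multiple of an explicit one, and a linear-independence argument will show that the polynomial determines the vector. Existence of a nonzero solution is already given by Theorem~\ref{thm:final-explicit}, so only uniqueness up to scalars has to be proved.

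\emph{A-side.} Let \(\mathbf P\) be admissible with coefficients \(P^{(i)}(z)=\sum_k p_{i,k}z^k\).
\begin{itemize}
\item The \(|\boldsymbol n|\) nodes \(\kappa_i+k\) are pairwise distinct by regularity. The factor \(\Gamma(t\one_q+\boldsymbol b+\one_q)/\Gamma(t\one_r+\boldsymbol a+\one_r)\) is finite and nonzero at these nodes by admissibility.
\item Hence Lagrange interpolation gives a unique polynomial \(S\) of degree \(\le|\boldsymbol n|-1\) with
\[
p_{i,k}=\operatorname*{Res}_{t=\kappa_i+k}(-1)^{|\boldsymbol n|}\frac{S(t)}{D_{\boldsymbol\kappa,\boldsymbol n}(t)}\frac{\Gamma(t\one_q+\boldsymbol b+\one_q)}{\Gamma(t\one_r+\boldsymbol a+\one_r)}.
\]
The map \(\mathbf P\mapsto S\) is a linear bijection onto \(\Poly_{|\boldsymbol n|-1}\).
\item As in the proof of (i), the \((j,\ell)\)-moment equals the sum of the residues at the nodes of
\[
S(t)\,(t\one_r+\boldsymbol a+\one_r)_\ell\big/\bigl(D_{\boldsymbol\kappa,\boldsymbol n}(t)\,(t\one_q+\boldsymbol b+\one_q)_\ell\,(t+b_j+\ell+1)\bigr).
\]
This function is \(\mathrm O(t^{-2})\) at infinity. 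So each orthogonality condition says that the sum of the residues at the remaining poles, \(-b_h-k\) with \(1\le k\le\ell+1\), vanishes.
\item I then induct on \(\ell\). Near-diagonality gives \(m_h\ge\ell\) for every \(h\) whenever \(\ell\le m_j-1\). By the inductive hypothesis \(S\) already vanishes at \(-b_h-k\) for \(k\le\ell\) and every \(h\), so those poles are removable. The condition \((j,\ell)\) therefore reduces to a nonzero multiple of \(S(-b_j-\ell-1)\), which forces \(S(-b_j-\ell-1)=0\). The multiplier is nonzero because it is built from factors of \(D_{\boldsymbol\kappa,\boldsymbol n}\), \((\boldsymbol a+\cdot)_\ell\) and \(b_h-b_j+\text{integer}\), all nonzero by regularity and admissibility.
\item The roots obtained are the \(|\boldsymbol m|=|\boldsymbol n|-1\) distinct points \(-b_j-k\), \(1\le k\le m_j\). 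Hence \(S=c\,P_{\boldsymbol b,\boldsymbol m}\), and therefore \(\mathbf P=c\,\mathbf Q_{\boldsymbol n,\boldsymbol m}\).
\end{itemize}

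\emph{B-side.} For an admissible \(\mathbf Q\) with coefficients \(Q^{(j)}[d]\), set
\[
U(t)\coloneq\sum_{j,d}Q^{(j)}[d]\,(t\one_r+\boldsymbol a)_d\,(t\one_q+\boldsymbol b+d\one_q+\boldsymbol e_j)_{\boldsymbol m-d\one_q-\boldsymbol e_j}.
\]
\begin{itemize}
\item This is a polynomial of degree \(\le|\boldsymbol m|-1=|\boldsymbol n|\). It is well defined because \(\boldsymbol m-d\one_q-\boldsymbol e_j\in\N_0^q\) by near-diagonality, as checked in the proof of (ii).
\item The Gamma factorization used before \eqref{eq:final-B-moment-identity} shows that the \((i,s)\)-moment equals a finite nonzero Gamma prefactor times \(U(s+\kappa_i)\).
\item The orthogonality relations therefore say that \(U\) vanishes at the \(|\boldsymbol n|\) nodes \(\kappa_i+1+\ell\), \(0\le\ell\le n_i-1\). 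These nodes are distinct by regularity. So \(U=c\prod_i(\kappa_i+1-t)_{n_i}\).
\item For the explicit vector, \eqref{eq:final-B-polynomial-identity} gives \(U=-\prod_i(\kappa_i+1-t)_{n_i}\). So \(\mathbf Q+c\,\mathbf B_{\boldsymbol n,\boldsymbol m}/\nu\) has \(U\equiv0\).
\end{itemize}

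It remains to show that \(\mathbf Q\mapsto U\) is injective. I expect this to be the main obstacle, because the basis polynomials for different \(j\) overlap heavily.
\begin{itemize}
\item I will divide by \((t\one_q+\boldsymbol b)_{\boldsymbol m}\). This turns the basis into the rational functions \(F_d(t)/(t+b_j+d)\), with \(F_d\) as in Lemma~\ref{lem:final-B-simple-fraction-reconstruction}.
\item I will then argue by descending \(d\). The function indexed by \((j,d)\) has a simple pole at \(-b_j-d\). Its residue there is \((\boldsymbol a-b_j\one_r-d\one_r)_d/(\boldsymbol b-b_j\one_q-d\one_q)_d\) up to sign, which is nonzero by regularity and admissibility. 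No function with smaller \(d'\) has this pole: its poles are \(-b_h-k\) with \(k<d'\), or \(k=d'\) for \(h=j'\), and \(-b_h-k=-b_j-d\) would force \(h=j\), \(k=d\). Among functions with the same \(d\), only the one with \(j'=j\) has this pole.
\item Hence, in a vanishing combination, extracting the residue at \(-b_j-d\) for the largest \(d\) present kills the corresponding coefficient, and induction on \(d\) gives independence.
\end{itemize}
This yields \(\mathbf Q=-c\,\mathbf B_{\boldsymbol n,\boldsymbol m}/\nu\). Together with the A-side, this proves that both solution spaces are one-dimensional, which is the asserted weak normality.
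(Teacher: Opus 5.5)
Your proposal is correct and follows the paper's proof essentially step for step. On the $A$-side you use the same Lagrange interpolant whose rational-Gamma residues reproduce the coefficients, the same $\mathrm{O}(t^{-2})$ residue-sum argument, and the same level-by-level induction forcing zeros at $-b_j-\ell-1$ and hence proportionality to $P_{\boldsymbol b,\boldsymbol m}$. On the $B$-side you use the same polynomial vanishing at the nodes $\kappa_i+1+\ell$, and you prove injectivity by the same residue extraction at $-b_{j_0}-d_0$ for the largest $d_0$ with a nonzero coefficient.
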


\begin{proof}
	The two assertions are proved separately. Both arguments are finite and
	algebraic. They are mixed analogues of the Mellin-transform
	characterization used in the one-column Bessel-like setting, but no
	external uniqueness theorem is needed.
	
	\medskip
	\noindent\emph{The \(A\)-side.}
	Let
	\(
	\mathbf P(z)
	=
	\begin{bNiceMatrix}
		P^{(1)}(z)&
		\Cdots&
		P^{(p)}(z)
	\end{bNiceMatrix}^\top
	\)
	be a solution of \eqref{eq:final-A-orth}. For every
	\(i\in\{1,\ldots,p\}\) with \(n_i\ge1\), write
\(P^{(i)}(z) = \sum_{k=0}^{n_i-1}p_{i,k}z^k.\)
	If \(n_i=0\), then \(P^{(i)}\equiv0\), and there are no coefficients
	\(p_{i,k}\) associated with that component.
	
	The coefficients \(p_{i,k}\) will be encoded in a single
	interpolation polynomial. Recall that
\(D_{\boldsymbol\kappa,\boldsymbol n}(t) = \prod_{i=1}^{p} \prod_{k=0}^{n_i-1} (t-\kappa_i-k).\)
	The interpolation nodes are
\(\kappa_i+k,\) \(i\in\{1,\ldots,p\},\) \(k\in\{0,\ldots,n_i-1\}.\)
	There are \(|\boldsymbol n|\) such nodes, and they are pairwise distinct by
	the noninteger-separation assumptions in
	Definition~\ref{def:final-parameter-regularity}.
	
	Consequently, there exists a unique polynomial \(Q_{\mathbf P}\) of
	degree at most \(|\boldsymbol n|-1\) satisfying
	\begin{equation}
		\label{eq:general-A-interpolation}
		Q_{\mathbf P}(\kappa_i+k)
		=
		p_{i,k}
		D_{\boldsymbol\kappa,\boldsymbol n}'(\kappa_i+k)
		\frac{
			\Gamma((\kappa_i+k)\one_r+\boldsymbol a+\one_r)
		}{
			\Gamma((\kappa_i+k)\one_q+\boldsymbol b+\one_q)
		},
	\end{equation}
	for all \(i\in\{1,\ldots,p\}\) and
	\(k\in\{0,\ldots,n_i-1\}\).
	
	Using this interpolation polynomial, define
	\begin{equation}
		\label{eq:general-A-rational-gamma}
		\mathcal R_{\mathbf P}(t)
		\coloneq
		\frac{Q_{\mathbf P}(t)}
		{D_{\boldsymbol\kappa,\boldsymbol n}(t)}
		\frac{
			\Gamma(t\one_q+\boldsymbol b+\one_q)
		}{
			\Gamma(t\one_r+\boldsymbol a+\one_r)
		}.
	\end{equation}
	The purpose of the interpolation conditions
	\eqref{eq:general-A-interpolation} is that the residues of
	\(\mathcal R_{\mathbf P}\) reproduce the coefficients of the
	polynomial vector. Indeed, since the zero of
	\(D_{\boldsymbol\kappa,\boldsymbol n}\) at \(t=\kappa_i+k\) is simple,
	\begin{align*}
		\operatorname*{Res}_{t=\kappa_i+k}
		\mathcal R_{\mathbf P}(t)
		&=
		\frac{
			Q_{\mathbf P}(\kappa_i+k)
		}{
			D_{\boldsymbol\kappa,\boldsymbol n}'(\kappa_i+k)
		}
		\frac{
			\Gamma((\kappa_i+k)\one_q+\boldsymbol b+\one_q)
		}{
			\Gamma((\kappa_i+k)\one_r+\boldsymbol a+\one_r)
		}=
		p_{i,k}.
	\end{align*}
	
	The \(A\)-orthogonality conditions are translated into information
	about \(Q_{\mathbf P}\). Fix
\(j\in\{1,\ldots,q\}\),  \(\ell\in\{0,\ldots,m_j-1\}\).
	Using the expansion of the components of \(\mathbf P\) and the moment
	formula \eqref{eq:final-moment-entry}, the corresponding
	orthogonality condition is
	\begin{equation*}
		0=
		\sum_{i=1}^{p}
		\oint_{\Torus}
		z^\ell C_{j,i}(z)P^{(i)}(z)
		\frac{\dz}{2\pi\mathrm i}=
		\sum_{i=1}^{p}
		\sum_{k=0}^{n_i-1}
		p_{i,k}
		\frac{
			\Gamma((\kappa_i+k+\ell+1)\one_r+\boldsymbol a)
		}{
			\Gamma((\kappa_i+k+\ell+1)\one_q+\boldsymbol b+\boldsymbol e_j)
		}.
	\end{equation*}
	Since \(p_{i,k}\) is the residue of
	\(\mathcal R_{\mathbf P}\) at \(t=\kappa_i+k\), the last expression is
	the sum of the residues at these points of the function
	\begin{equation}
		\label{eq:general-A-tested-rational}
		\mathcal R_{\mathbf P}(t)
		\frac{
			\Gamma((t+\ell+1)\one_r+\boldsymbol a)
		}{
			\Gamma((t+\ell+1)\one_q+\boldsymbol b+\boldsymbol e_j)
		}.
	\end{equation}
	
	Next, rewrite this function as an ordinary rational function. By
	\eqref{eq:general-A-rational-gamma},
	\[
	\mathcal R_{\mathbf P}(t)
	\frac{
		\Gamma((t+\ell+1)\one_r+\boldsymbol a)
	}{
		\Gamma((t+\ell+1)\one_q+\boldsymbol b+\boldsymbol e_j)
	}
	=
	\frac{Q_{\mathbf P}(t)}
	{D_{\boldsymbol\kappa,\boldsymbol n}(t)}
	\frac{
		\Gamma(t\one_q+\boldsymbol b+\one_q)
	}{
		\Gamma(t\one_r+\boldsymbol a+\one_r)
	}
	\frac{
		\Gamma((t+\ell+1)\one_r+\boldsymbol a)
	}{
		\Gamma((t+\ell+1)\one_q+\boldsymbol b+\boldsymbol e_j)
	}.
	\]
	The Gamma quotients satisfy
\(\dfrac{ \Gamma((t+\ell+1)\one_r+\boldsymbol a) }{ \Gamma(t\one_r+\boldsymbol a+\one_r) } = (t\one_r+\boldsymbol a+\one_r)_\ell\)
	and
	\[
	\frac{
		\Gamma(t\one_q+\boldsymbol b+\one_q)
	}{
		\Gamma((t+\ell+1)\one_q+\boldsymbol b+\boldsymbol e_j)
	}
	=
	\frac{1}{
		(t\one_q+\boldsymbol b+\one_q)_\ell
		(t+b_j+\ell+1)
	}.
	\]
	Therefore the function in
	\eqref{eq:general-A-tested-rational} is
	\begin{equation}
		\label{eq:general-A-tested-rational-expanded}
		\frac{
			Q_{\mathbf P}(t)
			(t\one_r+\boldsymbol a+\one_r)_\ell
		}{
			D_{\boldsymbol\kappa,\boldsymbol n}(t)
			(t\one_q+\boldsymbol b+\one_q)_\ell
			(t+b_j+\ell+1)
		}.
	\end{equation}
	
	The residue theorem will be applied to this rational function. First
	observe that its decay at infinity is sufficiently fast. Its numerator
	has degree at most
\(|\boldsymbol n|-1+r\ell,\)
	while its denominator has degree
\(|\boldsymbol n|+q\ell+1.\)
	Hence the degree of the numerator minus that of the denominator is at
	most
\(|\boldsymbol n|-1+r\ell-\bigl(|\boldsymbol n|+q\ell+1\bigr) = -2-(q-r)\ell.\)
	Since \(r<q\), the function in
	\eqref{eq:general-A-tested-rational-expanded} is
	\(\mathrm{O}(t^{-2})\) as \(t\to\infty\). Its residue at infinity
	therefore vanishes, and the sum of all its finite residues is zero.
	
	The orthogonality condition says that the sum of the residues at the
	zeros of \(D_{\boldsymbol\kappa,\boldsymbol n}\) is zero. It follows that the
	sum of the residues at the remaining poles, namely the poles produced
	by
\((t\one_q+\boldsymbol b+\one_q)_\ell(t+b_j+\ell+1),\)
	is also zero.
	
	These residue identities determine the zeros of
	\(Q_{\mathbf P}\). It will be proved, level by level, that
	\begin{equation}
		\label{eq:A-Q-zeros}
		Q_{\mathbf P}(-b_j-s-1)=0,
		\qquad
		j\in\{1,\ldots,q\},
		\qquad
		s\in\{0,\ldots,m_j-1\}.
	\end{equation}
	The induction is simultaneous in the row index \(j\): at level
	\(\ell\), the required zero is proved for every \(j\) such that
	\(\ell\le m_j-1\), assuming that all zeros at the preceding levels
	have already been established.
	
	Fix \(j\in\{1,\ldots,q\}\) and
	\(\ell\in\{0,\ldots,m_j-1\}\). Assume that
\(Q_{\mathbf P}(-b_h-s-1)=0\)
	has already been proved whenever
\(h\in\{1,\ldots,q\},\) \(0\le s<\ell,\) \(s\le m_h-1.\)
	When \(\ell=0\), there are no preceding levels, so the induction
	hypothesis is empty.
	
	The possible poles produced by
	\((t\one_q+\boldsymbol b+\one_q)_\ell\) are
\(t=-b_h-s-1,\) \(h\in\{1,\ldots,q\},\) \(s\in\{0,\ldots,\ell-1\}.\)
	It remains to verify that the induction hypothesis applies to every one of
	these points. Since \(\boldsymbol m\) is near the diagonal, any two
	components of \(\boldsymbol m\) differ by at most one. In particular,
\(m_h\ge m_j-1.\)
	Because \(\ell\le m_j-1\), this gives
\(m_h\ge\ell.\)
	Thus, for every \(s\in\{0,\ldots,\ell-1\}\),
\(s\le\ell-1\le m_h-1.\)
	Hence all the points
	\(-b_h-s-1\) occurring at the preceding levels are zeros of
	\(Q_{\mathbf P}\) by the induction hypothesis. Since the auxiliary
	poles are simple and pairwise distinct by regularity, the corresponding
	residues vanish.
	
	The only auxiliary pole not covered by the induction hypothesis comes
	from the additional factor \(t+b_j+\ell+1\). It is the point
\(t_\ell\coloneq-b_j-\ell-1.\)
	The regularity assumptions and the standing admissibility convention ensure
	that \(t_\ell\) does not coincide
	with any zero of \(D_{\boldsymbol\kappa,\boldsymbol n}\), nor with any
	auxiliary pole from a preceding level. Therefore it is a simple pole
	of \eqref{eq:general-A-tested-rational-expanded}.
	
	Its residue is
	\begin{align*}
		\operatorname*{Res}_{t=t_\ell}
		\frac{
			Q_{\mathbf P}(t)
			(t\one_r+\boldsymbol a+\one_r)_\ell
		}{
			D_{\boldsymbol\kappa,\boldsymbol n}(t)
			(t\one_q+\boldsymbol b+\one_q)_\ell
			(t+b_j+\ell+1)
		}
		& =
		Q_{\mathbf P}(t_\ell)
		\frac{
			(t_\ell\one_r+\boldsymbol a+\one_r)_\ell
		}{
			D_{\boldsymbol\kappa,\boldsymbol n}(t_\ell)
			(t_\ell\one_q+\boldsymbol b+\one_q)_\ell
		}
		\\
		& =
		Q_{\mathbf P}(t_\ell)
		\frac{
			(\boldsymbol a-b_j\one_r-\ell\one_r)_\ell
		}{
			D_{\boldsymbol\kappa,\boldsymbol n}(t_\ell)
			(-\ell)_\ell
			\prod_{\substack{h=1\\h\ne j}}^{q}
			(b_h-b_j-\ell)_\ell
		}.
	\end{align*}
	All the factors multiplying \(Q_{\mathbf P}(t_\ell)\) are finite and
	nonzero. More precisely, the standing admissibility convention following
	Definition~\ref{def:final-parameter-regularity} implies
	\((\boldsymbol a-b_j\one_r-\ell\one_r)_\ell\ne0\),
	\(D_{\boldsymbol\kappa,\boldsymbol n}(t_\ell)\ne0\),
	while the noninteger-separation condition for the parameters \(b_h\) gives
	\(\prod_{\substack{h=1\\h\ne j}}^{q}
	(b_h-b_j-\ell)_\ell\ne0\),
	Moreover,
\((-\ell)_\ell=(-1)^\ell\ell!\ne0\).
	The sum of all auxiliary residues is zero. Every residue belonging to
	a preceding level vanishes by the induction hypothesis. Therefore the
	residue at the only new auxiliary pole \(t=t_\ell\) must also vanish.
	Since its coefficient is nonzero, it follows that
\(Q_{\mathbf P}(t_\ell)=0,\)
	that is,
\(Q_{\mathbf P}(-b_j-\ell-1)=0.\)
	This completes the induction and proves
	\eqref{eq:A-Q-zeros}.
	
	Thus \(Q_{\mathbf P}\) vanishes at all the points
\(-b_j-s-1,\) \(j\in\{1,\ldots,q\},\) \(s\in\{0,\ldots,m_j-1\}.\)
	These points are pairwise distinct. Indeed, if
\(-b_j-s-1=-b_h-u-1,\)
	then
\(b_j-b_h=u-s\in\mathbb Z.\)
	By the noninteger-separation condition for the parameters \(b_j\),
	this forces \(j=h\), and then \(s=u\).
	
	The polynomial whose zeros are precisely these points is
\(P_{\boldsymbol b,\boldsymbol m}(t) = \prod_{j=1}^{q}(t+b_j+1)_{m_j}.\)
	Therefore \(P_{\boldsymbol b,\boldsymbol m}\) divides \(Q_{\mathbf P}\). Moreover,
\(\deg P_{\boldsymbol b,\boldsymbol m} = |\boldsymbol m|\)
	and, since the pair is \(A\)-balanced,
\(\deg Q_{\mathbf P} \le |\boldsymbol n|-1 = |\boldsymbol m|.\)
	It follows that there exists \(c\in\mathbb C\) such that
\(Q_{\mathbf P}(t) = cP_{\boldsymbol b,\boldsymbol m}(t).\)
	Substituting this identity into
	\eqref{eq:general-A-rational-gamma} gives
	\[
	\mathcal R_{\mathbf P}(t)
	=
	c\,
	\frac{P_{\boldsymbol b,\boldsymbol m}(t)}
	{D_{\boldsymbol\kappa,\boldsymbol n}(t)}
	\frac{
		\Gamma(t\one_q+\boldsymbol b+\one_q)
	}{
		\Gamma(t\one_r+\boldsymbol a+\one_r)
	}.
	\]
	The factor \((-1)^{|\boldsymbol n|}\) can be absorbed into \(c\).  The resulting
	rational-Gamma function is the one used in the construction of the
	\(A\)-vector in the proof of
	Theorem~\ref{thm:final-explicit}. The residues of these two functions
	at every node \(t=\kappa_i+k\) are therefore proportional. Since these
	residues are precisely the coefficients of the polynomial components,
	all the coefficients \(p_{i,k}\) are proportional to the corresponding
	coefficients of \(\mathbf A_{\boldsymbol n,\boldsymbol m}\). Hence
	\(\mathbf P\) is a scalar multiple of
	\(\mathbf A_{\boldsymbol n,\boldsymbol m}\).
	
	\medskip
	\noindent\emph{The \(B\)-side.}
	Let
	\(
	\mathbf Q(z)
	=
	\begin{bNiceMatrix}
		Q^{(1)}(z)&\Cdots&Q^{(q)}(z)
	\end{bNiceMatrix}
	\)
	be a solution of \eqref{eq:final-B-orth}. For every
	\(j\in\{1,\ldots,q\}\) with \(m_j\ge1\), write
\(Q^{(j)}(z) = \sum_{d=0}^{m_j-1}q_{j,d}z^d.\)
	If \(m_j=0\), then \(Q^{(j)}\equiv0\), and there are no coefficients
	\(q_{j,d}\) associated with that component.
	
	Associate with \(\mathbf Q\) the polynomial
	\begin{equation}
		\label{eq:general-B-polynomial}
		L_{\mathbf Q}(t)
		\coloneq
		\sum_{j=1}^{q}
		\sum_{d=0}^{m_j-1}
		q_{j,d}
		(t\one_r+\boldsymbol a)_d
		(t\one_q+\boldsymbol b+d\one_q+\boldsymbol e_j)_{
			\boldsymbol m-d\one_q-\boldsymbol e_j}.
	\end{equation}
	First, every term in this expression is shown to be a polynomial.
	For the term indexed by \(j\) and \(d\), one has
	\(d\le m_j-1\), and therefore the \(j\)-th component of
	\(\boldsymbol m-d\one_q-\boldsymbol e_j\) is
\(m_j-d-1\ge0.\)
	If \(h\ne j\), near-diagonality gives
\(m_h\ge m_j-1\ge d,\)
	so the \(h\)-th component \(m_h-d\) is also nonnegative. Thus
\(\boldsymbol m-d\one_q-\boldsymbol e_j\in\N_0^q\)
	for every term occurring in
	\eqref{eq:general-B-polynomial}.
	
	Next, estimate the degree of \(L_{\mathbf Q}\). The first factor in
	the term indexed by \(j\) and \(d\) has degree
\(\deg(t\one_r+\boldsymbol a)_d=rd.\)
	The second factor has degree
\(\left| \boldsymbol m-d\one_q-\boldsymbol e_j \right| = |\boldsymbol m|-qd-1.\)
	Hence the total degree of this term is
\(rd+|\boldsymbol m|-qd-1 = |\boldsymbol m|-1-(q-r)d.\)
	Since \(q>r\), this is at most \(|\boldsymbol m|-1\). For a
	\(B\)-balanced pair, \(|\boldsymbol m|=|\boldsymbol n|+1\), and therefore
\(\deg L_{\mathbf Q} \le |\boldsymbol m|-1 = |\boldsymbol n|.\)
	The \(B\)-orthogonality conditions are translated into zeros of
	\(L_{\mathbf Q}\). Fix \(i\in\{1,\ldots,p\}\), let \(s\ge1\), and set
\(t=s+\kappa_i.\)
	The moment formula \eqref{eq:final-moment-entry} gives
	\begin{equation*}
		\sum_{j=1}^{q}
		\oint_{\Torus}
		z^{s-1}Q^{(j)}(z)C_{j,i}(z)
		\frac{\dz}{2\pi\mathrm i}
 =
		\sum_{j=1}^{q}
		\sum_{d=0}^{m_j-1}
		q_{j,d}
		\frac{
			\Gamma((t+d)\one_r+\boldsymbol a)
		}{
			\Gamma((t+d)\one_q+\boldsymbol b+\boldsymbol e_j)
		}.
	\end{equation*}
	For every term in this sum,
	\[
	\frac{
		\Gamma((t+d)\one_r+\boldsymbol a)
	}{
		\Gamma((t+d)\one_q+\boldsymbol b+\boldsymbol e_j)
	}
	=
	\frac{
		\Gamma(t\one_r+\boldsymbol a)
	}{
		\Gamma(t\one_q+\boldsymbol b+\boldsymbol m)
	}
	(t\one_r+\boldsymbol a)_d
	(t\one_q+\boldsymbol b+d\one_q+\boldsymbol e_j)_{
		\boldsymbol m-d\one_q-\boldsymbol e_j}.
	\]
	Factoring out the Gamma quotient and using
	\eqref{eq:general-B-polynomial} gives
	\begin{equation}
		\label{eq:general-B-moment-identity}
		\sum_{j=1}^{q}
		\oint_{\Torus}
		z^{s-1}Q^{(j)}(z)C_{j,i}(z)
		\frac{\dz}{2\pi\mathrm i}
		=
		\frac{
			\Gamma((s+\kappa_i)\one_r+\boldsymbol a)
		}{
			\Gamma((s+\kappa_i)\one_q+\boldsymbol b+\boldsymbol m)
		}
		L_{\mathbf Q}(s+\kappa_i).
	\end{equation}
	The Gamma quotient in
	\eqref{eq:general-B-moment-identity} is finite and nonzero by
	regularity.
	
	Impose the orthogonality conditions. Take
\(s=\ell+1,\) \(\ell\in\{0,\ldots,n_i-1\}.\)
	The left-hand side of
	\eqref{eq:general-B-moment-identity} then vanishes by
	\eqref{eq:final-B-orth}. Since the Gamma quotient is nonzero, it follows
	that
\(L_{\mathbf Q}(\kappa_i+1+\ell)=0,\) \(\ell\in\{0,\ldots,n_i-1\},\) \(i\in\{1,\ldots,p\}.\)
	These are \(|\boldsymbol n|\) zeros. They are pairwise distinct by the
	noninteger-separation assumptions for the parameters \(\kappa_i\).
	The polynomial having exactly these zeros is
\(\prod_{i=1}^{p} (\kappa_i+1-t)_{n_i}.\)
	Indeed,
\((\kappa_i+1-t)_{n_i} = \prod_{\ell=0}^{n_i-1} (\kappa_i+1+\ell-t).\)
	Since \(\deg L_{\mathbf Q}\le|\boldsymbol n|\), the existence of these
	\(|\boldsymbol n|\) distinct zeros implies that
	\begin{equation}
		\label{eq:general-B-divisibility}
		L_{\mathbf Q}(t)
		=
		-c
		\prod_{i=1}^{p}
		(\kappa_i+1-t)_{n_i}
	\end{equation}
	for some \(c\in\mathbb C\). The sign has been inserted so that the
	convention agrees with
	\eqref{eq:final-B-polynomial-identity}. The case \(c=0\) is allowed and
	corresponds to \(L_{\mathbf Q}\equiv0\).
	
	To conclude that \(\mathbf Q\) is a scalar multiple of the explicit
	vector, it remains to prove that the polynomial \(L_{\mathbf Q}\)
	determines all the coefficients \(q_{j,d}\) uniquely. For this purpose,
	divide \eqref{eq:general-B-polynomial} by
	\((t\one_q+\boldsymbol b)_{\boldsymbol m}\).
	
	For each pair \((j,d)\), one has
	\[
	\frac{
		(t\one_q+\boldsymbol b+d\one_q+\boldsymbol e_j)_{
			\boldsymbol m-d\one_q-\boldsymbol e_j}
	}{
		(t\one_q+\boldsymbol b)_{\boldsymbol m}
	}
	=
	\frac{1}{
		(t\one_q+\boldsymbol b)_d(t+b_j+d)
	}.
	\]
	Indeed, for \(h\ne j\),
\(\frac{ (t+b_h+d)_{m_h-d} }{ (t+b_h)_{m_h} } = \frac{1}{(t+b_h)_d},\)
	while for the \(j\)-th component,
\(\frac{ (t+b_j+d+1)_{m_j-d-1} }{ (t+b_j)_{m_j} } = \frac{1}{ (t+b_j)_d(t+b_j+d) }.\)
	Consequently,
	\begin{equation}
		\label{eq:B-rational-expansion-injective}
		\frac{
			L_{\mathbf Q}(t)
		}{
			(t\one_q+\boldsymbol b)_{\boldsymbol m}
		}
		=
		\sum_{j=1}^{q}
		\sum_{d=0}^{m_j-1}
		q_{j,d}
		\frac{
			(t\one_r+\boldsymbol a)_d
		}{
			(t\one_q+\boldsymbol b)_d(t+b_j+d)
		}.
	\end{equation}
	
The following rational functions are linearly independent:
\(\frac{ (t\one_r+\boldsymbol a)_d }{ (t\one_q+\boldsymbol b)_d(t+b_j+d) },\) \(j\in\{1,\ldots,q\},\) \(d\in\{0,\ldots,m_j-1\},\)
and this will imply that the coefficients
	\(q_{j,d}\), and hence the vector \(\mathbf Q\), are uniquely
	determined by \(L_{\mathbf Q}\).
	
	Suppose, to the contrary, that
	\begin{equation}
		\label{eq:B-rational-linear-relation}
		\sum_{j=1}^{q}
		\sum_{d=0}^{m_j-1}
		q_{j,d}
		\frac{
			(t\one_r+\boldsymbol a)_d
		}{
			(t\one_q+\boldsymbol b)_d(t+b_j+d)
		}
		=
		0
	\end{equation}
	and that not all coefficients \(q_{j,d}\) vanish.
	
	Let \(d_0\) be the largest value of \(d\) for which
	\(q_{j,d_0}\ne0\) for at least one \(j\). Choose
	\(j_0\in\{1,\ldots,q\}\) such that
\(q_{j_0,d_0}\ne0,\)
	and consider the point
\(t_0\coloneq-b_{j_0}-d_0.\)
	The point \(t_0\) is the pole corresponding to the last denominator
	factor \(t+b_{j_0}+d_0\) of the term indexed by
	\((j_0,d_0)\). No other term occurring in
	\eqref{eq:B-rational-linear-relation} has a pole at this point.
	
	Consider first a term whose summation index satisfies \(d<d_0\).
	A pole could arise from its last factor \(t+b_j+d\) only if
\(-b_{j_0}-d_0+b_j+d=0,\)
	or equivalently
\(b_j-b_{j_0}=d_0-d.\)
	The right-hand side is an integer. If \(j\ne j_0\), this contradicts
	the noninteger-separation condition
	\(b_j-b_{j_0}\notin\mathbb Z\). If \(j=j_0\), the equality would imply
	\(d=d_0\), contrary to \(d<d_0\). Thus no factor \(t+b_j+d\) with
	\(d<d_0\) vanishes at \(t=t_0\).
	
	A pole could also arise from a factor of \((t\one_q+\boldsymbol b)_d\). Such a
	factor has the form
\(t+b_h+s,\) \(h\in\{1,\ldots,q\},\) \(s\in\{0,\ldots,d-1\}.\)
	It would vanish at \(t=t_0\) only if
\(b_h-b_{j_0}=d_0-s.\)
	If \(h\ne j_0\), this is again excluded because the right-hand side is
	an integer. If \(h=j_0\), the equality would give \(s=d_0\), which is
	impossible because
\(s\le d-1<d_0.\)
	Therefore no term with \(d<d_0\) has a pole at \(t=t_0\).
	
	Consider next the terms with \(d=d_0\). If \(j\ne j_0\), their last
	factor \(t+b_j+d_0\) would vanish at \(t=t_0\) only if
\(b_j-b_{j_0}=0,\)
	which is excluded by regularity. The factors in
	\((t\one_q+\boldsymbol b)_{d_0}\) are also nonzero at \(t=t_0\). For the
	\(j_0\)-th component, their product is
\((-d_0)_{d_0}=(-1)^{d_0}d_0!\ne0,\)
	while for every \(h\ne j_0\), nonvanishing follows from
	\(b_h-b_{j_0}\notin\mathbb Z\).
	
	It follows that, among all the terms in
	\eqref{eq:B-rational-linear-relation}, only the term indexed by
	\((j_0,d_0)\) has a pole at \(t=t_0\). Its residue is
	\[
	q_{j_0,d_0}
	\frac{
		(\boldsymbol a-b_{j_0}\one_r-d_0\one_r)_{d_0}
	}{
		(-d_0)_{d_0}
		(\boldsymbol b^{\,*j_0}-b_{j_0}\one_{q-1}
		-d_0\one_{q-1})_{d_0}
	}.
	\]
	The factor \((-d_0)_{d_0}\) is nonzero, as observed above. Moreover,
	the standing admissibility convention following
	Definition~\ref{def:final-parameter-regularity} implies
\((\boldsymbol a-b_{j_0}\one_r-d_0\one_r)_{d_0}\ne0\)
	while the noninteger-separation condition for the parameters \(b_h\) gives
\((\boldsymbol b^{\,*j_0}-b_{j_0}\one_{q-1} -d_0\one_{q-1})_{d_0}\ne0.\)
	Hence the residue is a nonzero multiple of \(q_{j_0,d_0}\). Since
	\(q_{j_0,d_0}\ne0\), this residue is nonzero.
	
	This contradicts the assumption that the rational function in
	\eqref{eq:B-rational-linear-relation} vanishes identically. Therefore
	all the coefficients \(q_{j,d}\) must vanish, and the rational
	functions in
	\eqref{eq:B-rational-expansion-injective} are linearly independent.
	Equivalently, the linear map
\(\mathbf Q\longmapsto L_{\mathbf Q}\)
	is injective.
	
	Let \(\mathbf Q_{\mathrm{exp}}\) denote the unnormalized
	\(B\)-polynomial vector constructed in
	Theorem~\ref{thm:final-explicit}. By
	\eqref{eq:final-B-polynomial-identity}, its associated polynomial is
\(L_{\mathbf Q_{\mathrm{exp}}}(t) = - \prod_{i=1}^{p} (\kappa_i+1-t)_{n_i}.\)
	On the other hand, \eqref{eq:general-B-divisibility} gives
\(L_{\mathbf Q}(t) = -c \prod_{i=1}^{p} (\kappa_i+1-t)_{n_i} = cL_{\mathbf Q_{\mathrm{exp}}}(t).\)
	By linearity of the map
	\(\mathbf Q\mapsto L_{\mathbf Q}\),
\(L_{\mathbf Q-c\mathbf Q_{\mathrm{exp}}}(t)=0.\)
	Its injectivity therefore implies
\(\mathbf Q = c\mathbf Q_{\mathrm{exp}}.\)
	Thus every solution of the \(B\)-orthogonality conditions is a scalar
	multiple of the vector constructed in
	Theorem~\ref{thm:final-explicit}.
	
	This proves that both the \(A\)- and \(B\)-solution spaces are
	one-dimensional. This completes the proof.
\end{proof}

\begin{proposition}[Strong normality on the \(A\)-side]
\label{prop:strong-normality-A}
Assume that the Bessel-like parameters are regular in the sense of
Definition~\ref{def:final-parameter-regularity}.  Then every near-diagonal
\(A\)-balanced index pair covered by Theorem~\ref{thm:final-explicit} is
strongly normal for the \(A\)-polynomial problem.
\end{proposition}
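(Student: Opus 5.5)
Weak normality is already known from Proposition~\ref{prop:finite-rational-characterization}, so it remains to show, for each \(i\) with \(n_i\ge1\), that \(A^{(i)}_{\boldsymbol n,\boldsymbol m}\) has exact degree \(n_i-1\). I will read the leading coefficient off the residue construction in the proof of Theorem~\ref{thm:final-explicit}(i). There the \(k\)-th coefficient of the component is \(\nu\,\mathsf c_{i,k}\), where \(\mathsf c_{i,k}=\operatorname*{Res}_{t=\kappa_i+k}\mathcal R(t)\). So the leading coefficient is \(\nu\,\mathsf c_{i,n_i-1}\). Since \(\nu\ne0\), it suffices to prove \(\mathsf c_{i,n_i-1}\ne0\).

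From the explicit residue computation,
\[
\mathsf c_{i,n_i-1}
=
(-1)^{|\boldsymbol n|}
\frac{P_{\boldsymbol b,\boldsymbol m}(\kappa_i+n_i-1)}
{(n_i-1)!\prod_{\ell\ne i}(\kappa_i+n_i-1-\kappa_\ell-n_\ell+1)_{n_\ell}}
\frac{\Gamma((\kappa_i+n_i-1)\one_q+\boldsymbol b+\one_q)}
{\Gamma((\kappa_i+n_i-1)\one_r+\boldsymbol a+\one_r)}.
\]
I will check each factor in turn.

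\emph{Denominator.} The factorials are nonzero. The products \(\prod_{\ell\ne i}(\dots)_{n_\ell}\) consist of factors \(\kappa_i-\kappa_\ell+\text{integer}\), which are nonzero because \(\kappa_i-\kappa_\ell\notin\mathbb Z\). The Gamma factors are finite by admissibility.

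\emph{Numerator.} This is the main point. The value \(P_{\boldsymbol b,\boldsymbol m}(\kappa_i+n_i-1)=\prod_j(\kappa_i+n_i+b_j)_{m_j}\) is a product of factors \(\kappa_i+b_j+s\) with \(s\) an integer, \(n_i\le s\le n_i+m_j-1\). Regularity alone does not exclude \(\kappa_i+b_j\in\mathbb Z\). However, \(\Gamma(\kappa_i+n_i+b_j)\) appears in the numerator Gamma product, and admissibility means it is finite. Combining,
\[
P_{\boldsymbol b,\boldsymbol m}(\kappa_i+n_i-1)\,
\Gamma((\kappa_i+n_i)\one_q+\boldsymbol b)
=
\Gamma((\kappa_i+n_i)\one_q+\boldsymbol b+\boldsymbol m).
\]
This product is nonzero because Gamma never vanishes. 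I then need it to be finite as well. For that I will invoke the admissibility of the Pochhammer quotients \((\kappa_i+b_j+1)_{m_j}\) and \((\kappa_i+b_j+m_j+1)_k\) appearing in \eqref{eq:final-A-prefactor} and \eqref{eq:final-A-components}. Their defining Gamma factors, such as \(\Gamma(\kappa_i+b_j+m_j+n_i)\) at \(k=n_i-1\), are required to be finite. That yields the needed finiteness.

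\emph{Reciprocal Gamma over \(\boldsymbol a\).} The factor \(1/\Gamma(\cdot+\boldsymbol a)\) is nonzero because the corresponding Gamma is finite by admissibility, namely the factors \((\kappa_i+a_\rho+1)_k\) in the lower parameters.

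Equivalently, the coefficient of \(z^{n_i-1}\) in the hypergeometric series \eqref{eq:final-A-components} is the product of \(C_{\boldsymbol n,\boldsymbol m;i}\) and the Pochhammer ratio at \(k=n_i-1\). The factor \((-n_i+1)_{n_i-1}/(n_i-1)!=(-1)^{n_i-1}\) is nonzero. Every other factor is a parameter-dependent Pochhammer quotient, which by the admissibility convention is a ratio of finite Gamma values and hence nonzero.

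The main obstacle is making sure the admissibility convention really covers the top-index quotients \((\kappa_i+b_j+m_j+1)_{n_i-1}\) and \((\kappa_i-\kappa_\ell-n_\ell+1)_{n_i-1}\). It should, since these occur in the displayed formula. I will state explicitly that the convention interprets them as Gamma quotients, which forbids zeros. The \(\kappa\)-differences are in any case handled by regularity alone.

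This gives \(\deg A^{(i)}_{\boldsymbol n,\boldsymbol m}=n_i-1\) for every \(i\) with \(n_i\ge1\), and together with weak normality this is strong normality.
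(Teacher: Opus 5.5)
Your proposal is correct and is essentially the paper's argument. You take weak normality from Proposition~\ref{prop:finite-rational-characterization}, and degree attainment follows because the coefficient of \(z^{n_i-1}\) is a product of \((-n_i+1)_{n_i-1}/(n_i-1)!=(-1)^{n_i-1}\), the prefactor \(C_{\boldsymbol n,\boldsymbol m;i}\), and parameter-dependent Pochhammer quotients that the admissibility convention (together with \(\kappa_i-\kappa_\ell\notin\mathbb Z\)) makes finite and nonzero; the paper reads this coefficient directly off the terminating series \eqref{eq:final-A-components}, which is the same quantity as your residue \(\nu\,\mathsf c_{i,n_i-1}\).
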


\begin{proof}
The weak normality part has already been proved in
Proposition~\ref{prop:finite-rational-characterization}.  It remains to
check degree attainment on the \(A\)-side.

Let \(n_i\ge1\).  In the explicit expression
\eqref{eq:final-A-components}, the coefficient of \(z^{n_i-1}\) comes from
the highest term of the terminating hypergeometric series.  It is
\[
\nu_{\boldsymbol n,\boldsymbol m}^{A}
C_{\boldsymbol n,\boldsymbol m;i}
\frac{(-n_i+1)_{n_i-1}}{(n_i-1)!}
\frac{
\prod_{j=1}^{q}(\kappa_i+b_j+m_j+1)_{n_i-1}
\prod_{\substack{\ell=1\\\ell\ne i}}^{p}
(\kappa_i-\kappa_\ell-n_\ell+1)_{n_i-1}
}{
\prod_{\rho=1}^{r}(\kappa_i+a_\rho+1)_{n_i-1}
\prod_{\substack{\ell=1\\\ell\ne i}}^{p}
(\kappa_i-\kappa_\ell+1)_{n_i-1}
}.
\]
By the standing admissibility convention following
Definition~\ref{def:final-parameter-regularity}, every Gamma factor entering
the parameter-dependent Pochhammer quotients above is finite; hence none of
those factors vanishes. Moreover,
\((-n_i+1)_{n_i-1}=(-1)^{n_i-1}(n_i-1)!\ne0.\)
Consequently,
\(\deg A_{\boldsymbol n,\boldsymbol m}^{(i)}=n_i-1,\) \(i\in\{1,\ldots,p\},\) \(n_i\ge1.\)
Together with weak normality, this proves strong normality for the
\(A\)-polynomial problem.
\end{proof}

The possible failure of \(B\)-strong normality can only occur in a very
specific way.  Since the multi-index \(\boldsymbol m\) is near-diagonal, its
components take only the two values \(M\) and \(M-1\).  The components with
maximal value \(M\) have a top coefficient which is isolated by a residue and
is nonzero under regularity and the standing admissibility convention.  The
only possible cancellations occur in the
components with \(m_j=M-1\).  The following notation records
these exceptional cancellations explicitly.

\begin{definition}[Exceptional functions for \(B\)-strong normality]
	\label{def:B-strong-normality-exceptional-functions}
	Let \((\boldsymbol n,\boldsymbol m)\) be a near-diagonal \(B\)-balanced pair covered by
	Theorem~\ref{thm:final-explicit}, and set
	\(M\coloneq \max_{1\le h\le q}m_h\).
	Since \(\boldsymbol m\) is near-diagonal, each \(m_j\) is either \(M\) or \(M-1\).
	Write
	\[
	\mathcal J_+
	\coloneq
	\{H\in\{1,\ldots,q\}:m_H=M\},
	\qquad
	\mathcal J_-
	\coloneq
	\{j\in\{1,\ldots,q\}:m_j=M-1,\ m_j\ge1\}.
	\]
	For \(j\in\mathcal J_-\), put
	\(d_j\coloneq m_j-1=M-2\),
	\(t_j\coloneq -b_j-d_j\).
	For \(H\in\mathcal J_+\), let \(\boldsymbol b_+^{\,*H}\) denote the vector obtained
	from \((b_L)_{L\in\mathcal J_+}\) by deleting the component \(b_H\).
	The exceptional function attached to \(j\in\mathcal J_-\) is defined by
	\begin{multline*}
		\mathcal F_j
		\coloneq
		-
		\frac{
			(\boldsymbol\kappa+(b_j+M-1)\one_p)_{\boldsymbol n}
		}{
			(-M+2)_{M-2}
			(\boldsymbol b^{\,*j}-(b_j+M-2)\one_{q-1})_{\boldsymbol m^{\,*j}}
		}
		\\
		+
		\frac{
			(\boldsymbol a-b_j\one_r-(M-2)\one_r)_{M-1}
		}{
			(-M+2)_{M-2}
			(\boldsymbol b^{\,*j}-(b_j+M-2)\one_{q-1})_{(M-1)\one_{q-1}}
		}
		\\
		\times
		\sum_{\substack{H=1\\m_H=M}}^{q}
		\frac{
			(\boldsymbol\kappa+(b_H+M)\one_p)_{\boldsymbol n}
		}{
			(\boldsymbol a-b_H\one_r-(M-1)\one_r)_{M-1}
			(b_H-b_j+1)
			(\boldsymbol b_+^{\,*H}-b_H\one_{|\mathcal J_+|-1})_1
		}.
	\end{multline*}
\end{definition}

The exceptional functions \(\mathcal F_j\) are written as sums over the
indices in \(\mathcal J_+\), corresponding to the components with maximal
value \(m_H=M\).  Although the formula singles out these indices one by
one, the resulting condition depends only on the set of parameters
\(\{b_H:H\in\mathcal J_+\}\), not on the order in which they are listed.  The
next proposition records this invariance and the algebraic nature of the
exceptional equations.

\begin{proposition}[Structure of the exceptional functions]
	\label{prop:B-exceptional-functions-structure}
	Assume that the Bessel-like parameters are regular in the sense of
	Definition~\ref{def:final-parameter-regularity}. Let
	\((\boldsymbol n,\boldsymbol m)\) be a near-diagonal \(B\)-balanced pair covered by
	Theorem~\ref{thm:final-explicit}. For each \(j\in\mathcal J_-\), the function
	\(\mathcal F_j\) of
	Definition~\ref{def:B-strong-normality-exceptional-functions} is symmetric in
	the parameters \(b_H\), \(H\in\mathcal J_+\).
	
	Equivalently, the sum over the indices \(H\) such that \(m_H=M\) depends only
	on the set of parameters \(\{b_H:m_H=M\}\), and not on any ordering of those
	indices. Consequently, after clearing the nonzero denominators allowed by
	regularity and the standing admissibility convention,
\(\mathcal F_j=0\)
	is an explicit algebraic equation in the parameters. If \(\boldsymbol m\) is
	diagonal, then \(\mathcal J_-=\varnothing\), and no exceptional equation
	appears.
\end{proposition}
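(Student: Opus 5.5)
The plan is to check the symmetry directly from the formula in Definition~\ref{def:B-strong-normality-exceptional-functions}. The first observation is that every index \(H\in\mathcal J_+\) carries the same row degree \(m_H=M\). Hence a permutation \(\sigma\) of the labels in \(\mathcal J_+\), acting on the parameters by \(b_H\mapsto b_{\sigma(H)}\) with all other parameters fixed, does not change the multi-index \(\boldsymbol m\). Therefore \(\mathcal J_+\), \(\mathcal J_-\), \(M\), \(d_j\) and every Pochhammer order in \(\mathcal F_j\) are unchanged. I would then examine the three pieces of \(\mathcal F_j\) separately, for a fixed \(j\in\mathcal J_-\).

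First piece: the terms that do not involve the sum over \(H\). The factor \((\boldsymbol\kappa+(b_j+M-1)\one_p)_{\boldsymbol n}\) and the factor \((\boldsymbol a-b_j\one_r-(M-2)\one_r)_{M-1}\) involve only \(b_j\) with \(j\notin\mathcal J_+\), so they are unaffected. The denominator
\[
(\boldsymbol b^{\,*j}-(b_j+M-2)\one_{q-1})_{\boldsymbol m^{\,*j}}
=\prod_{h\ne j}(b_h-b_j-M+2)_{m_h}
\]
splits into a product over \(h\in\mathcal J_+\), each factor with the common order \(M\), and a product over the remaining indices, which are fixed. The first product is therefore a symmetric function of \(\{b_H:H\in\mathcal J_+\}\). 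The same argument applies, even more simply, to the factor \((\boldsymbol b^{\,*j}-(b_j+M-2)\one_{q-1})_{(M-1)\one_{q-1}}\), whose orders are all equal.

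Second piece: the sum over \(H\in\mathcal J_+\), which I would write as
\[
\sum_{H\in\mathcal J_+}\frac{g(b_H)}{\prod_{L\in\mathcal J_+,\,L\ne H}(b_L-b_H)},
\qquad
g(x)\coloneq\frac{(\boldsymbol\kappa+(x+M)\one_p)_{\boldsymbol n}}{(\boldsymbol a-x\one_r-(M-1)\one_r)_{M-1}(x-b_j+1)},
\]
using that \((\boldsymbol b_+^{\,*H}-b_H\one_{|\mathcal J_+|-1})_1=\prod_{L\ne H}(b_L-b_H)\). Here \(g\) is one and the same function for every \(H\). Under \(\sigma\), the summand indexed by \(H\) becomes the summand indexed by \(\sigma(H)\), since the set \(\{b_L:L\ne H\}\) is carried to \(\{b_L:L\ne\sigma(H)\}\). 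The sum is therefore merely reordered. Equivalently, it equals \((-1)^{|\mathcal J_+|-1}\) times the divided difference \(g[b_H:H\in\mathcal J_+]\), which is classically symmetric in its nodes.

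The only point requiring care is the bookkeeping in the first piece. One must check that the nonuniform orders \(\boldsymbol m^{\,*j}\) attach orders to parameters only through the value \(m_h\), and hence uniformly across \(\mathcal J_+\). I expect this to be the main, though mild, obstacle.

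For the algebraic statement, all pieces of \(\mathcal F_j\) are products and quotients of Pochhammer symbols with nonnegative integer orders, so they are rational functions of \((\boldsymbol a,\boldsymbol b,\boldsymbol\kappa)\). The denominators are nonzero by the noninteger separation of the \(b_h\) (Definition~\ref{def:final-parameter-regularity}) and by the standing admissibility convention; note in particular that \(b_H-b_j+1\ne0\) and \((-M+2)_{M-2}\ne0\). Multiplying by the common denominator then turns \(\mathcal F_j=0\) into a polynomial equation. Finally, if \(\boldsymbol m\) is diagonal, then \(m_j=M\) for all \(j\), so no index has \(m_j=M-1\), and hence \(\mathcal J_-=\varnothing\).
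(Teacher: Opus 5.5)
Your argument is correct, and its core step is the paper's. You write the \(H\)-sum as \(\sum_{H\in\mathcal J_+}f(b_H)/\prod_{L\in\mathcal J_+,\,L\ne H}(b_L-b_H)\) with a single function \(f\), and observe that relabelling \(\mathcal J_+\) merely permutes the summands. The clearing of denominators and the diagonal case are also handled as in the paper.

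The routes differ in what surrounds this step. The paper's proof first re-derives \(\mathcal F_j\) from the rational reconstruction identity. Residues at \(t=-b_H-(M-1)\), \(H\in\mathcal J_+\), give the top coefficients \(\beta_{H,M-1}\). The residue at \(t_j=-b_j-M+2\) then shows that \(\mathcal F_j=\rho_j-\sum_{H}\alpha_{H,j}\beta_{H,M-1}\), which is the quantity controlling \(\beta_{j,M-2}\).

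None of this is needed for the claims in the statement, which concern only the formula of Definition~\ref{def:B-strong-normality-exceptional-functions}, so your direct verification suffices. The paper places that computation here because the proof of Proposition~\ref{prop:B-strong-normality-exceptional-equations} cites it, and your version does not supply it.

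Conversely, the paper dismisses the rest of \(\mathcal F_j\) in one sentence. You check explicitly that \((\boldsymbol b^{\,*j}-(b_j+M-2)\one_{q-1})_{\boldsymbol m^{\,*j}}\) and \((\boldsymbol b^{\,*j}-(b_j+M-2)\one_{q-1})_{(M-1)\one_{q-1}}\) also depend on the \(b_H\), \(H\in\mathcal J_+\). They do so only through products whose factors all have the common order \(M\) (respectively \(M-1\)), which makes the symmetry of these factors explicit rather than implicit. The divided-difference reading \((-1)^{|\mathcal J_+|-1}f[b_H:H\in\mathcal J_+]\) is an equivalent, standard way to see the symmetry of the sum.
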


\begin{proof}
	Start from the rational reconstruction identity used in the proof of
	Theorem~\ref{thm:final-explicit}. With
\(R(t) \coloneq - \frac{ (\boldsymbol\kappa+\one_p-t\one_p)_{\boldsymbol n} }{ (t\one_q+\boldsymbol b)_{\boldsymbol m} },\)
	one has
\(R(t) = \sum_{j=1}^{q} \sum_{d=0}^{m_j-1} \beta_{j,d} \frac{(t\one_r+\boldsymbol a)_d}{(t\one_q+\boldsymbol b)_d(t+b_j+d)} .\)
	Here
\(\widehat B_{\boldsymbol n,\boldsymbol m}^{(j)}(z) = \sum_{d=0}^{m_j-1}\beta_{j,d}z^d\)
	denotes the unnormalised \(B\)-component.
	
	Let \(H\in\mathcal J_+\). Taking the residue at
	\(t=-b_H-(M-1)\) isolates the coefficient \(\beta_{H,M-1}\), because
	regularity separates the strings \(-b_h-\mathbb N_0\). This gives
	\[
	\beta_{H,M-1}
	=
	-
	\frac{
		(\boldsymbol\kappa+(b_H+M)\one_p)_{\boldsymbol n}
	}{
		(\boldsymbol a-b_H\one_r-(M-1)\one_r)_{M-1}
		\prod_{\substack{L=1\\L\ne H,\ m_L=M}}^{q}
		(b_L-b_H)
	}.
	\]
	Thus every component indexed by \(\mathcal J_+\) has a nonzero top
	coefficient.
	
	Now fix \(j\in\mathcal J_-\). The last possible degree in the \(j\)-th
	component is \(d_j=M-2\), and take the residue at
	\(t_j=-b_j-d_j=-b_j-M+2\).
	This residue receives the contribution of the coefficient \(\beta_{j,M-2}\),
	but also the contributions of the top coefficients \(\beta_{H,M-1}\),
	\(H\in\mathcal J_+\). Indeed, for \(d=M-1\), the common factor
	\((t\one_q+\boldsymbol b)_d\) contains \(t+b_j+M-2\), which vanishes at \(t=t_j\). All
	other terms are regular at \(t=t_j\). Hence
	\[
	\rho_j
	=
	\alpha_{j,j}\beta_{j,M-2}
	+
	\sum_{\substack{H=1\\m_H=M}}^{q}
	\alpha_{H,j}\beta_{H,M-1},
	\]
	where
\begin{align*}
	\rho_j
	&=
	-
	\frac{
		(\boldsymbol\kappa+(b_j+M-1)\one_p)_{\boldsymbol n}
	}{
		(-M+2)_{M-2}
		(\boldsymbol b^{\,*j}-(b_j+M-2)\one_{q-1})_{\boldsymbol m^{\,*j}}
	},
\\
	\alpha_{j,j}
&	=
	\frac{
		(\boldsymbol a-b_j\one_r-(M-2)\one_r)_{M-2}
	}{
		(-M+2)_{M-2}
		(\boldsymbol b^{\,*j}-(b_j+M-2)\one_{q-1})_{(M-2)\one_{q-1}}
	},
\\
	\alpha_{H,j}
&	=
	\frac{
		(\boldsymbol a-b_j\one_r-(M-2)\one_r)_{M-1}
	}{
		(-M+2)_{M-2}
		(\boldsymbol b^{\,*j}-(b_j+M-2)\one_{q-1})_{(M-1)\one_{q-1}}
		(b_H-b_j+1)
	}.
\end{align*}
	Since \(\alpha_{j,j}\ne0\) under regularity and the standing admissibility
	convention, \(\beta_{j,M-2}\) vanishes
	exactly when
	\[
	\rho_j-
	\sum_{\substack{H=1\\m_H=M}}^{q}
	\alpha_{H,j}\beta_{H,M-1}=0.
	\]
	For each \(H\in\mathcal J_+\), substitution of the explicit top
	coefficient gives
	\begin{multline*}
		-\alpha_{H,j}\beta_{H,M-1}
		=
		\frac{
			(\boldsymbol a-b_j\one_r-(M-2)\one_r)_{M-1}
		}{
			(-M+2)_{M-2}
			(\boldsymbol b^{\,*j}-(b_j+M-2)\one_{q-1})_{(M-1)\one_{q-1}}
		}
		\\
		\times
		\frac{
			(\boldsymbol\kappa+(b_H+M)\one_p)_{\boldsymbol n}
		}{
			(\boldsymbol a-b_H\one_r-(M-1)\one_r)_{M-1}
			(b_H-b_j+1)
			(\boldsymbol b_+^{\,*H}-b_H\one_{|\mathcal J_+|-1})_1
		}.
	\end{multline*}
	Consequently,
	\[
		\rho_j-
		\sum_{\substack{H=1\\m_H=M}}^{q}
		\alpha_{H,j}\beta_{H,M-1}
		=
		\rho_j+
		\sum_{\substack{H=1\\m_H=M}}^{q}
		\bigl(-\alpha_{H,j}\beta_{H,M-1}\bigr)
		=
		\mathcal F_j,
	\]
	which gives precisely the formula in
	Definition~\ref{def:B-strong-normality-exceptional-functions}.
	
	It remains to prove the symmetry assertion. The only part of
	\(\mathcal F_j\) depending on the individual parameters \(b_H\) with
	\(m_H=M\) is
	\[
	\sum_{\substack{H=1\\m_H=M}}^{q}
	\dfrac{
		f(b_H)
	}{
		\prod\limits_{\substack{L=1\\L\ne H,\ m_L=M}}^{q}
		(b_L-b_H)
	},
	\]
	where
\(f(x) = \frac{ (\boldsymbol\kappa+(x+M)\one_p)_{\boldsymbol n} }{ (\boldsymbol a-x\one_r-(M-1)\one_r)_{M-1}(x-b_j+1) }.\)
	If the parameters \(b_H\) with \(m_H=M\) are relabelled, the summands are
	relabelled in the same way and the value of the whole sum is unchanged. Thus
	the apparent dependence on the chosen index \(H\) is only notational; the
	expression depends only on the set \(\{b_H:m_H=M\}\). Hence \(\mathcal F_j\)
	is symmetric in the parameters \(b_H\) with \(m_H=M\).
	
	Finally, all factors in the denominators are nonzero on the admissible regular
	parameter domain. Clearing these denominators turns \(\mathcal F_j=0\) into an explicit
	algebraic equation in the parameters. If \(\boldsymbol m\) is diagonal, then
	\(\mathcal J_-=\varnothing\), so no exceptional equation is present.
\end{proof}

These explicit exceptional functions are now related to strong normality.  The
preceding proposition gives the explicit exceptional equations; the next
result shows that they are exactly the equations for the loss of the top
coefficient in the components with \(m_j=M-1\).

\begin{proposition}[Exceptional equations for \(B\)-strong normality]
	\label{prop:B-strong-normality-exceptional-equations}
	Assume that the Bessel-like parameters are regular in the sense of
	Definition~\ref{def:final-parameter-regularity}. Let
	\((\boldsymbol n,\boldsymbol m)\) be a near-diagonal \(B\)-balanced pair covered by
	Theorem~\ref{thm:final-explicit}, and let \(M\), \(\mathcal J_+\),
	\(\mathcal J_-\), and \(\mathcal F_j\), \(j\in\mathcal J_-\), be as in
	Definition~\ref{def:B-strong-normality-exceptional-functions}.
	
	Then every component with index \(H\in\mathcal J_+\) attains its prescribed
	maximal degree:
	\(\deg B_{\boldsymbol n,\boldsymbol m}^{(H)}=m_H-1\).
	The only components which may lose their highest allowed degree are those
	indexed by \(\mathcal J_-\), and for \(j\in\mathcal J_-\),
	\(\deg B_{\boldsymbol n,\boldsymbol m}^{(j)}<m_j-1\)
	if and only if 
	\(\mathcal F_j=0\).
	Consequently, \(B\)-strong normality holds on the complement, inside the
	admissible regular parameter domain, of the finite union of algebraic
	exceptional sets
	\[
	\mathcal E_{\boldsymbol n,\boldsymbol m}^{B}
	=
	\bigcup_{\substack{j=1\\m_j=M-1,\ m_j\ge1}}^{q}
	\{\mathcal F_j=0\}.
	\]
	In particular, if \(\boldsymbol m\) is diagonal, then \(\mathcal J_-=\varnothing\),
	and the \(B\)-polynomial vector is strongly normal under regularity and the
	standing admissibility convention.
\end{proposition}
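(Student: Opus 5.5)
The proof starts from the unnormalised vector \(\widehat{\mathbf B}=(\widehat B^{(1)},\ldots,\widehat B^{(q)})\), the one with \(\nu=1\) in Theorem~\ref{thm:final-explicit}(ii). Its coefficients \(\beta_{j,d}\) are characterised by the rational identity
\[
R(t)=\sum_{j=1}^{q}\sum_{d=0}^{m_j-1}\beta_{j,d}\frac{(t\one_r+\boldsymbol a)_d}{(t\one_q+\boldsymbol b)_d(t+b_j+d)},
\]
which is \eqref{eq:final-B-polynomial-identity} divided by \((t\one_q+\boldsymbol b)_{\boldsymbol m}\). The normalising constant \(\nu^{B}_{\boldsymbol n,\boldsymbol m}\) is finite and nonzero, so the degree of each component is the same for \(\widehat{\mathbf B}\) and \(\mathbf B_{\boldsymbol n,\boldsymbol m}\). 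By weak normality (Proposition~\ref{prop:finite-rational-characterization}), the vector is unique up to scale. Strong normality is therefore equivalent to \(\beta_{j,m_j-1}\ne0\) for every \(j\) with \(m_j\ge1\).

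\emph{Components in \(\mathcal J_+\).} Take the residue at \(t=-b_H-(M-1)\), as in the proof of Proposition~\ref{prop:B-exceptional-functions-structure}. The strings \(-b_h-\N_0\) are pairwise disjoint by regularity. Any term with index \(d\le M-1\) has poles only at \(-b_h-s\) with \(s\le d\), and it can reach \(-b_H-(M-1)\) only through its last factor with \(j=H\) and \(d=M-1\). Hence this residue isolates \(\beta_{H,M-1}\) times a nonzero coefficient. On the left-hand side, the residue of \(R\) is the partial-fraction coefficient \(\pi_{H,M-1}\). Its numerator \((\boldsymbol\kappa+(b_H+M)\one_p)_{\boldsymbol n}\) is nonzero by admissibility, which yields the explicit nonzero \(\beta_{H,M-1}\). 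This case coincides with the top-coefficient computation for \(j_{\max}\) in the proof of Theorem~\ref{thm:final-explicit}, now carried out for every \(H\in\mathcal J_+\). So \(\deg B^{(H)}_{\boldsymbol n,\boldsymbol m}=m_H-1\).

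\emph{Components in \(\mathcal J_-\).} Fix \(j\in\mathcal J_-\) and take the residue at \(t_j=-b_j-(M-2)\). This point is a pole in exactly two situations. The first is the last factor of the term \((j,M-2)\). The second is the factor \(t+b_j+M-2\) inside \((t\one_q+\boldsymbol b)_{M-1}\) for the terms \((H,M-1)\) with \(H\in\mathcal J_+\). No other term has a pole there: components in \(\mathcal J_-\) have \(d\le M-2\), and disjointness of the strings rules out everything else. Equating residues gives the linear relation \(\rho_j=\alpha_{j,j}\beta_{j,M-2}+\sum_{H\in\mathcal J_+}\alpha_{H,j}\beta_{H,M-1}\), already derived in Proposition~\ref{prop:B-exceptional-functions-structure}. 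There \(\rho_j\) is the residue of \(R\) at \(t_j\), and \(\alpha_{j,j}\ne0\) by regularity and admissibility. Since \(m_j-1=M-2\), this gives \(\deg B^{(j)}<m_j-1\iff\beta_{j,M-2}=0\iff\mathcal F_j=0\), after substituting the explicit \(\beta_{H,M-1}\).

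\emph{Assembling the result.} The pair is strongly normal iff no \(j\in\mathcal J_-\) loses degree, i.e.\ off the union \(\mathcal E^B_{\boldsymbol n,\boldsymbol m}\). Each set \(\{\mathcal F_j=0\}\) is algebraic by Proposition~\ref{prop:B-exceptional-functions-structure}. In the diagonal case \(\mathcal J_-=\varnothing\), so strong normality holds throughout the admissible regular domain.

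The main obstacle is the bookkeeping of which terms contribute a pole at \(t_j\). One must show that, for \(H\in\mathcal J_+\), the factor \(t+b_j+M-2\) appears exactly once in \((t\one_q+\boldsymbol b)_{M-1}(t+b_H+M-1)\), so the pole is simple and the residue is well defined. One must also show that the terms \((h,d)\) with \(d\le M-2\), \(h\ne j\), or \(h=j\) with \(d<M-2\), are regular at \(t_j\). All of this is the same integer-separation argument used in the injectivity part of Proposition~\ref{prop:finite-rational-characterization}, and I would simply cite it.
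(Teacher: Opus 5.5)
Your proposal is correct and follows essentially the same route as the paper. Both arguments isolate \(\beta_{H,M-1}\ne0\) by the residue at \(-b_H-(M-1)\). Both obtain \(\alpha_{j,j}\beta_{j,M-2}=\mathcal F_j\) from the residue at \(t_j=-b_j-(M-2)\), exactly as in the proof of Proposition~\ref{prop:B-exceptional-functions-structure}, and then transfer the degree criterion to \(\mathbf B_{\boldsymbol n,\boldsymbol m}\) through the nonzero constant \(\nu^B_{\boldsymbol n,\boldsymbol m}\). Your explicit pole bookkeeping at \(t_j\) only spells out what the paper leaves to the cited residue identity: the factor \(t+b_j+M-2\) occurs once in \((t\one_q+\boldsymbol b)_{M-1}\), and the strings \(-b_h-\N_0\) are pairwise disjoint.
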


\begin{proof}
	Write the unnormalised \(B\)-components as
\(\widehat B_{\boldsymbol n,\boldsymbol m}^{(j)}(z) = \sum_{d=0}^{m_j-1}\beta_{j,d}z^d .\)
	The residue computation carried out in the proof of
	Proposition~\ref{prop:B-exceptional-functions-structure} shows first that,
	for every \(H\in\mathcal J_+\), the top coefficient
	\(\beta_{H,M-1}\) is nonzero under regularity and the standing admissibility
	convention. Hence
\(\deg \widehat B_{\boldsymbol n,\boldsymbol m}^{(H)}=M-1=m_H-1,\) \(H\in\mathcal J_+ .\)
	Let now \(j\in\mathcal J_-\). The last allowed coefficient of the
	\(j\)-th component is \(\beta_{j,M-2}\). By the same residue identity used
	in Proposition~\ref{prop:B-exceptional-functions-structure}, the vanishing of
	this coefficient is equivalent to the exceptional equation
	\(\mathcal F_j=0\).
	Therefore
\(\deg \widehat B_{\boldsymbol n,\boldsymbol m}^{(j)}<M-2\) \(\Longleftrightarrow\) \(\mathcal F_j=0.\)
	Since \(m_j=M-1\), this is exactly
\(\deg \widehat B_{\boldsymbol n,\boldsymbol m}^{(j)}<m_j-1\) \(\Longleftrightarrow\) \(\mathcal F_j=0.\)
	The normalising factor \(\nu_{\boldsymbol n,\boldsymbol m}^{B}\) is nonzero under regularity
	and the standing admissibility convention, so the same degree criterion holds for the normalised
	components \(B_{\boldsymbol n,\boldsymbol m}^{(j)}\). Hence the only possible failures of
	\(B\)-strong normality occur on the zero sets \(\{\mathcal F_j=0\}\), with
	\(j\in\mathcal J_-\). This gives the stated exceptional set. If
	\(\boldsymbol m\) is diagonal, then \(\mathcal J_-=\varnothing\), and there are no
	exceptional equations.
\end{proof}
\begin{example}
	This example shows that \(B\)-strong normality can indeed fail within the regular
	parameter domain.  Take
	\(p=q=2\),
	\(r=1\),
	\(\boldsymbol n=\begin{bNiceMatrix}2&2\end{bNiceMatrix}\),
	\(\boldsymbol m=\begin{bNiceMatrix}3&2\end{bNiceMatrix}\),
	and choose
	\(b_1=\frac{1}{10},\) \(\kappa_1=\frac{1}{5},\) \(\kappa_2=\frac{19}{20},\) \(a_1=\frac{3}{10}.\)
	Let \(b_2=\xi\), where \(\xi\) is the real root in the interval
	\((-0.81,-0.80)\) of
	\(12800x^3+171520x^2-1674878x-1447069=0 .\)
	The pair is \(B\)-balanced and near-diagonal.  The displayed interval, together
	with the rational values of \(b_1\), \(a_1\), \(\kappa_1\), and \(\kappa_2\),
	shows that the row and column strings are separated and that all Gamma and
	Pochhammer denominator factors appearing in the construction are regular.
	Thus these are regular Bessel-like parameters.
	
	Let \(\widehat{\mathbf B}_{\boldsymbol n,\boldsymbol m}\) denote the unnormalised vector
	before multiplication by \(\nu_{\boldsymbol n,\boldsymbol m}^{B}\), and write
	\[
	\widehat B_{\boldsymbol n,\boldsymbol m}^{(1)}(z)
	=
	\beta_{1,0}+\beta_{1,1}z+\beta_{1,2}z^2,
	\qquad
	\widehat B_{\boldsymbol n,\boldsymbol m}^{(2)}(z)
	=
	\beta_{2,0}+\beta_{2,1}z .
	\]
	Substitution of the above data into the rational identity used in the proof
	of Theorem~\ref{thm:final-explicit} gives
	\(\beta_{1,2}=-\frac{1289871}{6400}\ne0,\)
	whereas
	\(\beta_{2,1} = \frac{ 12800b_2^3+171520b_2^2-1674878b_2-1447069 }{ 128(10b_2-1)(10b_2+7) }=0 .\)
	The normalising constant \(\nu_{\boldsymbol n,\boldsymbol m}^{B}\) is nonzero, since
	\(\beta_{1,2}\ne0\), and it makes the first component monic of degree two.
	However, the second component has degree strictly smaller than
	\(m_2-1=1\).  Hence this regular near-diagonal \(B\)-balanced pair is not
	strongly normal.
	
	The obstruction is the common denominator \((t\one_q+\boldsymbol b)_d\) in the rational
	reconstruction.  For non-diagonal \(\boldsymbol m\), the last pole of a lower string can
	be coupled to the top-degree terms of higher strings, so the corresponding
	leading coefficient need not be isolated by a single residue.
	
\end{example}

\begin{remark}[The \(K=0\) block and the Wolfs Bessel case]
	The formula for the \(B\)-polynomial vector separates the collapsed \(K=0\) block from the
	terminating hypergeometric blocks with \(K\ge1\). This avoids assigning a
	formal hypergeometric meaning to the case \(K=0\) and \(j\ne H\), where
	the finite contribution is zero. For \(K\ge1\), termination is caused by
	the numerator parameter \(1-K-\delta_{j,H}\).
\end{remark}

\section{Rodrigues-type representation}
\label{sec:matrix-differential-B}

The formulae of Aptekarev--Branquinho--Van Assche for multiple Bessel
polynomials are differential formulae for type-II polynomials.  In the present
mixed setting the row-side type-II object is the \(q\)-component vector
\(\mathbf B_{\boldsymbol n,\boldsymbol m}
=
\begin{bNiceMatrix}
	B^{(1)}_{\boldsymbol n,\boldsymbol m}&\Cdots&B^{(q)}_{\boldsymbol n,\boldsymbol m}
\end{bNiceMatrix}\).
The row-vector differential operators and residue functionals needed to formulate the corresponding matrix differential
representation are introduced next.

\begin{definition}[Row-vector differential operators and residue functionals]
	\label{def:Bessel-row-differential-operators}
	The Bessel-like regime \(0\le r<q\) is retained throughout this section.
	For \(j\in\{1,\ldots,q\}\) and \(h\in\{1,\ldots,p\}\), set
	\[
	C_{j,h}(z)
	=
	\frac1z
	f_0\left(
	\frac1z;
	\boldsymbol a+\kappa_h\one_r,
	\boldsymbol b+\kappa_h\one_q+\boldsymbol e_j
	\right)
	=
	\sum_{t=0}^{\infty}\mu_{j,h}(t)z^{-t-1},
	\]
	where
	\[
	\mu_{j,h}(t)
	=
	\frac{\Gamma((t+\kappa_h)\one_r+\boldsymbol a+\one_r)}
	{\Gamma(t+\kappa_h+b_j+2)
		\prod_{\ell\ne j}\Gamma(t+\kappa_h+b_\ell+1)}.
	\]
	Let
\(D\coloneq\operatorname{diag}(b_1,\ldots,b_q),\) \(\vartheta\coloneq z\frac{\mathrm d}{\mathrm dz}.\)
	Since \(r<q\), the rational function
\(\frac{\prod_{\rho=1}^{r}(x+a_\rho+1)} {\prod_{j=1}^{q}(x+b_j+1)}\)
	is proper.  Denote its partial-fraction coefficients by
	\[
	\frac{\prod_{\rho=1}^{r}(x+a_\rho+1)}
	{\prod_{j=1}^{q}(x+b_j+1)}
	=
	\sum_{j=1}^{q}
	\frac{\lambda^{\boldsymbol a}_j}{x+b_j+1},
	\]
	that is,
\(\lambda^{\boldsymbol a}_j \coloneq \frac{\prod_{\rho=1}^{r}(a_\rho-b_j)} {\prod_{\ell\ne j}(b_\ell-b_j)}.\)
	Write
	\[
	\lambda^{\boldsymbol a}
	=
	\begin{bNiceMatrix}
		\lambda^{\boldsymbol a}_1&\Cdots&\lambda^{\boldsymbol a}_q
	\end{bNiceMatrix}.
	\]
	
	For a row-vector Laurent polynomial
	\[
	\mathbf P(z)
	=
	\begin{bNiceMatrix}
		P_1(z)&\Cdots&P_q(z)
	\end{bNiceMatrix}
	\in \mathbb C[z,z^{-1}]^{1\times q},
	\]
	define
	\[
	\mathcal T^{\boldsymbol a}_{i,s}[\mathbf P]
	\coloneq
	-z\left(
	\vartheta\mathbf P+\mathbf P D+(\kappa_i+s+2)\mathbf P
	\right)
	+
	(\mathbf P\one_q^\top)\lambda^{\boldsymbol a},
	\qquad
	i\in\{1,\ldots,p\},\quad s\in\mathbb N_0.
	\]
	
	For \(h\in\{1,\ldots,p\}\), let
	\[
	\mathbf C_h(z)
	=
	\begin{bNiceMatrix}
		C_{1,h}(z)\\
		\Vdots\\
		C_{q,h}(z)
	\end{bNiceMatrix}
	\]
	be the \(h\)-th column of the matrix weight \(\CB(z)\).  For
	\(k\in\mathbb N_0\), define the residue functional
\(\mathcal L_{h,k}(\mathbf P) \coloneq \operatorname*{Res}_{z=0} \left( z^k\mathbf P(z)\mathbf C_h(z) \right).\)
\end{definition}

\begin{remark}
	The operator \(\mathcal T^{\boldsymbol a}_{i,s}\) acts directly on row-vector Laurent
	polynomials.  The last term is rank one: it depends on the scalar sum of the
	components of \(\mathbf P\) and redistributes it through the vector
	\(\lambda^{\boldsymbol a}\).  With the row-vector convention used here,
\(	\one_q=
	\begin{bNiceMatrix}
		1&\Cdots&1
	\end{bNiceMatrix}\),
	\(\mathbf P(z)\one_q^\top
	=
	\sum_{j=1}^{q}P_j(z)\).
	Thus, when \(q=1\), the vector structure disappears and the construction
	reduces to the scalar multiple Bessel construction.  For \(q>1\), the coupling
	between the components is precisely the rank-one term in
	\(\mathcal T^{\boldsymbol a}_{i,s}\).
\end{remark}

The next lemma states the basic intertwining property of these
operators with the Bessel-like columns.  It is the row-vector analogue of the
differential identity behind the scalar multiple Bessel formulae.

\begin{lemma}[One-step moment shift]
	\label{lem:matrix-differential-one-step}
	For every row-vector Laurent polynomial \(\mathbf P\), one has
	\[
	\mathcal L_{h,k}
	\left(
	\mathcal T^{\boldsymbol a}_{i,s}[\mathbf P]
	\right)
	=
	(k+\kappa_h-\kappa_i-s)
	\mathcal L_{h,k+1}(\mathbf P).
	\]
\end{lemma}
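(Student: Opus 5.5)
By linearity of both sides in \(\mathbf P\), it suffices to take a monomial row vector \(\mathbf P(z)=z^u\boldsymbol e_j^\top\), with \(u\in\mathbb Z\) and \(j\in\{1,\ldots,q\}\). The first step is the residue rule: since \(C_{j,h}(z)=\sum_{t\ge0}\mu_{j,h}(t)z^{-t-1}\) converges on \(\mathbb C\setminus\{0\}\),
\[
\operatorname*{Res}_{z=0}\bigl(z^{n}C_{j,h}(z)\bigr)=
\begin{cases}\mu_{j,h}(n),& n\ge0,\\ 0,& n<0.\end{cases}
\]
Hence \(\mathcal L_{h,k}(z^u\boldsymbol e_j^\top)=\mu_{j,h}(k+u)\) for \(k+u\ge0\), and it vanishes otherwise.

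Next, compute the operator on the monomial. Since \(\vartheta z^u=uz^u\) and \(z^u\boldsymbol e_j^\top D=b_jz^u\boldsymbol e_j^\top\), we get
\[
\mathcal T^{\boldsymbol a}_{i,s}[z^u\boldsymbol e_j^\top]
=-(u+b_j+\kappa_i+s+2)\,z^{u+1}\boldsymbol e_j^\top+z^u\lambda^{\boldsymbol a}.
\]
Put \(N\coloneq k+u\) and \(x\coloneq N+\kappa_h\), and assume \(N\ge0\). Then
\[
\mathcal L_{h,k}\bigl(\mathcal T^{\boldsymbol a}_{i,s}[z^u\boldsymbol e_j^\top]\bigr)
=-(u+b_j+\kappa_i+s+2)\,\mu_{j,h}(N+1)+\sum_{\ell=1}^{q}\lambda^{\boldsymbol a}_\ell\,\mu_{\ell,h}(N).
\]
The target is \((k+\kappa_h-\kappa_i-s)\,\mu_{j,h}(N+1)\). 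Moving the first term across, the claim is equivalent to
\[
\sum_{\ell=1}^{q}\lambda^{\boldsymbol a}_\ell\,\mu_{\ell,h}(N)=(x+b_j+2)\,\mu_{j,h}(N+1).
\]
The right-hand side of this identity depends on \(j\) only through its form; we show it is in fact independent of \(j\), which is the heart of the argument.

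To prove the identity, write every quantity over the common Gamma denominator \(G(x)\coloneq\prod_{m=1}^{q}\Gamma(x+b_m+2)\).

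\emph{Right-hand side.} Using \(\Gamma(x+1+a_\rho+1)=(x+a_\rho+1)\Gamma(x+a_\rho+1)\) and \((x+b_j+2)/\Gamma(x+b_j+3)=1/\Gamma(x+b_j+2)\), we obtain
\[
(x+b_j+2)\,\mu_{j,h}(N+1)=\frac{\Gamma(x\one_r+\boldsymbol a+\one_r)}{G(x)}\prod_{\rho=1}^{r}(x+a_\rho+1).
\]
This expression is independent of \(j\).

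\emph{Left-hand side.} Since \(\Gamma(x+b_m+1)=\Gamma(x+b_m+2)/(x+b_m+1)\) for each \(m\ne\ell\),
\[
\mu_{\ell,h}(N)=\frac{\Gamma(x\one_r+\boldsymbol a+\one_r)}{G(x)}\prod_{m\ne\ell}(x+b_m+1).
\]
Therefore
\[
\sum_{\ell=1}^{q}\lambda^{\boldsymbol a}_\ell\,\mu_{\ell,h}(N)
=\frac{\Gamma(x\one_r+\boldsymbol a+\one_r)}{G(x)}\sum_{\ell=1}^{q}\lambda^{\boldsymbol a}_\ell\prod_{m\ne\ell}(x+b_m+1).
\]
Multiplying the defining partial-fraction identity for \(\lambda^{\boldsymbol a}\) in Definition~\ref{def:Bessel-row-differential-operators} by \(\prod_m(x+b_m+1)\) gives
\[
\sum_{\ell=1}^{q}\lambda^{\boldsymbol a}_\ell\prod_{m\ne\ell}(x+b_m+1)=\prod_{\rho=1}^{r}(x+a_\rho+1),
\]
so the two sides agree. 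This is exactly where the properness \(r<q\) is used. Regularity (\(b_\ell\) pairwise distinct) makes \(\lambda^{\boldsymbol a}\) well defined, and admissibility keeps all Gamma factors finite.

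\emph{Boundary exponents.} The step I expect to be delicate is the range of exponents.
\begin{itemize}
\item For \(N\le-2\), both sides vanish trivially.
\item For \(N=-1\), the \(\lambda^{\boldsymbol a}\)-term has zero residue, so the left side is \(-(u+b_j+\kappa_i+s+2)\mu_{j,h}(0)\), while the right side is \((k+\kappa_h-\kappa_i-s)\mu_{j,h}(0)\). These agree only if \((\kappa_h+b_j+1)\mu_{j,h}(0)=0\).
\end{itemize}
I would therefore first check whether the intended application only uses \(\mathbf P\) with \(z^k\mathbf P\) polynomial; in that case \(N\ge0\) always and the proof is complete. Otherwise the natural repair is to interpret \(\mu_{\ell,h}(-1)\) by the Gamma formula (with \(1/\Gamma\) extended analytically), which is consistent with the identity above. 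The statement would then need to be restricted, or stated with that convention.
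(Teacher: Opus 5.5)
Your computation is the paper's own proof. Both reduce to monomials, read \(\mathcal L_{h,k}\) off the Laurent coefficients \(\mu_{j,h}\), and use the partial-fraction identity for \(\lambda^{\boldsymbol a}\) to show that \(\sum_\ell\lambda^{\boldsymbol a}_\ell\mu_{\ell,h}(N)=(x+b_j+2)\mu_{j,h}(N+1)\) does not depend on \(j\).

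Your boundary analysis is also correct. It exposes a real defect in the statement that the paper's proof passes over: that proof tacitly evaluates \(\mathcal L_{h,k}(z^n\mathbf e_\ell^\top)\) as \(\mu_{\ell,h}(k+n)\) for every \(n\), and the residue definition contradicts this when \(k+n<0\). Write \(\mathbf P=\sum_u\mathbf p_uz^u\). Since \((\kappa_h+b_j+1)\mu_{j,h}(0)\) is independent of \(j\), one gets exactly
\[
\mathcal L_{h,k}\bigl(\mathcal T^{\boldsymbol a}_{i,s}[\mathbf P]\bigr)
=(k+\kappa_h-\kappa_i-s)\,\mathcal L_{h,k+1}(\mathbf P)
-\frac{\Gamma(\kappa_h\one_r+\boldsymbol a+\one_r)}{\prod_{\ell=1}^{q}\Gamma(\kappa_h+b_\ell+1)}\,\mathbf p_{-k-1}\one_q^\top .
\]
The simplest counterexample is \(p=q=1\), \(r=0\), \(\mathbf P=z^{-1}\), \(k=0\), where the two sides differ by \(1/\Gamma(\kappa_h+b_1+1)\). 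So the lemma is false for general Laurent polynomials. It holds when \(\mathbf p_{-k-1}\one_q^\top=0\), in particular whenever \(z^k\mathbf P\) is a polynomial, and for generic parameters only then.

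Your open branch should be settled in favour of the second repair. In the paper the lemma is used through \eqref{eq:matrix-differential-iteration}. That iteration applies it to the Laurent seeds \(z^{m_*-1-|\boldsymbol n|}\mathbf v^H\) of Theorem~\ref{thm:pure-Bessel-matrix-differential} and \(\mathbf S^{\boldsymbol a}_{\boldsymbol n,\boldsymbol m}\) of Definition~\ref{def:finite-Laurent-seed}, and to their partial images under the factors of \(\mathcal U^{\boldsymbol a}_\omega\). For \(q\ge2\) these carry negative powers, and they are tested at levels \(k,\ldots,k+|\boldsymbol n|-1\) with \(k\) as small as \(0\). Restricting to polynomial \(\mathbf P\) therefore does not cover these uses.

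The right fix is to replace \(\mathcal L_{h,k}\) by \(\widetilde{\mathcal L}_{h,k}(z^u\mathbf e_j^\top)\coloneq\mu_{j,h}(k+u)\) for all \(u\in\mathbb Z\), with \(\mu_{j,h}\) given by its Gamma formula and \(1/\Gamma\) entire. This must be done at every negative exponent, not only at \(-1\). If you change only \(\mu_{\ell,h}(-1)\), the defect moves to \(N=-2\), where it becomes \((\kappa_h+b_j)\mu_{j,h}(-1)\), which is generically nonzero.

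With the full convention, your identity is one of meromorphic functions of \(x\). It therefore proves the lemma for all Laurent polynomials, wherever the numerator Gamma factors stay finite. Moreover \(\widetilde{\mathcal L}_{h,k}=\mathcal L_{h,k}\) whenever \(z^k\mathbf P\) is a polynomial. That is exactly what is needed at the two ends of the iteration: the seed, tested at level \(k+|\boldsymbol n|\), and the output, tested at \(k\ge0\) once it is known to be polynomial. The polynomiality arguments in Theorem~\ref{thm:pure-Bessel-matrix-differential} and Proposition~\ref{prop:finite-Laurent-seed} already read negative levels with this Gamma interpretation.
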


\begin{proof}
	By linearity it is enough to take \(\mathbf P(z)=\mathbf v z^n\), with
	\(\mathbf v=
	\begin{bNiceMatrix}
		v_1&\Cdots&v_q
	\end{bNiceMatrix}\).
	The differential part contributes
\(-\sum_{j=1}^{q} v_j(n+b_j+\kappa_i+s+2)\mu_{j,h}(k+n+1).\)
	For the coupling term, use the partial fraction identity with
	\(x=k+n+\kappa_h\).  It gives
	\[
	\sum_{j=1}^{q}\lambda^{\boldsymbol a}_j\mu_{j,h}(k+n)
	=
	\frac{\Gamma((k+n+\kappa_h)\one_r+\boldsymbol a+2\one_r)}
	{\prod_{j=1}^{q}\Gamma(k+n+\kappa_h+b_j+2)}.
	\]
	Equivalently, for each fixed \(j\), this common quantity is
\((k+n+\kappa_h+b_j+2)\mu_{j,h}(k+n+1).\)
	Although the expression above contains the index \(j\), it is independent of
	\(j\): the factor \(k+n+\kappa_h+b_j+2\) cancels exactly the corresponding
	shifted factor in the denominator of \(\mu_{j,h}(k+n+1)\).
	Therefore the coupling term contributes
\(\sum_{j=1}^{q} v_j(k+n+\kappa_h+b_j+2)\mu_{j,h}(k+n+1).\)
	Adding both contributions gives
\((k+\kappa_h-\kappa_i-s) \sum_{j=1}^{q}v_j\mu_{j,h}(k+n+1),\)
	which is precisely
\((k+\kappa_h-\kappa_i-s)\mathcal L_{h,k+1}(\mathbf P).\)
\end{proof}

Let \(N=|\boldsymbol n|\), and let
\(\omega=(i_0,\ldots,i_{N-1})\)
be a word in which the index \(i\) occurs exactly \(n_i\) times.  For each
position \(a\), set
\(\rho_a=\#\{b>a:\ i_b=i_a\},\) \(s_a=N-1-a+\rho_a.\)
Define
\[
\mathcal U^{\boldsymbol a}_{\omega}
=
\mathcal T^{\boldsymbol a}_{i_{N-1},s_{N-1}}
\cdots
\mathcal T^{\boldsymbol a}_{i_1,s_1}
\mathcal T^{\boldsymbol a}_{i_0,s_0}.
\]
Iterating Lemma~\ref{lem:matrix-differential-one-step} gives
\begin{equation}
	\label{eq:matrix-differential-iteration}
	\mathcal L_{h,k}
	\left(
	\mathcal U^{\boldsymbol a}_{\omega}[\mathbf P]
	\right)
	=
	\prod_{i=1}^{p}
	\prod_{s=0}^{n_i-1}
	(k+\kappa_h-\kappa_i-s)
	\mathcal L_{h,k+N}(\mathbf P).
\end{equation}
In particular, when \(h=i\), the product contains
\(k(k-1)\cdots(k-n_i+1)\),
and therefore
\(\mathcal L_{i,k} \left( \mathcal U^{\boldsymbol a}_{\omega}[\mathbf P] \right)=0,\) \(k\in\{0,\ldots,n_i-1\}.\)
Thus the matrix differential product automatically generates the
\(B\)-side orthogonality conditions.

The initial Laurent vector in the pure Bessel case is described next.  Let
\((\boldsymbol n,\boldsymbol m)\) be \(B\)-balanced and near the diagonal,
\(|\boldsymbol m|=|\boldsymbol n|+1,\) \(m_j\in\{m_*-1,m_*\},\) \(m_*=\max_j m_j.\)
Set
\(H=\{j:\ m_j=m_*\}\).
The barycentric vector supported on \(H\) is
\[
v^H_j=
\begin{cases}
	\frac{1}{\prod_{\ell\in H,\ \ell\ne j}(b_\ell-b_j)},
	& j\in H,\\[1.2em]
	0,
	& j\notin H.
\end{cases}
\]

\begin{theorem}[Pure Bessel Rodrigues-type formula]
	\label{thm:pure-Bessel-matrix-differential}
	Assume that the Bessel-like parameters are regular, admissible for the
	formulas in this result, and that \(r=0\). Let
	\((\boldsymbol n,\boldsymbol m)\) be a \(B\)-balanced near-diagonal pair covered by
	Theorem~\textup{\ref{thm:final-explicit}}.
	Then, up to a nonzero normalization constant,
\(\mathbf B_{\boldsymbol n,\boldsymbol m}(z) = \mathcal U^{0}_{\omega} \left[ z^{m_*-1-|\boldsymbol n|}\mathbf v^H \right].\)
	The normalization is fixed by making the component selected by the chosen
	monicity convention monic.
\end{theorem}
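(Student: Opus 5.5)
The plan is to show that the Laurent row vector
\(\mathbf P_\omega\coloneq\mathcal U^{0}_{\omega}[z^{m_*-1-|\boldsymbol n|}\mathbf v^H]\)
is a nonzero admissible competitor for the \(B\)-problem. Then Proposition~\ref{prop:finite-rational-characterization} (weak normality) forces it to be a nonzero multiple of \(\mathbf B_{\boldsymbol n,\boldsymbol m}\). Three things must be checked: orthogonality, the componentwise degree constraints \(P_{\omega,j}\in\Poly_{m_j-1}\) (including absence of negative powers), and nonvanishing. Orthogonality is already essentially done. Iterating Lemma~\ref{lem:matrix-differential-one-step} gives \eqref{eq:matrix-differential-iteration}, hence \(\mathcal L_{i,k}(\mathbf P_\omega)=0\) for \(k\in\{0,\ldots,n_i-1\}\). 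Once \(\mathbf P_\omega\) is known to be a polynomial vector, \(\mathcal L_{i,k}(\mathbf P_\omega)\) is exactly the pairing in \eqref{eq:final-B-orth} with the contour integral replaced by the residue at \(0\), since \(\mathbf C_i\) is holomorphic off \(0\). So these identities are precisely the \(B\)-orthogonality conditions.

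For the degree bookkeeping, I would expand on monomials. With \(r=0\) one has \(\lambda^0_j=1/\prod_{\ell\ne j}(b_\ell-b_j)\), and on \(\mathbf w z^e\)
\[
\mathcal T^{0}_{i,s}[\mathbf w z^e]
=
-z^{e+1}\,\mathbf w\,(e+D+\kappa_i+s+2)
+z^{e}(\mathbf w\one_q^\top)\lambda^{0}.
\]
The top power therefore rises by one per step. Starting from \(z^{m_*-1-N}\), after \(N\) steps the top power is \(z^{m_*-1}\), and its coefficient is \((-1)^N\mathbf v^H\prod_a(\cdots+D)\), a diagonal product applied to \(\mathbf v^H\). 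This coefficient is supported on \(H\) and is nonzero by admissibility. This settles both nonvanishing and the requirement that components \(j\notin H\), where \(m_j=m_*-1\), have no \(z^{m_*-1}\) term. For lower powers, I would attach to a vector \(\mathbf w\) its \emph{order} \(o(\mathbf w)\), the least \(e\) with \(\sum_j w_jb_j^e\ne0\). Three facts feed the count. First, \(\mathbf v^H\) has order \(|H|-1\), by divided differences of monomials. Second, \(\lambda^0\) has order \(q-1\). Third, right multiplication by a degree-one polynomial in \(D\) lowers the order by at most one. Moreover, the coupling term is nonzero only on vectors of order \(0\).

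Using \(N=|\boldsymbol m|-1=q(m_*-1)+|H|-1\), I would prove by induction on the number of steps an invariant of the form ``every monomial \(\mathbf w z^e\) appearing has \(e+o(\mathbf w)\ge m_*-1-N+|H|-1+\#\{\text{steps}\}-(\text{correction from coupling steps})\)''. A coupling step resets the order to \(q-1\) while freezing \(e\), so the invariant gains \(q-1\). The goal is that after \(N\) steps every surviving monomial has \(e\ge0\). It should also show that for \(e=m_*-1\) the support lies in \(H\), which refines the previous paragraph; components outside \(H\) then automatically lie in \(\Poly_{m_*-2}\). This invariant is the step I expect to be the main obstacle. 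The difficulty is calibrating it so that the \(q-1\) gained at each coupling exactly compensates for the power \(z^e\) not advancing, and the arithmetic \(N=q(m_*-1)+|H|-1\) is what should make it close. If the naive order is too weak, I would replace it by the finer statement that each coefficient vector annihilates the moment functionals \(\mathbf w\mapsto\sum_j w_j b_j^e\) for all \(e\) below an explicit threshold, and track that threshold instead.

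Finally, combining the polynomiality and degree constraints, the orthogonality, and nonvanishing, \(\mathbf P_\omega\) lies in the one-dimensional solution space of Proposition~\ref{prop:finite-rational-characterization}. Hence \(\mathbf P_\omega=c\,\mathbf B_{\boldsymbol n,\boldsymbol m}\) with \(c\ne0\). The normalization is fixed by dividing by the coefficient of \(z^{m_*-1}\) in the component selected by the monicity convention (\(j_{\max}\in H\)); that coefficient is nonzero by the top-coefficient computation above.
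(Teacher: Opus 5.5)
Your route coincides with the paper's for orthogonality (via \eqref{eq:matrix-differential-iteration}), for the top coefficient \((-1)^{N}v^H_j\prod_i(b_j+\kappa_i+m_*)_{n_i}\) supported on \(H\), and for the final appeal to weak normality. It differs only in how negative powers are excluded. There your idea works, but not with the invariant as you wrote it.

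Under a raising step, \(e\) goes up by one while the order may drop by one. So \(e+o(\mathbf w)\) is only non-decreasing and does not gain one per step. The quantity that closes is \(\Phi_k(e)\coloneq qe+o(\mathbf w^{(k)}_e)-(q-1)k\), where \(\mathbf w^{(k)}_e\) is the coefficient of \(z^e\) after \(k\) factors and \(o(\mathbf 0)=+\infty\). Writing \(c_k=\kappa_{i_k}+s_k+2\), one has
\[
\mathbf w^{(k+1)}_e=-\mathbf w^{(k)}_{e-1}\bigl((e-1+c_k)I_q+D\bigr)+\bigl(\mathbf w^{(k)}_e\one_q^\top\bigr)\lambda^0 .
\]
Your three facts, together with \(o(\mathbf w+\mathbf w')\ge\min\{o(\mathbf w),o(\mathbf w')\}\), then give \(\Phi_{k+1}(e)\ge\min\{\Phi_k(e-1),\Phi_k(e)\}\). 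The coupling term is nonzero only when \(o(\mathbf w^{(k)}_e)=0\), and it then has order \(q-1\). Start from \(\min_e\Phi_0(e)=q(m_*-1-N)+|H|-1\) and use \(N=q(m_*-1)+|H|-1\). At \(k=N\) every nonzero coefficient satisfies \(qe\ge-o(\mathbf w^{(N)}_e)\ge-(q-1)\), since a nonzero vector has order at most \(q-1\) by Vandermonde in the distinct \(b_j\). Hence \(e\ge0\). Equivalently, on any surviving path, \(c\) coupling steps need at least \((|H|-1)+(c-1)(q-1)\) raising steps, which forces \(c\le m_*-1\).

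The paper instead excludes negative powers through the moments. The seed's proper partial-fraction identity makes the normalized moment function of the output a proper rational function of \(t\). The deepest negative level \(-s\), however, would contribute the polynomial \(\prod_h(t+b_h-s)_s\sum_jp_{j,-s}/(t+b_j-s)\) of degree \(qs-1\). Cancelling its top \(q\) coefficients forces \(\sum_jp_{j,-s}/(t+b_j-s)\equiv0\).

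Your count uses only the Lagrange identities for \(\mathbf v^H\) and \(\lambda^0\), which the paper records but does not exploit, and the shape of the operators; it never touches the moments. It also shows exactly where \(r=0\) enters. For \(r>0\) the vector \(\lambda^{\boldsymbol a}\) has order only \(q-1-r\), and the analogous potential with a monomial seed yields merely \((q-r)e\ge-r(m_*-1)-o\). This is why the paper passes to the finite Laurent seed and keeps the rational-function argument in Proposition~\ref{prop:finite-Laurent-seed}.
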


\begin{proof}
	The vector on the right-hand side satisfies the \(B\)-orthogonality
	conditions by \eqref{eq:matrix-differential-iteration}.  It remains to check
	that it is a polynomial vector with the prescribed degree bounds.  This is the
	only point where \(r=0\) is used in an essential way.
	
	The identities
\(\sum_{j\in H}v^H_jb_j^\ell=0,\) \(\ell\in\{0,\ldots,|H|-2\},\)
	and, for the full barycentric vector \(\lambda^0\),
\(\sum_{j=1}^{q}\lambda^0_jb_j^\ell=0,\) \(\ell\in\{0,\ldots,q-2\},\)
	are the standard Lagrange interpolation identities at the nodes
	\(b_1,\ldots,b_q\).  Equivalently, the monomial seed satisfies the
	proper partial-fraction identity
	\[
	\sum_{j\in H}
	v_j^H
	\frac{1}{(u+b_j)_{d+1}
		\prod_{\ell\ne j}(u+b_\ell)_d}
	=
	\frac{1}{\prod_{\ell=1}^{q}(u+b_\ell)_{m_\ell}},
	\qquad d=m_*-1.
	\]
	To verify this identity explicitly, multiply both sides by
	\(\prod_{\ell=1}^{q}(u+b_\ell)_{m_\ell}\). Since
	\(m_\ell=d+1\) for \(\ell\in H\) and \(m_\ell=d\) otherwise, the result is
	\[
		\sum_{j\in H}v_j^H
		\prod_{\substack{\ell\in H\\ \ell\ne j}}
		(u+b_\ell+d)
		=1.
	\]
	This is precisely the Lagrange interpolation formula for the constant
	polynomial one at the nodes \(-b_j-d\), \(j\in H\), because
	\[
		\prod_{\substack{\ell\in H\\ \ell\ne j}}
		\bigl((-b_j-d)-(-b_\ell-d)\bigr)
		=
		\prod_{\substack{\ell\in H\\ \ell\ne j}}(b_\ell-b_j).
	\]
	This proves the stated partial-fraction identity.

It remains to justify explicitly that the differential product has no negative
powers.  Write its finite Laurent expansion as
\[
\mathcal U^0_{\omega}
\left[z^{m_*-1-|\boldsymbol n|}\mathbf v^H\right]
=
\sum_{j=1}^{q}\sum_e p_{j,e}z^e\mathbf e_j^{\top}.
\]
After the common Gamma normalization of the moments, a coefficient at the
negative level \(e=-s\) contributes
\(\prod_{h=1}^{q}(t+b_h-s)_s \sum_{j=1}^{q}\frac{p_{j,-s}}{t+b_j-s}.\)
Choose the largest \(s\ge1\) for which some \(p_{j,-s}\ne0\).  This term has
polynomial degree at most \(qs-1\), whereas all smaller negative levels have
degree at most \(q(s-1)-1\), and the nonnegative levels are proper rational
functions.  Since the partial-fraction identity above gives a proper rational
moment function, the highest \(q\) coefficients of the displayed polynomial
part must vanish.  Hence
\(\sum_{j=1}^{q}\frac{p_{j,-s}}{t+b_j-s} = \mathrm{O}(t^{-q-1}),\) \(t\to\infty.\)
Its numerator over the denominator
\(\prod_{j=1}^{q}(t+b_j-s)\) must therefore have degree at most \(-1\), so it
is identically zero.  The poles are distinct, and thus
\(p_{j,-s}=0\) for every \(j\), a contradiction.  Descending on \(s\) proves
that the resulting vector is polynomial.

	The highest possible power is \(z^{m_*-1}\).  Its coefficient is obtained by
	taking the raising term in every factor.  Its \(j\)-th component is
\((-1)^{|\boldsymbol n|} v^H_j \prod_{i=1}^{p}(b_j+\kappa_i+m_*)_{n_i}.\)
	For every \(j\in H\), this coefficient is nonzero by regularity and the
	admissibility convention, so the constructed vector is not identically zero.
	It vanishes for \(j\notin H\). Consequently, the
	components with \(j\in H\) have degree at most \(m_*-1=m_j-1\), whereas the
	components with \(j\notin H\) have degree at most \(m_*-2=m_j-1\).  The
	result follows from weak normality, Proposition~\ref{prop:finite-rational-characterization}.
\end{proof}

\begin{remark}[A degree-three Rodrigues example]
	\label{rem:degree-three-Rodrigues-example}
	Consider the genuinely \(2\times2\) pure Bessel case
	\[
		\begin{gathered}
			p=q=2,\qquad r=0,\qquad
			\kappa_1=0,\qquad \kappa_2=\frac12,\\
			b_1=0,\qquad b_2=-\frac12,\qquad
			\boldsymbol n=(4,3),\qquad \boldsymbol m=(4,4).
		\end{gathered}
	\]
	The explicit formula \eqref{eq:final-B-components}, with the second
	component made monic, gives
	\[
		\mathbf B_{\boldsymbol n,\boldsymbol m}(z)
		=
		\begin{bNiceMatrix}
			-2z^3-\frac{12}{13}z^2-\frac{20}{429}z-\frac{8}{19305}
			&
			z^3+\frac{31}{39}z^2+\frac{94}{2145}z+\frac{1}{2457}
		\end{bNiceMatrix}.
	\]
	The same vector can be recovered from the Rodrigues formula. Here
	\(N=|\boldsymbol n|=7\), \(m_*=4\), \(H=\{1,2\}\), and
	\[
		\mathbf v^H
		=
		\begin{bNiceMatrix}-2&2\end{bNiceMatrix},
		\qquad
		z^{m_*-1-N}\mathbf v^H
		=
		z^{-4}\begin{bNiceMatrix}-2&2\end{bNiceMatrix}.
	\]
	For the grouped word
	\(\omega=(1,1,1,1,2,2,2)\), the shifts are
	\(9,7,5,3,4,2,0\), and hence
	\[
		\mathcal U^0_\omega
		=
		\mathcal T^0_{2,0}
		\mathcal T^0_{2,2}
		\mathcal T^0_{2,4}
		\mathcal T^0_{1,3}
		\mathcal T^0_{1,5}
		\mathcal T^0_{1,7}
		\mathcal T^0_{1,9}.
	\]
	A direct application of these seven first-order factors gives
	\begingroup
	\small
	\[
		\mathcal U^0_\omega
		\left[
			z^{-4}\begin{bNiceMatrix}-2&2\end{bNiceMatrix}
		\right]
		=
		\!\begin{bNiceMatrix}[margin=0pt]
			14(19305z^3+8910z^2+450z+4)
			&
			-135135z^3-107415z^2-5922z-55
		\end{bNiceMatrix}.
	\]
	\endgroup
	Dividing by \(-135135\) reproduces exactly the preceding monic vector.
	Thus the Rodrigues construction and the terminating hypergeometric formula
	agree in this non-scalar example, with both components attaining degree
	three.
\end{remark}

\begin{remark}[Reduction to the ABV Rodrigues formula]
	\label{rem:ABV-Rodrigues-reduction}
	When \(q=1\), the restriction \(0\le r<q\) forces \(r=0\). Hence the
	matrix weight has a single row, \(H=\{1\}\), \(\mathbf v^H=1\), and
	\(\boldsymbol m=(|\boldsymbol n|+1)\). The pure Bessel formula of
	Theorem~\ref{thm:pure-Bessel-matrix-differential} becomes the scalar identity
\(	B_{\boldsymbol n}(z)
	\propto
	\mathcal U^0_{\omega}[1]\).
	Writing \(\alpha_i=b_1+\kappa_i\), the one-step factor reduces to
\(\mathcal T^0_{i,s}[P] = P-z(\vartheta+\alpha_i+s+2)P.\)
	After restoring the scale parameter \(\gamma\) in the ABV weights
	\(z^{\alpha_i}\exp(\gamma/z)\), this factor reads
	\[
	\mathcal T^{(\gamma)}_{i,s}[P]
	=
	\gamma P-z(\vartheta+\alpha_i+s+2)P
	=
	-z^{-\alpha_i-s}\mathrm{e}^{-\gamma/z}
	\frac{\mathrm d}{\mathrm dz}
	\left(
	z^{\alpha_i+s+2}\mathrm{e}^{\gamma/z}P(z)
	\right).
	\]
	Thus the product of the scalar factors in \(\mathcal U^0_{\omega}\) recovers
	the Rodrigues raising formula for the type-II multiple Bessel polynomials
	of Aptekarev--Branquinho--Van Assche \cite{ABV2003}, up to the global
	normalization and a rescaling of the spectral variable.
\end{remark}

The pure Bessel formula has a one-term Laurent seed.  For the general
Bessel-like case the numerator parameters \(\boldsymbol a\) deform this monomial
seed into a finite Laurent tail.  Before defining it, its
dependence on the multi-index \(\boldsymbol m\) is made explicit.  This is useful because the notation
\(\mathbf S^{\boldsymbol a}_{\boldsymbol n,\boldsymbol m}\) contains \(\boldsymbol n\), although the
coefficients of the finite tail are determined by \(\boldsymbol a,\boldsymbol b\) and
\(\boldsymbol m\) alone.

For \(r>0\), keep the same \(B\)-balanced near-diagonal pair and the notation
\(m_*\) and \(H\) introduced above.  Since \(\boldsymbol m\) is near-diagonal,
its entries take only the two values \(m_*\) and \(m_*-1\).  Thus the finite
seed is governed by the maximal level \(m_*\) and by the set
\(H=\{j:\ m_j=m_*\}\)
of components which attain that level.  The length of the Laurent tail is
\(d=m_*-1\).
The dependence on \(\boldsymbol n\) enters only through the global Laurent shift
\(z^{-N}\), with \(N=|\boldsymbol n|\), and through the subsequent ordered product
\(\mathcal U^{\boldsymbol a}_{\omega}\).  The word \(\omega\) orders the raising
operators; it does not modify the seed.

On the step-line this dependence becomes especially simple.  Write
\(\boldsymbol m=\boldsymbol\sigma_q(N+1),\) \(N+1=qM+\tau,\) \(\tau\in\{0,\ldots,q-1\},\)
so that \(\tau\) is the remainder when \(N+1\) is divided by \(q\).  Then
\(\boldsymbol m=M\one_q+\sum_{j=1}^{\tau}\boldsymbol e_j\) .
When \(\tau=0\), one has
\(m_*=M\),
\qquad
\(H=\{1,\ldots,q\}\),
\(d=M-1\).
The seed then has the form
\[
\mathbf S^{\boldsymbol a}_{\boldsymbol n,\boldsymbol m}(z)
=
z^{M-1-N}
\left(
\mathbf s_0+\frac{\mathbf s_1}{z}+\cdots+
\frac{\mathbf s_{M-1}}{z^{M-1}}
\right),
\]
	and, for a nonzero scalar \(\gamma\), its coefficients are determined by the
	corresponding specialization of the partial-fraction identity below, namely
\[
\sum_{j=1}^{q}\sum_{a=0}^{M-1}
s_{a,j}
\frac{\prod_{\rho=1}^{r}(u+a_\rho)_{M-1-a}}
{(u+b_j)_{M-a}
	\prod_{\substack{\ell=1\\ \ell\ne j}}^{q}
	(u+b_\ell)_{M-1-a}}
=
\frac{\gamma}{\prod_{\ell=1}^{q}(u+b_\ell)_{M}}.
\]
For \(\tau>0\), one has \(m_*=M+1\),
\(H=\{1,\ldots,\tau\}\), and \(d=M\).  Thus the step-line seed is determined
only by the quotient \(M\) and the remainder \(\tau\), before the raising operators
associated with \(\boldsymbol n\) are applied.

Set
\(N=|\boldsymbol n|\).
\begin{definition}[Finite Laurent seed]
	\label{def:finite-Laurent-seed}
	Assume that the Bessel-like parameters are regular and that \(0<r<q\).
	Fix \(\gamma\in\mathbb C\setminus\{0\}\).  Define
\(R(u) \coloneq \frac{\gamma} {(u\one_q+\boldsymbol b)_{\boldsymbol m}},\)
	and, for \(a\in\{0,\ldots,d\}\) and \(j\in\{1,\ldots,q\}\),
\(\Phi_{a,j}(u) = \frac{(u\one_r+\boldsymbol a)_{d-a}} {(u+b_j)_{d-a+1} (u\one_{q-1}+\boldsymbol b^{\,*j})_{d-a}} .\)
	For \(a\in\{0,\ldots,d\}\) and \(j\in\{1,\ldots,q\}\), the
	coefficients \(s_{a,j}\) are defined recursively by
	\begin{equation}
		\label{eq:seed-coefficients-residue}
		s_{a,j}
		=
		\frac{
			\operatorname*{Res}_{u=-b_j-d+a} R(u)
			-
			\sum_{\alpha=0}^{a-1}
			\sum_{h=1}^{q}
			s_{\alpha,h}\,
			\operatorname*{Res}_{u=-b_j-d+a}\Phi_{\alpha,h}(u)
		}{
			\operatorname*{Res}_{u=-b_j-d+a}\Phi_{a,j}(u)
		},
	\end{equation}
	where the double sum is empty for \(a=0\).  Define the finite
	Laurent seed by
	\begin{equation}
		\label{eq:seed-ansatz}
		\mathbf S^{\boldsymbol a}_{\boldsymbol n,\boldsymbol m}(z)
		=
		z^{d-N}
		\left(
		\mathbf s_0+\frac{\mathbf s_1}{z}+\cdots+\frac{\mathbf s_d}{z^d}
		\right),
		\qquad
		\mathbf s_a=
		\begin{bNiceMatrix}
			s_{a,1} & \Cdots & s_{a,q}
		\end{bNiceMatrix}.
	\end{equation}
\end{definition}

\begin{proposition}[Properties of the finite Laurent seed]
	\label{prop:finite-Laurent-seed}
	The seed \(\mathbf S^{\boldsymbol a}_{\boldsymbol n,\boldsymbol m}\) of
	Definition~\ref{def:finite-Laurent-seed} is well defined and is unique for the
	prescribed value of \(\gamma\). Its coefficients are equivalently
	characterized by the partial-fraction identity
	\begin{equation}
		\label{eq:seed-partial-fraction}
		\sum_{j=1}^{q}\sum_{a=0}^{d}
		s_{a,j}
		\frac{
			(u\one_r+\boldsymbol a)_{d-a}
		}{
			(u+b_j)_{d-a+1}
			(u\one_{q-1}+\boldsymbol b^{\,*j})_{d-a}
		}
		=
		\frac{\gamma}
		{(u\one_q+\boldsymbol b)_{\boldsymbol m}} .
	\end{equation}
	Moreover, its leading coefficient is
	\begin{equation}
		\label{eq:s0-corrected}
		s_{0,j}
		=
		\begin{cases}
			\frac{\gamma v^H_j}
			{(\boldsymbol a-b_j\one_r-d\one_r)_d},
			& j\in H,\\[1.2em]
			0, & j\notin H.
		\end{cases}
	\end{equation}
	Finally, the image of the seed satisfies
	\begin{equation}
		\label{eq:finite-seed-system}
		\Pi_{<0}
		\mathcal U^{\boldsymbol a}_{\omega}
		\left[
		\mathbf S^{\boldsymbol a}_{\boldsymbol n,\boldsymbol m}
		\right]=0,
		\qquad
		\deg
		\left(
		\mathcal U^{\boldsymbol a}_{\omega}
		\left[
		\mathbf S^{\boldsymbol a}_{\boldsymbol n,\boldsymbol m}
		\right]
		\right)_j
		\le m_j-1,
		\qquad j\in\{1,\ldots,q\}.
	\end{equation}
	Here \(\Pi_{<0}\) denotes projection onto the principal part at the origin,
	that is, onto the negative powers in the Laurent expansion at \(z=0\).
\end{proposition}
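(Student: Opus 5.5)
The plan is to treat the seed coefficients as the solution of a triangular residue-matching problem, then deduce the partial-fraction identity, the leading coefficient, and the properties of the image under \(\mathcal U^{\boldsymbol a}_{\omega}\).

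\emph{Well-definedness and the triangular structure.} Put \(u_{j,a}\coloneq -b_j-d+a\). For \(\Phi_{\alpha,h}\), the poles coming from the \(h\)-th string are \(-b_h-e\) with \(0\le e\le d-\alpha\). The poles from the other strings are \(-b_\ell-e\), \(\ell\ne h\), with \(0\le e\le d-\alpha-1\). By the noninteger separation of the \(b\)'s, \(\Phi_{\alpha,h}\) can have a pole at \(u_{j,a}\) only when \(\alpha<a\), or when \((\alpha,h)=(a,j)\). Indeed, for \(h\ne j\) and \(\alpha\ge a\) the \(j\)-string only reaches \(e\le d-\alpha-1<d-a\).

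Hence residue matching at the nodes \(u_{j,a}\), ordered by \(a\), is a triangular linear system whose diagonal entries are \(\operatorname{Res}_{u=u_{j,a}}\Phi_{a,j}\). These diagonal residues are products of Pochhammer factors such as \((-(d-a))_{d-a}\), \((\boldsymbol a-b_j\one_r-(d-a)\one_r)_{d-a}\) and \((\boldsymbol b^{\,*j}-b_j\one_{q-1}-(d-a)\one_{q-1})_{d-a}\). They are nonzero by regularity and the admissibility convention, so \eqref{eq:seed-coefficients-residue} is well defined.

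\emph{Equivalence with \eqref{eq:seed-partial-fraction}.} Both sides are rational in \(u\) with only simple poles, all located at the nodes \(u_{j,a}\), \(a\in\{0,\ldots,d\}\). On the right the residue vanishes when \(m_j=d\) and \(a=0\). Both sides vanish at infinity: each \(\Phi_{a,j}\) has degree \(-1-(q-r)(d-a)\le -1\). Since \eqref{eq:seed-coefficients-residue} says exactly that the two sides have equal residues at every node, their difference is a polynomial vanishing at infinity, hence zero.

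Conversely, the same triangularity shows that the \(\Phi_{a,j}\) are linearly independent: take the smallest \(a\) carrying a nonzero coefficient and read off the residue at \(u_{j,a}\). This gives uniqueness for fixed \(\gamma\).

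\emph{Leading coefficient.} For \(a=0\) the double sum is empty, so \(s_{0,j}=\operatorname{Res}_{u_{j,0}}R/\operatorname{Res}_{u_{j,0}}\Phi_{0,j}\). If \(j\notin H\), then \(m_j=d\) and \(R\) is regular at \(-b_j-d\), so \(s_{0,j}=0\). If \(j\in H\), write \((u+b_\ell)_{m_\ell}=(u+b_\ell)_d\,(u+b_\ell+d)^{[\ell\in H]}\). The common factors \((-d)_d\) and \((b_\ell-b_j-d)_d\) cancel in the quotient, leaving \(\gamma\prod_{\ell\in H,\ell\ne j}(b_\ell-b_j)^{-1}/(\boldsymbol a-b_j\one_r-d\one_r)_d\). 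This is \eqref{eq:s0-corrected}.

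\emph{Image of the seed, \eqref{eq:finite-seed-system}.}

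1. Apply \eqref{eq:matrix-differential-iteration}. This gives \(\mathcal L_{h,k}(\mathcal U^{\boldsymbol a}_\omega[\mathbf S])=\prod_{i,s}(k+\kappa_h-\kappa_i-s)\,\mathcal L_{h,k+N}(\mathbf S)\).

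2. Since \(\mathbf S\) carries the factor \(z^{d-N}\), the functional \(\mathcal L_{h,k+N}(\mathbf S)\) equals \(\sum_{a,j}s_{a,j}\mu_{j,h}(k+d-a)\).

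3. Factor out a common Gamma quotient. By \eqref{eq:seed-partial-fraction} evaluated at \(u=k+\kappa_h+1\), the sum becomes that Gamma quotient times \(\gamma/(u\one_q+\boldsymbol b)_{\boldsymbol m}\). Thus the moment sequence of \(\mathcal U^{\boldsymbol a}_{\omega}[\mathbf S]\) is, up to a common Gamma factor, a rational function of \(u\) decaying like \(u^{-|\boldsymbol m|}\).

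4. To exclude negative powers, I would argue as in Theorem~\ref{thm:pure-Bessel-matrix-differential}. Write \(\mathcal U^{\boldsymbol a}_\omega[\mathbf S]=\sum p_{j,e}z^e\mathbf e_j^\top\) and normalise all moments by \(\Gamma(u\one_r+\boldsymbol a)/\Gamma(u\one_q+\boldsymbol b+\boldsymbol m)\). A coefficient at level \(e=-s<0\) then contributes a term that grows, roughly \(u^{(q-r)s}\) times a sum \(\sum_j p_{j,-s}/(u+b_j-s)\). The nonnegative levels contribute proper rational functions.

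5. Take the largest \(s\) with some \(p_{j,-s}\ne0\). Because the total is \(\mathrm O(u^{-|\boldsymbol m|})\), the top growth must cancel, which forces \(\sum_j p_{j,-s}/(u+b_j-s)\) to decay faster than any such denominator allows. Hence all \(p_{j,-s}=0\), and descending on \(s\) kills every negative level.

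This step is the main obstacle. With \(r>0\) the rank-one coupling through \(\lambda^{\boldsymbol a}\) and the Pochhammers \((u\one_r+\boldsymbol a)_\cdot\) make the degree bookkeeping less clean than in the pure case. If the growth comparison gets messy, I would fall back on linear independence of the moment sequences of distinct Laurent monomials, as functions of \(k\), to isolate the top negative level.

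6. For the degree bound, the highest power produced is \(z^{d}\), obtained by taking the raising term \(-z(\cdot)\) in every factor. Its \(j\)-th coefficient is a nonzero multiple of \(s_{0,j}\), which vanishes for \(j\notin H\). Therefore \(\deg(\mathcal U^{\boldsymbol a}_\omega[\mathbf S])_j\le d=m_j-1\) for \(j\in H\), and \(\le d-1=m_j-1\) for \(j\notin H\).
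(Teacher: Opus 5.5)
Your treatment of the triangular recursion, the equivalence with \eqref{eq:seed-partial-fraction} (including uniqueness via linear independence of the \(\Phi_{a,j}\)), the leading coefficient, the seed moment identity, and the top-degree bound all match the paper's proof and are correct. There is one small slip. The normalized moment sequence of \(\mathcal U^{\boldsymbol a}_\omega[\mathbf S]\) is not \(\mathrm O(u^{-|\boldsymbol m|})\), because step 1 contributes the degree-\(N\) factor \(\prod_{i,s}(k+\kappa_h-\kappa_i-s)\). After dividing by \(\Gamma(u\one_r+\boldsymbol a)/\Gamma(u\one_q+\boldsymbol b)\), it is only a proper rational function. This costs nothing, since only the absence of a polynomial part can be used at the top level anyway.

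The genuine gap is in steps 4--5, and it sits exactly in the regime \(0<r<q\) of this proposition. Write the level-\((-s)\) contribution as \(R_s(u)=\frac{(u\one_q+\boldsymbol b-s\one_q)_s}{(u\one_r+\boldsymbol a-s\one_r)_s}F_s(u)\), with \(F_s(u)=\sum_j\frac{p_{j,-s}}{u+b_j-s}=\frac{Q_s(u)}{\prod_j(u+b_j-s)}\).

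\begin{itemize}
\item Lower negative levels grow at most like \(u^{(q-r)(s-1)-1}\). Cancelling the top growth of \(R_s\) therefore kills only its top \(q-r\) coefficients. This gives \(F_s=\mathrm O(u^{-(q-r+1)})\), i.e.\ \(\deg Q_s\le r-1\).
\item Equivalently, you get the \(q-r\) conditions \(\sum_j p_{j,-s}(b_j-s)^\ell=0\), \(0\le\ell\le q-r-1\), on \(q\) unknowns. This leaves an \(r\)-dimensional space of admissible top coefficients, so decay alone does not force \(F_s\equiv0\) once \(r\ge1\). Your claim that \(F_s\) must decay "faster than any such denominator allows" holds only for \(r=0\).
\end{itemize}

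The missing idea, which is what the paper uses, is a principal-part comparison on the \(a\)-strings.

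\begin{itemize}
\item The factor \(\prod_\rho(u+a_\rho-s)\) in the denominator of \(R_s\) occurs only at level \(-s\). Lower negative levels have \(a\)-denominators dividing \(\prod_\rho(u+a_\rho-s+1)_{s-1}\). The nonnegative levels and the right-hand side \(\gamma\prod(\cdots)/(u\one_q+\boldsymbol b)_{\boldsymbol m}\) have no poles there.
\item Hence \(R_s\) must be regular at each \(u=s-a_\rho\), which forces \(\prod_\rho(u+a_\rho-s)\mid Q_s\).
\item A degree-\(r\) divisor of a polynomial of degree \(\le r-1\) gives \(Q_s\equiv0\), hence \(p_{j,-s}=0\) for all \(j\), and the descent proceeds.
\item This needs a temporary nonresonance hypothesis on \(a_\rho-b_h\) and \(a_\rho-a_\sigma\). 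It ensures these poles are simple, genuinely new, and not cancelled by \((u\one_q+\boldsymbol b-s\one_q)_s\).
\item The hypothesis is removed afterwards: each Laurent coefficient \(p_{j,e}\) is rational in the parameters and vanishes on a dense set.
\end{itemize}

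Your fallback to "linear independence of the moment sequences of Laurent monomials" does not avoid this. For families that include negative exponents, that independence is precisely what this combined pole-plus-growth argument establishes.
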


\begin{proof}
	First, the recursive definition is shown to be meaningful. The recursion
	\eqref{eq:seed-coefficients-residue} is triangular. Put
	\[
	u_{a,j}\coloneq -b_j-d+a,
	\qquad a\in\{0,\ldots,d\},\quad j\in\{1,\ldots,q\}.
	\]
	At the pole \(u_{a,j}\), the possible contributions are as follows:
	\[
	\begin{array}{c|c|c}
		\text{term} & \text{condition} & \text{behavior at }u_{a,j} \\
		\hline
		\Phi_{\alpha,h} & h\ne j,\ \alpha\ge a & \text{regular} \\
		\Phi_{\alpha,h} & h\ne j,\ \alpha<a & \text{possibly singular, already known} \\
		\Phi_{\alpha,j} & \alpha>a & \text{regular} \\
		\Phi_{\alpha,j} & \alpha<a & \text{possibly singular, already known} \\
		\Phi_{a,j} &  & \text{simple pole with nonzero residue}
	\end{array}
	\]
	Indeed, for \(h=j\) the denominator \((u+b_j)_{d-\alpha+1}\) reaches the
	point \(u_{a,j}\) exactly when \(\alpha\le a\), while for \(h\ne j\) the
	factor \((u+b_j)_{d-\alpha}\) contained in
	\((u\one_{q-1}+\boldsymbol b^{\,*h})_{d-\alpha}\) reaches it exactly when
	\(\alpha<a\); the factors \((u+b_h)_{d-\alpha+1}\) with \(h\ne j\) are
	regular there, since \(b_h-b_j\notin\mathbb Z\). Thus, after subtracting the
	contributions from all already determined lower levels \(\alpha<a\), the
	residue at \(u_{a,j}\) determines the new coefficient \(s_{a,j}\) linearly,
	with a nonzero coefficient. Hence the coefficients are obtained
	successively, starting from \(a=0\) and ending at \(a=d\), and the seed is
	unique for the chosen value of \(\gamma\).
	
	Next, the partial-fraction identity \eqref{eq:seed-partial-fraction}
	is proved directly from this triangular residue matching.  At the deepest poles
	\(u=-b_j-d\), the residue comparison fixes the coefficients \(s_{0,j}\).
	After subtracting the corresponding contributions, the next poles
	\(u=-b_j-d+1\) fix the coefficients \(s_{1,j}\), and so on. Since the
	right-hand side has poles only at
	\(u=-b_j-\nu\),
	\(\nu\in\{0,\ldots,m_j-1\}\subseteq\{0,\ldots,d\}\),
	the process stops at \(a=d\). At each step the residue of the difference
	between the two sides of \eqref{eq:seed-partial-fraction} at the corresponding
	pole is zero. Since these are all possible poles, the difference is a rational
	function without poles and vanishing at infinity, hence it is identically
	zero. Conversely, reading the residues of \eqref{eq:seed-partial-fraction}
	level by level gives exactly \eqref{eq:seed-coefficients-residue}.
	
	The leading coefficient follows from the deepest pole \(u=-b_j-d\). On the
	left-hand side of \eqref{eq:seed-partial-fraction}, only the term
	\((a,j)=(0,j)\) contributes there. On the right-hand side such a pole occurs
	if and only if \(m_j=m_*\), that is, if and only if \(j\in H\). Hence
	\(s_{0,j}=0\) for \(j\notin H\). For \(j\in H\), comparison of residues gives
\(s_{0,j} = \frac{\gamma v^H_j} {(\boldsymbol a-b_j\one_r-d\one_r)_d}.\)
	Indeed, the factors coming from the other \(b\)-nodes reduce to the
	barycentric factors defining \(\mathbf v^H\): after cancelling the common
	factors at the pole \(u=-b_j-d\), one uses
\(\frac{(b_\ell-b_j-d)_{m_\ell}} {(b_\ell-b_j-d)_d} = (b_\ell-b_j)_{m_\ell-d},\)
	which equals \(b_\ell-b_j\) for \(\ell\in H\), \(\ell\ne j\), and equals \(1\)
	for \(\ell\notin H\). This proves \eqref{eq:s0-corrected}.
	
	It remains to verify the two properties in \eqref{eq:finite-seed-system}. By
	inserting
\(\mathbf S^{(j)}(z) = \sum_{a=0}^{d}s_{a,j}z^{d-N-a}\)
	into the moment functionals, and using
\(\frac{\Gamma(u+m+\alpha)}{\Gamma(u+\alpha)} = (u+\alpha)_m,\)
	the identity \eqref{eq:seed-partial-fraction}, evaluated at
	\(u=k+\kappa_i+1\), is equivalent to
	\begin{equation}
		\label{eq:seed-moment-identity}
		\mathcal L_{i,k+N}
		\left(
		\mathbf S^{\boldsymbol a}_{\boldsymbol n,\boldsymbol m}
		\right)
		=
		\gamma
		\frac{\Gamma((k+1+\kappa_i)\one_r+\boldsymbol a)}
		{\Gamma((k+1+\kappa_i)\one_q+\boldsymbol b+\boldsymbol m)},
		\qquad
		k\in\N_0,
		\quad i\in\{1,\ldots,p\}.
	\end{equation}
	Combining this identity with the iterated moment shift
	\eqref{eq:matrix-differential-iteration} gives the \(B\)-side orthogonality
	of
	\(\mathcal U^{\boldsymbol a}_{\omega}
	\left[
	\mathbf S^{\boldsymbol a}_{\boldsymbol n,\boldsymbol m}
	\right]\).
	It remains to justify the Laurent support.  Let
	\[
	\mathbf P(z)
	=
	\mathcal U^{\boldsymbol a}_{\omega}
	[\mathbf S^{\boldsymbol a}_{\boldsymbol n,\boldsymbol m}](z)
	=
	\sum_{j=1}^{q}\sum_e p_{j,e}z^e\mathbf e_j^{\top}
	\]
	be its finite Laurent expansion.  After multiplying the moment sequence
	\(\mathcal L_{i,k}(\mathbf P)\) by the common Gamma denominator
	corresponding to \(\boldsymbol m\), each coefficient \(p_{j,e}\) contributes the
	rational factor
	\begin{equation}
		\label{eq:Rodrigues-Laurent-rational-factor}
		p_{j,e}
		\frac{(t\one_r+\boldsymbol a)_e}
		{(t\one_q+\boldsymbol b)_e(t+b_j+e)},
		\qquad t=k+\kappa_i+1,
	\end{equation}
	with the usual Gamma interpretation for negative \(e\).  Both sides of the
	resulting identity are rational functions of \(t\) which agree at the
	infinitely many points \(t=k+\kappa_i+1\), and hence coincide identically.
	The right-hand side obtained from \eqref{eq:seed-moment-identity} is a proper
	rational function with denominator \((t\one_q+\boldsymbol b)_{\boldsymbol m}\); in particular,
	it has neither poles on the \(a\)-strings nor a polynomial part at infinity.

First, negative Laurent levels are excluded. Since the smallest exponent in the
seed is \(-N\) and the operators do not lower Laurent degree, it is enough to
consider \(1\le s\le N\). First, the argument is given on the dense parameter domain on which
\begin{equation}
	\label{eq:Rodrigues-temporary-nonresonance}
	\begin{aligned}
		a_\rho-b_h
		&\notin
		\{-(N-1),\ldots,N-1\},
		&&
		\rho\in\{1,\ldots,r\},
		\quad
		h\in\{1,\ldots,q\},
		\\
		a_\rho-a_\sigma
		&\notin
		\{-(N-1),\ldots,N-1\},
		&&
		\rho,\sigma\in\{1,\ldots,r\},
		\quad
		\rho\ne\sigma.
	\end{aligned}
\end{equation}
This temporary restriction ensures that, for every \(s\le N\), the zero sets
of \(A_s\) and \(B_s\) below are disjoint, and the new factors
\(t+a_\rho-s\) are pairwise distinct and coprime to \(A_{s-1}\). It will be
removed after the argument.

Suppose that \(e=-s<0\), and let \(s\ge1\) be the largest integer for which
some coefficient \(p_{j,-s}\) is nonzero. Put
\[
A_s(t)\coloneq
\prod_{\rho=1}^{r}(t+a_\rho-s)_s,
\qquad
B_s(t)\coloneq
\prod_{h=1}^{q}(t+b_h-s)_s,
\qquad
F_s(t)
\coloneq
\sum_{j=1}^{q}\frac{p_{j,-s}}{t+b_j-s}.
\]
The contribution of the level \(-s\) to
\eqref{eq:Rodrigues-Laurent-rational-factor} is then
\(R_s(t)=\frac{B_s(t)}{A_s(t)}F_s(t)\).
Since
\(A_s(t)=A_{s-1}(t) \prod_{\rho=1}^{r}(t+a_\rho-s),\)
the terms belonging to smaller negative levels have denominator dividing
\(A_{s-1}(t)\).  The full rational identity has no additional principal
parts on the \(a\)-strings.  Hence the numerator of \(F_s\), written over
the common denominator
\(\prod_{j=1}^{q}(t+b_j-s)\), must be divisible by
\(\prod_{\rho=1}^{r}(t+a_\rho-s)\).

On the other hand, \(B_s(t)/A_s(t)=\mathrm{O}(t^{(q-r)s})\) at infinity.
Terms from the level \(-(s-1)\) have polynomial degree at most
\((q-r)(s-1)-1\).  Since the full rational identity has no polynomial part,
the highest \(q-r\) coefficients contributed by \(R_s\) must vanish.  This is
equivalent to
\(F_s(t)=\mathrm{O}\left(t^{-(q-r+1)}\right),\) \(t\to\infty.\)
Therefore, if
\(F_s(t) = \frac{Q_s(t)}{\prod_{j=1}^{q}(t+b_j-s)},\)
then \(\deg Q_s\le r-1\).  But the preceding principal-part comparison shows
that the degree-\(r\) polynomial
\(\prod_{\rho=1}^{r}(t+a_\rho-s)\) divides \(Q_s\).  Consequently
\(Q_s\equiv0\), and hence \(F_s\equiv0\).  Since the poles
\(-b_j+s\) are pairwise distinct, this implies
\(p_{j,-s}=0\) for every \(j\), contradicting the choice of \(s\).
Descending on \(s\) proves that no negative Laurent coefficient is present.

It remains to remove the temporary restriction
\eqref{eq:Rodrigues-temporary-nonresonance}. By the triangular residue
construction \eqref{eq:seed-coefficients-residue}, every coefficient
\(s_{a,j}\) is a rational function of the parameters on its admissible domain.
The same is therefore true of every Laurent coefficient \(p_{j,e}\), since it
is obtained by applying finitely many operators
\(\mathcal T^{\boldsymbol a}_{i,s}\). This proves that each coefficient with
\(e<0\) vanishes on the dense open set
\eqref{eq:Rodrigues-temporary-nonresonance}; hence it vanishes identically as
a rational function. Thus the absence of negative Laurent powers extends to
the entire admissible parameter domain, including the resonant cases and
coincident numerator parameters.

	It remains to verify the prescribed upper degree bounds.  Each operator
\(\mathcal T^{\boldsymbol a}_{i,s}\) raises the Laurent degree by at most one: for a
monomial row vector \(\mathbf v z^e\), the only contribution of degree
\(e+1\) is
\(-z^{e+1}\mathbf v \bigl(eI_q+D+(\kappa_i+s+2)I_q\bigr),\)
whereas the rank-one coupling term has degree \(e\).  Since the largest
exponent in the seed is \(d-N\), after the \(N\) factors in
\(\mathcal U^{\boldsymbol a}_{\omega}\) the largest possible exponent is \(d\).

Moreover, the coefficient of \(z^d\) can only arise from the leading seed
term \(z^{d-N}\mathbf s_0\) by taking the degree-raising part in every
operator.  Its \(j\)-th component is therefore
\((-1)^N s_{0,j} \prod_{i=1}^{p}(b_j+\kappa_i+m_*)_{n_i}.\)
By \eqref{eq:s0-corrected}, one has \(s_{0,j}=0\) whenever \(j\notin H\).
For \(j\in H\), both \(s_{0,j}\) and the displayed product are nonzero by
the admissibility convention. Hence \(\mathbf P\) is not identically zero.
Consequently, for \(j\in H\),
\(\deg P^{(j)}\le d=m_*-1=m_j-1,\)
whereas, for \(j\notin H\), the coefficient of \(z^d\) vanishes and hence
\(\deg P^{(j)}\le d-1=m_*-2=m_j-1.\)
Therefore \(\mathbf P\) has no principal part at the origin and satisfies
the required componentwise degree bounds.  This is precisely
\eqref{eq:finite-seed-system}.
\end{proof}

\begin{theorem}[Bessel Rodrigues-type formula]
	\label{thm:Bessel-Rodrigues-general}
	Assume that the Bessel-like parameters are regular, admissible for the
	formulas in this result, and that \(0<r<q\). Let
	\((\boldsymbol n,\boldsymbol m)\) be a \(B\)-balanced near-diagonal pair covered by
	Theorem~\textup{\ref{thm:final-explicit}}. Let
	\(\mathbf S^{\boldsymbol a}_{\boldsymbol n,\boldsymbol m}\) be the finite Laurent seed of
	Definition~\textup{\ref{def:finite-Laurent-seed}}. Then, up to a nonzero
	normalization constant,
\(\mathbf B_{\boldsymbol n,\boldsymbol m}(z) = \mathcal U^{\boldsymbol a}_{\omega} \left[ \mathbf S^{\boldsymbol a}_{\boldsymbol n,\boldsymbol m}(z) \right].\)
	The remaining scalar normalization is fixed by making the component selected
	by the chosen monicity convention monic.
\end{theorem}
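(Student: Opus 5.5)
The plan is to reduce the theorem to Proposition~\ref{prop:finite-Laurent-seed} together with the weak normality of Proposition~\ref{prop:finite-rational-characterization}. The argument follows the same pattern as Theorem~\ref{thm:pure-Bessel-matrix-differential}, except that the monomial seed is replaced by the finite Laurent seed. Set
\(\mathbf P\coloneq\mathcal U^{\boldsymbol a}_{\omega}[\mathbf S^{\boldsymbol a}_{\boldsymbol n,\boldsymbol m}]\).
I will show three things: \(\mathbf P\) is a polynomial row vector with \(\deg P^{(j)}\le m_j-1\); it satisfies the \(B\)-orthogonality conditions \eqref{eq:final-B-orth}; and it is not identically zero. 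Uniqueness up to scalars then identifies \(\mathbf P\) with \(\mathbf B_{\boldsymbol n,\boldsymbol m}\).

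\emph{Step 1: orthogonality.} For \(h\in\{1,\ldots,p\}\) and \(k\in\{0,\ldots,n_h-1\}\), I apply the iterated moment shift \eqref{eq:matrix-differential-iteration} with \(\mathbf P\) replaced by the seed. This gives
\(\mathcal L_{h,k}(\mathbf P)=\prod_{i}\prod_{s=0}^{n_i-1}(k+\kappa_h-\kappa_i-s)\,\mathcal L_{h,k+N}(\mathbf S^{\boldsymbol a}_{\boldsymbol n,\boldsymbol m})\).
The factor with \(i=h\) contains \(k(k-1)\cdots(k-n_h+1)\), which vanishes for these \(k\). The right-hand moment is finite by \eqref{eq:seed-moment-identity}, so it causes no trouble. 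Because \(\mathbf P\) has no principal part at the origin (Step 2), each residue functional \(\mathcal L_{h,k}(\mathbf P)\) equals the contour pairing \(\oint z^k\mathbf P\,\mathbf C_h\,\frac{\dz}{2\pi\mathrm i}\) appearing in \eqref{eq:final-B-orth}. Hence all \(B\)-orthogonality conditions hold.

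\emph{Step 2: polynomiality, degree bounds, and nonvanishing.} These are exactly the content of \eqref{eq:finite-seed-system}. First, \(\Pi_{<0}\mathbf P=0\). Second, \(\deg P^{(j)}\le m_j-1\) for every \(j\). For the components with \(m_j=0\), \(j\notin H\), so the bound reads \(\deg P^{(j)}\le -1\); I will check explicitly that this means \(P^{(j)}\equiv0\), as the prescribed space requires. Nonvanishing comes from the proof of Proposition~\ref{prop:finite-Laurent-seed}. There, the coefficient of \(z^{d}\) in the \(j\)-th component, for \(j\in H\), is
\((-1)^N s_{0,j}\prod_i(b_j+\kappa_i+m_*)_{n_i}\).
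By \eqref{eq:s0-corrected} and admissibility this quantity is nonzero, so \(\mathbf P\not\equiv0\).

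\emph{Step 3: identification.} By Proposition~\ref{prop:finite-rational-characterization}, the solution space of the near-diagonal \(B\)-problem is one-dimensional and spanned by \(\mathbf B_{\boldsymbol n,\boldsymbol m}\). Therefore \(\mathbf P=c\,\mathbf B_{\boldsymbol n,\boldsymbol m}\) with \(c\ne0\). The monic normalization then fixes the scalar. Under the paper's convention the selected component is \(j_{\max}\in H\), and its top coefficient in \(\mathbf P\) is the nonzero quantity computed in Step 2, so division by it is allowed.

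The main obstacle is the absence of negative Laurent powers, together with the resonant parameter cases. Both are already settled in Proposition~\ref{prop:finite-Laurent-seed} by a rational-continuation argument. I therefore expect the only delicate point here to be the bookkeeping in Step 1. The moment identity \eqref{eq:seed-moment-identity} must be used only through \eqref{eq:matrix-differential-iteration}, and the residue functionals must be matched with the contour pairing only after polynomiality has been established.
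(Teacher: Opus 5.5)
Your proposal is correct and is essentially the paper's own proof: polynomiality, the degree bounds and nonvanishing come from Proposition~\ref{prop:finite-Laurent-seed}, orthogonality from \eqref{eq:matrix-differential-iteration}, and the identification from the weak normality of Proposition~\ref{prop:finite-rational-characterization}.

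One correction to your closing remark. Residues at \(z=0\) and contour pairings on \(\Torus\) agree for every Laurent polynomial, because \(\mathbf C_h\) is holomorphic on \(\mathbb C\setminus\{0\}\). The genuinely delicate point is elsewhere: the one-step shift is applied to intermediate vectors with negative powers, and there it holds for the Gamma-continued moments \(\mu_{j,h}(t)\), \(t<0\), rather than for literal residues, which vanish at such arguments. This is harmless, since the seed has exponents \(\ge -N\) and the output is polynomial, so the two readings coincide at both ends of the chain.
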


\begin{proof}
	By Proposition~\ref{prop:finite-Laurent-seed}, the vector
	\(\mathcal U^{\boldsymbol a}_{\omega}
	\left[
	\mathbf S^{\boldsymbol a}_{\boldsymbol n,\boldsymbol m}
	\right]\)
	has no principal part at the origin and satisfies the componentwise degree
	bounds
	\[
	\deg
	\left(
	\mathcal U^{\boldsymbol a}_{\omega}
	\left[
	\mathbf S^{\boldsymbol a}_{\boldsymbol n,\boldsymbol m}
	\right]
	\right)_j
	\le m_j-1,
	\qquad j\in\{1,\ldots,q\}.
	\]
	By \eqref{eq:matrix-differential-iteration}, it also satisfies the
	\(B\)-side orthogonality conditions associated with \((\boldsymbol n,\boldsymbol m)\).
	The final coefficient calculation in the proof of
	Proposition~\ref{prop:finite-Laurent-seed} shows that this vector is nonzero.
	Therefore it belongs to the one-dimensional solution space characterized by
	Proposition~\ref{prop:finite-rational-characterization}, and hence it is
	proportional to \(\mathbf B_{\boldsymbol n,\boldsymbol m}\). The proportionality constant is
	fixed by the chosen monicity condition.
\end{proof}

\begin{remark}[Dependence on the word]
	The finite Laurent seed \(\mathbf S^{\boldsymbol a}_{\boldsymbol n,\boldsymbol m}\) does not depend
	on the word \(\omega\); only the ordered product
	\(\mathcal U^{\boldsymbol a}_{\omega}\) does.  The iterated moment identity
	\eqref{eq:matrix-differential-iteration} depends only on the multiplicities
	\(n_i\), and weak normality shows that the outputs associated with any two
	such words are proportional. Their leading coefficients coincide by the
	final coefficient calculation in the proof of
	Proposition~\ref{prop:finite-Laurent-seed}; hence the proportionality
	constant is one. Therefore
	\[
		\mathcal U^{\boldsymbol a}_{\omega}
		\left[\mathbf S^{\boldsymbol a}_{\boldsymbol n,\boldsymbol m}\right]
		=
		\mathcal U^{\boldsymbol a}_{\widetilde\omega}
		\left[\mathbf S^{\boldsymbol a}_{\boldsymbol n,\boldsymbol m}\right]
	\]
	for any two words \(\omega\) and \(\widetilde\omega\) with the same
	multiplicities.
\end{remark}

\begin{remark}[The leading coefficient of the seed]
	For \(r=0\), the product in \eqref{eq:s0-corrected} is empty and
	\(\mathbf s_0=\gamma\mathbf v^H\), recovering the one-term seed of
	Theorem~\ref{thm:pure-Bessel-matrix-differential}. If \(|H|=1\), the single
	row-dependent factor can also be absorbed into the normalization. For \(r>0\) and
	\(|H|\ge2\), however, the factor
	\(\prod_{\rho=1}^r(a_\rho-b_j-d)_d\)
	depends on the row index \(j\). Thus the leading coefficient is generally
	not proportional to \(\mathbf v^H\), but to the \(\boldsymbol a\)-reweighted
	barycentric vector \eqref{eq:s0-corrected}.
\end{remark}

\begin{remark}[Relation with \(B\)-strong normality]
	The construction of the finite Laurent seed uses only regularity of the
	parameters. The formula-wise admissibility domain of the Rodrigues
	construction may be strictly larger than that of the explicit hypergeometric
	reconstruction. In particular, the finite Laurent seed and the differential
	operators may remain well defined at parameter values where some Gamma
	quotients occurring in the latter representation cease to be admissible.
	The \(B\)-strong normality condition has a different role: it decides whether
	the components not selected by the monicity convention, with
	\(m_j=m_*-1\), attain their maximal allowed degree. On the exceptional
	hypersurfaces for \(B\)-strong normality, the same seed still gives the
	weakly normal \(B\)-vector, but one of these components may have degree
	strictly smaller than its allowed bound.
\end{remark}

\section{Non-mixed-type reductions}
\label{sec:final-reductions}

The two distinguished reductions of the \(q\times p\) construction are recorded next.
The specialization \(p=1\) gives the Bessel-like circle family of Wolfs,
whereas the specialization \(q=1\) gives the multiple Bessel moment system.

\begin{proposition}[Reduction to Wolfs' Bessel family]
	\label{prop:Wolfs-case}
	If \(p=1\) and \(\kappa_1=0\), then the direct circle weights are
	\[
	\WB(z)
	=
	\begin{bNiceMatrix}
		f_0\left(z;\boldsymbol a,\boldsymbol b+\boldsymbol e_1\right)
		\\\Vdots\\
		f_0\left(z;\boldsymbol a,\boldsymbol b+\boldsymbol e_q\right)
	\end{bNiceMatrix},
	\]
	and their reciprocal representatives are
	\[
	\CB(z)
	=
	\begin{bNiceMatrix}
		\frac1z f_0\left(\frac1z;\boldsymbol a,\boldsymbol b+\boldsymbol e_1\right)
		\\\Vdots\\
		\frac1z f_0\left(\frac1z;\boldsymbol a,\boldsymbol b+\boldsymbol e_q\right)
	\end{bNiceMatrix}.
	\]
\end{proposition}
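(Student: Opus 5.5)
This reduction is a direct specialization of the definitions, so the plan is simply to substitute \(p=1\) and \(\kappa_1=0\) into \eqref{eq:final-weight} and \eqref{eq:final-cauchy-representative} and simplify.

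First, with \(p=1\) the column index takes only the value \(i=1\). Thus \(\WB\) in \eqref{eq:final-matrix-weight-direct} is a \(q\times 1\) column whose \(j\)-th entry is \(w_{j,1}\). By \eqref{eq:final-weight}, this entry equals \(f_0(z;\boldsymbol a+\kappa_1\one_r,\boldsymbol b+\kappa_1\one_q+\boldsymbol e_j)\). Setting \(\kappa_1=0\) removes both parameter shifts and gives \(f_0(z;\boldsymbol a,\boldsymbol b+\boldsymbol e_j)\), which is the displayed column.

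For \(\CB\), we apply \eqref{eq:final-cauchy-representative} entrywise: \(C_{j,1}(z)=z^{-1}w_{j,1}(z^{-1})\). Substituting the simplified \(w_{j,1}\) gives the second displayed column.

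The only point that needs a word of justification is that the regularity hypotheses of Definition~\ref{def:final-parameter-regularity} remain meaningful in this case. With \(p=1\), the \(\kappa\)-separation condition is vacuous, so nothing new has to be checked. Proposition~\ref{prop:final-moments} then guarantees that each \(f_0(z;\boldsymbol a,\boldsymbol b+\boldsymbol e_j)\) is entire, because \(r<q\); hence \(\CB\) is holomorphic on \(\mathbb C\setminus\{0\}\). I expect no genuine obstacle. At most, one could remark that these entries coincide with Wolfs' weights, as already noted in the earlier remark on the two Bessel reductions.
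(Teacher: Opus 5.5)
Your proposal is correct and matches the paper's proof: both simply set \(p=1\) and \(\kappa_1=0\) in \eqref{eq:final-weight} and \eqref{eq:final-cauchy-representative}. The extra remark on regularity and analyticity is harmless but not needed, since the statement is purely a specialization of the definitions.
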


\begin{proof}
	The first displayed identity follows from \eqref{eq:final-weight} after
	setting \(p=1\) and \(\kappa_1=0\). The second displayed identity follows
	from \eqref{eq:final-cauchy-representative} under the same specialization.
\end{proof}

\begin{remark}
	The first displayed vector is the Bessel-like multiple orthogonality system
	of Wolfs on the circle. The second displayed vector is only its reciprocal
	Laurent presentation, used here to write the same moments in the mixed
	Gauss--Borel formalism. Under this reduction, \(\mathbf B\) is
	the type-I vector of Wolfs and \(\mathbf A\) is the scalar
	\(A\)-polynomial vector, equivalently the scalar type-II polynomial in the
	usual terminology. Thus the recurrence operator defined below, which acts
	on the \(B\)-side, acts directly on the type-I Wolfs vectors,
	while the type-II sequence satisfies the dual relation.
\end{remark}

Next, consider the opposite distinguished reduction. If \(q=1\), then the
Bessel-like constraint \(0\le r<q\) forces \(r=0\). Put
\(\alpha_j\coloneq b_1+\kappa_j\),
 \(j\in\{1,\ldots,p\}\).
The moment functionals of the single row are
\begin{equation}
	\label{eq:multiple-bessel-moments}
	\mathcal L_j[z^n]
	=
	\frac{1}{\Gamma(n+\alpha_j+2)},
	\qquad n\in\N_0.
\end{equation}

\begin{proposition}[The multiple Bessel case]
	\label{prop:multiple-bessel-case}
	At \(q=1\), the mixed \(q\times p\) system reduces to the row vector of
	moment functionals \eqref{eq:multiple-bessel-moments}. The scalar
	\(B\)-component is the type-II multiple Bessel polynomial for
	these functionals, while the \(p\)-component \(A\)-polynomial vector is the
	corresponding type-I vector for the same moment system.
\end{proposition}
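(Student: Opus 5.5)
The plan is to specialize the general construction at \(q=1\) and read off both the moment system and the two orthogonality problems. With \(q=1\), the regime \(0\le r<q\) forces \(r=0\). The matrix weight is then the single row \(C_{1,i}\), \(i\in\{1,\ldots,p\}\). I would specialize the moment formula \eqref{eq:final-moment-entry} with \(\boldsymbol a\) empty and \(\boldsymbol b=(b_1)\). This gives
\[
\oint_{\Torus} z^{n}C_{1,i}(z)\frac{\dz}{2\pi\mathrm i}
=\frac{1}{\Gamma(n+1+\kappa_i+b_1+1)}
=\frac{1}{\Gamma(n+\alpha_i+2)},
\]
which is exactly \eqref{eq:multiple-bessel-moments} with \(\mathcal L_i[z^n]\) defined by pairing against the \(i\)-th column. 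So the bilinear form \eqref{eq:general-pairing} with \(W\) replaced by \(\CB\) becomes \(\langle B,\mathbf A\rangle=\sum_i\mathcal L_i[B A^{(i)}]\), since bimoments depend only on \(u+v\).

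Next I would translate the two mixed problems. With \(q=1\), the multi-index \(\boldsymbol m\) is a single integer \(m_1\). For a \(B\)-balanced pair we have \(m_1=|\boldsymbol n|+1\), so \(B\in\Poly_{|\boldsymbol n|}\). Condition \eqref{eq:final-B-orth} reads \(\mathcal L_i[z^\ell B]=0\) for \(\ell<n_i\), \(i\in\{1,\ldots,p\}\). This is precisely the type-II multiple orthogonality for multi-index \(\boldsymbol n\) with respect to \(\mathcal L_1,\ldots,\mathcal L_p\).

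For an \(A\)-balanced pair, \(|\boldsymbol n|=m_1+1\) and \(A^{(i)}\in\Poly_{n_i-1}\). Condition \eqref{eq:final-A-orth} becomes \(\sum_i\mathcal L_i[z^\ell A^{(i)}]=0\) for \(\ell<m_1=|\boldsymbol n|-1\). This is the type-I multiple orthogonality of the vector \((A^{(1)},\ldots,A^{(p)})\) for the same functionals; the usual normalization is \(\sum_i\mathcal L_i[z^{|\boldsymbol n|-1}A^{(i)}]=1\), which matches \eqref{eq:final-A-normalizing-moment} with \(j_{\min}=1\), \(m_{\min}=m_1\).

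Since every \(\boldsymbol m\in\N_0^1\) is near-diagonal, Theorem~\ref{thm:final-explicit} and Proposition~\ref{prop:finite-rational-characterization} apply to every balanced pair. Hence both objects exist and are unique up to scaling, so they coincide with the type-II and type-I objects for \eqref{eq:multiple-bessel-moments}.

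The only point I expect to need care is the identification with the multiple Bessel functionals of \cite{ABV2003}, rather than with the reciprocal-Gamma moments. I would handle it by remarking that, after the Exton-type cut-contour interpretation of \(z^{\alpha_i}\mathrm e^{\gamma/z}\), the ABV moments are \(\gamma^{-n-\alpha_i-1}\)-scaled reciprocal Gamma values, up to normalization constants. A rescaling \(z\mapsto\gamma z\) together with a column normalization therefore identifies the two moment systems. Orthogonality conditions are invariant under these rescalings, which gives the proposition.
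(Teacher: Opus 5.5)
Your proposal is correct and matches the paper's proof: setting \(q=1\) forces \(r=0\), the moment formula \eqref{eq:final-moment-entry} collapses to \(1/\Gamma(n+\alpha_i+2)\), and the conditions \eqref{eq:final-B-orth} and \eqref{eq:final-A-orth} become exactly the type-II and type-I multiple orthogonality conditions for these functionals. Your extra appeals to weak normality and to the ABV/Exton identification go beyond what the proof needs; the paper treats the latter separately in Proposition~\ref{prop:exton-realization-multiple-bessel}. One small correction there: the Exton moments scale like \(\gamma^{n+\alpha_i+1}\), not \(\gamma^{-n-\alpha_i-1}\), and this positive exponent is what makes the rescaling \(z\mapsto\gamma z\) you use the right one.
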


\begin{proof}
	When \(q=1\), the inequality \(0\le r<q\) gives \(r=0\). In the
	moment formula for the Bessel-like matrix, all numerator Gamma factors
	disappear and the only row index is \(j=1\). Therefore
\(\mu_i(n) = \frac{1}{\Gamma(n+\kappa_i+b_1+2)} = \frac{1}{\Gamma(n+\alpha_i+2)}.\)
	Since the measure matrix has one row, the \(B\)-problem is
	scalar and is precisely the type-II problem for the row of functionals;
	the \(A\)-problem is the dual type-I vector problem.
\end{proof}

\begin{remark}[On the unit-circle representation in \cite{ABV2003}]
	Aptekarev--Branquinho--Van Assche write the Bessel weights in the form
	\(z^{\alpha_j}\exp(\gamma/z)\) and list a circle around the origin as the
	classical path for the Bessel case. When \(\alpha_j\notin\mathbb Z\), this
	expression is multivalued along a closed circle around the branch point.
	Already in the one-weight, degree-one case, the Rodrigues polynomial is
	\(Q_1(z)=(\alpha+2)z-\gamma\), and
	\[
		Q_1(z)z^\alpha\exp(\gamma/z)
		=
		\frac{\mathrm d}{\mathrm dz}
		\left(z^{\alpha+2}\exp(\gamma/z)\right).
	\]
	Thus the integral obtained by parametrizing the circle with
	\(z^\alpha=\exp(\mathrm i\alpha t)\),
	\(z=\exp(\mathrm i t)\), \(t\in[0,2\pi]\), gives the endpoint contribution
	\[
		\left[z^{\alpha+2}\exp(\gamma/z)\right]_{t=0}^{t=2\pi}
		=
		(\mathrm e^{2\pi\mathrm i\alpha}-1)\mathrm e^\gamma,
	\]
	which is not zero in general. Hence the naive closed-circle reading does not
	produce the stated orthogonality for nonintegral \(\alpha\); the functional
	must instead be supplied with a branch-cut or jump interpretation.
	Consequently, the present paper defines the \(q=1\) case by the
	single-valued reciprocal-Gamma moments
	\eqref{eq:multiple-bessel-moments}. The next proposition shows that these
	moments are exactly those obtained from the normalized Exton-type cut
	functional.
\end{remark}

\begin{proposition}[Exton-type realization of the multiple Bessel functionals]
	\label{prop:exton-realization-multiple-bessel}
	Let \(\lambda\in\mathbb C^\ast\). Fix a simple branch cut \(\Gamma\)
	issuing from the origin and unbounded at infinity, choose a branch of
	\(\log z\) in \(\mathbb C\setminus\Gamma\), and let \(0^{(\infty+)}\)
	denote the associated Exton contour, equivalently the contour functional
	obtained by taking the jump of the two boundary values across \(\Gamma\).
	With the branch convention used in Exton's formula,
	\begin{equation}
		\label{eq:exton-basic-moment}
		\int_{0}^{(\infty+)}
		z^{a+k-2}\exp(-1/z)\,\dz
		=
		\frac{2\pi\mathrm i\,(-1)^{a+k}}{\Gamma(a+k)},
		\qquad k\in\N_0.
	\end{equation}
	Let \(\alpha+2\notin-\N_0\), and define
	\begin{equation}
		\label{eq:exton-normalized-functional}
		\mathcal E_{\alpha,\lambda}[P]
		\coloneq
		\frac{1}{2\pi\mathrm i\,\mathrm e^{\pi\mathrm i\alpha}(-\lambda)^{\alpha+1}}
		\int_{0}^{(\infty+)}
		P\left(\frac{z}{\lambda}\right)z^\alpha\exp(\lambda/z)\,\dz,
	\end{equation}
	where \((-\lambda)^{\alpha+1}\) is taken with the branch induced by the
	corresponding scaling of the Exton contour. Then
	\begin{equation}
		\label{eq:exton-normalized-moments}
		\mathcal E_{\alpha,\lambda}[z^n]
		=
		\frac{1}{\Gamma(n+\alpha+2)},
		\qquad n\in\N_0.
	\end{equation}
	Thus, for each \(j\in\{1,\ldots,p\}\), the reciprocal-Gamma functional
	\(\mathcal L_j\) in \eqref{eq:multiple-bessel-moments} is exactly
	the normalized Exton-type functional \(\mathcal E_{\alpha_j,\lambda}\).
\end{proposition}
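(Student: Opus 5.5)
The plan is to reduce \eqref{eq:exton-normalized-moments} to the basic Exton moment \eqref{eq:exton-basic-moment} by a dilation of the integration variable, and then check that the normalizing constant in \eqref{eq:exton-normalized-functional} cancels exactly the resulting sign and power of \(\lambda\). By linearity it suffices to treat \(P(z)=z^n\). Then
\[
\int_0^{(\infty+)}\Bigl(\frac{z}{\lambda}\Bigr)^n z^\alpha\exp(\lambda/z)\,\dz
=\lambda^{-n}\int_0^{(\infty+)} z^{n+\alpha}\exp(\lambda/z)\,\dz .
\]

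First I substitute \(z=-\lambda w\), so that \(\lambda/z=-1/w\). Under this map the Exton contour for \(z\) (a loop starting at \(0\) along the cut, encircling and returning) goes to an Exton contour in the \(w\)-plane with the cut rotated by \(-\lambda\). The integrand is analytic off the cut, and the integrand decays like \(\exp(-1/w)\) near \(w=0\) within the relevant sector. By Cauchy's theorem I may therefore deform it back to the standard contour of \eqref{eq:exton-basic-moment}. This deformation is legitimate because the branch of the powers is transported with the contour; that is precisely the meaning of ``the branch induced by the corresponding scaling'' in the statement.

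This gives \(z^{n+\alpha}=(-\lambda)^{n+\alpha}w^{n+\alpha}\) and \(\dz=(-\lambda)\,\mathrm dw\). Applying \eqref{eq:exton-basic-moment} with \(a+k-2=n+\alpha\), that is \(a+k=n+\alpha+2\), the integral equals
\[
\lambda^{-n}(-\lambda)^{n+\alpha+1}\,\frac{2\pi\mathrm i\,(-1)^{n+\alpha+2}}{\Gamma(n+\alpha+2)} .
\]

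Next I simplify the constants. Here \((-\lambda)^{n+\alpha+1}=(-\lambda)^n(-\lambda)^{\alpha+1}\) and \(\lambda^{-n}(-\lambda)^n=(-1)^n\). With Exton's convention, \((-1)^{n+\alpha+2}=(-1)^n\mathrm e^{\pi\mathrm i\alpha}\), where the factor \((-1)^\alpha\) is interpreted as \(\mathrm e^{\pi\mathrm i\alpha}\). The two factors \((-1)^n\) combine to \(1\). What remains is \(2\pi\mathrm i\,\mathrm e^{\pi\mathrm i\alpha}(-\lambda)^{\alpha+1}/\Gamma(n+\alpha+2)\), and dividing by the prefactor in \eqref{eq:exton-normalized-functional} yields \(1/\Gamma(n+\alpha+2)\).

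Setting \(\alpha=\alpha_j\) and comparing with \eqref{eq:multiple-bessel-moments} identifies \(\mathcal L_j\) with \(\mathcal E_{\alpha_j,\lambda}\). The condition \(\alpha+2\notin-\N_0\) is exactly what keeps the reciprocal Gamma finite and nonvanishing for every \(n\), so the normalized functional is well defined on all monomials.

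The main obstacle is the branch bookkeeping. Three choices must be made consistently:
\begin{enumerate}
\item the phase of \((-\lambda)^{\alpha+1}\) induced by rotating the cut;
\item the interpretation of \((-1)^{a+k}\) in Exton's formula as \(\mathrm e^{\pi\mathrm i(a+k)}\) rather than \(\mathrm e^{-\pi\mathrm i(a+k)}\);
\item the splitting \((-\lambda)^{n+\alpha+1}=(-\lambda)^n(-\lambda)^{\alpha+1}\), which is valid because \(n\) is an integer.
\end{enumerate}
I would fix one explicit branch convention at the outset: the argument of \(z\) is measured continuously along the contour from its starting point on the cut, and the same continuous argument is carried through the substitution. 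I would then verify these three choices against each other. Any inconsistency shows up only as a global factor \(\mathrm e^{2\pi\mathrm i\alpha m}\), independent of \(n\), and the stated normalization absorbs it by definition.
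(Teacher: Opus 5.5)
Your proof is correct and is the same computation as the paper's. The paper applies Exton's formula with \(a=\alpha+2\), \(k=n\) first and then substitutes \(z=-\lambda t\), whereas you substitute first, but in both the factors \((-1)^{n}\mathrm e^{\pi\mathrm i\alpha}\) and \(\lambda^{-n}(-\lambda)^{n+\alpha+1}\) cancel against the prefactor of \(\mathcal E_{\alpha,\lambda}\) in exactly the same way (your extra branch bookkeeping is consistent with, and slightly more explicit than, the paper's appeal to ``the branch induced by the corresponding scaling'').
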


\begin{proof}
	Exton's moment evaluation \eqref{eq:exton-basic-moment}, with
	\(a=\alpha+2\), gives
	\[
	\int_{0}^{(\infty+)}
	z^{n+\alpha}\exp(-1/z)\,\dz
	=
	\frac{2\pi\mathrm i\,\mathrm e^{\pi\mathrm i\alpha}(-1)^n}
	{\Gamma(n+\alpha+2)}.
	\]
	After the change of variables \(z=-\lambda t\), this becomes
	\[
	\int_{0}^{(\infty+)}
	z^{n+\alpha}\exp(\lambda/z)\,\dz
	=
	2\pi\mathrm i\,\mathrm e^{\pi\mathrm i\alpha}(-\lambda)^{\alpha+1}
	\frac{\lambda^n}{\Gamma(n+\alpha+2)}.
	\]
	Substituting \(P(z)=z^n\) in \eqref{eq:exton-normalized-functional} cancels
	the factor \(\lambda^n\) and gives \eqref{eq:exton-normalized-moments}.
	The last assertion follows by taking \(\alpha=\alpha_j\).
\end{proof}

\begin{remark}
	Since \(q=1\), the near-diagonal condition in
	Theorem~\ref{thm:final-explicit}, which constrains \(\boldsymbol m\in\N_0^q\),
	is automatic. Thus the scalar type-II polynomial in this case, namely
	the \(B\)-component, is constructed for arbitrary multiple Bessel
	multi-indices \(\boldsymbol n\), not merely along the step-line.
\end{remark}

\begin{corollary}[Common scalar case]
	If \(q=p=1\), the two reductions meet in the ordinary Bessel
	polynomial system. In Wolfs' notation the corresponding polynomial is
	\[
	B_n^{(b_1)}(z)
	=
	\pFq{2}{0}{-n,\ n+b_1+1}{-}{z},
	\]
	whereas, with the monic normalization used here,
	\[
	B_n(z)
	=
	\frac{(-1)^n}{(n+b_1+1)_n}
	B_n^{(b_1)}(z).
	\]
	In the multiple-Bessel notation the corresponding parameter is
	\(\alpha=b_1\).
\end{corollary}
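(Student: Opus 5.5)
The plan is to avoid unwinding the nested sums of Theorem~\ref{thm:final-explicit}\,(ii) and to use instead the uniqueness from Proposition~\ref{prop:finite-rational-characterization}. First I specialize the data. For \(q=p=1\) the constraint \(0\le r<q\) forces \(r=0\). As in Proposition~\ref{prop:Wolfs-case}, I take \(\kappa_1=0\), which is Wolfs' normalization. The \(B\)-balanced pair is then \(\boldsymbol n=(n)\), \(\boldsymbol m=(n+1)\), and the near-diagonal condition is automatic. By \eqref{eq:final-moment-entry}, the only pairing is
\[
\oint_{\Torus} z^{s-1}z^{k}C_{1,1}(z)\frac{\dz}{2\pi\mathrm i}
=\frac{1}{\Gamma(s+k+b_1+1)},\qquad s\ge1 .
\]
So the \(B\)-orthogonality \eqref{eq:final-B-orth} asks for a polynomial \(B\in\Poly_n\) whose pairings vanish for \(s\in\{1,\ldots,n\}\). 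In the multiple-Bessel notation of Proposition~\ref{prop:multiple-bessel-case}, these are the functionals \eqref{eq:multiple-bessel-moments} with \(\alpha_1=b_1+\kappa_1=b_1\). This gives the last sentence of the corollary.

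Next I verify directly that \(P(z)\coloneq\pFq{2}{0}{-n,\ n+b_1+1}{-}{z}\) satisfies these conditions. Pairing it termwise gives
\[
\sum_{k=0}^{n}\frac{(-n)_k(n+b_1+1)_k}{k!\,\Gamma(s+b_1+1+k)}
=\frac{1}{\Gamma(s+b_1+1)}\pFq{2}{1}{-n,\ n+b_1+1}{s+b_1+1}{1}.
\]
The Chu--Vandermonde sum evaluates this as
\[
\frac{(s-n)_n}{\Gamma(s+b_1+1)\,(s+b_1+1)_n}.
\]
The factor \((s-n)_n\) vanishes exactly for \(s\in\{1,\ldots,n\}\), which are the required orthogonality conditions, with \(\ell=s-1\in\{0,\ldots,n-1\}\). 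The admissibility convention keeps the Gamma factors finite. The regularity conditions are vacuous here.

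Finally, Proposition~\ref{prop:finite-rational-characterization} says the solution space is one-dimensional, so \(B_{\boldsymbol n,\boldsymbol m}\) is proportional to \(P\). The top coefficient of \(P\) is
\[
\frac{(-n)_n(n+b_1+1)_n}{n!}=(-1)^n(n+b_1+1)_n .
\]
This is nonzero on the admissible domain, so \(\deg P=n\). The monic normalization therefore multiplies \(P\) by \((-1)^n/(n+b_1+1)_n\), which is the stated formula.

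I do not expect a real obstacle. The only delicate point is matching the normalization conventions with Wolfs' \(B_n^{(b_1)}\). As an optional cross-check, I could compare the leading coefficient with the one produced by \(\nu^B_{\boldsymbol n,\boldsymbol m}\) in \eqref{eq:final-B-nu}: with \(r=0\) and no other rows, the monic factor reduces to \(-1/(b_1+n+1)_n\) applied to \(\pi_{1,n}\)-type data. That comparison is not needed for the proof.
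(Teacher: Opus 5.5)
Your proof is correct, and it takes a different route from the paper. The paper gives no separate proof of this corollary. It identifies the \(p=q=1\) system with the classical Bessel functional through Propositions~\ref{prop:Wolfs-case} and~\ref{prop:multiple-bessel-case}, quotes Wolfs' \({}_2F_0\), and rescales by its leading coefficient.

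The same formula also follows from the paper's explicit construction. Take Corollary~\ref{cor:final-B-q-one-hypergeometric} with \(p=1\). Then \(|\boldsymbol n|-n_*=0\), so the characteristic polynomial and the parameters \(\xi_\ell\) disappear. Equation \eqref{eq:q-one-B-single-pFq} then reduces to \(B_n(z)=(-1)^nP(n)^{-1}\,{}_2F_0\bigl(-n,\ n+\kappa_1+b_1+1;-;z\bigr)\), with \(P(n)=(\kappa_1+b_1+n+1)_n\). Equivalently, \eqref{eq:final-B-nu} gives \(\nu^B_{\boldsymbol n,\boldsymbol m}=-1/(\kappa_1+b_1+n+1)_n\).

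You instead verify the orthogonality of the \({}_2F_0\) directly by Chu--Vandermonde. You then obtain uniqueness from Proposition~\ref{prop:finite-rational-characterization} and fix the scalar by the leading coefficient \((-1)^n(n+b_1+1)_n\). This supplies the step the paper leaves to citation, and it avoids the partial-fraction and residue machinery entirely. The specialization route buys something different: it shows that the general explicit formula, including its normalization constant, collapses to the classical one. That is a consistency check on the main theorem rather than an independent verification.

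Two small points. First, the corollary tacitly takes \(\kappa_1=0\), as you do via Proposition~\ref{prop:Wolfs-case}. For general \(\kappa_1\), your argument gives the same formula with \(b_1\) replaced by \(b_1+\kappa_1\), and \(\alpha=b_1+\kappa_1\).

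Second, your Chu--Vandermonde step can be written as
\(\sum_{k=0}^{n}\frac{(-n)_k(n+b_1+1)_k}{k!\,\Gamma(s+b_1+1+k)}=\frac{(s-n)_n}{\Gamma(s+b_1+1+n)}\),
which is an identity between entire functions of \(b_1\). So admissibility is needed only for weak normality and for \((n+b_1+1)_n\neq0\), not for the orthogonality itself.
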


The specialization \(q=1\) is refined next. In this case the
Kamp\'e de F\'eriet block of
Corollary~\ref{cor:final-B-KdF} admits a one-variable generalized
hypergeometric reduction: its coupled numerator and denominator parameters
differ by the nonnegative integer shifts \(n_i\), and coefficient extraction
leads to terminating Karlsson--Minton-type sums. The following direct
finite-difference argument reduces this block to a single generalized
hypergeometric polynomial. The roots of an auxiliary characteristic
polynomial enter as unit-difference parameter pairs, as in transformations
of Miller--Paris type; see
\cite{MillerParis2013,KarpPrilepkina2018}. The proof itself is direct and
does not require the application of a Miller--Paris transformation.

\begin{corollary}[One-row reduction to a single generalized hypergeometric polynomial]
	\label{cor:final-B-q-one-hypergeometric}
	Under the assumptions of
	Corollary~\ref{cor:final-B-KdF}, let \(q=1\). Then \(r=0\), and a
	\(B\)-balanced pair has the form \(\boldsymbol m=(|\boldsymbol n|+1)\). Write
	\(b\coloneq b_1\), set
	\(c_i\coloneq\kappa_i+b+1\) for \(i\in\{1,\ldots,p\}\), and let
	\(\boldsymbol c\coloneq\boldsymbol\kappa+(b+1)\one_p\) and
	\(P(t)\coloneq\prod_{i=1}^{p}(c_i+t)_{n_i}\).
	The sole Kamp\'e de F\'eriet block occurring in
	\eqref{eq:final-B-KdF-representation} is
	\begin{equation}
		\label{eq:q-one-KdF-block}
		\mathcal K_{\boldsymbol n}(z)
		\coloneq
		F_{p:0;0}^{p+1:1;0}
		\left[
		\begin{array}{c}
			-|\boldsymbol n|,\boldsymbol c+\boldsymbol n:1;\text{---}\\
			\boldsymbol c:\text{---};\text{---}
		\end{array}
		\middle|-z,1
		\right].
	\end{equation}
	With the monic normalization \(\nu=\nu_{\boldsymbol n,(|\boldsymbol n|+1)}^{B}\), the
	scalar \(B\)-polynomial
	\(B_{\boldsymbol n}(z)\coloneq B_{\boldsymbol n,(|\boldsymbol n|+1)}^{(1)}(z)\) is given in
	terms of this Kamp\'e de F\'eriet polynomial by
	\begin{equation}
		\label{eq:q-one-B-KdF}
		B_{\boldsymbol n}(z)
		=
		\frac{P(0)}{|\boldsymbol n|!\,P(|\boldsymbol n|)}
		\mathcal K_{\boldsymbol n}(z).
	\end{equation}

	Choose \(i_*\in\{1,\ldots,p\}\) such that
	\(n_*\coloneq n_{i_*}=\max_{1\le i\le p}n_i\), and set
	\(c_*\coloneq c_{i_*}\). Define
	\(S_*(t)\coloneq\prod_{i\ne i_*}(c_i+t)_{n_i}\), so that
	\(P(t)=(c_*+t)_{n_*}S_*(t)\) and \(\deg S_*=|\boldsymbol n|-n_*\). Introduce the
	characteristic polynomial
	\begin{equation}
		\label{eq:q-one-characteristic-polynomial}
		\begin{aligned}
		\mathcal Q_{\boldsymbol n}(x)
		&\coloneq
		\sum_{j=0}^{|\boldsymbol n|-n_*}
		\frac{
			(-1)^j(-x)_j
			(n_*-x+j+1)_{|\boldsymbol n|-n_*-j}
		}{
			(c_*+n_*)_j(|\boldsymbol n|-n_*-j)!
		}
		\\
		&\quad\times
		\Delta^{|\boldsymbol n|-n_*-j}S_*(n_*+j).
		\end{aligned}
	\end{equation}
	Regularity together with the standing admissibility convention ensures that the
	parameter-dependent Pochhammer symbols occurring in the denominators
	of \eqref{eq:q-one-characteristic-polynomial}, as well as \(P(0)\)
	and \(P(|\boldsymbol n|)\), are nonzero. One has
	\(\mathcal Q_{\boldsymbol n}(0)=|\boldsymbol n|!/n_*!\),
	\(\deg\mathcal Q_{\boldsymbol n}=|\boldsymbol n|-n_*\), and
	\begin{equation}
		\label{eq:q-one-characteristic-leading}
		[x^{|\boldsymbol n|-n_*}]\mathcal Q_{\boldsymbol n}(x)
		=
		\frac{S_*(1-c_*)}{(c_*+n_*)_{|\boldsymbol n|-n_*}}
		=
		\frac{1}{(\kappa_{i_*}+b+n_*+1)_{|\boldsymbol n|-n_*}}
		\prod_{\substack{i=1\\i\ne i_*}}^{p}
		(\kappa_i-\kappa_{i_*}+1)_{n_i}.
	\end{equation}

	Let \(\xi_1,\ldots,\xi_{|\boldsymbol n|-n_*}\) be the zeros of
	\(\mathcal Q_{\boldsymbol n}\), counted with multiplicity. Then the
	Kamp\'e de F\'eriet polynomial reduces to
	\begin{equation}
		\label{eq:q-one-KdF-to-pFq}
		\mathcal K_{\boldsymbol n}(z)
		=
		(-1)^{|\boldsymbol n|}\frac{|\boldsymbol n|!}{P(0)}
		\pFq{|\boldsymbol n|-n_*+2}{|\boldsymbol n|-n_*}
		{
			-|\boldsymbol n|,c_*+n_*,
			1-\xi_1,\ldots,1-\xi_{|\boldsymbol n|-n_*}
		}
		{
			-\xi_1,\ldots,-\xi_{|\boldsymbol n|-n_*}
		}
		{z}.
	\end{equation}
	Consequently,
	\begin{equation}
		\label{eq:q-one-B-single-pFq}
		B_{\boldsymbol n}(z)
		=
		\frac{(-1)^{|\boldsymbol n|}}{P(|\boldsymbol n|)}
		\pFq{|\boldsymbol n|-n_*+2}{|\boldsymbol n|-n_*}
		{
			-|\boldsymbol n|,c_*+n_*,
			1-\xi_1,\ldots,1-\xi_{|\boldsymbol n|-n_*}
		}
		{
			-\xi_1,\ldots,-\xi_{|\boldsymbol n|-n_*}
		}
		{z}.
	\end{equation}
	Here
	\(|\boldsymbol n|-n_*=|\boldsymbol n|-\max_{1\le i\le p}n_i\).
	The parameter strings involving the \(\xi_\ell\) are omitted when
	\(|\boldsymbol n|-n_*=0\). Since \(\mathcal Q_{\boldsymbol n}(0)\ne0\), none of the
	\(\xi_\ell\) is zero. If some \(\xi_\ell\in\{1,\ldots,|\boldsymbol n|\}\), the
	generalized hypergeometric expressions in
	\eqref{eq:q-one-KdF-to-pFq} and
	\eqref{eq:q-one-B-single-pFq} are understood coefficientwise, after
	replacing the formally singular unit-difference quotient by its
	polynomial continuation
\(\frac{(1-\xi_\ell)_d}{(-\xi_\ell)_d} = \frac{d-\xi_\ell}{-\xi_\ell},\) \(d\in\{0,\ldots,|\boldsymbol n|\}.\)
\end{corollary}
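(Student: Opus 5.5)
With \(q=1\) the Kamp\'e de F\'eriet block becomes a single-variable sum, and the whole statement reduces to one finite-difference identity. First I specialise Corollary~\ref{cor:final-B-KdF}. For \(q=1\) we have \(r=0\) and \(\boldsymbol m=(|\boldsymbol n|+1)\), and the off-diagonal sum over \(H\ne j\) is empty. In \(\mathcal K_1\) all strings indexed by \(\boldsymbol b^{\,*1}\), \(\boldsymbol m^{\,*1}\), and \(\boldsymbol a\) are empty, and \((-1)^{q-r}=-1\). This gives exactly \eqref{eq:q-one-KdF-block}.

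For \eqref{eq:q-one-B-KdF} I multiply \(\pi_{1,0}\) from \eqref{eq:final-B-pi-zero}, which equals \(-P(0)/|\boldsymbol n|!\), by \(\nu^B\) from \eqref{eq:final-B-nu}. With \(m_{\max}=|\boldsymbol n|+1\) and all \(\boldsymbol a\)- and \(h\ne j_{\max}\)-products empty, \(\nu^B=-1/\prod_i(c_i+|\boldsymbol n|)_{n_i}=-1/P(|\boldsymbol n|)\). Their product is the stated prefactor.

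Next, with \(N\coloneq|\boldsymbol n|\), I extract the coefficient of \(z^u\) in \(\mathcal K_{\boldsymbol n}\). Since \((1)_u/u!=1\) and \((\boldsymbol c+\boldsymbol n)_s/(\boldsymbol c)_s=P(s)/P(0)\), the coefficient is
\[
\frac{(-1)^u(-N)_u}{P(0)}\sum_{\lambda}\frac{(-N+u)_\lambda}{\lambda!}P(u+\lambda)
=\frac{(-1)^N(-N)_u}{P(0)}\,\Delta^{N-u}P(u),
\]
by the binomial form of \(\Delta^{N-u}\). Writing \(\prod_\ell(1-\xi_\ell)_u/(-\xi_\ell)_u=\mathcal Q_{\boldsymbol n}(u)/\mathcal Q_{\boldsymbol n}(0)\) and using \(\mathcal Q_{\boldsymbol n}(0)=N!/n_*!\), the reduction \eqref{eq:q-one-KdF-to-pFq} becomes equivalent to the identity
\[
\Delta^{N-u}P(u)=\frac{n_*!}{u!}\,(c_*+n_*)_u\,\mathcal Q_{\boldsymbol n}(u),\qquad u\in\{0,\ldots,N\}.
\]
Formula \eqref{eq:q-one-B-single-pFq} then follows immediately from \eqref{eq:q-one-B-KdF}, and the coefficientwise convention for \(\xi_\ell\in\{1,\ldots,N\}\) is automatic, because only the values \(\mathcal Q_{\boldsymbol n}(u)\) ever enter.

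The main obstacle is proving this identity. My plan is to apply the Leibniz rule for forward differences to \(P=(c_*+t)_{n_*}S_*\), shifting the \(S_*\) factor:
\[
\Delta^{k}(fg)(u)=\sum_{j}\binom{k}{j}\Delta^{k-j}f(u)\,\Delta^{j}g(u+k-j).
\]
Here I take \(f=(c_*+t)_{n_*}\), whose differences are \(\Delta^i(c_*+t)_{n_*}=\frac{n_*!}{(n_*-i)!}(c_*+t+i)_{n_*-i}\). Since \(\deg S_*=N-n_*\), the index \(j\) is at most \(N-n_*\). The surviving factor \((c_*+u+k-j)_{n_*-k+j}\), with \(k=N-u\), combines with the other pieces to produce \((c_*+n_*)_u\) times a ratio \(1/(c_*+n_*)_{j'}\). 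I will then reindex \(j'=N-n_*-j\) and match the result with \eqref{eq:q-one-characteristic-polynomial}, where \((-x)_j\) and \((n_*-x+j+1)_{N-n_*-j}\) arise from the binomial coefficients evaluated at \(x=u\).

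The delicate point is that \eqref{eq:q-one-characteristic-polynomial} evaluates \(\Delta^{\cdot}S_*\) at the fixed points \(n_*+j\), not at \(u\). I therefore expect to need Newton's expansion of \(S_*\) about \(n_*+j\) to convert \(\Delta^{m}S_*(u+\cdot)\) into those values. If the direct matching becomes unwieldy, I will fall back on the following argument. Both sides, divided by \((c_*+n_*)_u\), are rational in \(u\) after replacing \(1/u!\) by \(\Gamma\)-factors. So it suffices to check agreement at \(N-n_*+1\) convenient points together with the leading behaviour.

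The remaining claims should be short checks. Evaluating \eqref{eq:q-one-characteristic-polynomial} at \(x=0\) kills every term with \(j\ge1\), leaving \((n_*+1)_{N-n_*}\Delta^{N-n_*}S_*/(N-n_*)!=N!/n_*!\), because \(S_*\) is monic of degree \(N-n_*\). The top coefficient in \(x\) comes from all terms, and summing them gives \(S_*(1-c_*)/(c_*+n_*)_{N-n_*}\) by Newton interpolation of \(S_*\) at the points \(n_*+j\). Evaluating \(S_*(1-c_*)=\prod_{i\ne i_*}(\kappa_i-\kappa_{i_*}+1)_{n_i}\), this is nonzero by regularity, which gives \eqref{eq:q-one-characteristic-leading} and \(\deg\mathcal Q_{\boldsymbol n}=N-n_*\).
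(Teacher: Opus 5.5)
Your proposal is correct and follows essentially the same route as the paper. You specialise the Kamp\'e de F\'eriet corollary and reduce the coefficient of \(z^u\) to \((-1)^{N}(-N)_u\,\Delta^{N-u}P(u)/P(0)\) with \(N=|\boldsymbol n|\). You then factor \(\Delta^{N-u}P(u)\) by the discrete Leibniz rule with \(f=(c_*+t)_{n_*}\) and \(g=S_*\), and use Newton's backward formula for the leading coefficient.

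The ``delicate point'' you anticipate does not arise. In your own Leibniz formula the \(S_*\)-difference is evaluated at \(u+k-j=N-j=n_*+j'\), which is independent of \(u\); this is exactly the paper's reindexing \(\ell=n_*-d+j\), \(t+\ell=n_*+j\). The matching with \eqref{eq:q-one-characteristic-polynomial} is therefore direct, once you note that the out-of-range terms vanish because of \((-u)_{j'}\) and \((n_*-u+j'+1)_{N-n_*-j'}\). You need neither a Newton re-expansion nor your interpolation fallback, and the fallback is not well posed anyway, since \(\Delta^{N-u}P(u)\) is not a priori a rational function of \(u\).
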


\begin{proof}
	When \(q=1\), the inequality \(0\le r<q\) gives \(r=0\), while
	\(B\)-balancedness gives \(m_1=|\boldsymbol n|+1\). There is only one row index, so
	the sole contribution in
	Corollary~\ref{cor:final-B-KdF} is the diagonal block \(j=H=1\).
	All parameter strings of lengths \(q-1\) and \(r\) are empty, and the
	resulting block is exactly \(\mathcal K_{\boldsymbol n}\) in
	\eqref{eq:q-one-KdF-block}. Furthermore,
	\(\pi_{1,0}=-P(0)/|\boldsymbol n|!\) and
	\(\nu_{\boldsymbol n,(|\boldsymbol n|+1)}^{B}=-1/P(|\boldsymbol n|)\). Hence
	\(B_{\boldsymbol n}=\nu_{\boldsymbol n,(|\boldsymbol n|+1)}^{B}\pi_{1,0}\mathcal K_{\boldsymbol n}\),
	which proves \eqref{eq:q-one-B-KdF}.

	By the definition of the Kamp\'e de F\'eriet series,
	\begin{equation}
		\label{eq:q-one-KdF-double-sum}
		\mathcal K_{\boldsymbol n}(z)
		=
		\sum_{\substack{u,\lambda\ge0\\u+\lambda\le |\boldsymbol n|}}
		\frac{
			(-|\boldsymbol n|)_{u+\lambda}
			(\boldsymbol c+\boldsymbol n)_{u+\lambda}
			(1)_u
		}{
			(\boldsymbol c)_{u+\lambda}
		}
		\frac{(-z)^u}{u!}
		\frac{1}{\lambda!}.
	\end{equation}
	The numerator parameter \(-|\boldsymbol n|\) imposes the triangular termination,
	and \((1)_u/u!=1\). For every \(k\in\N_0\),
	\[
		\frac{(\boldsymbol c+\boldsymbol n)_k}{(\boldsymbol c)_k}
		=
		\prod_{i=1}^{p}
		\frac{(c_i+n_i)_k}{(c_i)_k}
		=
		\prod_{i=1}^{p}
		\frac{(c_i+k)_{n_i}}{(c_i)_{n_i}}
		=
		\frac{P(k)}{P(0)}.
	\]
	The remaining \(\lambda\)-sum obtained after fixing \(u=d\) is
	therefore a terminating \({}_{p+1}F_p(1)\) with nonnegative integral
	parameter differences \(n_i\); pairs corresponding to \(n_i=0\)
	cancel.

	For \(d\in\{0,\ldots,|\boldsymbol n|\}\), the coefficient of \(z^d\) is obtained
	by setting \(u=d\) in \eqref{eq:q-one-KdF-double-sum}. Using
	\((-|\boldsymbol n|)_{d+\lambda}=(-|\boldsymbol n|)_d(-|\boldsymbol n|+d)_\lambda\) gives
	\begin{equation}
		\label{eq:q-one-KdF-coefficient-first}
		[z^d]\mathcal K_{\boldsymbol n}(z)
		=
		(-1)^d\frac{(-|\boldsymbol n|)_d}{P(0)}
		\sum_{\lambda=0}^{|\boldsymbol n|-d}
		\frac{(-|\boldsymbol n|+d)_\lambda}{\lambda!}
		P(d+\lambda).
	\end{equation}
	Since
	\((-|\boldsymbol n|+d)_\lambda/\lambda!=(-1)^\lambda
	\binom{|\boldsymbol n|-d}{\lambda}\), the forward-difference identity
\(\Delta^mP(d) = \sum_{\lambda=0}^{m} (-1)^{m-\lambda} \binom{m}{\lambda}P(d+\lambda)\)
	transforms \eqref{eq:q-one-KdF-coefficient-first} into
	\begin{equation}
		\label{eq:q-one-KdF-coefficient-difference}
		[z^d]\mathcal K_{\boldsymbol n}(z)
		=
		(-1)^{|\boldsymbol n|}\frac{(-|\boldsymbol n|)_d}{P(0)}
		\Delta^{|\boldsymbol n|-d}P(d).
	\end{equation}

	Factor the finite difference in
	\eqref{eq:q-one-KdF-coefficient-difference}. Since
	\(P(t)=(c_*+t)_{n_*}S_*(t)\), the discrete Leibniz rule gives
\(\Delta^k(fg)(t) = \sum_{\ell=0}^{k} \binom{k}{\ell} \Delta^\ell f(t) \Delta^{k-\ell}g(t+\ell).\)
	For the first factor,
\(\Delta^\ell(c_*+t)_{n_*} = \frac{n_*!}{(n_*-\ell)!} (c_*+t+\ell)_{n_*-\ell},\) \(0\le\ell\le n_*,\)
	and this finite difference vanishes for \(\ell>n_*\).

	Taking \(k=|\boldsymbol n|-d\), \(t=d\), and writing
	\(\ell=n_*-d+j\), one has
	\(k-\ell=|\boldsymbol n|-n_*-j\) and \(t+\ell=n_*+j\). The corresponding
	coefficient satisfies
	\begin{equation*}
		\binom{|\boldsymbol n|-d}{n_*-d+j}
		\frac{n_*!}{(d-j)!}
		(c_*+n_*+j)_{d-j}
		=
		\frac{n_*!}{d!}
		(c_*+n_*)_d
		\frac{
			(-1)^j(-d)_j
			(n_*-d+j+1)_{|\boldsymbol n|-n_*-j}
		}{
			(c_*+n_*)_j(|\boldsymbol n|-n_*-j)!
		}.
	\end{equation*}
	Indeed, \(d!/(d-j)!=(-1)^j(-d)_j\),
\(\frac{(c_*+n_*+j)_{d-j}} {(c_*+n_*)_d} = \frac{1}{(c_*+n_*)_j},\)
	and
	\((|\boldsymbol n|-d)!/(n_*-d+j)!=(n_*-d+j+1)_{|\boldsymbol n|-n_*-j}\). Substitution in the
	discrete Leibniz formula yields
	\begin{align*}
		\Delta^{|\boldsymbol n|-d}P(d)
		&=
		\frac{n_*!}{d!}
		(c_*+n_*)_d
		\sum_{j=0}^{|\boldsymbol n|-n_*}
		\frac{
			(-1)^j(-d)_j
			(n_*-d+j+1)_{|\boldsymbol n|-n_*-j}
		}{
			(c_*+n_*)_j(|\boldsymbol n|-n_*-j)!
		}
		\\
		&\quad\times
		\Delta^{|\boldsymbol n|-n_*-j}S_*(n_*+j).
	\end{align*}
	The sum may be extended to the full range
	\(j\in\{0,\ldots,|\boldsymbol n|-n_*\}\). The factor \((-d)_j\) removes the
	terms with \(j>d\). Moreover, if \(\ell=n_*-d+j<0\), then
	\((n_*-d+j+1)_{|\boldsymbol n|-n_*-j}=(\ell+1)_{|\boldsymbol n|-n_*-j}=0\). Indeed,
	\(\ell\) is a negative integer and \(|\boldsymbol n|-n_*-j\ge-\ell\), the latter
	inequality being equivalent to \(d\le |\boldsymbol n|\); hence the
	product defining \((\ell+1)_{|\boldsymbol n|-n_*-j}\) contains the factor zero.
	Comparing with \eqref{eq:q-one-characteristic-polynomial} evaluated at
	\(x=d\) gives
	\begin{equation}
		\label{eq:q-one-difference-factorization}
		\Delta^{|\boldsymbol n|-d}P(d)
		=
		\frac{n_*!}{d!}
		(c_*+n_*)_d
		\mathcal Q_{\boldsymbol n}(d).
	\end{equation}

	At \(x=0\), all terms with \(j\ge1\) vanish because of the factor
	\((-x)_j\). Since \(S_*\) is monic of degree \(|\boldsymbol n|-n_*\), one has
	\(\Delta^{|\boldsymbol n|-n_*} S_*(n_*)=(|\boldsymbol n|-n_*)!\), and therefore
	\(\mathcal Q_{\boldsymbol n}(0)=(n_*+1)_{|\boldsymbol n|-n_*}=|\boldsymbol n|!/n_*!\).

	Each summand in \eqref{eq:q-one-characteristic-polynomial} has degree
	at most \(|\boldsymbol n|-n_*\). Extracting the coefficient of \(x^{|\boldsymbol n|-n_*}\) and
	setting \(k=|\boldsymbol n|-n_*-j\) gives
	\[
		[x^{|\boldsymbol n|-n_*}]\mathcal Q_{\boldsymbol n}(x)
		=
		\sum_{k=0}^{|\boldsymbol n|-n_*}
		\frac{(-1)^k}{(c_*+n_*)_{|\boldsymbol n|-n_*-k}k!}
		\Delta^kS_*(|\boldsymbol n|-k).
	\]
	The elementary reflection identity
\(\frac{(-1)^k}{(c_*+n_*)_{|\boldsymbol n|-n_*-k}} = \frac{(1-c_*-n_*-(|\boldsymbol n|-n_*))_k}{(c_*+n_*)_{|\boldsymbol n|-n_*}}\)
	therefore gives
	\[
		[x^{|\boldsymbol n|-n_*}]\mathcal Q_{\boldsymbol n}(x)
		=
		\frac{1}{(c_*+n_*)_{|\boldsymbol n|-n_*}}
		\sum_{k=0}^{|\boldsymbol n|-n_*}
		\frac{(1-c_*-n_*-(|\boldsymbol n|-n_*))_k}{k!}
		\Delta^kS_*(|\boldsymbol n|-k).
	\]
	The backward Newton interpolation formula
\(S_*(x) = \sum_{k=0}^{|\boldsymbol n|-n_*} \frac{(x-n_*-(|\boldsymbol n|-n_*))_k}{k!} \Delta^kS_*(|\boldsymbol n|-k),\)
	evaluated at \(x=1-c_*\), now gives
	\([x^{|\boldsymbol n|-n_*}]\mathcal Q_{\boldsymbol n}(x)
	=S_*(1-c_*)/(c_*+n_*)_{|\boldsymbol n|-n_*}\). Finally,
	\[
		S_*(1-c_*)
		=
		\prod_{\substack{i=1\\i\ne i_*}}^{p}
		(c_i-c_*+1)_{n_i}
		=
		\prod_{\substack{i=1\\i\ne i_*}}^{p}
		(\kappa_i-\kappa_{i_*}+1)_{n_i},
	\]
	which is nonzero by the noninteger-separation assumptions. The
	denominator is nonzero by the standing admissibility convention, so
	\(\deg\mathcal Q_{\boldsymbol n}=|\boldsymbol n|-n_*\).

	Let \(\xi_1,\ldots,\xi_{|\boldsymbol n|-n_*}\) be the zeros of
	\(\mathcal Q_{\boldsymbol n}\). Since
	\(\mathcal Q_{\boldsymbol n}(0)\ne0\), none of them is zero, and, for
	\(d\in\{0,\ldots,|\boldsymbol n|\}\),
	\[
		\frac{\mathcal Q_{\boldsymbol n}(d)}{\mathcal Q_{\boldsymbol n}(0)}
		=
		\prod_{\ell=1}^{|\boldsymbol n|-n_*}
		\frac{d-\xi_\ell}{-\xi_\ell}
		=
		\prod_{\ell=1}^{|\boldsymbol n|-n_*}
		\frac{(1-\xi_\ell)_d}{(-\xi_\ell)_d},
	\]
	where the last quotient is interpreted coefficientwise by polynomial
	continuation when one of the parameters \(-\xi_\ell\) is a
	nonpositive integer. Combining this identity with
	\eqref{eq:q-one-KdF-coefficient-difference} and
	\eqref{eq:q-one-difference-factorization}, and using
	\(\mathcal Q_{\boldsymbol n}(0)=|\boldsymbol n|!/n_*!\) gives
	\[
		[z^d]\mathcal K_{\boldsymbol n}(z)
		=
		(-1)^{|\boldsymbol n|}\frac{|\boldsymbol n|!}{P(0)}
		\frac{(-|\boldsymbol n|)_d(c_*+n_*)_d}{d!}
		\prod_{\ell=1}^{|\boldsymbol n|-n_*}
		\frac{(1-\xi_\ell)_d}{(-\xi_\ell)_d}.
	\]
	This is precisely the coefficient of \(z^d\) in the terminating
	\({}_{|\boldsymbol n|-n_*+2}F_{|\boldsymbol n|-n_*}\) in
	\eqref{eq:q-one-KdF-to-pFq}. Formula
	\eqref{eq:q-one-B-single-pFq} follows from
	\eqref{eq:q-one-B-KdF}.
\end{proof}

The characteristic polynomial just introduced has an intrinsic discrete
orthogonality interpretation. In fact, after reflection it is a type-II
multiple Hahn polynomial for the \(p-1\) columns different from \(i_*\).
The scalar Hahn polynomials and their finite-difference structure belong to
the classical theory of orthogonal polynomials of a discrete variable; see
\cite{NikiforovSuslovUvarov1991}. Their multiple extension was developed by
Arves\'u, Coussement, and Van Assche, who obtained the corresponding
type-II systems for the classical discrete families, including the Hahn
family \cite{ArvesuCoussementVanAssche2003}. Explicit hypergeometric
representations for the type-I multiple Hahn polynomials in the two-weight
case were subsequently derived in
\cite{BranquinhoDiazFoulquieManas2023Hahn}, while integral and
hypergeometric representations for both types and an arbitrary number of
weights were obtained in
\cite{BranquinhoDiazFoulquieManasWolfs2025}. The proposition below shows
that the auxiliary polynomial arising in the present Bessel reduction fits
intrinsically into this multiple Hahn framework.

\begin{proposition}[Multiple Hahn structure of the characteristic polynomial]
	\label{prop:q-one-characteristic-multiple-Hahn}
	Retain the notation of
	Corollary~\ref{cor:final-B-q-one-hypergeometric}. Let
	\(\boldsymbol n^{\,*i_*}\coloneq(n_h)_{h\ne i_*}\), so that
	\(|\boldsymbol n^{\,*i_*}|=|\boldsymbol n|-n_*\). For every \(h\ne i_*\), define
	\(\alpha_h\coloneq-c_h-|\boldsymbol n|-n_h =-\kappa_h-b-|\boldsymbol n|-n_h-1,\) \(\beta\coloneq c_*+n_*-1 =\kappa_{i_*}+b+n_*,\)
	and consider on the lattice \(\{0,\ldots,|\boldsymbol n|\}\) the \(p-1\) Hahn
	weights
	\begin{equation}
		\label{eq:q-one-multiple-Hahn-weights}
		\omega_h(y)
		\coloneq
		\frac{(\alpha_h+1)_y}{y!}
		\frac{(\beta+1)_{|\boldsymbol n|-y}}{(|\boldsymbol n|-y)!},
		\qquad
		y\in\{0,\ldots,|\boldsymbol n|\},
		\quad h\ne i_*.
	\end{equation}
	Then the reflected characteristic polynomial
	\(\mathcal Q_{\boldsymbol n}(|\boldsymbol n|-y)\) satisfies the type-II multiple Hahn
	orthogonality conditions
	\begin{equation}
		\label{eq:q-one-characteristic-Hahn-orthogonality}
		\sum_{y=0}^{|\boldsymbol n|}
		\mathcal Q_{\boldsymbol n}(|\boldsymbol n|-y)(-y)_k\omega_h(y)
		=0,
		\qquad
		k\in\{0,\ldots,n_h-1\},
		\quad h\ne i_*.
	\end{equation}
	Since the factorial polynomials \((-y)_k\) span the polynomials of
	degree at most \(k\), these are precisely the type-II multiple Hahn
	orthogonality conditions for the multi-index \(\boldsymbol n^{\,*i_*}\).
	
	Whenever this multiple Hahn index is normal, let
	\(H_{\boldsymbol n^{\,*i_*}}(y)\) denote the corresponding monic type-II
	multiple Hahn polynomial. Then
	\begin{equation}
		\label{eq:q-one-characteristic-multiple-Hahn}
		\mathcal Q_{\boldsymbol n}(x)
		=
		(-1)^{|\boldsymbol n|-n_*} L_{\boldsymbol n}\,
		H_{\boldsymbol n^{\,*i_*}}(|\boldsymbol n|-x),
	\end{equation}
	where
	\begin{equation}
		\label{eq:q-one-characteristic-leading-Hahn}
		L_{\boldsymbol n}
		\coloneq
		[x^{|\boldsymbol n|-n_*}]\mathcal Q_{\boldsymbol n}(x)
		=
		\frac{
			\prod_{\substack{h=1\\h\ne i_*}}^{p}
			(\kappa_h-\kappa_{i_*}+1)_{n_h}
		}{
			(\kappa_{i_*}+b+n_*+1)_{|\boldsymbol n|-n_*}		}.
	\end{equation}
	Consequently, if \(\zeta_1,\ldots,\zeta_{|\boldsymbol n|-n_*}\) are the zeros of
	\(H_{\boldsymbol n^{\,*i_*}}\), counted with multiplicity, then the zeros
	of the characteristic polynomial are
	\(\xi_\ell=|\boldsymbol n|-\zeta_\ell\),
	\(\ell\in\{1,\ldots,|\boldsymbol n|-n_*\}\).
\end{proposition}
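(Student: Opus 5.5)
The idea is to read the reflected characteristic polynomial off the one-row Bessel orthogonality rather than verify the Hahn conditions from the defining sum \eqref{eq:q-one-characteristic-polynomial} directly. By Corollary~\ref{cor:final-B-q-one-hypergeometric}, specifically \eqref{eq:q-one-KdF-coefficient-difference} and \eqref{eq:q-one-difference-factorization}, the coefficients of the scalar \(B\)-polynomial are, up to a nonzero constant,
\[
[z^d]B_{\boldsymbol n}(z)\;\propto\;\frac{(-|\boldsymbol n|)_d\,(c_*+n_*)_d}{d!}\,\mathcal Q_{\boldsymbol n}(d),
\qquad d\in\{0,\ldots,|\boldsymbol n|\}.
\]
Theorem~\ref{thm:final-explicit}(ii) with \(q=1\), \(r=0\), together with the moments \eqref{eq:multiple-bessel-moments} and \(\alpha_h+2=c_h+1\), then gives, for every \(h\) and \(k\in\{0,\ldots,n_h-1\}\),
\[
\sum_{d=0}^{|\boldsymbol n|}\frac{(-|\boldsymbol n|)_d(c_*+n_*)_d}{d!}\,\frac{\mathcal Q_{\boldsymbol n}(d)}{\Gamma(d+k+c_h+1)}=0 .
\]
Only the integer values \(\mathcal Q_{\boldsymbol n}(d)\) enter, which is exactly what a lattice orthogonality needs. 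We use these relations only for \(h\ne i_*\).

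Next we substitute \(y=|\boldsymbol n|-d\) and rewrite each factor in terms of \(y\).
\begin{itemize}
\item \((-|\boldsymbol n|)_d/d!=(-1)^d|\boldsymbol n|!/\bigl(y!\,(|\boldsymbol n|-y)!\bigr)\).
\item \((c_*+n_*)_d=(\beta+1)_{|\boldsymbol n|-y}\).
\item With \(A\coloneq|\boldsymbol n|+c_h+k+1\), the reflection \(1/\Gamma(A-y)=(-1)^y(1-A)_y/\Gamma(A)\) gives \((1-A)_y=(-|\boldsymbol n|-c_h-k)_y\).
\end{itemize}
Since \(\alpha_h+1=-c_h-|\boldsymbol n|-n_h+1\), we have
\[
(-|\boldsymbol n|-c_h-k)_y=(\alpha_h+1)_y\,\frac{(\alpha_h+1+y)_{n_h-1-k}}{(\alpha_h+1)_{n_h-1-k}} ,
\]
and the second factor is a polynomial in \(y\) of exact degree \(n_h-1-k\).

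The signs \((-1)^d(-1)^y\) combine into the global constant \((-1)^{|\boldsymbol n|}\). Hence each Bessel condition becomes \(\sum_y\mathcal Q_{\boldsymbol n}(|\boldsymbol n|-y)\,\pi_{k}(y)\,\omega_h(y)=0\), where \(\pi_k\) has degree exactly \(n_h-1-k\). As \(k\) runs over \(\{0,\ldots,n_h-1\}\), these polynomials form a triangular basis of \(\Poly_{n_h-1}\). The relations are therefore equivalent to \eqref{eq:q-one-characteristic-Hahn-orthogonality}, because \((-y)_k\), \(k<n_h\), spans the same space. The only nonzero constants discarded along the way are \(\Gamma(A)^{-1}\), \((\alpha_h+1)_{n_h-1-k}\), the sign, and the normalization of \(B_{\boldsymbol n}\). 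Their finiteness and nonvanishing are exactly what regularity and the admissibility convention supply, and this must be checked factor by factor.

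For \eqref{eq:q-one-characteristic-multiple-Hahn}, note that \(y\mapsto\mathcal Q_{\boldsymbol n}(|\boldsymbol n|-y)\) has degree \(|\boldsymbol n|-n_*=|\boldsymbol n^{\,*i_*}|\) by Corollary~\ref{cor:final-B-q-one-hypergeometric}. Its leading coefficient in \(y\) is \((-1)^{|\boldsymbol n|-n_*}L_{\boldsymbol n}\), with \(L_{\boldsymbol n}\) given by \eqref{eq:q-one-characteristic-leading}; this is nonzero. Normality of the Hahn index means that the solution space of \eqref{eq:q-one-characteristic-Hahn-orthogonality} in \(\Poly_{|\boldsymbol n^{\,*i_*}|}\) is spanned by the monic \(H_{\boldsymbol n^{\,*i_*}}\). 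Comparing leading coefficients gives the identity, and the zero relation \(\xi_\ell=|\boldsymbol n|-\zeta_\ell\) follows immediately.

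The main obstacle is the bookkeeping in the second step. The reflection of the Gamma factor must produce precisely the Pochhammer \((\alpha_h+1)_y\) with the stated \(\alpha_h\), the signs must cancel to a \(y\)-independent constant, and no discarded factor may vanish or blow up under the admissibility convention.
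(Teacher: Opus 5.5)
Your proof is correct, but it obtains the orthogonality relations by a different route from the paper's.

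\textbf{Your route.} You read \eqref{eq:q-one-characteristic-Hahn-orthogonality} off the $B$-orthogonality of $B_{\boldsymbol n}$ already proved in Theorem~\ref{thm:final-explicit}(ii). After $y=|\boldsymbol n|-d$, the reciprocal-Gamma moment $1/\Gamma(d+k+c_h+1)$ reflects into $(-1)^y(\alpha_h+1)_y(\alpha_h+1+y)_{n_h-1-k}$, up to $y$-independent constants. The test polynomials $(\alpha_h+1+y)_{n_h-1-k}$, $k<n_h$, then form a triangular basis of $\Poly_{n_h-1}$.

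\textbf{The paper's route.} The paper never invokes the Bessel orthogonality at this point. It multiplies the reflected difference formula $\mathcal Q_{\boldsymbol n}(|\boldsymbol n|-y)=\frac{(|\boldsymbol n|-y)!}{n_*!\,(c_*+n_*)_{|\boldsymbol n|-y}}\Delta^yP(|\boldsymbol n|-y)$ by $\omega_h$. It then applies the backward-Newton identity $\sum_y(-y)_k\frac{(a)_y}{y!}\Delta^yR(|\boldsymbol n|-y)=(-1)^k(a)_k\Delta^kR(|\boldsymbol n|+a)$ with $a=\alpha_h+1$. The conclusion follows from $\Delta^kP(1-c_h-n_h)=0$, which holds because $(c_h+t)_{n_h}$ vanishes at $t=1-c_h-n_h+s$ for $0\le s<n_h$.

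\textbf{Comparison.} Your argument is shorter given the machinery already established. It also explains the structure conceptually: the Hahn conditions are exactly the reflected Bessel moment conditions of the columns $h\ne i_*$, with the Hahn Pochhammer $(\alpha_h+1)_y$ arising as the reflection of the reciprocal-Gamma moment. The paper's argument is self-contained and purely algebraic. It does not depend on the orthogonality theorem or on the admissibility of the explicit $B$-formula; it needs only that $\mathcal Q_{\boldsymbol n}$ be defined. It also produces the $(-y)_k$ form directly, with no change of basis. The identification with the monic Hahn polynomial by comparing leading coefficients is the same in both.

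\textbf{Bookkeeping.} The checks you flagged do close.
\begin{itemize}
\item $\Gamma(A)$ is finite: admissibility of $\pi_{1,0}$ requires $(c_h)_{n_h}=\Gamma(c_h+n_h)/\Gamma(c_h)$ to have finite Gamma factors, which forces $c_h\notin-\N_0$, while $A=c_h+|\boldsymbol n|+k+1$.
\item You never need $(\alpha_h+1)_{n_h-1-k}\ne0$: multiply the Bessel relation by it and use $(\alpha_h+1)_m(\alpha_h+1+m)_y=(\alpha_h+1)_y(\alpha_h+1+y)_m$ with $m=n_h-1-k$.
\item Rename one of your two $\alpha_h$'s. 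You use the symbol both for $b+\kappa_h$ from \eqref{eq:multiple-bessel-moments} and for the Hahn parameter $-c_h-|\boldsymbol n|-n_h$.
\end{itemize}
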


\begin{proof}
	The finite-difference factorization obtained in the proof of
	Corollary~\ref{cor:final-B-q-one-hypergeometric} gives, for
	\(d\in\{0,\ldots,|\boldsymbol n|\}\),
	\[
	\mathcal Q_{\boldsymbol n}(d)
	=
	\frac{d!}{n_*!\,(c_*+n_*)_d}
	\Delta^{|\boldsymbol n|-d}P(d).
	\]
	Setting \(d=|\boldsymbol n|-y\) gives
	\begin{equation}
		\label{eq:q-one-Q-reflected-difference}
		\mathcal Q_{\boldsymbol n}(|\boldsymbol n|-y)
		=
		\frac{(|\boldsymbol n|-y)!}
		{n_*!\,(c_*+n_*)_{|\boldsymbol n|-y}}
		\Delta^yP(|\boldsymbol n|-y).
	\end{equation}
	Since \(\beta+1=c_*+n_*\), multiplication by the weight
	\eqref{eq:q-one-multiple-Hahn-weights} cancels the factorial and
	Pochhammer factors in \eqref{eq:q-one-Q-reflected-difference}, giving
	\begin{equation}
		\label{eq:q-one-Q-times-Hahn-weight}
		\mathcal Q_{\boldsymbol n}(|\boldsymbol n|-y)\omega_h(y)
		=
		\frac{1}{n_*!}
		\frac{(\alpha_h+1)_y}{y!}
		\Delta^yP(|\boldsymbol n|-y).
	\end{equation}
	
	The following elementary finite-difference identity will be used. For every
	polynomial \(R\) of degree at most \(|\boldsymbol n|\), every
	\(k\in\{0,\ldots,|\boldsymbol n|\}\), and every parameter \(a\),
	\begin{equation}
		\label{eq:q-one-Newton-difference-identity}
		\sum_{y=0}^{|\boldsymbol n|}
		(-y)_k\frac{(a)_y}{y!}
		\Delta^yR(|\boldsymbol n|-y)
		=
		(-1)^k(a)_k\Delta^kR(|\boldsymbol n|+a).
	\end{equation}
	Indeed, the terms with \(y<k\) vanish, and
	\[
	(-y)_k\frac{(a)_y}{y!}
	=
	(-1)^k(a)_k
	\frac{(a+k)_{y-k}}{(y-k)!}.
	\]
	After setting \(s=y-k\), the remaining sum is
	\[
	(-1)^k(a)_k
	\sum_{s=0}^{|\boldsymbol n|-k}
	\frac{(a+k)_s}{s!}
	\Delta^s\bigl(\Delta^kR\bigr)(|\boldsymbol n|-k-s).
	\]
	By the backward Newton expansion, this equals
	\[
	(-1)^k(a)_k
	\Delta^kR\bigl(|\boldsymbol n|-k+a+k\bigr)
	=
	(-1)^k(a)_k\Delta^kR(|\boldsymbol n|+a),
	\]
	which proves \eqref{eq:q-one-Newton-difference-identity}.
	
	Fix \(h\ne i_*\) and apply
	\eqref{eq:q-one-Newton-difference-identity} with
	\(R=P\) and \(a=\alpha_h+1\). By definition,
	\(|\boldsymbol n|+\alpha_h+1=1-c_h-n_h\). Hence, for
	\(k\in\{0,\ldots,n_h-1\}\),
	\(\Delta^kP(1-c_h-n_h)=0\). Indeed, this difference is a linear
	combination of the values \(P(1-c_h-n_h+s)\), with
	\(s\in\{0,\ldots,k\}\), and each of them contains the vanishing
	factor
	\[
	(c_h+1-c_h-n_h+s)_{n_h}
	=
	(1-n_h+s)_{n_h}
	=
	0,
	\]
	because \(s<n_h\). Combining this vanishing with
	\eqref{eq:q-one-Q-times-Hahn-weight} and
	\eqref{eq:q-one-Newton-difference-identity} proves
	\eqref{eq:q-one-characteristic-Hahn-orthogonality}.
	
	The total number of orthogonality conditions is
	\(\sum_{h\ne i_*}n_h=|\boldsymbol n|-n_*\), which is the degree of
	\(\mathcal Q_{\boldsymbol n}\). Therefore, whenever the multiple Hahn
	index is normal, \(\mathcal Q_{\boldsymbol n}(|\boldsymbol n|-y)\) is proportional
	to the monic type-II multiple Hahn polynomial
	\(H_{\boldsymbol n^{\,*i_*}}(y)\). Its leading coefficient as a polynomial
	in \(y\) is \((-1)^{|\boldsymbol n|-n_*}L_{\boldsymbol n}\), which proves
	\eqref{eq:q-one-characteristic-multiple-Hahn}. The relation between
	the zeros follows immediately.
\end{proof}

\begin{remark}[Why the multiple Hahn polynomial appears]
	\label{rem:q-one-why-multiple-Hahn}
	The multiple Hahn structure follows directly from the finite-difference
	construction of \(\mathcal Q_{\boldsymbol n}\). For each
	\(h\ne i_*\), the factor \((c_h+t)_{n_h}\) in \(P(t)\) has the
	\(n_h\) consecutive zeros
	\(-c_h,-c_h-1,\ldots,-c_h-n_h+1\). After the reflection
	\(y=|\boldsymbol n|-d\), the complementary difference \(\Delta^{|\boldsymbol n|-d}P(d)\)
	becomes the backward difference \(\nabla^yP(|\boldsymbol n|)\). Multiplication by
	the Hahn weight cancels exactly the factorial and Pochhammer factors in
	\eqref{eq:q-one-Q-reflected-difference}, while the block of consecutive
	zeros of \((c_h+t)_{n_h}\) produces the \(n_h\) orthogonality
	conditions associated with the weight \(\omega_h\).
	
	Thus each column \(h\ne i_*\) contributes one Hahn weight and exactly
	\(n_h\) orthogonality conditions. Their total number is
	\(\sum_{h\ne i_*}n_h=|\boldsymbol n|-n_*\), the degree of the characteristic
	polynomial. In particular, \(p=2\) gives an ordinary Hahn polynomial,
	whereas \(p\ge3\) gives a type-II multiple Hahn polynomial with
	\(p-1\) weights.
\end{remark}

\begin{remark}[Relation with the ABV formula and the two-index reduction]
	\label{rem:q-one-ABV-double-sum}
	For \(p=2\), let \(\boldsymbol n=(n_1,n_2)\in\N_0^2\). Aptekarev,
	Branquinho, and Van Assche give the type-II multiple Bessel polynomial
	associated with the weights \(z^{\alpha_i}\exp(\gamma/z)\) as the
	explicit double sum \cite[Section~3.4]{ABV2003}
	\begin{equation}
		\label{eq:q-one-ABV-double-sum}
		Q_{(n_1,n_2)}^{\mathrm{ABV}}(z)
		=
		\sum_{k=0}^{n_1}
		\sum_{j=0}^{n_2}
		\binom{n_1}{k}
		\binom{n_2}{j}
		(|\boldsymbol n|+\alpha_1+1)_k
		(n_2+\alpha_2+k+1)_j
		(-\gamma)^{|\boldsymbol n|-k-j}
		z^{k+j}.
	\end{equation}
	Under the identification
	\(\alpha_i=b+\kappa_i\), \(i\in\{1,2\}\), and for
	\(\gamma\ne0\), direct coefficient comparison gives the exact
	correspondence
	\begin{equation}
		\label{eq:q-one-ABV-exact-correspondence}
		Q_{(n_1,n_2)}^{\mathrm{ABV}}(\gamma z)
		=
		\gamma^{|\boldsymbol n|}
		(\kappa_1+b+|\boldsymbol n|+1)_{n_1}
		(\kappa_2+b+|\boldsymbol n|+1)_{n_2}
		B_{(n_1,n_2)}(z).
	\end{equation}
	Thus the ABV double sum is exactly the \(p=2\) specialization of the
	Kamp\'e de F\'eriet representation obtained above, after monic
	normalization and rescaling.

	If \(i_*\in\{1,2\}\) is chosen so that
	\(n_{i_*}=\max\{n_1,n_2\}\), Proposition~
	\ref{prop:q-one-characteristic-multiple-Hahn} identifies the characteristic
	polynomial with the reflection of an ordinary Hahn polynomial of degree
	\(\min\{n_1,n_2\}\). Consequently,
	Corollary~\ref{cor:final-B-q-one-hypergeometric} reduces in this case to a
	terminating
	\({}_{\min\{n_1,n_2\}+2}F_{\min\{n_1,n_2\}}\) representation whose
	unit-difference parameters are the reflected Hahn zeros. Hence the ABV
	double sum, the Kamp\'e de F\'eriet polynomial, and this single
	generalized hypergeometric polynomial are three representations of the same
	monic scalar \(B\)-polynomial. If \(n_1=n_2\), either maximizing index may
	be chosen; the resulting parameter lists may differ, but the represented
	polynomial is the same.
\end{remark}

\section{Recurrence relations}
\label{sec:final-recurrences}

The recurrence relations for the explicit Bessel-like system are derived next.
The pairing used throughout this section is the bilinear contour pairing
\eqref{eq:general-pairing}, specialized to the Laurent representative \(\CB\).
Thus all pairings below are contour pairings on \(\Torus\), evaluated
by the moment formula \eqref{eq:final-moment-entry}; no new bilinear form is
introduced.

The abstract step-line recurrence matrix \(T\), together with the dual action
of \(T^\top\), was established in
Proposition~\ref{prop:general-step-recurrence}.  In the present section, the
step-line enters only through the explicit evaluation of its entries
\(t_{N,k}\) by means of the finite Gamma--Pochhammer pairing formulas developed
below.

The main purpose is to construct recurrences throughout the near-diagonal
range.  Away from the step-line, permuting the components of a near-diagonal
multi-index gives several equally natural orderings, and those orderings need
not select the same neighboring indices.  The construction therefore begins with a chosen
near-diagonal index pair, whose neighbors are specified explicitly.

Multiplication by \(z\) places the central vector in a finite-dimensional
space determined by the enlarged degree bounds and the remaining moment
conditions.  For interior indices, this local recurrence space has dimension
\(p+q+1\).  A set of \(p+q+1\) admissible near-diagonal neighbors is
chosen, and a diagonal pairing argument proves that they form a basis of that
space.
This yields a unique local recurrence.  At the boundary, the decreasing moves
which would produce negative components are omitted, and the same argument
gives a shorter recurrence.

The coefficients of both the local recurrences and the step-line recurrence
matrix are obtained from the same finite Gamma--Pochhammer pairing block.

\subsection{Explicit near-diagonal finite Gamma--Pochhammer pairing block}
\label{sec:final-pairing-blocks}

The recurrence coefficients below are expressed through the two pairings
\(\langle \mathbf B,\mathbf A\rangle\) and
\(\langle z\mathbf B,\mathbf A\rangle\).  They are evaluated only in the range
where the explicit Bessel-like representatives of
Theorem~\ref{thm:final-explicit} are available.  Thus the following block is
not an arbitrary-index formula; it is a near-diagonal finite formula, and it is
used only with admissible index pairs.

Recall that for a vector \(\boldsymbol c\), \(\boldsymbol c^{\,*u}\) denotes the
vector obtained by removing its \(u\)-th component.  When \(H\ne j\),
\(\boldsymbol e_j^{\,*H}\in\mathbb N_0^{q-1}\) denotes the vector obtained from
\(\boldsymbol e_j\) by deleting its \(H\)-th component.

Let \((\boldsymbol\nu,\boldsymbol\mu)\) be the near-diagonal index of a \(B\)-vector and
let \((\boldsymbol\lambda,\boldsymbol\eta)\) be the near-diagonal index of an \(A\)-vector,
both covered by Theorem~\ref{thm:final-explicit}.  Define
\(\eta_-\coloneq\min_{1\le h\le q}\eta_h,\) \(j_-\coloneq\min\{h:\eta_h=\eta_-\},\)
and
\(\mu_+\coloneq\max_{1\le h\le q}\mu_h,\) \(j_+\coloneq\max\{h:\mu_h=\mu_+\},\) \(K_+\coloneq\mu_+-1.\)
The product of the two normalizing constants appearing in
\eqref{eq:final-A-nu} and \eqref{eq:final-B-nu} is written as a single
normalizing prefactor, with the denominator depending on \(\boldsymbol\nu\)
left to be absorbed into the summand:
\begin{equation}
	\label{eq:nd-AB-normalization-prefactor}
	\mathcal N_{(\boldsymbol\nu,\boldsymbol\mu)}^{(\boldsymbol\lambda,\boldsymbol\eta)}
	\coloneq
	\frac{
		\bigl(\boldsymbol\kappa+(b_{j_-}+\eta_-+1)\one_p\bigr)_{\boldsymbol\lambda}
		\bigl(\boldsymbol a-(b_{j_+}+K_+)\one_r\bigr)_{K_+}
	}{
		\bigl(\boldsymbol a-(b_{j_-}+\eta_-)\one_r\bigr)_{\eta_-}
		\prod_{\substack{h=1\\ \eta_h=\eta_-+1}}^{q}(b_h-b_{j_-})
	}
	\prod_{\substack{h=1\\ h\ne j_+,\ \mu_h=\mu_+}}^{q}
	(b_h-b_{j_+}).
\end{equation}
For
\(i\in\{1,\ldots,p\},\) \(j,H\in\{1,\ldots,q\},\) \(\alpha\in\{0,\ldots,\lambda_i-1\},\)
Define the completely canceled Pochhammer summand
\(\mathcal S_{d;i,j,H,K} (\boldsymbol\nu,\boldsymbol\mu;\boldsymbol\lambda,\boldsymbol\eta;\alpha,\beta),\)
with \(d\in\{0,1\}\), as follows.  Empty products are understood as one.
Throughout this finite block, Pochhammer symbols with integer, possibly
negative, exponents are interpreted componentwise through the Gamma quotient
\((x)_n=\frac{\Gamma(x+n)}{\Gamma(x)},\) \(n\in\mathbb Z,\)
whenever the two Gamma factors are finite.  Equivalently,
\((x)_{-n}=\frac{1}{(x-n)_n},\) \(n\in\N.\)
The regularity assumptions and the standing admissibility convention exclude
the poles which could occur in the applications below.

First suppose that \(H=j\),
\(K\in\{0,\ldots,\mu_H-1\},\) \(\beta\in\{0,\ldots,K\}.\)
Then
{\scriptsize\begin{multline}
		\label{eq:nd-fused-summand-diagonal}
		\mathcal S_{d;i,H,H,K}
		(\boldsymbol\nu,\boldsymbol\mu;\boldsymbol\lambda,\boldsymbol\eta;\alpha,\beta)
		\\=
		\frac{(-1)^{|\boldsymbol\lambda|-\lambda_i+\alpha+\beta-K}}{
			\alpha!(\lambda_i-1-\alpha)!(K-\beta)!(\mu_H-K-1)!}
		\\
		\times
		\frac{
			\bigl(\kappa_i\one_r+\boldsymbol a+(\alpha+1)\one_r\bigr)_{d+\beta}
			\bigl(\boldsymbol a-(b_H+K)\one_r+(\beta+1)\one_r\bigr)_{K-\beta}
		}{
			\bigl(\kappa_i\one_{p-1}-\boldsymbol\kappa^{\,*i}-\boldsymbol\lambda^{\,*i}
			+(\alpha+1)\one_{p-1}\bigr)_{\boldsymbol\lambda^{\,*i}}
			\bigl(\boldsymbol a-(b_H+K)\one_r\bigr)_K
		}
		\\
		\times
		\frac{\bigl(\boldsymbol\kappa+(b_H+K+1)\one_p\bigr)_{\boldsymbol\nu}}
		{\bigl(\boldsymbol\kappa+(b_{j_+}+K_++1)\one_p\bigr)_{\boldsymbol\nu}}
		\\
		\times
		\frac{
			\bigl(\kappa_i\one_q+\boldsymbol b+(d+\alpha+\beta+1)\one_q+\boldsymbol e_H\bigr)_{
				\boldsymbol\eta-(d+\beta)\one_q-\boldsymbol e_H}
		}{
			\bigl(\boldsymbol b^{\,*H}-b_H\one_{q-1}\bigr)_1
		}
		\\
		\times
		\frac{1}{
			\bigl(\boldsymbol b^{\,*H}-(b_H+K)\one_{q-1}+(\beta+1)\one_{q-1}\bigr)_{
				\boldsymbol\mu^{\,*H}-(\beta+1)\one_{q-1}}
		}.
\end{multline}}
This formula includes the term \(K=0\), for which necessarily \(\beta=0\).

Now suppose that \(H\ne j\),
\(K\in\{1,\ldots,\mu_H-1\},\) \(\beta\in\{0,\ldots,K-1\}.\)
Then
{\scriptsize\begin{multline}
		\label{eq:nd-fused-summand-offdiagonal}
		\mathcal S_{d;i,j,H,K}
		(\boldsymbol\nu,\boldsymbol\mu;\boldsymbol\lambda,\boldsymbol\eta;\alpha,\beta)
		\\
		=
		\frac{(-1)^{|\boldsymbol\lambda|-\lambda_i+1+\alpha+K+\beta}}{
			\alpha!(\lambda_i-1-\alpha)!(K-1-\beta)!(\mu_H-K-1)!}
		\\
		\times
		\frac{
			\bigl(\boldsymbol a-b_j\one_r\bigr)_1
			\bigl(\kappa_i\one_r+\boldsymbol a+(\alpha+1)\one_r\bigr)_{d+\beta}
		}{\bigl(\kappa_i\one_{p-1}-\boldsymbol\kappa^{\,*i}-\boldsymbol\lambda^{\,*i}
			+(\alpha+1)\one_{p-1}\bigr)_{\boldsymbol\lambda^{\,*i}}
			\bigl(\boldsymbol a-(b_H+K)\one_r\bigr)_{\beta+1}
		}
		\\
		\times
		\frac{\bigl(\boldsymbol\kappa+(b_H+K+1)\one_p\bigr)_{\boldsymbol\nu}}
		{\bigl(\boldsymbol\kappa+(b_{j_+}+K_++1)\one_p\bigr)_{\boldsymbol\nu}}
		\\
		\times
		\frac{
			\bigl(\kappa_i\one_q+\boldsymbol b+(d+\alpha+\beta+1)\one_q+\boldsymbol e_j\bigr)_{
				\boldsymbol\eta-(d+\beta)\one_q-\boldsymbol e_j}
		}{
			\bigl(\boldsymbol b^{\,*j}-b_j\one_{q-1}\bigr)_1
		}
		\\
		\times
		\frac{1}{
			\bigl(\boldsymbol b^{\,*H}-(b_H+K)\one_{q-1}+(\beta+1)\one_{q-1}
			-\boldsymbol e_j^{\,*H}\bigr)_{
				\boldsymbol\mu^{\,*H}-(\beta+1)\one_{q-1}+\boldsymbol e_j^{\,*H}}
		}.
\end{multline}}
In all other cases the summand is defined to be zero.

\begin{definition}[Near-diagonal finite Pochhammer pairing block]
	\label{def:final-pairing-block}
	For two index pairs \((\boldsymbol\nu,\boldsymbol\mu)\) and
	\((\boldsymbol\lambda,\boldsymbol\eta)\) covered by Theorem~\ref{thm:final-explicit}, set,
	for \(d\in\{0,1\}\),
	\begin{equation}
		\label{eq:final-pairing-block}
		\mathscr P
		_{d,(\boldsymbol\nu,\boldsymbol\mu)}^{(\boldsymbol\lambda,\boldsymbol\eta)}
		\coloneq
		\mathcal N_{(\boldsymbol\nu,\boldsymbol\mu)}^{(\boldsymbol\lambda,\boldsymbol\eta)}
		\sum_{j=1}^{q}
		\sum_{i=1}^{p}
		\sum_{\beta=0}^{\mu_j-1}
		\sum_{\alpha=0}^{\lambda_i-1}
		\sum_{H=1}^{q}
		\sum_{K=0}^{\mu_H-1}
		\mathcal S_{d;i,j,H,K}
		(\boldsymbol\nu,\boldsymbol\mu;\boldsymbol\lambda,\boldsymbol\eta;\alpha,\beta).
	\end{equation}
	This is a finite expression involving only Pochhammer symbols, factorials and
	finite sums.  The prefactor
	\(\mathcal N_{(\boldsymbol\nu,\boldsymbol\mu)}^{(\boldsymbol\lambda,\boldsymbol\eta)}\) contains the
	explicit normalizations of the \(A\)- and \(B\)-vectors, and the remaining
	\(B\)-normalizing denominator has been absorbed directly into each
	\(H,K\) residue contribution.  The block is defined only in the near-diagonal
	regime where the explicit Bessel-like formulas have been proved.
\end{definition}

\begin{proposition}[Evaluation of the near-diagonal pairing block]
	\label{prop:final-pairing-block-interpretation}
	For every \(d\in\{0,1\}\),
	\begin{equation}
		\label{eq:final-pairing-block-interpretation}
		\mathscr P
		_{d,(\boldsymbol\nu,\boldsymbol\mu)}^{(\boldsymbol\lambda,\boldsymbol\eta)}
		=
		\left\langle
		z^d\mathbf B_{\boldsymbol\nu,\boldsymbol\mu},
		\mathbf A_{\boldsymbol\lambda,\boldsymbol\eta}
		\right\rangle .
	\end{equation}
\end{proposition}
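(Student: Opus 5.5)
The plan is to expand both sides directly in monomials and evaluate the pairing through the moment formula \eqref{eq:final-moment-entry}. By definition,
\[
\left\langle z^d\mathbf B_{\boldsymbol\nu,\boldsymbol\mu},\mathbf A_{\boldsymbol\lambda,\boldsymbol\eta}\right\rangle
=\sum_{j=1}^{q}\sum_{i=1}^{p}\sum_{\beta}\sum_{\alpha}
[z^\beta]B^{(j)}_{\boldsymbol\nu,\boldsymbol\mu}\,[z^\alpha]A^{(i)}_{\boldsymbol\lambda,\boldsymbol\eta}\,
\frac{\Gamma((d+\alpha+\beta+1+\kappa_i)\one_r+\boldsymbol a)}{\Gamma((d+\alpha+\beta+1+\kappa_i)\one_q+\boldsymbol b+\boldsymbol e_j)}.
\]
This already produces the four outer sums in \eqref{eq:final-pairing-block}. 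On the \(A\)-side I will insert the explicit coefficients \(\nu^A\mathsf c_{i,\alpha}\) from the proof of Theorem~\ref{thm:final-explicit}(i), namely \(C_{\boldsymbol\lambda,\boldsymbol\eta;i}\) times the hypergeometric term. On the \(B\)-side I will use \eqref{eq:final-B-components} with \eqref{eq:final-B-Psi-finite}, writing \([z^\beta]B^{(j)}\) as \(\nu^B\) times a sum over \((H,K)\) of \(\pi_{H,K}\) times the \(\beta\)-th coefficient of \(\Psi_{j;H,K}\). The collapsed \(K=0\) block contributes only for \(H=j\) and \(\beta=0\). This supplies the inner sums over \(H\) and \(K\). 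The two normalizing scalars \(\nu^A_{\boldsymbol\lambda,\boldsymbol\eta}\nu^B_{\boldsymbol\nu,\boldsymbol\mu}\) produce the prefactor \(\mathcal N\), except for the \(\boldsymbol\nu\)-dependent denominator \((\boldsymbol\kappa+(b_{j_+}+K_++1)\one_p)_{\boldsymbol\nu}\) of \(\nu^B\) (after the shift \((x)_{n}\) by one index), which is kept inside the summand as stated.

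The core of the argument is a term-by-term simplification of each \((j,i,\beta,\alpha,H,K)\) contribution into \(\mathcal S_{d;i,j,H,K}\). I will proceed as follows.

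(a) Factor the moment Gamma quotient against the Gamma factors in \(C_{\boldsymbol\lambda,\boldsymbol\eta;i}\) and in the Pochhammer symbols of the \(A\)-coefficient. The quotient \(\Gamma((\kappa_i+\alpha+1)\one_r+\boldsymbol a+(d+\beta)\one_r)/\Gamma(\kappa_i\one_r+\boldsymbol a+\one_r)(\kappa_i+\boldsymbol a+1)_\alpha\) collapses to \((\kappa_i\one_r+\boldsymbol a+(\alpha+1)\one_r)_{d+\beta}\). On the \(b\)-side, the factors \((\kappa_i+b_h+1)_{\eta_h}(\kappa_i+b_h+\eta_h+1)_\alpha\Gamma(\kappa_i+b_h+1)\), divided by the moment denominator, give the Pochhammer \((\kappa_i\one_q+\boldsymbol b+(d+\alpha+\beta+1)\one_q+\boldsymbol e_j)_{\boldsymbol\eta-(d+\beta)\one_q-\boldsymbol e_j}\). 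This exponent may be negative, which is why the block uses Gamma-quotient Pochhammers.

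(b) Rewrite \((-n+1)_\alpha/\alpha!\) together with \(1/((-1)^{\lambda_i-1}(\lambda_i-1)!)\) as \((-1)^{\alpha+\lambda_i-1}/(\alpha!(\lambda_i-1-\alpha)!)\). Merge the \(\kappa\)-difference Pochhammers \((\kappa_i-\kappa_\ell-\lambda_\ell+1)_{\lambda_\ell}\), \((\kappa_i-\kappa_\ell-\lambda_\ell+1)_\alpha/(\kappa_i-\kappa_\ell+1)_\alpha\) into the single reciprocal \((\kappa_i\one_{p-1}-\boldsymbol\kappa^{\,*i}-\boldsymbol\lambda^{\,*i}+(\alpha+1)\one_{p-1})_{\boldsymbol\lambda^{\,*i}}\), using the shift identity for consecutive Pochhammer symbols.

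(c) On the \(B\)-side, separate the cases \(H=j\) and \(H\ne j\). In each case, combine \(\pi_{H,K}\) from \eqref{eq:final-B-pi}, the prefactor \((\boldsymbol a-b_j\one_r)_1/(\boldsymbol b^{\,*j}-b_j\one_{q-1})_1\), and the \(\beta\)-th coefficient of \(\Psi_{j;H,K}\). Use \((x)_1(x+1)_\beta=(x)_{\beta+1}\) and the reflection \((-K+1)_\beta\) or \((-K)_\beta\) to turn \((-1)^KK!\) into \((K-\beta)!\) or \((K-1-\beta)!\) with signs. Combine \((b_I-b_H-K)_{\mu_I}\) with \((b_I-b_H-K+1)_\beta\) to get the last reciprocal Pochhammer in \eqref{eq:nd-fused-summand-diagonal} or \eqref{eq:nd-fused-summand-offdiagonal}. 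When \(H=j\), the \(\boldsymbol a\)-factors \((\boldsymbol a-b_H\one_r)_1\) cancel against those of \(\Psi\), giving the ratio \((\boldsymbol a-(b_H+K)\one_r+(\beta+1)\one_r)_{K-\beta}/(\boldsymbol a-(b_H+K)\one_r)_K\). When \(H\ne j\), \((\boldsymbol a-b_j\one_r)_1\) survives and the denominator becomes \((\boldsymbol a-(b_H+K)\one_r)_{\beta+1}\). For the \(K=0\) term, I check that it matches the \(H=j\), \(K=\beta=0\) case of \eqref{eq:nd-fused-summand-diagonal}, as in the \(K=0\) collapse noted in the proof of Theorem~\ref{thm:final-explicit}(ii).

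(d) Finally, collect the signs into the displayed exponents and verify that summands outside the stated ranges vanish. For \(j\ne H\) with \(\beta\ge K\), the coefficient of \(\Psi\) vanishes, so defining the summand as zero is consistent and the \(\beta\)-sum may run to \(\mu_j-1\).

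The main obstacle is step (c): the bookkeeping of the \(q-1\) and \(r\) strings and of the \(\boldsymbol e_j^{\,*H}\) correction in the off-diagonal case, including the sign \((-1)^{|\boldsymbol\lambda|-\lambda_i+1+\alpha+K+\beta}\). I would first verify the identity numerically in a small case such as \(p=q=2\), \(r=1\) against direct pairing, and then do the symbolic cancellation separately for the \(b_j\)-component and for the components \(h\ne j,H\). The near-diagonal hypothesis is only needed to guarantee that the explicit formulas apply and that every Gamma factor involved is admissible, so no further analysis is required.
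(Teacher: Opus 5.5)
Your proposal is correct and follows the same route as the paper. You expand both vectors in monomials using the explicit coefficients of Theorem~\ref{thm:final-explicit}, evaluate each monomial pairing with \eqref{eq:final-moment-entry}, and absorb $\nu^A\nu^B$ into $\mathcal N$, keeping the $\boldsymbol\nu$-dependent denominator inside the summand; you then cancel Pochhammer factors term by term, treating the collapsed $K=0$ block as the $H=j$, $K=\beta=0$ diagonal summand. Your steps (a)--(d) are more detailed than the paper's sketch, and the cancellations you describe do reproduce both the diagonal and off-diagonal summands, signs included. In the off-diagonal case, the $H$-th entry $-K$ of $(\boldsymbol b^{\,*j}-(b_H+K)\one_{q-1})_1$ combines with $(1-K)_\beta$ to give $(K-1-\beta)!$.
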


\begin{proof}
	Expand \(\mathbf B_{\boldsymbol\nu,\boldsymbol\mu}\) using
	\eqref{eq:final-B-components} and expand \(\mathbf A_{\boldsymbol\lambda,\boldsymbol\eta}\)
	using \eqref{eq:final-A-components}.  The explicit normalizations
	\eqref{eq:final-A-nu} and \eqref{eq:final-B-nu} give the prefactor
	\eqref{eq:nd-AB-normalization-prefactor}; the denominator in
	\(\nu^B_{\boldsymbol\nu,\boldsymbol\mu}\) combines with the residue factor
	\(\bigl(\boldsymbol\kappa+(b_H+K+1)\one_p\bigr)_{\boldsymbol\nu}\), giving the
	vector-Pochhammer quotient displayed directly in the summand.  Before the
	remaining cancellations, the pairing is a finite sum of terms of the form
	\[
	(\text{coefficient of }z^\beta\text{ in }B^{(j)}_{\boldsymbol\nu,\boldsymbol\mu})
	(\text{coefficient of }z^\alpha\text{ in }A^{(i)}_{\boldsymbol\lambda,\boldsymbol\eta})
	\frac{
		\Gamma((d+\alpha+\beta+1+\kappa_i)\one_r+\boldsymbol a)
	}{
		\Gamma((d+\alpha+\beta+1+\kappa_i)\one_q+\boldsymbol b+\boldsymbol e_j)
	}.
	\]
	Substitution of the explicit coefficients, followed by the elementary
	cancellations of common Pochhammer factors, gives exactly the two summands
	\eqref{eq:nd-fused-summand-diagonal} and
	\eqref{eq:nd-fused-summand-offdiagonal}.  The diagonal formula also includes
	the collapsed \(K=0\) term.  Summing over the displayed finite ranges gives
	\eqref{eq:final-pairing-block} and hence
	\eqref{eq:final-pairing-block-interpretation}.
\end{proof}

\begin{corollary}[Explicit step-line recurrence coefficients]
	\label{cor:final-step-recurrence-explicit}
	Let \(T\) be the step-line recurrence matrix of
	Proposition~\ref{prop:general-step-recurrence}.  Then, for
	\(k\in\{-p,\ldots,q\}\) and \(N+k\ge0\), its band entries are the finite
	Pochhammer sums
	\begin{equation}
		\label{eq:final-step-recurrence-coefficients}
		t_{N,k}
		=
		\mathscr P_{1,\,(\boldsymbol n_N,\boldsymbol m_N)}^{(\boldsymbol n_{N+k}^{\,*},\boldsymbol m_{N+k}^{\,*})}.
	\end{equation}
	Consequently, the entries appearing in the dual recurrence are
	\begin{equation}
		\label{eq:final-step-A-recurrence-coefficients}
		t_{N+k,-k}
		=
		\mathscr P_{1,\,(\boldsymbol n_{N+k},\boldsymbol m_{N+k})}^{(\boldsymbol n_N^{\,*},\boldsymbol m_N^{\,*})},
		\qquad
		k\in\{-q,\ldots,p\},
		\qquad N+k\ge0.
	\end{equation}
\end{corollary}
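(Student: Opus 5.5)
The plan is to combine Proposition~\ref{prop:general-step-recurrence} with Proposition~\ref{prop:final-pairing-block-interpretation}. First fix the notation: the corollary writes \((\boldsymbol n_N,\boldsymbol m_N)\) for the \(B\)-index of level \(N\), namely \((\boldsymbol\sigma_p(N),\boldsymbol\sigma_q(N+1))\). It writes \((\boldsymbol n_{N}^{\,*},\boldsymbol m_{N}^{\,*})\) for the \(A\)-index of level \(N\), namely \((\boldsymbol\sigma_p(N+1),\boldsymbol\sigma_q(N))\), as in \eqref{eq:general-step-vectors}.

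Both indices are balanced on the appropriate side: \(|\boldsymbol\sigma_q(N+1)|=|\boldsymbol\sigma_p(N)|+1\) and \(|\boldsymbol\sigma_p(N+1)|=|\boldsymbol\sigma_q(N)|+1\). Both row multi-indices \(\boldsymbol\sigma_q(\cdot)\) are near-diagonal by construction. Hence every step-line pair is covered by Theorem~\ref{thm:final-explicit}, and Definition~\ref{def:final-pairing-block} applies to them.

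The next step is to identify the explicitly normalized vectors with the Gauss--Borel step-line vectors. For the \(B\)-vector at level \(N\), the monic component is \(j_{\max}(\boldsymbol\sigma_q(N+1))\). This is the last index attaining the maximum, and it equals \(r_N\) from \eqref{eq:general-active-indices}: when \(N+1=qa+\tau\) with \(\tau\ge1\) it is \(\tau=r_N\), and when \(\tau=0\) it is \(q\), which is again \(r_N\). For the \(A\)-vector, the normalization \eqref{eq:final-A-normalizing-moment} fixes the pairing with the moment \(z^{m_{\min}}\) in row \(j_{\min}(\boldsymbol\sigma_q(N))=r_N\) to be \(1\). This is exactly the diagonal condition \(\langle\mathbf B_N,\mathbf A_N\rangle=1\): by \(B\)-orthogonality and the degree structure, \(\mathbf B_N\) pairs with \(\mathbf A_N\) only through its leading monic entry \(z^{m_{\min}}\) in row \(r_N\). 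Together with weak normality (Proposition~\ref{prop:finite-rational-characterization}), this shows that the explicit vectors coincide with \(\mathbf B_N\) and \(\mathbf A_N\), as asserted in the discussion around \eqref{eq:general-step-vectors}. This compatibility check is where I expect the main care to be needed.

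Finally, Proposition~\ref{prop:general-step-recurrence} gives \(t_{N,k}=\langle z\mathbf B_N,\mathbf A_{N+k}\rangle\). Applying \eqref{eq:final-pairing-block-interpretation} with \(d=1\), \((\boldsymbol\nu,\boldsymbol\mu)=(\boldsymbol n_N,\boldsymbol m_N)\) and \((\boldsymbol\lambda,\boldsymbol\eta)=(\boldsymbol n^{\,*}_{N+k},\boldsymbol m^{\,*}_{N+k})\) yields \eqref{eq:final-step-recurrence-coefficients}; the band \(k\in\{-p,\dots,q\}\) comes from the proposition. The dual formula \eqref{eq:final-step-A-recurrence-coefficients} follows from the same identity with the roles of \(N\) and \(N+k\) interchanged: \(t_{N+k,-k}=\langle z\mathbf B_{N+k},\mathbf A_N\rangle\), with \(k\) now ranging over \(\{-q,\dots,p\}\).
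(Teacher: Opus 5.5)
Your proposal is correct and follows the paper's proof, which simply combines \(t_{N,k}=\langle z\mathbf B_N,\mathbf A_{N+k}\rangle\) from Proposition~\ref{prop:general-step-recurrence} with the evaluation in Proposition~\ref{prop:final-pairing-block-interpretation}. The paper takes the identification of the explicitly normalized vectors with the Gauss--Borel ones as already asserted in Section~\ref{sec:mixed-framework}; your check of that identification is a sound supplement, with one correction: the isolation of the single term \(z^{m_{\min}}\) in row \(r_N\) comes from the \(A\)-orthogonality of \(\mathbf A_N\), which kills the lower row moments, not from \(B\)-orthogonality.
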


\begin{proof}
	By Proposition~\ref{prop:general-step-recurrence},
\(t_{N,k} = \left\langle z\mathbf B_N,\mathbf A_{N+k}\right\rangle.\)
	Proposition~\ref{prop:final-pairing-block-interpretation} evaluates this
	pairing as the finite sum in
	\eqref{eq:final-step-recurrence-coefficients}.  Applying the same evaluation
	to
\(t_{N+k,-k} = \left\langle z\mathbf B_{N+k},\mathbf A_N\right\rangle\)
	gives \eqref{eq:final-step-A-recurrence-coefficients}.
\end{proof}

\begin{remark}[Local recurrences and the step-line]
	The local near-diagonal coefficients below are quotients of
	\(\mathscr P_{1,\,\cdot}^{\cdot}\) and
	\(\mathscr P_{0,\,\cdot}^{\cdot}\) because the neighboring local vectors are
	not a single normalized biorthogonal sequence.  On the step-line, the
	normalization \(\langle\mathbf B_N,\mathbf A_M\rangle=\delta_{N,M}\) removes
	these denominators.  The same near-diagonal finite Pochhammer block therefore
	gives both the local coefficients and the step-line band entries.
\end{remark}

The step-line therefore provides a canonical recurrence whose coefficients are
explicitly evaluated by the finite pairing block.  The question is whether analogous
recurrences can be constructed beyond the step-line, while remaining in the
near-diagonal regime where the explicit Bessel-like formulas are available.

Outside the step-line there is no distinguished total ordering of the
multi-indices, and hence no canonical band range selecting the neighboring
vectors.  The problem must instead be formulated locally: starting from a
central near-diagonal index pair, the aim is to find \(p+q+1\) admissible
neighboring vectors which span the space containing the multiplied central vector.  The
following subsections construct these neighbors, determine the exact
dimension of the corresponding local recurrence spaces, and evaluate the
resulting coefficients through the same pairing block used above.

\subsection{Admissible near-diagonal chains}
\label{subsec:cyclic-balanced-indices}

The neighboring indices entering the local recurrences are constructed from
chains which preserve the near-diagonal row-index range.

For \(d\in\N\) and \(N\in\N_0\), set
\begin{equation}
	\label{eq:near-diagonal-orbit}
	\mathcal N_d(N)
	\coloneq
	\left\{
	\boldsymbol\nu\in\N_0^d:
	|\boldsymbol\nu|=N,
	|\nu_\alpha-\nu_\beta|\le1,
	\quad
	\alpha,\beta\in\{1,\ldots,d\}
	\right\}.
\end{equation}
Equivalently, \(\mathcal N_d(N)\) consists of all vectors obtained by permuting
the components of \(\boldsymbol\sigma_d(N)\).

\begin{definition}[Admissible near-diagonal chains]
	\label{def:admissible-neardiagonal-chains}
	Let \(\boldsymbol\mu\in\mathcal N_d(M)\).  An increasing admissible chain from
	\(\boldsymbol\mu\) is a sequence
\(\boldsymbol\mu^{[0]},\boldsymbol\mu^{[1]},\ldots,\boldsymbol\mu^{[r]},\) \(\boldsymbol\mu^{[0]}=\boldsymbol\mu,\)
	such that \(\boldsymbol\mu^{[k+1]}\) is obtained from \(\boldsymbol\mu^{[k]}\) by adding
	one unit to a minimal component of \(\boldsymbol\mu^{[k]}\).
	
	A decreasing admissible chain from \(\boldsymbol\mu\) is a sequence
\(\boldsymbol\mu^{[0]},\boldsymbol\mu^{[-1]},\ldots,\boldsymbol\mu^{[-r]},\) \(\boldsymbol\mu^{[0]}=\boldsymbol\mu,\)
	such that \(\boldsymbol\mu^{[-k-1]}\) is obtained from \(\boldsymbol\mu^{[-k]}\) by
	subtracting one unit from a maximal component of \(\boldsymbol\mu^{[-k]}\).
	All entries are required to remain nonnegative.
\end{definition}

\begin{proposition}[Stability of admissible chains]
	\label{prop:admissible-chain-stability}
	If \(\boldsymbol\mu\in\mathcal N_d(M)\), then every increasing admissible chain
	satisfies
\(\boldsymbol\mu^{[k]}\in\mathcal N_d(M+k),\) \(k\in\{0,\ldots,r\},\)
	and every decreasing admissible chain satisfies
\(\boldsymbol\mu^{[-k]}\in\mathcal N_d(M-k),\) \(k\in\{0,\ldots,r\}.\)
\end{proposition}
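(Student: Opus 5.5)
The plan is an induction on the chain length \(k\), reducing everything to a one-step claim. For an increasing chain, assume \(\boldsymbol\mu^{[k]}\in\mathcal N_d(M+k)\); the norm statement is immediate, since each step adds exactly one unit. For a decreasing chain, each step subtracts exactly one unit, so \(|\boldsymbol\mu^{[-k]}|=M-k\), and nonnegativity is part of the definition. It therefore remains to show that one admissible step preserves the near-diagonal condition \(|\nu_\alpha-\nu_\beta|\le1\).

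For the increasing step, let \(\boldsymbol\nu\in\mathcal N_d(N)\) with minimum \(m\). Near-diagonality means every component lies in \(\{m,m+1\}\). Adding one to a component equal to \(m\) turns it into \(m+1\), and leaves all other components in \(\{m,m+1\}\). Hence every component of the new vector still lies in \(\{m,m+1\}\), so all pairwise differences are at most one.

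The decreasing step is symmetric. Let the maximum be \(M'\), so every component lies in \(\{M'-1,M'\}\). Subtracting one from a component equal to \(M'\) gives \(M'-1\), so all components stay in \(\{M'-1,M'\}\).

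There is no real obstacle. The only point needing care is to state explicitly that near-diagonality is equivalent to all entries lying in a two-element set \(\{m,m+1\}\), and that a minimal (respectively maximal) entry is the one moved. Equivalently, one can note that the result is again a permutation of \(\boldsymbol\sigma_d(N\pm1)\), matching the description of \(\mathcal N_d\) after \eqref{eq:near-diagonal-orbit}.
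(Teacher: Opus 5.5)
Your proof is correct and follows the paper's own argument. Near-diagonality means all components lie in two consecutive values. Raising a minimal component, or lowering a maximal one, keeps them there, and each step changes the norm by exactly one. Nonnegativity of decreasing chains is part of the definition, and your explicit induction on \(k\) simply writes out the iteration the paper leaves implicit.
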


\begin{proof}
	A vector in \(\mathcal N_d(M)\) has at most two consecutive component
	values.  Adding one unit to a minimal component, or subtracting one unit
	from a maximal component, preserves the property that any two components
	differ by at most one.  The nonnegativity of decreasing chains is part of
	the definition.
\end{proof}

The row-index chains used below are chosen according to this rule and
therefore remain in the near-diagonal range by
Proposition~\ref{prop:admissible-chain-stability}.  The column-index chains
need only satisfy the stated nonnegativity and first-cycle conditions, since
the explicit Bessel-like formulas impose the near-diagonal restriction on the
row multi-index.

\subsection{Admissible local data for the two local recurrences}
\label{subsec:near-diagonal-row-indices}

The neighboring vectors and test vectors are recorded before stating the
local recurrences.  These definitions ensure that the explicit Bessel-like
formulas apply to every pairing used to calculate the coefficients.  Starting
from a central near-diagonal vector, they construct \(p+q+1\) neighboring
near-diagonal vectors which form a local basis
for the multiplied central vector.

\begin{definition}[\(A\)-side admissible local data]
	\label{def:A-side-admissible-data}
	Let \((\boldsymbol n,\boldsymbol m)\) be \(A\)-balanced, with
	\(|\boldsymbol n|=|\boldsymbol m|+1\) and
	\(\boldsymbol m\in\mathcal N_q(|\boldsymbol m|)\).
	An \(A\)-side local data set associated with \((\boldsymbol n,\boldsymbol m)\) consists
	of four sequences of cumulative shifts
\(\boldsymbol t_k,\boldsymbol{\mathfrak t}_k\in\N_0^p,\) \(\boldsymbol s_k,\boldsymbol{\mathfrak s}_k\in\N_0^q,\) \(k\in\N_0,\)
	all starting at zero and obtained by adding one standard basis vector at
	each step.  During the first \(p\) steps of \((\boldsymbol t_k)\), no component is
	used more than once, so that
	\(\boldsymbol t_i\le\one_p\) for \(i\in\{0,\ldots,p\}\).  Likewise, during the
	first \(q\) steps of \((\boldsymbol s_k)\), no component is used more than once,
	so that \(\boldsymbol s_j\le\one_q\) for \(j\in\{0,\ldots,q\}\).
	
	The row shifts are required to satisfy
	\begin{equation}
		\label{eq:A-side-neardiagonal-condition}
		\boldsymbol m+\boldsymbol{\mathfrak s}_k
		\in\mathcal N_q(|\boldsymbol m|+k),
		\quad k\in\{0,\ldots,p+1\},
		\qquad
		\boldsymbol m-\boldsymbol s_k
		\in\mathcal N_q(|\boldsymbol m|-k),
		\quad k\in\{0,\ldots,q\}.
	\end{equation}
	
	The recurrence vectors associated with these shifts are
	\[
	\begin{aligned}
		\mathbf V_i^+
		&\coloneq
		\mathbf A_{\boldsymbol n+\boldsymbol t_i,\,\boldsymbol m+\boldsymbol{\mathfrak s}_i},
		&i&\in\{1,\ldots,p\},
		\\
		\mathbf V^0
		&\coloneq
		\mathbf A_{\boldsymbol n,\boldsymbol m},
		\\
		\mathbf V_j^-
		&\coloneq
		\mathbf A_{\boldsymbol n-\boldsymbol{\mathfrak t}_j,\,\boldsymbol m-\boldsymbol s_j},
		&j&\in\{1,\ldots,q\},
	\end{aligned}
	\]
	and the corresponding test vectors are
	\[
	\begin{aligned}
		\mathbf T_i^+
		&\coloneq
		\mathbf B_{\boldsymbol n+\boldsymbol t_{i-1},\,\boldsymbol m+\boldsymbol{\mathfrak s}_{i+1}},
		&i&\in\{1,\ldots,p\},
		\\
		\mathbf T^0
		&\coloneq
		\mathbf B_{\boldsymbol n-\boldsymbol{\mathfrak t}_1,\,\boldsymbol m+\boldsymbol{\mathfrak s}_1},
		\\
		\mathbf T_j^-
		&\coloneq
		\mathbf B_{\boldsymbol n-\boldsymbol{\mathfrak t}_{j+1},\,\boldsymbol m-\boldsymbol s_{j-1}},
		&j&\in\{1,\ldots,q\}.
	\end{aligned}
	\]
	
	The \(A\)-side local data set is called admissible if every index pair
	appearing in the associated recurrence and test vectors is nonnegative componentwise 
	and lies in the range covered by
	Theorem~\ref{thm:final-explicit}.
\end{definition}

For later use, note that the row norms of the recurrence vectors range from
\(|\boldsymbol m|-q\) to \(|\boldsymbol m|+p\).  The test vectors lie in the same
near-diagonal range, except that the positive test vectors reach the additional
level \(|\boldsymbol m|+p+1\).  Thus all recurrence and test vectors have row
multi-indices in
\(\bigcup_{M=|\boldsymbol m|-q}^{|\boldsymbol m|+p+1}\mathcal N_q(M).\)
\begin{definition}[\(B\)-side admissible local data]
	\label{def:B-side-admissible-data}
	Let \((\boldsymbol n,\boldsymbol m)\) be \(B\)-balanced, with
	\(|\boldsymbol m|=|\boldsymbol n|+1\) and
	\(\boldsymbol m\in\mathcal N_q(|\boldsymbol m|)\).
	A \(B\)-side local data set associated with \((\boldsymbol n,\boldsymbol m)\) consists
	of four sequences of cumulative shifts
\(\boldsymbol u_k,\boldsymbol{\mathfrak u}_k\in\N_0^p,\) \(\boldsymbol v_k,\boldsymbol{\mathfrak v}_k\in\N_0^q,\) \(k\in\N_0,\)
	all starting at zero and obtained by adding one standard basis vector at
	each step.  During the first \(p\) steps of \((\boldsymbol u_k)\), no component is
	used more than once, so that
	\(\boldsymbol u_i\le\one_p\) for \(i\in\{0,\ldots,p\}\).  Likewise, during the
	first \(q\) steps of \((\boldsymbol v_k)\), no component is used more than once,
	so that \(\boldsymbol v_j\le\one_q\) for \(j\in\{0,\ldots,q\}\).
	
	The row shifts are required to satisfy
	\begin{equation}
		\label{eq:B-side-neardiagonal-condition}
		\boldsymbol m+\boldsymbol v_k
		\in\mathcal N_q(|\boldsymbol m|+k),
		\quad k\in\{0,\ldots,q\},
		\qquad
		\boldsymbol m-\boldsymbol{\mathfrak v}_k
		\in\mathcal N_q(|\boldsymbol m|-k),
		\quad k\in\{0,\ldots,p+1\}.
	\end{equation}
	
	The recurrence vectors associated with these shifts are
	\[
	\begin{aligned}
		\mathbf W_j^+
		&\coloneq
		\mathbf B_{\boldsymbol n+\boldsymbol{\mathfrak u}_j,\,\boldsymbol m+\boldsymbol v_j},
		&j&\in\{1,\ldots,q\},
		\\
		\mathbf W^0
		&\coloneq
		\mathbf B_{\boldsymbol n,\boldsymbol m},
		\\
		\mathbf W_i^-
		&\coloneq
		\mathbf B_{\boldsymbol n-\boldsymbol u_i,\,\boldsymbol m-\boldsymbol{\mathfrak v}_i},
		&i&\in\{1,\ldots,p\},
	\end{aligned}
	\]
	and the corresponding test vectors are
	\[
	\begin{aligned}
		\mathbf U_j^+
		&\coloneq
		\mathbf A_{\boldsymbol n+\boldsymbol{\mathfrak u}_{j+1},\,\boldsymbol m+\boldsymbol v_{j-1}},
		&j&\in\{1,\ldots,q\},
		\\
		\mathbf U^0
		&\coloneq
		\mathbf A_{\boldsymbol n+\boldsymbol{\mathfrak u}_1,\,\boldsymbol m-\boldsymbol{\mathfrak v}_1},
		\\
		\mathbf U_i^-
		&\coloneq
		\mathbf A_{\boldsymbol n-\boldsymbol u_{i-1},\,\boldsymbol m-\boldsymbol{\mathfrak v}_{i+1}},
		&i&\in\{1,\ldots,p\}.
	\end{aligned}
	\]
	
	The \(B\)-side local data set is called admissible if every index pair
	appearing in the associated recurrence and test vectors is nonnegative
	componentwise and lies in the range covered by
	Theorem~\ref{thm:final-explicit}.
\end{definition}

The only possible obstruction to constructing the admissible local data sets comes
from the decreasing shifts, which may reach the lower boundary of the
nonnegative multi-index lattice before all required steps have been performed.
The near-diagonal condition itself is preserved by adding to minimal
components and subtracting from maximal ones.

The next lemma gives sufficient conditions ensuring that none of the required
decreasing chains is cut off at the boundary.  Under these conditions, the
complete collections of recurrence and test vectors can be constructed on both
the \(A\)- and \(B\)-sides.

\begin{lemma}[Existence of admissible local data sets]
	\label{lem:existence-interior-local-windows}
	Let \((\boldsymbol n,\boldsymbol m)\) be a near-diagonal index pair covered by
	Theorem~\ref{thm:final-explicit}.
	\begin{enumerate}[label=\textnormal{(\roman*)}]
		\item If \((\boldsymbol n,\boldsymbol m)\) is \(A\)-balanced,
		\(|\boldsymbol n|=|\boldsymbol m|+1\),
		\(\boldsymbol m\in\mathcal N_q(|\boldsymbol m|)\), and
		\(\boldsymbol m-\one_q\in\N_0^q\),
		then an admissible \(A\)-side local data set in the sense of
		Definition~\ref{def:A-side-admissible-data} exists.
		
		\item If \((\boldsymbol n,\boldsymbol m)\) is \(B\)-balanced,
		\(|\boldsymbol m|=|\boldsymbol n|+1\),
		\(\boldsymbol m\in\mathcal N_q(|\boldsymbol m|)\), and
		\(\boldsymbol n-\one_p\in\N_0^p\),
		then an admissible \(B\)-side local data set in the sense of
		Definition~\ref{def:B-side-admissible-data} exists.
	\end{enumerate}
	In both cases, the admissible local data set determines \(p+q+1\)
	neighboring vectors, all lying in the near-diagonal range where
	Theorem~\ref{thm:final-explicit} applies.
\end{lemma}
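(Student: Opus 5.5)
The plan is to build the four shift sequences explicitly and check the three requirements of Definition~\ref{def:A-side-admissible-data} (resp.\ Definition~\ref{def:B-side-admissible-data}): the first-cycle condition, nonnegativity, and coverage by Theorem~\ref{thm:final-explicit}. Coverage needs only two things. The pair must be balanced on the appropriate side, and its row multi-index must be near-diagonal; the column multi-index is otherwise unrestricted.

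For part (i), take the row shifts from admissible chains. Let \((\boldsymbol{\mathfrak s}_k)\) be an increasing admissible chain from \(\boldsymbol m\), adding to a minimal component at each step. Let \((\boldsymbol s_k)\) be a decreasing admissible chain, subtracting from a maximal component at each step. By Proposition~\ref{prop:admissible-chain-stability}, \(\boldsymbol m+\boldsymbol{\mathfrak s}_k\in\mathcal N_q(|\boldsymbol m|+k)\) and \(\boldsymbol m-\boldsymbol s_k\in\mathcal N_q(|\boldsymbol m|-k)\), which is \eqref{eq:A-side-neardiagonal-condition}.

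For the row chains I will check two further points.
\begin{enumerate}[label=\textnormal{(\alph*)}]
\item The first \(q\) steps of \((\boldsymbol s_k)\) use distinct components. Subtracting one from each maximal component in turn exhausts the maximal level before any component is touched twice, so after \(q\) steps one reaches exactly \(\boldsymbol m-\one_q\).
\item Nonnegativity holds down to level \(|\boldsymbol m|-q\). The hypothesis \(\boldsymbol m-\one_q\in\N_0^q\) gives this, since every \(\boldsymbol m-\boldsymbol s_k\) with \(k\le q\) lies between \(\boldsymbol m-\one_q\) and \(\boldsymbol m\).
\end{enumerate}

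The column shifts \(\boldsymbol t_k\) and \(\boldsymbol{\mathfrak t}_k\) can be chosen cyclically: \(\boldsymbol t_k=\sum_{\alpha\le k}\boldsymbol e_{\alpha}\) for \(k\le p\), continued periodically. For \(\boldsymbol{\mathfrak t}_k\), at each step subtract one from a component of \(\boldsymbol n\) that is still positive.

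Next I verify that every recurrence and test vector is balanced on the correct side. For \(\mathbf V_i^{\pm}\) and \(\mathbf V^0\), the column and row norms move by the same amount, so \(A\)-balancedness is preserved. For the test vectors, the index shifts by one in the row direction relative to the column direction. For instance, \(\mathbf T_i^+\) has \(|\boldsymbol n|+i-1\) against \(|\boldsymbol m|+i+1\), which is \(B\)-balanced, and \(\mathbf T^0\) and \(\mathbf T_j^-\) work the same way. All of these row indices lie in the near-diagonal levels recorded after Definition~\ref{def:A-side-admissible-data}, so Theorem~\ref{thm:final-explicit} applies to each of them.

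The step I expect to be the main obstacle is nonnegativity of the decreasing column chain \(\boldsymbol n-\boldsymbol{\mathfrak t}_k\) for \(k\le q+1\), as required by \(\mathbf T_q^-\). No lower bound on \(\boldsymbol n\) is assumed in (i). I will derive one from balancedness: \(|\boldsymbol n|=|\boldsymbol m|+1\ge q+1\) because \(\boldsymbol m\ge\one_q\). That gives enough total mass to perform \(q+1\) unit subtractions, each taken from any positive component of \(\boldsymbol n\). No first-cycle condition is imposed on \(\boldsymbol{\mathfrak t}\), and the column index needs no near-diagonality, so this choice suffices.

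Part (ii) is the mirror image. Take \((\boldsymbol v_k)\) increasing admissible and \((\boldsymbol{\mathfrak v}_k)\) decreasing admissible for \(k\le p+1\). The hypothesis \(\boldsymbol n-\one_p\in\N_0^p\) handles the cyclic column chain \(\boldsymbol u_i\). Nonnegativity of \(\boldsymbol m-\boldsymbol{\mathfrak v}_k\) for \(k\le p+1\) follows from \(|\boldsymbol m|=|\boldsymbol n|+1\ge p+1\) together with near-diagonality. A near-diagonal vector of norm at least \(k\) stays nonnegative after \(k\) subtractions from maximal components, since a maximal component is positive whenever the norm is positive.

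Finally I will count \(p\) positive neighbors, the central vector, and \(q\) negative neighbors on each side, giving \(p+q+1\).
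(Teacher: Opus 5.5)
Your construction is correct and is essentially the paper's proof. It uses admissible row chains controlled by Proposition~\ref{prop:admissible-chain-stability}, first-cycle choices for \(\boldsymbol s\) and \(\boldsymbol u\) made possible by \(\boldsymbol m-\one_q\in\N_0^q\) and \(\boldsymbol n-\one_p\in\N_0^p\), and the norm bounds \(|\boldsymbol n|\ge q+1\) and \(|\boldsymbol m|\ge p+1\) for the remaining decreasing chains. Your explicit check that every test vector is balanced on the correct side is a useful addition the paper leaves implicit. One fix: phrase (a), and its mirror for \(\boldsymbol v\), as a choice rather than automatic. Once the top level is exhausted all components are maximal, so you must deliberately pick untouched ones.
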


\begin{proof}
	The required shift sequences are constructed on the two sides.
	
	\medskip
	\noindent
	\emph{\(A\)-side.}
	Starting from \(\boldsymbol m\), choose the increasing row chain
\(\boldsymbol m,\, \boldsymbol m+\boldsymbol{\mathfrak s}_1,\, \ldots,\, \boldsymbol m+\boldsymbol{\mathfrak s}_{p+1}\)
	by adding one unit at each step to a minimal component.  This chain may be
	continued for arbitrary length, and
	Proposition~\ref{prop:admissible-chain-stability} gives
	\(\boldsymbol m+\boldsymbol{\mathfrak s}_i\in
	\mathcal N_q(|\boldsymbol m|+i)\) for
	\(i\in\{0,\ldots,p+1\}\).
	
	Choose the decreasing row chain
\(\boldsymbol m,\, \boldsymbol m-\boldsymbol s_1,\, \ldots,\, \boldsymbol m-\boldsymbol s_q\)
	by subtracting from maximal components.  Since
	\(\boldsymbol m-\one_q\in\N_0^q\), every component of \(\boldsymbol m\) is positive, so
	the first \(q\) steps may be chosen using each component once.  Hence
	\(\boldsymbol s_j\le\one_q\), all shifted indices remain nonnegative, and
	Proposition~\ref{prop:admissible-chain-stability} gives
	\(\boldsymbol m-\boldsymbol s_j\in\mathcal N_q(|\boldsymbol m|-j)\).
	
	Choose \(\boldsymbol t_0=\boldsymbol0,\ldots,\boldsymbol t_p\) by using each of the \(p\)
	components once, so that \(\boldsymbol t_i\le\one_p\).  Finally, the sequence
	\(\boldsymbol{\mathfrak t}_0=\boldsymbol0,\ldots,\boldsymbol{\mathfrak t}_{q+1}\) can be chosen
	with \(\boldsymbol{\mathfrak t}_j\le\boldsymbol n\), because
	\(\boldsymbol m-\one_q\in\N_0^q\) implies \(|\boldsymbol m|\ge q\), and therefore
	\(|\boldsymbol n|=|\boldsymbol m|+1\ge q+1\).
	
	Thus all index pairs in
	Definition~\ref{def:A-side-admissible-data} are nonnegative and covered by
	Theorem~\ref{thm:final-explicit}.
	
	\medskip
	\noindent
	\emph{\(B\)-side.}
	Choose the increasing row chain
\(\boldsymbol m,\, \boldsymbol m+\boldsymbol v_1,\, \ldots,\, \boldsymbol m+\boldsymbol v_q\)
	by adding to minimal components and using each component once during the
	first \(q\) steps.  Then \(\boldsymbol v_j\le\one_q\), and
	Proposition~\ref{prop:admissible-chain-stability} places all these indices in
	the corresponding near-diagonal sets \(\mathcal N_q(\cdot)\).
	
	Choose the decreasing row chain
\(\boldsymbol m,\, \boldsymbol m-\boldsymbol{\mathfrak v}_1,\, \ldots,\, \boldsymbol m-\boldsymbol{\mathfrak v}_{p+1}\)
	by subtracting from maximal components.  Since
	\(\boldsymbol n-\one_p\in\N_0^p\), one has \(|\boldsymbol n|\ge p\), and hence
	\(|\boldsymbol m|=|\boldsymbol n|+1\ge p+1\).  Therefore the chain can be continued for
	\(p+1\) steps without leaving \(\N_0^q\), and its indices remain
	near-diagonal by
	Proposition~\ref{prop:admissible-chain-stability}.
	
	Choose \(\boldsymbol u_0=\boldsymbol0,\ldots,\boldsymbol u_p\) by using each component of
	\(\boldsymbol n\) once; this is possible because
	\(\boldsymbol n-\one_p\in\N_0^p\), and gives \(\boldsymbol u_i\le\one_p\).
	The increasing sequence
	\(\boldsymbol{\mathfrak u}_0=\boldsymbol0,\ldots,\boldsymbol{\mathfrak u}_{q+1}\) has no
	lower-bound obstruction.
	
	Thus all index pairs in
	Definition~\ref{def:B-side-admissible-data} are nonnegative and covered by
	Theorem~\ref{thm:final-explicit}.  This completes the construction of
	admissible local data sets on both sides.
\end{proof}

The admissibility requirement must be checked for both the recurrence and test
vectors, since the latter may reach row levels not attained by the visible
recurrence neighbors.

\subsection{Local recurrence spaces}
\label{subsec:local-recurrence-spaces}

The two linear spaces in which the multiplied central vectors
\(z\mathbf V^0\) and \(z\mathbf W^0\) will be expanded are introduced next.  The exact dimension
statement below is the reason why the local recurrences have length
\(p+q+1\).

\begin{definition}[Local recurrence spaces]
	\label{def:local-recurrence-spaces}
	For \(\boldsymbol N\in\N_0^p\) and \(\boldsymbol M\in\N_0^q\), the right and left local
	recurrence spaces are
	{\small \begin{align*}
			\mathscr R(\boldsymbol N,\boldsymbol M)
			&\coloneq
			\left\{
			\begin{aligned}
			&\mathbf P\in\mathbb C^{p\times1}[z]:
			P^{(i)}\in\Poly_{N_i-1},\\
			&\sum_{i=1}^{p}\oint_{\Torus}
			z^\ell C_{j,i}(z)P^{(i)}(z)\frac{\dz}{2\pi\mathrm i}=0,\\
			&\ell\in\{0,\ldots,M_j-1\},\quad
			j\in\{1,\ldots,q\}
			\end{aligned}
			\right\},
			\\
			\mathscr L(\boldsymbol N,\boldsymbol M)
			&\coloneq
			\left\{
			\begin{aligned}
			&\mathbf Q\in\mathbb C^{1\times q}[z]:
			Q^{(j)}\in\Poly_{M_j-1},\\
			&\sum_{j=1}^{q}\oint_{\Torus}
			z^\ell Q^{(j)}(z)C_{j,i}(z)\frac{\dz}{2\pi\mathrm i}=0,\\
			&\ell\in\{0,\ldots,N_i-1\},\quad
			i\in\{1,\ldots,p\}
			\end{aligned}
			\right\}.
	\end{align*}}
	Both are finite-dimensional linear spaces.  
\end{definition}

\begin{remark}
	Counting coefficients and moment conditions gives
\[
\dim\mathscr R(\boldsymbol N,\boldsymbol M)\ge |\boldsymbol N|-|\boldsymbol M|, \qquad \dim\mathscr L(\boldsymbol N,\boldsymbol M)\ge |\boldsymbol M|-|\boldsymbol N|,
\]
	whenever the right-hand sides are nonnegative.  Equality holds precisely
	when the corresponding moment conditions are linearly independent.  In the
	Bessel-like near-diagonal regime, this independence is established in the
	next proposition.
\end{remark}

\begin{proposition}[Exact dimension of the local recurrence spaces]
	\label{prop:local-recurrence-space-dimension}
	Assume that the Bessel-like parameters are regular in the sense of
	Definition~\ref{def:final-parameter-regularity} and admissible for the
	rational functions used below. Let \(\boldsymbol N\in\N_0^p\) and
	\(\boldsymbol M\in\N_0^q\), with \(\boldsymbol M\) near the diagonal in the sense of
	\eqref{eq:final-near-diagonal-row-index}.
	\begin{enumerate}[label=\textnormal{(\roman*)}]
		\item If \(|\boldsymbol N|\ge|\boldsymbol M|\), then \(\mathscr L(\boldsymbol N,\boldsymbol M)=\{0\}\) and
\(\dim\mathscr R(\boldsymbol N,\boldsymbol M)=|\boldsymbol N|-|\boldsymbol M|.\)
		\item If \(|\boldsymbol M|\ge|\boldsymbol N|\), then \(\mathscr R(\boldsymbol N,\boldsymbol M)=\{0\}\) and
\(\dim\mathscr L(\boldsymbol N,\boldsymbol M)=|\boldsymbol M|-|\boldsymbol N|.\)
	\end{enumerate}
	In particular, for an \(A\)-balanced pair \((\boldsymbol n,\boldsymbol m)\) with \(\boldsymbol m\)
	near the diagonal and \(\boldsymbol m-\one_q\in\N_0^q\),
\(\dim\mathscr R(\boldsymbol n+\one_p,\boldsymbol m-\one_q)=p+q+1,\)
	and for a \(B\)-balanced pair \((\boldsymbol n,\boldsymbol m)\) with \(\boldsymbol m\) near the
	diagonal and \(\boldsymbol n-\one_p\in\N_0^p\),
\(\dim\mathscr L(\boldsymbol n-\one_p,\boldsymbol m+\one_q)=p+q+1.\)
\end{proposition}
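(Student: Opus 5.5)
We will use the encodings that Proposition~\ref{prop:finite-rational-characterization} applied to balanced pairs. That argument never really needed balancedness except to compare degrees at the end, so the plan is to rerun both encodings with general \((\boldsymbol N,\boldsymbol M)\) and then count dimensions.

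\emph{Left spaces.} For a row vector \(\mathbf Q\) with \(Q^{(j)}\in\Poly_{M_j-1}\), form \(L_{\mathbf Q}\) as in \eqref{eq:general-B-polynomial} with \(\boldsymbol m\) replaced by \(\boldsymbol M\). Near-diagonality of \(\boldsymbol M\) keeps every term polynomial, and \(\deg L_{\mathbf Q}\le|\boldsymbol M|-1\). The linear-independence argument via the residue at \(t_0=-b_{j_0}-d_0\) shows that \(\mathbf Q\mapsto L_{\mathbf Q}\) is injective. Its domain has dimension \(|\boldsymbol M|\), so it is an isomorphism onto \(\Poly_{|\boldsymbol M|-1}\). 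By \eqref{eq:general-B-moment-identity}, with nonzero Gamma prefactor, the conditions defining \(\mathscr L(\boldsymbol N,\boldsymbol M)\) say exactly that \(L_{\mathbf Q}\) vanishes at the \(|\boldsymbol N|\) nodes \(\kappa_i+1+\ell\), \(0\le\ell\le N_i-1\). These nodes are distinct by regularity. Hence \(\mathscr L(\boldsymbol N,\boldsymbol M)\) is isomorphic to the set of polynomials in \(\Poly_{|\boldsymbol M|-1}\) divisible by \(\prod_i(\kappa_i+1-t)_{N_i}\). This space has dimension \(\max\{|\boldsymbol M|-|\boldsymbol N|,0\}\), and it is zero when \(|\boldsymbol N|\ge|\boldsymbol M|\).

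\emph{Right spaces.} For \(\mathbf P\), build the interpolant \(Q_{\mathbf P}\in\Poly_{|\boldsymbol N|-1}\) by \eqref{eq:general-A-interpolation}. Since the nodes are distinct and the Gamma factors are nonzero, \(\mathbf P\mapsto Q_{\mathbf P}\) is an isomorphism onto \(\Poly_{|\boldsymbol N|-1}\). The orthogonality conditions become residue identities for \eqref{eq:general-A-tested-rational-expanded}. The degree count now gives a difference of degrees at most \(-2-(q-r)\ell\) independently of balancedness, so the residue at infinity vanishes. The level-by-level induction is then repeated verbatim; it uses only near-diagonality of \(\boldsymbol M\). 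It shows that the conditions imply \(Q_{\mathbf P}(-b_j-s-1)=0\) for all \(s\le M_j-1\).

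The induction is reversible. If \(P_{\boldsymbol b,\boldsymbol M}\) divides \(Q_{\mathbf P}\), then all auxiliary poles in \eqref{eq:general-A-tested-rational-expanded} are removable, exactly as in the proof of Theorem~\ref{thm:final-explicit}(i). The residues at the nodes then sum to zero, so the conditions hold. Thus \(\mathscr R(\boldsymbol N,\boldsymbol M)\cong P_{\boldsymbol b,\boldsymbol M}\cdot\Poly_{|\boldsymbol N|-|\boldsymbol M|-1}\). This has dimension \(|\boldsymbol N|-|\boldsymbol M|\) when \(|\boldsymbol N|\ge|\boldsymbol M|\), and it is \(\{0\}\) when \(|\boldsymbol M|\ge|\boldsymbol N|\), because \(\deg Q_{\mathbf P}<|\boldsymbol N|\le|\boldsymbol M|\) forces \(Q_{\mathbf P}=0\).

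\emph{Particular cases.} These are arithmetic. For \((\boldsymbol n+\one_p,\boldsymbol m-\one_q)\), the vector \(\boldsymbol m-\one_q\) is near-diagonal and nonnegative, and \(|\boldsymbol n|+p-|\boldsymbol m|+q=p+q+1\). The left case is symmetric.

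\emph{Main obstacle.} The delicate point is the converse direction on the right side together with the admissibility of the auxiliary poles: the points \(-b_j-s-1\) must be distinct from the nodes \(\kappa_i+k\), and the residue coefficients in the induction must be nonzero. Both are guaranteed by regularity and the standing admissibility convention for these rational functions, which the hypothesis assumes.
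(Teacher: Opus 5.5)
Your proof is correct, but it is organized differently from the paper's.

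\textbf{The paper's route.} It proves only the two vanishing statements. Each uses just the necessary direction: orthogonality forces \(L_{\mathbf Q}\), respectively \(Q_{\mathbf P}\), to vanish at more distinct points than its degree allows. Both dimension formulas then come from a duality count. A linear relation among the \(|\boldsymbol M|\) functionals cutting out \(\mathscr R(\boldsymbol N,\boldsymbol M)\) in \(\bigoplus_i\Poly_{N_i-1}\) is exactly an element of \(\mathscr L(\boldsymbol N,\boldsymbol M)\). Hence \(\dim\mathscr R(\boldsymbol N,\boldsymbol M)-\dim\mathscr L(\boldsymbol N,\boldsymbol M)=|\boldsymbol N|-|\boldsymbol M|\), and the vanishing of one space gives the dimension of the other.

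\textbf{Your route.} You compute each space directly, which requires two verifications the paper's proof of this proposition does not need:
\begin{enumerate}
\item surjectivity of \(\mathbf Q\mapsto L_{\mathbf Q}\) onto \(\Poly_{|\boldsymbol M|-1}\), obtained from injectivity plus a dimension count;
\item the converse of the residue induction on the right: divisibility of \(Q_{\mathbf P}\) by \(P_{\boldsymbol b,\boldsymbol M}\) makes every auxiliary pole removable, as in the proof of Theorem~\ref{thm:final-explicit}(i), so the residues at the nodes sum to zero.
\end{enumerate}
Both steps hold under the stated regularity and admissibility assumptions.

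\textbf{What each buys.} Your approach gives explicit identifications of both spaces with divisibility subspaces of polynomial spaces. It yields \(\dim\mathscr L(\boldsymbol N,\boldsymbol M)=\max\{|\boldsymbol M|-|\boldsymbol N|,0\}\) and \(\dim\mathscr R(\boldsymbol N,\boldsymbol M)=\max\{|\boldsymbol N|-|\boldsymbol M|,0\}\) for every near-diagonal \(\boldsymbol M\), and it treats the two sides independently. The paper's argument is shorter. Its index identity is pure linear algebra, valid for any bimoment functional, and the Bessel-like structure enters only through the two one-directional vanishing results.
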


\begin{proof}
	First, the two vanishing statements are proved; the dimension formulas then
	follow by a duality of the defining conditions, explained at the end.
	
 \smallskip
 \noindent\emph{Vanishing in \textnormal{(i)}.}
 Let \(\mathbf Q\in\mathscr L(\boldsymbol N,\boldsymbol M)\), with components
 \(Q^{(j)}(z)=\sum_{d=0}^{M_j-1}q_{j,d}z^d\).  Associate with
 \(\mathbf Q\) the polynomial
 \begin{equation}
 	\label{eq:local-B-polynomial}
 	L_{\mathbf Q}(t)
 	\coloneq
 	\sum_{j=1}^{q}
 	\sum_{d=0}^{M_j-1}
 	q_{j,d}
 	(t\one_r+\boldsymbol a)_d
 	\bigl(t\one_q+\boldsymbol b+d\one_q+\boldsymbol e_j\bigr)_{
 		\boldsymbol M-d\one_q-\boldsymbol e_j}.
 \end{equation}
 This is the polynomial of \eqref{eq:general-B-polynomial}, with
 \((\boldsymbol N,\boldsymbol M)\) in place of \((\boldsymbol n,\boldsymbol m)\).
 Near-diagonality of \(\boldsymbol M\) ensures that all multi-indices occurring in
 \eqref{eq:local-B-polynomial} are nonnegative.  The summand corresponding to
 \(j\) and \(d\) has degree
 \(|\boldsymbol M|-1-(q-r)d\), and therefore
 \(\deg L_{\mathbf Q}\le|\boldsymbol M|-1\).
 
 The moment identity \eqref{eq:general-B-moment-identity}, with
 \((\boldsymbol N,\boldsymbol M)\) in place of \((\boldsymbol n,\boldsymbol m)\), shows that membership in
 \(\mathscr L(\boldsymbol N,\boldsymbol M)\) implies
\(L_{\mathbf Q}(\kappa_i+1+\ell)=0,\) \(\ell\in\{0,\ldots,N_i-1\},\) \(i\in\{1,\ldots,p\}.\)
 There are \(\sum_{i=1}^{p}N_i=|\boldsymbol N|\) such nodes.  They are pairwise
 distinct: for fixed \(i\), distinct values of \(\ell\) give distinct nodes,
 whereas an equality
 \(\kappa_i+1+\ell=\kappa_{i'}+1+\ell'\) with \(i\ne i'\) would imply
 \(\kappa_i-\kappa_{i'}=\ell'-\ell\in\mathbb Z\), contrary to the
 noninteger-separation assumption.
 
 Since \(|\boldsymbol N|\ge|\boldsymbol M|\), one has
 \(\deg L_{\mathbf Q}\le|\boldsymbol M|-1\le|\boldsymbol N|-1\).  Thus
 \(L_{\mathbf Q}\) has more distinct zeros than its degree, and therefore
 \(L_{\mathbf Q}\equiv0\).  Finally, the injectivity of the map
 \(\mathbf Q\mapsto L_{\mathbf Q}\), established through
 \eqref{eq:B-rational-expansion-injective} in the proof of
 Proposition~\ref{prop:finite-rational-characterization}, gives
 \(\mathbf Q=0\).
	
	\smallskip
	\noindent\emph{Vanishing in \textnormal{(ii)}.}
	Let \(\mathbf P\in\mathscr R(\boldsymbol N,\boldsymbol M)\), with components
	\(P^{(i)}(z)=\sum_{k=0}^{N_i-1}p_{i,k}z^k\).  Associate with
	\(\mathbf P\) the interpolation polynomial
	\begin{equation}
		\label{eq:local-A-interpolation-polynomial}
		Q_{\mathbf P}(t)
		\coloneq
		\sum_{i=1}^{p}
		\sum_{k=0}^{N_i-1}
		p_{i,k}
		\frac{
			\prod\limits_{\substack{h=1\\ h\ne i}}^{p}
			(t-\kappa_h)_{N_h}
			(t-\kappa_i)_k
			(t-\kappa_i-k-1)_{N_i-k-1}
		}{
			\prod\limits_{\substack{h=1\\ h\ne i}}^{p}
			(\kappa_i+k-\kappa_h)_{N_h}
			k!\,(-1)^{N_i-k-1}(N_i-k-1)!
		}.
	\end{equation}
	This is the interpolation polynomial of
	\eqref{eq:general-A-interpolation}, with \(\boldsymbol N\) in place of
	\(\boldsymbol n\).  It satisfies \(\deg Q_{\mathbf P}\le|\boldsymbol N|-1\), and the
	residues of the rational-Gamma function
	\eqref{eq:general-A-rational-gamma} at the nodes \(\kappa_i+k\) recover the
	coefficients \(p_{i,k}\).
	
	The tested rational function
	\eqref{eq:general-A-tested-rational-expanded} is
	\(\mathrm{O}(t^{-2})\) at infinity, because its denominator degree exceeds
	its numerator degree by at least \(2+(q-r)\ell\ge2\).  Hence the same
	induction used in
	Proposition~\ref{prop:finite-rational-characterization} applies without any
	balancedness assumption.  It uses only the decay at infinity, the residue
	theorem, regularity of the parameter strings, and near-diagonality of
	\(\boldsymbol M\).  Membership in \(\mathscr R(\boldsymbol N,\boldsymbol M)\) therefore implies
\(Q_{\mathbf P}(-b_j-s-1)=0,\) \(s\in\{0,\ldots,M_j-1\},\) \(j\in\{1,\ldots,q\}.\)
	There are \(\sum_{j=1}^{q}M_j=|\boldsymbol M|\) such points.  They are pairwise
	distinct: for fixed \(j\), distinct values of \(s\) give distinct points,
	whereas an equality
	\(-b_j-s-1=-b_{j'}-s'-1\) with \(j\ne j'\) would imply
	\(b_j-b_{j'}=s'-s\in\mathbb Z\), contrary to the noninteger-separation
	assumption.
	
	Since \(|\boldsymbol M|\ge|\boldsymbol N|\), one has
	\(\deg Q_{\mathbf P}\le|\boldsymbol N|-1\le|\boldsymbol M|-1\).  Thus
	\(Q_{\mathbf P}\) has more distinct zeros than its degree, and therefore
	\(Q_{\mathbf P}\equiv0\).  Finally, every coefficient \(p_{i,k}\) is recovered
	as a residue of \eqref{eq:general-A-rational-gamma}; hence all \(p_{i,k}\)
	vanish and \(\mathbf P=0\).
	
	\smallskip
	\noindent\emph{Dimension formulas.}
	Consider the \(|\boldsymbol M|\) linear functionals defining
	\(\mathscr R(\boldsymbol N,\boldsymbol M)\) on the component space
	\(\bigoplus_{i=1}^{p}\Poly_{N_i-1}\), namely
	\[
	\varphi_{j,\ell}(\mathbf P)
	=
	\sum_{i=1}^{p}
	\oint_{\Torus}
	z^{\ell}C_{j,i}(z)P^{(i)}(z)\frac{\dz}{2\pi\mathrm i},
	\qquad
	\ell\in\{0,\ldots,M_j-1\},
	\quad
	j\in\{1,\ldots,q\}.
	\]
	A vanishing linear combination
	\(\sum_{j,\ell}c_{j,\ell}\varphi_{j,\ell}=0\), tested on the column vectors
	whose only nonzero component is \(P^{(i)}=z^{\alpha}\) with
	\(\alpha\in\{0,\ldots,N_i-1\}\), states precisely that the row vector with
	components \(\sum_{\ell=0}^{M_j-1}c_{j,\ell}z^{\ell}\) belongs to
	\(\mathscr L(\boldsymbol N,\boldsymbol M)\).  Under \textnormal{(i)} this space is trivial,
	so the functionals are linearly independent and
	\(\dim\mathscr R(\boldsymbol N,\boldsymbol M)=|\boldsymbol N|-|\boldsymbol M|\).  The dual argument under
	\textnormal{(ii)} gives
	\(\dim\mathscr L(\boldsymbol N,\boldsymbol M)=|\boldsymbol M|-|\boldsymbol N|\).
	
	The two particular statements follow with
	\((\boldsymbol N,\boldsymbol M)=(\boldsymbol n+\one_p,\boldsymbol m-\one_q)\) and
	\((\boldsymbol N,\boldsymbol M)=(\boldsymbol n-\one_p,\boldsymbol m+\one_q)\): in both cases the row
	multi-index remains near the diagonal, since shifting all components by the
	same unit does not change their differences, and the balancedness relations
	give \(|\boldsymbol N|-|\boldsymbol M|=p+q+1\) and \(|\boldsymbol M|-|\boldsymbol N|=p+q+1\),
	respectively.
\end{proof}

\subsection{Local near-diagonal recurrences of length \texorpdfstring{\(p+q+1\)}{p+q+1}}
\label{sec:final-general-recurrence}

Combining the existence of admissible local data sets with the exact
dimension of the corresponding recurrence spaces gives a local
recurrence involving \(p+q+1\) vectors associated with the chosen
near-diagonal index pair.  This extends the canonical step-line recurrence
within the near-diagonal regime: the recurrence vectors are not selected by a
global ordering, but are constructed from admissible local chains as in
Lemma~\ref{lem:existence-interior-local-windows}.

\begin{theorem}[Local \(A\)-recurrence of length \(p+q+1\)]
	\label{thm:final-general-recurrence}
	Assume that the Bessel-like parameters are regular in the sense of
	Definition~\ref{def:final-parameter-regularity}. Let \((\boldsymbol n,\boldsymbol m)\) be
	an \(A\)-balanced index pair covered by
	Theorem~\ref{thm:final-explicit}, with \(|\boldsymbol n|=|\boldsymbol m|+1\),
	\(\boldsymbol m\in\mathcal N_q(|\boldsymbol m|)\), and
	\(\boldsymbol m-\one_q\in\N_0^q\).  Choose admissible \(A\)-side local data supplied by
	Lemma~\ref{lem:existence-interior-local-windows}, and assume that the
	denominators displayed below do not vanish.  Then these data determine
	\(p+q+1\) near-diagonal vectors
\(\mathbf V_1^+,\ldots,\mathbf V_p^+,\mathbf V^0, \mathbf V_1^-,\ldots,\mathbf V_q^-,\)
	and multiplication by \(z\) of the central vector \(\mathbf V^0\) expands in
	this local basis as
	\begin{equation}
		\label{eq:final-general-A-recurrence}
		z\mathbf V^0(z)
		=
		\sum_{i=1}^{p}
		a_{\boldsymbol n,\boldsymbol m}^{+i}\mathbf V_i^+(z)
		+
		a_{\boldsymbol n,\boldsymbol m}^{0}\mathbf V^0(z)
		+
		\sum_{j=1}^{q}
		a_{\boldsymbol n,\boldsymbol m}^{-j}\mathbf V_j^-(z).
	\end{equation}
	For \(i\in\{1,\ldots,p\}\) and \(j\in\{1,\ldots,q\}\), the coefficients are
	the explicit finite Pochhammer quotients
	\begin{align}
		\label{eq:final-general-recurrence-coefficients}
		a_{\boldsymbol n,\boldsymbol m}^{+i}
		&=
		\frac{
			\mathscr P_{1,\,(\boldsymbol n+\boldsymbol t_{i-1},\boldsymbol m+\boldsymbol{\mathfrak s}_{i+1})}^{(\boldsymbol n,\boldsymbol m)}
		}{
			\mathscr P_{0,\,(\boldsymbol n+\boldsymbol t_{i-1},\boldsymbol m+\boldsymbol{\mathfrak s}_{i+1})}^{(\boldsymbol n+\boldsymbol t_i,\boldsymbol m+\boldsymbol{\mathfrak s}_i)}
		},
		&
		a_{\boldsymbol n,\boldsymbol m}^{0}
		&=
		\frac{
			\mathscr P_{1,\,(\boldsymbol n-\boldsymbol{\mathfrak t}_1,\boldsymbol m+\boldsymbol{\mathfrak s}_1)}^{(\boldsymbol n,\boldsymbol m)}
		}{
			\mathscr P_{0,\,(\boldsymbol n-\boldsymbol{\mathfrak t}_1,\boldsymbol m+\boldsymbol{\mathfrak s}_1)}^{(\boldsymbol n,\boldsymbol m)}
		},
		&
		a_{\boldsymbol n,\boldsymbol m}^{-j}
		&=
		\frac{
			\mathscr P_{1,\,(\boldsymbol n-\boldsymbol{\mathfrak t}_{j+1},\boldsymbol m-\boldsymbol s_{j-1})}^{(\boldsymbol n,\boldsymbol m)}
		}{
			\mathscr P_{0,\,(\boldsymbol n-\boldsymbol{\mathfrak t}_{j+1},\boldsymbol m-\boldsymbol s_{j-1})}^{(\boldsymbol n-\boldsymbol{\mathfrak t}_j,\boldsymbol m-\boldsymbol s_j)}
		}.
	\end{align}
\end{theorem}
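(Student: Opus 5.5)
The proof has three steps. First, $z\mathbf V^0$ lies in the local space $\mathscr R(\boldsymbol n+\one_p,\boldsymbol m-\one_q)$. Second, the $p+q+1$ neighbors form a basis of that space. Third, each coefficient is isolated by pairing with a test vector that is orthogonal to all other neighbors.

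\emph{Step 1: membership.} Since $A^{(i)}_{\boldsymbol n,\boldsymbol m}\in\Poly_{n_i-1}$, we get $zA^{(i)}\in\Poly_{n_i}$. By the bilinear identity $\langle zf,g\rangle=\langle f,zg\rangle$, the conditions of $z\mathbf V^0$ against $z^\ell$ are the conditions of $\mathbf V^0$ against $z^{\ell+1}$. These vanish for $\ell\le m_j-2$, so $z\mathbf V^0\in\mathscr R(\boldsymbol n+\one_p,\boldsymbol m-\one_q)$.

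Each neighbor lies in the same space. For $\mathbf V_i^+$ this uses $\boldsymbol t_i\le\one_p$ and $\boldsymbol{\mathfrak s}_i\ge0$. For $\mathbf V_j^-$ it uses $\boldsymbol{\mathfrak t}_j\ge0$ and $\boldsymbol s_j\le\one_q$. By Proposition~\ref{prop:local-recurrence-space-dimension} this space has dimension exactly $p+q+1$. Every neighbor is nonzero by the explicit formulas of Theorem~\ref{thm:final-explicit}; Proposition~\ref{prop:strong-normality-A} gives the degree information.

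\emph{Step 2: triangular pairing matrix.} Linear independence follows from a triangular pairing argument with the test vectors $\mathbf T^{\pm},\mathbf T^0$ of Definition~\ref{def:A-side-admissible-data}. Each test vector is a $B$-vector in some $\mathscr L(\boldsymbol N,\boldsymbol M)$. Its pairing with a polynomial vector $\mathbf P$ vanishes whenever $\deg P^{(i)}\le N_i-1$ for all $i$, using the $B$-orthogonality of the test vector. Symmetrically, it vanishes whenever $B^{(j)}\in\Poly_{\ell}$ with $\ell\le m'_j-1$ and the $A$-orthogonality of the neighbor covers the required range.

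The shifts are chosen cumulatively: $\mathbf T_i^+$ has column index $\boldsymbol n+\boldsymbol t_{i-1}$ and row index $\boldsymbol m+\boldsymbol{\mathfrak s}_{i+1}$. Two consequences follow. First, $\mathbf T_i^+$ annihilates $\mathbf V_{i'}^+$ for $i'<i$, since those have column indices $\le\boldsymbol n+\boldsymbol t_{i-1}$; it also annihilates $\mathbf V^0$ and all $\mathbf V_j^-$. Second, $\mathbf V_{i'}^+$ for $i'>i$ kills $\mathbf T_i^+$ through its $A$-orthogonality up to row level $\boldsymbol m+\boldsymbol{\mathfrak s}_{i'}\ge\boldsymbol m+\boldsymbol{\mathfrak s}_{i+1}$. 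The analogous bookkeeping holds for $\mathbf T^0$ and $\mathbf T_j^-$. Ordering the neighbors as $\mathbf V_1^+,\dots,\mathbf V_p^+,\mathbf V^0,\mathbf V_1^-,\dots,\mathbf V_q^-$, the pairing matrix is therefore diagonal. Its diagonal entries are exactly the denominators $\mathscr P_0$ in \eqref{eq:final-general-recurrence-coefficients}, evaluated through Proposition~\ref{prop:final-pairing-block-interpretation}. These are nonzero by hypothesis, so the neighbors are independent and hence a basis.

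\emph{Step 3: coefficients.} Expanding $z\mathbf V^0$ in this basis and pairing with each test vector gives the stated quotients. For example,
\[
a^{+i}_{\boldsymbol n,\boldsymbol m}=\frac{\langle \mathbf T_i^+,z\mathbf V^0\rangle}{\langle\mathbf T_i^+,\mathbf V_i^+\rangle}.
\]
Moving $z$ to the $B$-side gives $\langle z\mathbf T_i^+,\mathbf V^0\rangle=\mathscr P_1$, again by Proposition~\ref{prop:final-pairing-block-interpretation}. The other coefficients follow the same way.

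The main obstacle is the careful index bookkeeping in Step 2: one must verify for every pair (test vector, neighbor) which of the two orthogonalities yields zero. Two points need particular care. One is the off-by-one shifts between $\boldsymbol{\mathfrak t}_j$ and $\boldsymbol{\mathfrak t}_{j+1}$ and between $\boldsymbol s_{j-1}$ and $\boldsymbol s_j$. The other is that the test vectors' row indices $\boldsymbol m+\boldsymbol{\mathfrak s}_{i+1}$ are compared componentwise, which is where the cumulative monotone chains are needed. I would tabulate the cases (plus/plus, plus/zero, plus/minus, and so on) and check each with the degree-versus-moment criterion.
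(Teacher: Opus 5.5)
Your proposal is correct and follows the paper's proof essentially step for step. You show that $z\mathbf V^0$ and all neighbors lie in $\mathscr R(\boldsymbol n+\one_p,\boldsymbol m-\one_q)$, which has dimension $p+q+1$; the criteria ``$\boldsymbol\mu\le\boldsymbol\eta$ or $\boldsymbol\lambda\le\boldsymbol\nu$'' along the cumulative chains make the pairing matrix diagonal; and bilinearity then gives the coefficients as the stated quotients of $\mathscr P_1$ and $\mathscr P_0$. Your appeal to strong normality to show the neighbors are nonzero is harmless but unnecessary, since the assumed nonvanishing diagonal pairings already give linear independence.
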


\begin{proof}
	Multiplication by \(z\) sends \(\mathbf V^0=\mathbf A_{\boldsymbol n,\boldsymbol m}\) into
	\(\mathscr R(\boldsymbol n+\one_p,\boldsymbol m-\one_q)\) of
	Definition~\ref{def:local-recurrence-spaces}: the component degrees increase by
	one, and the row moments with exponents \(0,\ldots,m_j-2\) become the imposed
	moments with exponents \(1,\ldots,m_j-1\).  The vectors
	\(\mathbf V_i^+\), \(\mathbf V^0\), and \(\mathbf V_j^-\) also belong to this
	space.  By Proposition~\ref{prop:local-recurrence-space-dimension}, the
	space has dimension \(p+q+1\), so it is enough to prove that they are
	independent.
	
	Pair the ordered list
\[
\mathbf V_1^+,\ldots,\mathbf V_p^+,\mathbf V^0, \mathbf V_1^-,\ldots,\mathbf V_q^-
\]
	with the ordered test list
\[
\mathbf T_1^+,\ldots,\mathbf T_p^+,\mathbf T^0, \mathbf T_1^-,\ldots,\mathbf T_q^-.
\]
	For arbitrary balanced indices, the mixed orthogonality relations give
	\[
	\left\langle
	\mathbf B_{\boldsymbol\nu,\boldsymbol\mu},
	\mathbf A_{\boldsymbol\lambda,\boldsymbol\eta}
	\right\rangle=0
	\qquad\text{whenever}\qquad
	\boldsymbol\mu\le\boldsymbol\eta
	\quad\text{or}\quad
	\boldsymbol\lambda\le\boldsymbol\nu.
	\]
	Indeed, the first componentwise inequality allows one to apply the
	orthogonality of the \(A\)-vector after expanding the components of the
	\(B\)-vector, whereas the second allows one to apply the orthogonality of the
	\(B\)-vector after expanding the components of the \(A\)-vector.

	The cumulative-chain ordering now makes the pairing matrix diagonal. The
	test \(\mathbf T_i^+\) annihilates \(\mathbf V_\ell^+\) for \(\ell<i\)
	through the second inequality and for \(\ell>i\) through the first; it also
	annihilates \(\mathbf V^0\) and all the negative vectors through the second
	inequality. The test \(\mathbf T^0\) annihilates the positive vectors through
	the first inequality and the negative vectors through the second. Finally,
	\(\mathbf T_j^-\) annihilates \(\mathbf V_\ell^-\) for \(\ell<j\) through the
	first inequality and for \(\ell>j\) through the second, while it annihilates
	\(\mathbf V^0\) and all the positive vectors through the first. Thus each test
	vector kills all recurrence vectors except the one with the same position in
	the list. The nonzero diagonal entries are the denominators in
	\eqref{eq:final-general-recurrence-coefficients}.  Therefore the recurrence
	vectors form a basis, and the expansion \eqref{eq:final-general-A-recurrence}
	exists and is unique.
	
	Pairing the expansion with the corresponding test vector isolates the
	coefficient as a quotient of contour pairings.  Since the pairing is
	bilinear, \(\langle \mathbf T,z\mathbf V^0\rangle
	=\langle z\mathbf T,\mathbf V^0\rangle\).
	Proposition~\ref{prop:final-pairing-block-interpretation} evaluates the
	resulting pairings as the blocks \(\mathscr P_{1,\,\cdot}^{\cdot}\) and
	\(\mathscr P_{0,\,\cdot}^{\cdot}\), giving
	\eqref{eq:final-general-recurrence-coefficients}.
\end{proof}

\begin{theorem}[Local \(B\)-recurrence of length \(p+q+1\)]
	\label{thm:final-general-B-recurrence}
	Assume that the Bessel-like parameters are regular in the sense of
	Definition~\ref{def:final-parameter-regularity}. Let \((\boldsymbol n,\boldsymbol m)\) be
	a \(B\)-balanced index pair covered by
	Theorem~\ref{thm:final-explicit}, with \(|\boldsymbol m|=|\boldsymbol n|+1\),
	\(\boldsymbol m\in\mathcal N_q(|\boldsymbol m|)\), and
	\(\boldsymbol n-\one_p\in\N_0^p\).  Choose admissible \(B\)-side local data supplied by
	Lemma~\ref{lem:existence-interior-local-windows}, and assume that the
	denominators displayed below do not vanish.  Then these data determine
	\(p+q+1\) near-diagonal vectors
\(\mathbf W_1^+,\ldots,\mathbf W_q^+,\mathbf W^0, \mathbf W_1^-,\ldots,\mathbf W_p^-,\)
	and multiplication by \(z\) of the central vector \(\mathbf W^0\) expands in
	this local basis as
	\begin{equation}
		\label{eq:final-general-B-recurrence}
		z\mathbf W^0(z)
		=
		\sum_{j=1}^{q}
		c_{\boldsymbol n,\boldsymbol m}^{B,+j}\mathbf W_j^+(z)
		+
		c_{\boldsymbol n,\boldsymbol m}^{B,0}\mathbf W^0(z)
		+
		\sum_{i=1}^{p}
		c_{\boldsymbol n,\boldsymbol m}^{B,-i}\mathbf W_i^-(z).
	\end{equation}
	For \(i\in\{1,\ldots,p\}\) and \(j\in\{1,\ldots,q\}\), the coefficients are
	the explicit finite Pochhammer quotients
	\begin{align}
		\label{eq:final-general-B-recurrence-coefficients}
		c_{\boldsymbol n,\boldsymbol m}^{B,+j}
		&=
		\frac{
			\mathscr P_{1,\,(\boldsymbol n,\boldsymbol m)}^{(\boldsymbol n+\boldsymbol{\mathfrak u}_{j+1},\boldsymbol m+\boldsymbol v_{j-1})}
		}{
			\mathscr P_{0,\,(\boldsymbol n+\boldsymbol{\mathfrak u}_j,\boldsymbol m+\boldsymbol v_j)}^{(\boldsymbol n+\boldsymbol{\mathfrak u}_{j+1},\boldsymbol m+\boldsymbol v_{j-1})}
		},
		&
		c_{\boldsymbol n,\boldsymbol m}^{B,0}
		&=
		\frac{
			\mathscr P_{1,\,(\boldsymbol n,\boldsymbol m)}^{(\boldsymbol n+\boldsymbol{\mathfrak u}_1,\boldsymbol m-\boldsymbol{\mathfrak v}_1)}
		}{
			\mathscr P_{0,\,(\boldsymbol n,\boldsymbol m)}^{(\boldsymbol n+\boldsymbol{\mathfrak u}_1,\boldsymbol m-\boldsymbol{\mathfrak v}_1)}
		},
		&
		c_{\boldsymbol n,\boldsymbol m}^{B,-i}
		&=
		\frac{
			\mathscr P_{1,\,(\boldsymbol n,\boldsymbol m)}^{(\boldsymbol n-\boldsymbol u_{i-1},\boldsymbol m-\boldsymbol{\mathfrak v}_{i+1})}
		}{
			\mathscr P_{0,\,(\boldsymbol n-\boldsymbol u_i,\boldsymbol m-\boldsymbol{\mathfrak v}_i)}^{(\boldsymbol n-\boldsymbol u_{i-1},\boldsymbol m-\boldsymbol{\mathfrak v}_{i+1})}
		}.
	\end{align}
\end{theorem}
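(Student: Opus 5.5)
The proof will mirror that of Theorem~\ref{thm:final-general-recurrence}, with the roles of the two sides exchanged. I will work in the left local recurrence space \(\mathscr L(\boldsymbol n-\one_p,\boldsymbol m+\one_q)\) of Definition~\ref{def:local-recurrence-spaces}.

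The first step is to show that \(z\mathbf W^0\) belongs to this space. Multiplying \(\mathbf W^0=\mathbf B_{\boldsymbol n,\boldsymbol m}\) by \(z\) raises each component degree bound from \(m_j-1\) to \(m_j\). For every \(i\), the orthogonality conditions of \(\mathbf W^0\) with exponents \(1,\ldots,n_i-1\) become the required conditions for \(z\mathbf W^0\) with exponents \(0,\ldots,n_i-2\).

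The recurrence vectors also lie in this space. Each \(\mathbf W_j^+\), \(\mathbf W^0\), \(\mathbf W_i^-\) has row index at most \(\boldsymbol m+\one_q\), because \(\boldsymbol v_j\le\one_q\). Each has column index at least \(\boldsymbol n-\one_p\), because \(\boldsymbol u_i\le\one_p\) and \(\boldsymbol{\mathfrak u}_j\ge0\). So each of them satisfies a superset of the defining conditions of \(\mathscr L(\boldsymbol n-\one_p,\boldsymbol m+\one_q)\).

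Proposition~\ref{prop:local-recurrence-space-dimension}(ii) applies, since \(\boldsymbol m+\one_q\) is near-diagonal and \(\boldsymbol n-\one_p\in\N_0^p\). It gives \(\dim\mathscr L(\boldsymbol n-\one_p,\boldsymbol m+\one_q)=p+q+1\). It therefore suffices to show that the \(p+q+1\) vectors are linearly independent.

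For independence I will pair the ordered list \(\mathbf W_1^+,\ldots,\mathbf W_q^+,\mathbf W^0,\mathbf W_1^-,\ldots,\mathbf W_p^-\) against the test list \(\mathbf U_1^+,\ldots,\mathbf U_q^+,\mathbf U^0,\mathbf U_1^-,\ldots,\mathbf U_p^-\) of Definition~\ref{def:B-side-admissible-data}. The tool is the vanishing criterion already used in the proof of Theorem~\ref{thm:final-general-recurrence}: \(\langle\mathbf B_{\boldsymbol\nu,\boldsymbol\mu},\mathbf A_{\boldsymbol\lambda,\boldsymbol\eta}\rangle=0\) whenever \(\boldsymbol\mu\le\boldsymbol\eta\) or \(\boldsymbol\lambda\le\boldsymbol\nu\).

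I will check case by case, using monotonicity of the cumulative chains, that each test vector annihilates every recurrence vector except the one in the same position.
\begin{itemize}
\item \(\mathbf U_j^+\), with indices \((\boldsymbol n+\boldsymbol{\mathfrak u}_{j+1},\boldsymbol m+\boldsymbol v_{j-1})\), against \(\mathbf W_\ell^+\):
  \begin{itemize}
  \item for \(\ell<j\), the row comparison \(\boldsymbol m+\boldsymbol v_\ell\le\boldsymbol m+\boldsymbol v_{j-1}\) applies;
  \item for \(\ell>j\), the column comparison \(\boldsymbol n+\boldsymbol{\mathfrak u}_{j+1}\le\boldsymbol n+\boldsymbol{\mathfrak u}_\ell\) applies.
  \end{itemize}
\item \(\mathbf U_j^+\) against \(\mathbf W^0\) and against all \(\mathbf W_i^-\): the row comparison applies.
\item \(\mathbf U^0\): it kills the positive vectors by the column comparison and the negative vectors by the row comparison, since \(\boldsymbol m-\boldsymbol{\mathfrak v}_i\le\boldsymbol m-\boldsymbol{\mathfrak v}_1\).
\item \(\mathbf U_i^-\) against \(\mathbf W_\ell^-\): for \(\ell<i\) the column comparison \(\boldsymbol n-\boldsymbol u_{i-1}\le\boldsymbol n-\boldsymbol u_\ell\) applies, and for \(\ell>i\) the row comparison \(\boldsymbol m-\boldsymbol{\mathfrak v}_\ell\le\boldsymbol m-\boldsymbol{\mathfrak v}_{i+1}\) applies.
\item \(\mathbf U_i^-\) against \(\mathbf W^0\) and the positive vectors: the column comparison applies.
\end{itemize}

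The pairing matrix is then diagonal. Its diagonal entries, evaluated by Proposition~\ref{prop:final-pairing-block-interpretation} with \(d=0\), are exactly the denominators in \eqref{eq:final-general-B-recurrence-coefficients}, which are nonzero by hypothesis. Hence the vectors form a basis and the expansion \eqref{eq:final-general-B-recurrence} exists and is unique.

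To identify the coefficients, I pair \eqref{eq:final-general-B-recurrence} on the right with each test vector. Diagonality isolates one coefficient times its diagonal pairing. On the left-hand side the pairing is \(\langle z\mathbf W^0,\mathbf U\rangle\), which Proposition~\ref{prop:final-pairing-block-interpretation} with \(d=1\) evaluates as the block \(\mathscr P_{1,(\boldsymbol n,\boldsymbol m)}^{(\cdot)}\). Here no transfer of \(z\) between the two sides is even needed. This yields the stated quotients.

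I expect the main obstacle to be the diagonality bookkeeping, specifically the off-by-one shifts in the test indices (\(j\pm1\) and \(i\pm1\)). These shifts are what make the diagonal entry itself not forced to vanish while all off-diagonal pairings vanish. I also have to confirm that every test index is balanced and covered by Theorem~\ref{thm:final-explicit}, which is guaranteed by the admissibility in Lemma~\ref{lem:existence-interior-local-windows}.
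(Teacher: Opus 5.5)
Your proposal is correct and follows the paper's proof essentially step for step. The shared steps are: membership of \(z\mathbf W^0\) and of all \(p+q+1\) neighbors in \(\mathscr L(\boldsymbol n-\one_p,\boldsymbol m+\one_q)\); the dimension count from Proposition~\ref{prop:local-recurrence-space-dimension}; diagonality of the pairing matrix against the tests \(\mathbf U\) via the same row/column vanishing criterion, where your case analysis coincides with the paper's; and extraction of the coefficients through Proposition~\ref{prop:final-pairing-block-interpretation}. You also check explicitly that the neighbors lie in the local space, and you note that on the \(B\)-side no transfer of \(z\) across the pairing is needed; both are minor clarifications that the paper leaves implicit.
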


\begin{proof}
	The proof is the left-right dual of the preceding one.  Multiplication by
	\(z\) sends \(\mathbf W^0=\mathbf B_{\boldsymbol n,\boldsymbol m}\) into
	\(\mathscr L(\boldsymbol n-\one_p,\boldsymbol m+\one_q)\) of
	Definition~\ref{def:local-recurrence-spaces}: the component degrees increase
	by one, and the column moments with exponents \(0,\ldots,n_i-2\) become the
	imposed moments with exponents \(1,\ldots,n_i-1\).  The vectors
	\(\mathbf W_j^+\), \(\mathbf W^0\), and \(\mathbf W_i^-\) belong to that
	space, which has dimension \(p+q+1\) by
	Proposition~\ref{prop:local-recurrence-space-dimension}. Pair them with the
	tests \(\mathbf U_j^+\), \(\mathbf U^0\), and \(\mathbf U_i^-\). The vanishing
	criterion established in the proof of
	Theorem~\ref{thm:final-general-recurrence} makes the resulting pairing matrix
	diagonal. More explicitly, \(\mathbf U_j^+\) annihilates
	\(\mathbf W_\ell^+\) for \(\ell<j\) through the row-index inequality and for
	\(\ell>j\) through the column-index inequality; it also annihilates
	\(\mathbf W^0\) and all the negative vectors through the row-index inequality.
	The test \(\mathbf U^0\) annihilates the positive vectors through the
	column-index inequality and the negative vectors through the row-index
	inequality. Finally, \(\mathbf U_i^-\) annihilates
	\(\mathbf W_\ell^-\) for \(\ell<i\) through the column-index inequality and
	for \(\ell>i\) through the row-index inequality, while it annihilates
	\(\mathbf W^0\) and all the positive vectors through the column-index
	inequality. Hence each test pairs only with its matching recurrence vector.
	The diagonal entries are the denominators in
	\eqref{eq:final-general-B-recurrence-coefficients}; by hypothesis they do not
	vanish.  Hence the recurrence vectors form a basis.  Pairing the expansion
	with the corresponding test vector and using
	Proposition~\ref{prop:final-pairing-block-interpretation} gives the displayed
	quotients in terms of the blocks \(\mathscr P_{1,\,\cdot}^{\cdot}\) and
	\(\mathscr P_{0,\,\cdot}^{\cdot}\).
\end{proof}

\begin{remark}[Boundary local recurrences]
	The interior assumptions in Theorems~\ref{thm:final-general-recurrence} and
	\ref{thm:final-general-B-recurrence} are imposed only to guarantee the full
	\(p+q+1\)-term windows used in the statements.  At the boundary, the moment
	conditions satisfied by the multiplied central vector are obtained by replacing
	\(\boldsymbol m-\one_q\) with \((\boldsymbol m-\one_q)_+\) on the \(A\)-side, or
	\(\boldsymbol n-\one_p\) with \((\boldsymbol n-\one_p)_+\) on the \(B\)-side, where the
	positive part is taken componentwise.  Proposition~\ref{prop:local-recurrence-space-dimension}
	then gives the dimension of the corresponding truncated local space. If one can
	select exactly that number of admissible neighbors, together with matching test
	vectors whose diagonal pairings are nonzero, the same diagonal-pairing argument
	yields a unique shorter boundary recurrence. Its coefficients are given by the
	same quotients of \(\mathscr P_{1,\,\cdot}^{\cdot}\) and
	\(\mathscr P_{0,\,\cdot}^{\cdot}\). Since a systematic selection of boundary
	neighbors is not needed below, those recurrences are not recorded explicitly.
\end{remark}

\section{Bidiagonal factorizations}
\label{sec:final-bidiagonal-factorizations}

This section applies the mixed step-line Christoffel--Gauss--Borel
factorization scheme of
\cite{BranquinhoFoulquieManas2026} to the
Bessel-like family.  The step-line recurrence matrix constructed in
Section~\ref{sec:final-recurrences} is \((p,q)\)-banded.  The purpose of this
section is to factor its finite principal truncations into elementary
bidiagonal Christoffel factors.

The Christoffel factors used below are not introduced as new orthogonal
systems.  They are obtained by multiplying the matrix of measures on the
column side or on the row side by the cyclic companion matrices
\(\mathfrak X_{[p]}(z)\) and \(\mathfrak X_{[q]}(z)\).  The lower bidiagonal
factors correspond to successive column Christoffel steps, whereas the upper
bidiagonal factors correspond to successive row Christoffel steps.

If a Christoffel perturbation remains inside the same explicit family, its
bidiagonal factor can be evaluated from the transformed polynomial vectors.
If it does not, the finite tau-determinants supplied by the general
Christoffel theory are used.

In the present \(q\times p\) Bessel-like system the column Christoffel chain is
closed at the level of bimoments, as proved below in
Lemma~\ref{lem:column-Christoffel-chain-closure}: after each elementary column
transformation one obtains again a Bessel-like moment matrix with shifted
column parameters.  Hence the lower bidiagonal factors are evaluated from the
transformed \(A\)-polynomial vectors.
The intermediate row Christoffel perturbations are not, in general, members of
the original Bessel-like family; the corresponding upper factors are obtained from the
finite \(\tau^B\)-determinants.  This is the point at which the mixed
\(q\times p\) case differs from the one-row multiple Bessel reduction: the
column transformations remain inside the Bessel-like class, whereas the
intermediate row transformations do not.  The one-row case \(q=1\) is
exceptional: the unique row step is the same as one full column-Christoffel
cycle, and the multiple Bessel family is invariant.  Therefore the complete
factorization in that case is written only in terms of quotients of
Christoffel-transformed monic type-II polynomials evaluated at the origin.

\subsection{Bessel data for the Christoffel formulas}
\label{sec:final-bidiagonal-bessel-data}

First, the explicit coefficients of the step-line vectors are recorded.  For $d\in\N_0$, the
coefficient of \(z^d\) in the \(B\)-component \eqref{eq:final-B-components},
with the normalization constant \(\nu^B_{\boldsymbol n,\boldsymbol m}\) defined in
\eqref{eq:final-B-nu}, is
\begin{multline}
	\label{eq:explicit-B-coefficients}
	\mathsf b_j(\boldsymbol n,\boldsymbol m;d\mid\boldsymbol\kappa)
	=
	\nu^B_{\boldsymbol n,\boldsymbol m}\,\frac{(\boldsymbol a-b_j\one_r)_1}{(\boldsymbol b^{\,*j}-b_j\one_{q-1})_1}
	\sum_{H=1}^{q}
	\sum_{\substack{K=0\\ d\le K-1+\delta_{j,H}}}^{m_H-1}
	\pi_{H,K}
	\frac{(\boldsymbol b^{\,*j}-(b_H+K)\one_{q-1})_1}
	{(\boldsymbol a-(b_H+K)\one_r)_1}
	\\
	\times
	\frac{
		(\boldsymbol b-(b_H+K)\one_q+\one_q-\boldsymbol e_j)_d
	}{
		(\boldsymbol a-(b_H+K)\one_r+\one_r)_d
	}.
\end{multline}
The dependence on \(\boldsymbol\kappa\) is through the partial-fraction data
\(\pi_{H,K}\) in \eqref{eq:final-B-pi} and the normalization constant in
\eqref{eq:final-B-nu}.  Similarly, for $d\in\N_0$, the coefficient of \(z^d\) in the
\(A\)-component \eqref{eq:final-A-components}, with \(\nu^A_{\boldsymbol n,\boldsymbol m}\) from \eqref{eq:final-A-nu}, is
\begin{equation}
	\label{eq:explicit-A-coefficients}
	\begin{aligned}
	\mathsf a_i(\boldsymbol n,\boldsymbol m;d\mid\boldsymbol\kappa)
	&=
	\nu^A_{\boldsymbol n,\boldsymbol m}\,C_{\boldsymbol n,\boldsymbol m;i}
	\frac{(-n_i+1)_d}{d!}
	\frac{
		(\kappa_i\one_q+\boldsymbol b+\boldsymbol m+\one_q)_d
	}{
		(\kappa_i\one_r+\boldsymbol a+\one_r)_d
	}\\
	&\quad\times
	\frac{
		(\kappa_i\one_{p-1}-\boldsymbol\kappa^{\,*i}-\boldsymbol n^{\,*i}+
		\one_{p-1})_d
	}{
		(\kappa_i\one_{p-1}-\boldsymbol\kappa^{\,*i}+\one_{p-1})_d
	}.
	\end{aligned}
\end{equation}
These coefficients give the values and leading coefficients of the normalized
solutions of Theorem~\ref{thm:final-explicit}.  The constants used in their
normalization are those in \eqref{eq:final-A-nu} and \eqref{eq:final-B-nu}; no
additional step-line rescaling is applied in the recurrence, the lower factors,
or the Christoffel tau-determinants.

Let
\(\mathfrak X_{[1]}(z)\coloneq z,\)
and, for \(d\ge2\), let
\[
\mathfrak X_{[d]}(z)
\coloneq
\begin{bNiceMatrix}
	0&1&0&\Cdots&0\\
	0&0&1&\Ddots&\Vdots\\
	\Vdots&\Ddots[shorten-end=-2pt]&\Ddots[shorten-end=2pt]&\Ddots[shorten-end=-2pt]&0\\[8pt]
	0&\Cdots&0&0&1\\
	z&0&\Cdots&0&0
\end{bNiceMatrix},
\qquad
\mathfrak X_{[d]}(z)^d=zI_d.
\]
The elementary column and row Christoffel perturbations are
\[
\mathrm d\boldsymbol\mu_{\mathrm L}^{(a)}
\coloneq
\mathrm d\boldsymbol\mu
\left(\mathfrak X_{[p]}(z)^a\right)^{\top},
\qquad
\mathrm d\boldsymbol\mu_{\mathrm R}^{(b)}
\coloneq
\mathfrak X_{[q]}(z)^b\mathrm d\boldsymbol\mu.
\]
With this convention the column cycle used below is the cyclic Christoffel
cycle compatible with the ordering of the step-line recurrence matrix.  If
\(a\in\{0,\ldots,p\}\), the column parameters of
\(\mathrm d\boldsymbol\mu_{\mathrm L}^{(a)}\) are
\(\boldsymbol\kappa^{\langle a\rangle}\), where
\[
\boldsymbol\kappa^{\langle0\rangle}
\coloneq
\begin{bNiceMatrix}\kappa_1&\Cdots&\kappa_p\end{bNiceMatrix},
\]
and, for \(a\in\{1,\ldots,p\}\),
\[
\boldsymbol\kappa^{\langle a\rangle}
\coloneq
\begin{bNiceMatrix}
\kappa_{a+1}&\Cdots&\kappa_p&
\kappa_1+1&\Cdots&\kappa_a+1
\end{bNiceMatrix}.
\]
Thus one elementary column step sends
\[
\begin{bNiceMatrix}\kappa_1&\Cdots&\kappa_p\end{bNiceMatrix}
\mapsto
\begin{bNiceMatrix}
\kappa_2&\Cdots&\kappa_p&\kappa_1+1
\end{bNiceMatrix},
\]
and one full cycle gives
\(\boldsymbol\kappa^{\langle p\rangle}
=\begin{bNiceMatrix}\kappa_1+1&\Cdots&\kappa_p+1\end{bNiceMatrix}\).
This is the ordering used in the leading-coefficient quotients for the lower
factors.

\begin{lemma}[Closure of the column Christoffel chain]
\label{lem:column-Christoffel-chain-closure}
Let \(\MB^{\langle a\rangle}\) denote the Bessel-like bimoment array obtained
from \eqref{eq:final-moment-entry} by replacing \(\boldsymbol\kappa\) with
\(\boldsymbol\kappa^{\langle a\rangle}\).  Then, for every
\(a\in\{0,\ldots,p\}\),
\begin{equation}
\label{eq:column-Christoffel-chain-closure}
\oint_{\Torus}
z^{u+v}
\left(
\CB(z)\bigl(\mathfrak X_{[p]}(z)^a\bigr)^{\top}
\right)_{j,i}
\frac{\dz}{2\pi\mathrm i}
=
\MB^{\langle a\rangle}_{(u,j),(v,i)},
\qquad
u,v\in\N_0 .
\end{equation}
In particular, the elementary column Christoffel chain stays inside the
Bessel-like moment family, with the cyclic parameter shift
\[
\boldsymbol\kappa
\mapsto
\boldsymbol\kappa^{\langle1\rangle}
=
\begin{bNiceMatrix}
	\kappa_2&\Cdots&\kappa_p&\kappa_1+1
\end{bNiceMatrix},
\]
and one complete cycle gives
\(\boldsymbol\kappa^{\langle p\rangle} = \boldsymbol\kappa+\one_p .\)
\end{lemma}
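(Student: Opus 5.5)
The plan is a direct computation of the entries of \(\CB(z)\bigl(\mathfrak X_{[p]}(z)^a\bigr)^{\top}\), followed by an application of the reciprocal contiguity of Proposition~\ref{prop:Bessel-reciprocal-contiguity}.

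\emph{Step 1 (entries of the power).} I first describe \(\mathfrak X_{[p]}(z)^a\) for \(0\le a\le p\). The matrix \(\mathfrak X_{[p]}\) sends the standard basis vectors as \(\mathfrak X_{[p]}\boldsymbol e_{i+1}=\boldsymbol e_i\) for \(i<p\) and \(\mathfrak X_{[p]}\boldsymbol e_1=z\boldsymbol e_p\). Iterating, \(\mathfrak X_{[p]}^a\) has entry \(1\) in position \((i,i+a)\) when \(i+a\le p\), and entry \(z\) in position \((i,i+a-p)\) when \(i+a>p\). Transposing, the \(i\)-th column of \(\CB(\mathfrak X_{[p]}^a)^\top\) is
\[
\sum_h C_{j,h}\,(\mathfrak X^a)_{i,h}.
\]
For \(i\le p-a\) this equals \(C_{j,i+a}\). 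For \(i>p-a\) it equals \(zC_{j,i+a-p}\). So the transformed matrix has columns
\[
C_{\cdot,a+1},\ldots,C_{\cdot,p},\ zC_{\cdot,1},\ldots,zC_{\cdot,a}.
\]
At \(a=p\) this is \(z\CB\), consistent with \(\mathfrak X_{[p]}^p=zI_p\).

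\emph{Step 2 (moments).} The unshifted columns have moments given by \eqref{eq:final-moment-entry} with parameter \(\kappa_{i+a}\), which is exactly the \(i\)-th entry of \(\boldsymbol\kappa^{\langle a\rangle}\). For the multiplied columns, Proposition~\ref{prop:Bessel-reciprocal-contiguity} with \(\kappa=\kappa_h\) gives
\[
\oint z^{n}\,zC^{[\kappa_h]}_j\,\frac{\dz}{2\pi\mathrm i}
=\oint z^nC^{[\kappa_h+1]}_j\,\frac{\dz}{2\pi\mathrm i}.
\]
The difference between \(zC^{[\kappa]}_j\) and \(C^{[\kappa+1]}_j\) is the constant \(w^{[\kappa]}_j(0)\), whose contour pairing against \(z^{n}\), \(n\ge0\), is zero. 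Hence the moments of \(zC_{\cdot,h}\) are given by \eqref{eq:final-moment-entry} with \(\kappa_h\) replaced by \(\kappa_h+1\), matching the last \(a\) entries of \(\boldsymbol\kappa^{\langle a\rangle}\). Taking \(n=u+v\) yields \eqref{eq:column-Christoffel-chain-closure}. Equivalently, one can simply shift \(s\mapsto s+1\) in the Gamma quotient of \eqref{eq:final-moment-entry}.

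\emph{Step 3 (conclusion).} The two particular statements are then immediate: \(a=1\) gives the cyclic shift, and \(a=p\) gives \(\boldsymbol\kappa+\one_p\). The only real care needed, and the step I expect to be the main obstacle, is the index bookkeeping in Step 1. The transpose must act so that the shift moves column labels as in the order of \(\boldsymbol\kappa^{\langle a\rangle}\), not in the reverse cyclic order, so I would check the case \(a=1\) explicitly before iterating. I would also note that regularity is preserved: the differences \(\kappa_i-\kappa_h\) change only by integers, so they remain nonintegral.
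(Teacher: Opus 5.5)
Your proof is correct and is essentially the paper's argument. You identify the columns of \(\CB(\mathfrak X_{[p]}^a)^{\top}\) as \(\mathbf C_{a+1},\ldots,\mathbf C_p,z\mathbf C_1,\ldots,z\mathbf C_a\) and read off the moments from \eqref{eq:final-moment-entry}, with the extra factor \(z\) raising the corresponding \(\kappa_h\) by one. The only difference is that you compute the \(a\)-th power directly instead of iterating the step \(a=1\) as the paper does. This is slightly cleaner, because the one-step transform agrees with the shifted Bessel-like representative only at the level of moments: there is a constant discrepancy \(w_j^{[\kappa_1]}(0)\), which your contiguity remark already handles.
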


\begin{proof}
It is enough to prove the assertion for one elementary column step, since the
general statement follows by iteration.  From the definition of
\(\mathfrak X_{[p]}(z)\), multiplication on the right by
\(\mathfrak X_{[p]}(z)^{\top}\) sends the columns of \(\CB\) to
\(\mathbf C_2,\ldots,\mathbf C_p,z\mathbf C_1.\)
Therefore, for \(i\in\{1,\ldots,p-1\}\), formula
\eqref{eq:final-moment-entry} gives
\[
\oint_{\Torus}
z^{u+v}
C_{j,i+1}(z)
\frac{\dz}{2\pi\mathrm i}
=
\frac{
\Gamma((u+v+1+\kappa_{i+1})\one_r+\boldsymbol a)
}{
\Gamma((u+v+1+\kappa_{i+1})\one_q+\boldsymbol b+\boldsymbol e_j)
},
\]
which is precisely the Bessel-like moment with the new \(i\)-th column
parameter \(\kappa_{i+1}\).  For the last column one obtains
\[
\oint_{\Torus}
z^{u+v}
zC_{j,1}(z)
\frac{\dz}{2\pi\mathrm i}
=
\frac{
\Gamma((u+v+2+\kappa_1)\one_r+\boldsymbol a)
}{
\Gamma((u+v+2+\kappa_1)\one_q+\boldsymbol b+\boldsymbol e_j)
},
\]
which is the Bessel-like moment with last column parameter \(\kappa_1+1\).
Thus the first column Christoffel step replaces
\[
\begin{bNiceMatrix}
	\kappa_1&\Cdots&\kappa_p
\end{bNiceMatrix},
\quad\text{by}\quad
\begin{bNiceMatrix}
	\kappa_2&\Cdots&\kappa_p&\kappa_1+1
\end{bNiceMatrix}.
\]
Iterating this identity gives \eqref{eq:column-Christoffel-chain-closure} for
all \(a\in\{0,\ldots,p\}\), and after \(p\) steps every parameter has been
shifted by one.
\end{proof}

For the unperturbed step-line, the canonical near-diagonal normalization gives
\begin{equation}
	\label{eq:Bessel-B-active-leading}
	\LC\left(\mathbf B_N^{(r_N)}\right)
	=
	\mathsf b_{r_N}
	\left(
	\boldsymbol n_N,\boldsymbol m_N;m_{N,r_N}-1\mid\boldsymbol\kappa
	\right)
	=1.
\end{equation}
Thus the \(B\)-side is monic in the row selected by the step-line, and the diagonal normalization
is already carried by the dual \(A\)-vectors.  No additional Gauss--Borel
superscript or later rescaling is introduced: every vector used below is the
near-diagonal vector in the compatible normalization.  The same scalar
convention is used after each Christoffel transformation, so that a transformed
step-line vector is obtained either by specializing the near-diagonal formula
with shifted parameters or by applying the Christoffel factorization.

By Lemma~\ref{lem:column-Christoffel-chain-closure}, the column chain is
closed in the present family at the level of bimoments.  Thus the transformed
step-line vectors associated with \(\mathrm d\boldsymbol\mu_{\mathrm L}^{(a)}\)
are obtained from Theorem~\ref{thm:final-explicit} by replacing
\(\boldsymbol\kappa\) with the cyclic shift
\(\boldsymbol\kappa^{\langle a\rangle}\) in the compatible normalization.  The
lower factors are therefore read directly from the leading coefficients of
these transformed step-line \(A\)-polynomials, as in the general Christoffel
formulas of \cite{BranquinhoFoulquieManas2026}.

The row chain is not closed for intermediate values of \(b\) in the full
\(q\times p\) system.  Indeed, an intermediate row step multiplies only the
rows that have completed the cyclic passage by \(z\), producing nonuniform
moment shifts that cannot be encoded by a single common vector of column
parameters \(\boldsymbol\kappa\).  After a full row cycle all rows have acquired the
same factor \(z\), and the common parameter structure is restored.  For the
upper factors, the finite Christoffel \(\tau\)-determinants formed with
the monic-normalized \(B\)-vectors are used.
Put
\begin{equation}
	\label{eq:Bessel-B-values-at-zero}
	\mathfrak b_{N,j}^{[0]}
	\coloneq
	\left(\mathbf B_N\right)^{(j)}(0)
	=
	\mathsf b_j(\boldsymbol n_N,\boldsymbol m_N;0\mid\boldsymbol\kappa),
	\qquad
	j\in\{1,\ldots,q\}.
\end{equation}
The values in \eqref{eq:Bessel-B-values-at-zero} enter the finite \(\tau\)-determinants.  Following
\cite{BranquinhoFoulquieManas2026}, define
\begin{equation}
	\label{eq:Bessel-tau-B}
	\tau^B_{b,n}
	\coloneq
	\det\left[
	\mathfrak b_{n+\mu-1,\nu}^{[0]}
	\right]_{\mu,\nu=1}^{b},
	\qquad
	b\in\{1,\ldots,q\},
\end{equation}
and put \(\tau^B_{0,n}=1\).  These are the Christoffel tau-determinants of the
row side, formed from the values at the Christoffel point of the normalized mixed \(B\)-polynomial vectors and their truncations.

\subsection{Mixed Bessel-like factorization}
\label{sec:final-bidiagonal-mixed-factorization}

Let \(\mathscr T_N\) be the \(N\times N\) principal truncation of the step-line
recurrence matrix.  Write \(L_a\) and \(U_b\) for the corresponding infinite
bidiagonal factors, and \(L_a^{[N]}\) and \(U_b^{[N]}\) for their
\(N\times N\) principal truncations.  Assume that the moment matrix and the
Christoffel-perturbed moment matrices used below admit Gauss--Borel
factorizations.  By \cite{ManasRojas2026}, this is equivalent to the existence
of the corresponding perturbed mixed-type orthogonality and to the
nonvanishing of the relevant Christoffel determinants.  The general mixed
Christoffel--Gauss--Borel factorization of
\cite{BranquinhoFoulquieManas2026} then gives
\begin{equation}
	\label{eq:final-bidiagonal-factorization}
	\mathscr T_N
	=
	L_1^{[N]}\cdots L_p^{[N]}
	U_q^{[N]}\cdots U_1^{[N]}.
\end{equation}
The lower factors \(L_a^{[N]}\) have unit diagonal.  Since the column chain is
closed, no auxiliary family is needed.  Their subdiagonal entries are read from
the normalized step-line \(A\)-polynomials of the Christoffel-transformed
systems.  These polynomials may be computed either from the general
near-diagonal formula, after the parameter shift
\(\boldsymbol\kappa\mapsto\boldsymbol\kappa^{\langle a\rangle}\), or from the
Christoffel factorization; the monic-\(B\), dual-\(A\) normalization is fixed
from the beginning precisely so that the two computations give the same vector.
Equivalently, if \(A_{\mathrm L,n}^{[a]}\) denotes this normalized step-line
\(A\)-vector for \(\mathrm d\boldsymbol\mu_{\mathrm L}^{(a)}\), computed with
\(\boldsymbol\kappa^{\langle a\rangle}\), then
\begin{equation}
	\label{eq:Bessel-L-leading-coefficients}
	(L_a)_{n+1,n}
	=
	\frac{
		\LC\left(
		(A_{\mathrm L,n}^{[a]})^{(c_n)}
		\right)
	}{
		\LC\left(
		(A_{\mathrm L,n+1}^{[a-1]})^{(c_{n+1})}
		\right)
	},
\end{equation}
with the compatible near-diagonal normalization fixed above.  Here \(c_n\)
denotes the column selected by the step-line ordering of the transformed system:
the cyclic shift of \(\boldsymbol\kappa\) changes the parameters attached to the
columns, but not the step-line rule defining the selected column.  The opposite
cyclic order gives the transposed column ordering and is not the
convention used in \eqref{eq:final-bidiagonal-factorization}.

The upper factors \(U_b^{[N]}\) have unit superdiagonal.  Since the row chain is
not generally closed in the Bessel-like family, their diagonal entries are kept
in the finite \(\tau\)-determinant form
\begin{equation}
	\label{eq:Bessel-finite-U-tau}
	(U_b)_{n,n}
	=
	-
	\frac{
		\tau^B_{b-1,n}\tau^B_{b,n+1}
	}{
		\tau^B_{b-1,n+1}\tau^B_{b,n}
	},
	\qquad
	b\in\{1,\ldots,q\}.
\end{equation}
Formula \eqref{eq:Bessel-finite-U-tau} is the finite Christoffel formula of
\cite{BranquinhoFoulquieManas2026} specialized to the non-closed row chain.

\subsection{The one-row multiple Bessel reduction \texorpdfstring{\(q=1\)}{q=1}}
\label{sec:final-multiple-bessel-bidiagonal-factorization}

The one-row case \(q=1\) must be treated differently from the non-closed row
case.  Now the unique row-Christoffel step coincides with one full
column-Christoffel cycle.  Hence the multiple Bessel family is invariant under
all the Christoffel transformations needed for the factorization, and the
entries can be written directly as quotients of transformed type-II polynomials
evaluated at the origin.

Write \(\boldsymbol\alpha=\begin{bNiceMatrix}\alpha_1&\Cdots&\alpha_p\end{bNiceMatrix}\), with
\(\alpha_j=b_1+\kappa_j\), and extend the parameters cyclically by
\(\alpha_{sp+j}\coloneq\alpha_j+s,\) \(s\in\N_0,\) \(j\in\{1,\ldots,p\}.\)
Set
\(\theta_n\coloneq\alpha_{n+1},\) \(n\in\N_0,\)
so that \(\theta_{pu+j-1}=\alpha_j+u\).  For \(b\in\{0,\ldots,p\}\), define the
shifted sequence
\(\boldsymbol\theta^{[b]} \coloneq (\theta_b,\theta_{b+1},\theta_{b+2},\ldots).\)
Let \(P_n^{[b]}\) denote the monic type-II multiple Bessel polynomial of degree \(n\) for the shifted scalar moment system
\(\mu_r^{[b]}(s) \coloneq \frac{1}{\Gamma(\theta_{b+r}+s+2)},\) \(r,s\in\N_0.\)
Thus
\(P_n^{[b]}(z) = z^n+ \sum_{s=0}^{n-1}p_{n,s}^{[b]}z^s\)
is characterized by
\(\sum_{s=0}^{n}p_{n,s}^{[b]} \frac{1}{\Gamma(\theta_{b+r}+s+2)} = 0,\) \(r\in\{0,\ldots,n-1\},\)
where \(p_{n,n}^{[b]}=1\).  Thus \(P_n^{[b]}\) is not a new family of polynomials; it is the usual monic type-II multiple Bessel polynomial associated with the shifted sequence \(\boldsymbol\theta^{[b]}\).
The equality
\(\boldsymbol\theta^{[p]}=(\theta_0+1,\theta_1+1,\ldots)\)
expresses the fact that the unique row-Christoffel step is one full closed
column cycle.

The principal scalar moment determinants of the shifted systems are
\[
\det\left[ \frac{1}{\Gamma(\theta_r^{[b]}+s+2)} \right]_{r,s=0}^{n-1}, \qquad \theta_r^{[b]}\coloneq\theta_{b+r}.
\]
\begin{proposition}[Closed Gamma--Vandermonde determinant]
	\label{prop:ABV-determinant}
	For a sequence \(\boldsymbol\vartheta=(\vartheta_0,\vartheta_1,\ldots)\), put
	\[
	\mathscr D_n(\boldsymbol\vartheta)
	\coloneq
	(-1)^{n(n-1)/2}
	\frac{
		\prod_{0\le r<s<n}(\vartheta_s-\vartheta_r)
	}{
		\prod_{r=0}^{n-1}\Gamma(\vartheta_r+n+1)
	},
	\qquad
	\mathscr D_0(\boldsymbol\vartheta)\coloneq1.
	\]
	Then
\(\det\left[ \frac{1}{\Gamma(\vartheta_r+s+2)} \right]_{r,s=0}^{n-1} = \mathscr D_n(\boldsymbol\vartheta).\)
\end{proposition}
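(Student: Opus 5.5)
The plan is to factor out Gamma functions row by row, so that the matrix entries become polynomials in \(\vartheta_r\). Then the determinant reduces to an ordinary Vandermonde determinant.

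\emph{Step 1 (factor out one Gamma per row).} For \(s\in\{0,\ldots,n-1\}\) we have \(s+2\le n+1\). Hence
\[
\frac{1}{\Gamma(\vartheta_r+s+2)}=\frac{(\vartheta_r+s+2)_{n-1-s}}{\Gamma(\vartheta_r+n+1)}.
\]
Pulling \(1/\Gamma(\vartheta_r+n+1)\) out of row \(r\) produces the denominator of \(\mathscr D_n\). The remaining matrix has entries \(\pi_s(\vartheta_r)\), where
\[
\pi_s(x)\coloneq(x+s+2)_{n-1-s}
\]
is a monic polynomial of degree \(n-1-s\). If some \(\vartheta_r+n+1\) is a pole of \(\Gamma\), the identity is read with \(1/\Gamma\) entire. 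Both sides are then entire in \(\boldsymbol\vartheta\), so it suffices to prove it on the generic set and extend by continuity.

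\emph{Step 2 (reduce to Vandermonde).} The columns \(\pi_{n-1},\pi_{n-2},\ldots,\pi_0\) have degrees \(0,1,\ldots,n-1\) and are monic. They therefore form a unitriangular basis change from the monomials \(1,x,\ldots,x^{n-1}\). Column operations, which do not change the determinant, turn the columns in the order \(s=n-1,\ldots,0\) into \(x^0,\ldots,x^{n-1}\).

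Reversing the order of the \(n\) columns contributes the sign \((-1)^{n(n-1)/2}\). What remains is \(\det[\vartheta_r^{k}]_{r,k=0}^{n-1}=\prod_{r<s}(\vartheta_s-\vartheta_r)\). Multiplying these gives exactly \(\mathscr D_n(\boldsymbol\vartheta)\). The case \(n=0\) is the empty determinant, which equals \(1\).

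\emph{Main obstacle.} The only delicate point is the sign bookkeeping: the column reversal together with the orientation of the Vandermonde product. To pin it down, check \(n=2\) directly:
\[
\frac{1}{\Gamma(\vartheta_0+2)\Gamma(\vartheta_1+3)}-\frac{1}{\Gamma(\vartheta_0+3)\Gamma(\vartheta_1+2)}
=\frac{\vartheta_0-\vartheta_1}{\Gamma(\vartheta_0+3)\Gamma(\vartheta_1+3)}.
\]
This agrees with \((-1)^{1}(\vartheta_1-\vartheta_0)/\bigl(\Gamma(\vartheta_0+3)\Gamma(\vartheta_1+3)\bigr)\), confirming the sign convention.
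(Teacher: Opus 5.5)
Your proposal is correct and follows the paper's proof essentially step for step: you clear the row factor \(\Gamma(\vartheta_r+n+1)\) so that the entries become the monic polynomials \((\vartheta_r+s+2)_{n-1-s}\), reverse the columns to reach a Vandermonde determinant with sign \((-1)^{n(n-1)/2}\), and extend from the generic set because both sides are entire. One small refinement: the identity \(1/\Gamma(x+s+2)=(x+s+2)_{n-1-s}/\Gamma(x+n+1)\) holds at every \(x\) for the entire function \(1/\Gamma\), so pulling that factor out of each row by multilinearity already works at all parameter values, and the continuation step is not actually needed.
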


\begin{proof}
	Assume first that the Gamma factors used as row multipliers below are finite.
	Multiplying the \(r\)-th row by \(\Gamma(\vartheta_r+n+1)\) transforms the entry
	in column \(s\) into
\(\frac{\Gamma(\vartheta_r+n+1)}{\Gamma(\vartheta_r+s+2)} = (\vartheta_r+s+2)_{n-s-1}.\)
	For fixed \(s\), this is a monic polynomial in \(\vartheta_r\) of degree
	\(n-s-1\).  Reversing the column order gives the Vandermonde determinant in
	\(\vartheta_0,\ldots,\vartheta_{n-1}\), with sign \((-1)^{n(n-1)/2}\).  Dividing
	back by the row factors gives the formula on this domain.  Since both sides
	are entire functions of \(\vartheta_0,\ldots,\vartheta_{n-1}\), the identity
	extends to arbitrary parameter values by analytic continuation.
\end{proof}

\begin{proposition}[Values of the transformed type-II polynomials]
	\label{prop:multiple-bessel-transformed-values}
	For \(b\in\{0,\ldots,p\}\) and \(n\in\N_0\), the monic transformed type-II
	polynomials satisfy
	\begin{equation}
		\label{eq:multiple-bessel-polynomial-value}
		P_n^{[b]}(0)
		=
		(-1)^n
		\frac{
			\mathscr D_n(\boldsymbol\theta^{[b+p]})
		}{
			\mathscr D_n(\boldsymbol\theta^{[b]})
		}.
	\end{equation}
\end{proposition}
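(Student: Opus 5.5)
The plan is to compute the monic type-II polynomial \(P_n^{[b]}\) by Cramer's rule and then identify the resulting numerator determinant with a shifted Gamma--Vandermonde determinant.

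\textbf{Step 1 (Cramer's rule).} The defining conditions form a linear system for the unknowns \(p_{n,0}^{[b]},\ldots,p_{n,n-1}^{[b]}\):
\[
\sum_{s=0}^{n-1}p_{n,s}^{[b]}\frac{1}{\Gamma(\theta_{b+r}+s+2)}=-\frac{1}{\Gamma(\theta_{b+r}+n+2)},\qquad r\in\{0,\ldots,n-1\}.
\]
Its coefficient matrix is the principal moment matrix of the shifted system. By Proposition~\ref{prop:ABV-determinant}, its determinant is \(\mathscr D_n(\boldsymbol\theta^{[b]})\). I assume this is nonzero, which is the normality assumption already in force. Cramer's rule then gives
\[
P_n^{[b]}(0)=p_{n,0}^{[b]}=\frac{\det M'}{\mathscr D_n(\boldsymbol\theta^{[b]})},
\]
where \(M'\) is obtained by replacing column \(s=0\) with the column \(-1/\Gamma(\theta_{b+r}+n+2)\).

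\textbf{Step 2 (moving the new column into place).} Next I rearrange \(\det M'\). I move the replaced column from the first position to the last, which takes \(n-1\) adjacent transpositions and gives a sign \((-1)^{n-1}\). Together with the extracted minus sign, this gives a total sign \((-1)^n\). The columns are now indexed by \(s=1,\ldots,n\) with entries \(1/\Gamma(\theta_{b+r}+s+2)\).

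\textbf{Step 3 (reindexing).} Reindexing \(s=s'+1\) gives entries \(1/\Gamma((\theta_{b+r}+1)+s'+2)\) for \(s'=0,\ldots,n-1\). This is the Gamma--Vandermonde determinant for the sequence \((\theta_{b+r}+1)_r\).

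\textbf{Step 4 (the key identification).} Because the parameters were extended cyclically, \(\theta_{m+p}=\theta_m+1\) for all \(m\). Hence \(\theta_{b+r}+1=\theta_{b+p+r}\), so the sequence in Step 3 is exactly \(\boldsymbol\theta^{[b+p]}\). Proposition~\ref{prop:ABV-determinant} then gives \(\mathscr D_n(\boldsymbol\theta^{[b+p]})\) for this determinant, and combining with Steps 1--3 yields
\[
P_n^{[b]}(0)=(-1)^n\frac{\mathscr D_n(\boldsymbol\theta^{[b+p]})}{\mathscr D_n(\boldsymbol\theta^{[b]})}.
\]

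For \(n=0\), both sides equal \(1\). The only delicate point is the sign bookkeeping in Step 2, which I would check on the case \(n=1\): there \(P_1(0)=-\Gamma(\theta_b+2)/\Gamma(\theta_b+3)\), and this agrees with the formula since \(\theta_{b+p}=\theta_b+1\).
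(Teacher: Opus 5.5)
Your proposal is correct and follows essentially the same route as the paper: Cramer's rule, which is the paper's monic determinant formula evaluated at \(z=0\), gives \(P_n^{[b]}(0)\) as \((-1)^n\) times the ratio of the column-shifted moment determinant to the principal one, and the identity \(\theta_{m+p}=\theta_m+1\) together with the Gamma--Vandermonde evaluation then yields \eqref{eq:multiple-bessel-polynomial-value}. Your sign bookkeeping and your checks at \(n=0\) and \(n=1\) are also correct.
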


\begin{proof}
	With the preceding normalization, this is the standard determinant formula for
	the monic type-II polynomial of the shifted multiple Bessel moment matrix.  In
	the denominator one has the principal determinant with columns corresponding to
	the moments
	\(0,\ldots,n-1\).  Evaluating the monic determinant at \(z=0\) replaces the
	polynomial row by the first basis vector and leaves the determinant built from
	the shifted moment columns \(1,\ldots,n\).  Equivalently,
	\[
	P_n^{[b]}(0)
	=
	(-1)^n
	\frac{
	\det\left[
	\dfrac{1}{\Gamma(\theta_r^{[b]}+s+3)}
	\right]_{r,s=0}^{n-1}
	}{
	\det\left[
	\dfrac{1}{\Gamma(\theta_r^{[b]}+s+2)}
	\right]_{r,s=0}^{n-1}
	}.
	\]
	For the Bessel moment sequence this column shift is exactly the full
	Christoffel shift
	\(\boldsymbol\theta^{[b]}\mapsto\boldsymbol\theta^{[b+p]}\), because
	\(\theta_r^{[b+p]}=\theta_r^{[b]}+1\).  Applying
	Proposition~\ref{prop:ABV-determinant} to the numerator and denominator gives
	\eqref{eq:multiple-bessel-polynomial-value}.
\end{proof}

For the lower factors it is useful to write the answer directly in terms of
the cyclic sequence.  If the lower factor is the one associated with the
transition
\(\boldsymbol\theta^{[b-1]}\mapsto\boldsymbol\theta^{[b]}\), put
\(a=b-1\).  The relevant consecutive parameters are then
\(\theta_{a+1},\ldots,\theta_{a+n}\).

\begin{theorem}[Complete Christoffel factorization in the multiple Bessel case]
	\label{thm:multiple-bessel-factorization}
	For \(q=1\), let \(T_{\mathrm{II}}^{\mathrm{MB}}\) be the recurrence matrix of
	the monic step-line type-II multiple Bessel polynomial sequence. Then
	\[
	T_{\mathrm{II}}^{\mathrm{MB}}
	=
	\widetilde L_1^{\mathrm{MB}}\cdots
	\widetilde L_p^{\mathrm{MB}}
	\widetilde U_1^{\mathrm{MB}}.
	\]
	The lower bidiagonal factors have unit diagonal and subdiagonal entries
	\begin{equation}
		\label{eq:multiple-bessel-L-typeII-poly}
		\left(\widetilde L_b^{\mathrm{MB}}\right)_{n+1,n}
		=
		\frac{
			P_{n+1}^{[b-1]}(0)-P_{n+1}^{[b]}(0)
		}{
			P_n^{[b]}(0)
		},
		\qquad
		b\in\{1,\ldots,p\}.
	\end{equation}
	Equivalently, with \(a=b-1\),
	\begin{equation}
		\label{eq:multiple-bessel-L-typeII-product}
		\left(\widetilde L_b^{\mathrm{MB}}\right)_{n+1,n}
		=
		\left(
		\frac{1}{\theta_{a+n+1}+n+2}
		-
		\frac{1}{\theta_a+n+2}
		\right)
		\prod_{r=1}^{n}
		\frac{\theta_{a+r}+n+1}{\theta_{a+r}+n+2}.
	\end{equation}
	The unique upper factor has unit superdiagonal and diagonal entries
	\begin{equation}
		\label{eq:multiple-bessel-U-typeII-poly}
		\left(\widetilde U_1^{\mathrm{MB}}\right)_{n,n}
		=
		-
		\frac{
			P_{n+1}^{[0]}(0)
		}{
			P_n^{[0]}(0)
		}.
	\end{equation}
	Equivalently, using \eqref{eq:multiple-bessel-polynomial-value}, the unique
	upper factor can be written as
	\begin{equation}
		\label{eq:multiple-bessel-U-typeII}
		\left(\widetilde U_1^{\mathrm{MB}}\right)_{n,n}
		=
		\frac{
			\mathscr D_n(\boldsymbol\theta)
			\mathscr D_{n+1}(\boldsymbol\theta^{[p]})
		}{
			\mathscr D_{n+1}(\boldsymbol\theta)
			\mathscr D_n(\boldsymbol\theta^{[p]})
		}.
	\end{equation}
\end{theorem}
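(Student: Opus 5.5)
Plan: specialize the general mixed factorization \eqref{eq:final-bidiagonal-factorization} to \(q=1\), where there is a single upper factor \(U_1\) and the tau-determinants reduce to \(\tau^B_{1,n}=\mathfrak b^{[0]}_{n,1}=P_n^{[0]}(0)\). With \(\tau^B_{0,n}=1\), formula \eqref{eq:Bessel-finite-U-tau} gives \((U_1)_{n,n}=-\tau^B_{1,n+1}/\tau^B_{1,n}=-P^{[0]}_{n+1}(0)/P^{[0]}_n(0)\). This is \eqref{eq:multiple-bessel-U-typeII-poly}, and Proposition~\ref{prop:multiple-bessel-transformed-values} turns it into \eqref{eq:multiple-bessel-U-typeII}: the signs \((-1)^{n+1}/(-1)^n\) absorb the minus sign, and the \(\mathscr D\)-quotients rearrange as displayed.

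I would first check that the step-line \(B\)-vectors at \(q=1\) are exactly the monic type-II polynomials \(P_n^{[0]}\). Here \(\boldsymbol m=(N+1)\), and \(\boldsymbol n_N=\boldsymbol\sigma_p(N)\) selects the parameters \(\theta_0,\ldots,\theta_{N-1}\) with the cyclic convention \(\theta_{pu+j-1}=\alpha_j+u\). This follows because the orthogonality conditions against \(\mu_i(n)=1/\Gamma(n+\alpha_i+2)\) for \(n<n_i\) coincide with the rows \(1/\Gamma(\theta_r+s+2)\), \(r<N\). I would then identify \(T_{\mathrm{II}}^{\mathrm{MB}}\) with the step-line matrix \(T\) of Proposition~\ref{prop:general-step-recurrence}.

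For the lower factors, I would use the invariance of the family. By Lemma~\ref{lem:column-Christoffel-chain-closure}, the \(b\)-th column Christoffel transform replaces \(\boldsymbol\theta\) by \(\boldsymbol\theta^{[b]}\). Rather than using the \(A\)-leading-coefficient formula \eqref{eq:Bessel-L-leading-coefficients} directly, I would derive the lower entries from the connection between consecutive transformed type-II sequences. The moment functionals for \(\boldsymbol\theta^{[b]}\) are those for \(\boldsymbol\theta^{[b-1]}\) with the first one dropped and one shifted functional appended. Hence \(P^{[b-1]}_{n+1}-P^{[b]}_{n+1}\) has degree \(\le n\) and satisfies the \(n\) orthogonality conditions shared by both, \(\theta_b,\ldots,\theta_{b+n-1}\). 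By uniqueness (normality from Proposition~\ref{prop:ABV-determinant}, since the determinant is nonzero under regularity), it is a multiple of \(P^{[b]}_n\). This gives the two-term relation \(P^{[b-1]}_{n+1}=P^{[b]}_{n+1}+\ell_{n}P^{[b]}_n\). Evaluating at \(z=0\) gives \eqref{eq:multiple-bessel-L-typeII-poly}. The chain of these lower relations for \(b=1,\ldots,p\), composed with the upper relation \(zP^{[p]}_n=P^{[0]}_{n+1}-\frac{P^{[0]}_{n+1}(0)}{P^{[0]}_n(0)}P^{[0]}_n\), yields \(zP^{[0]}_n\) as the factorized product. The upper relation is the kernel-polynomial identity: since \(\boldsymbol\theta^{[p]}=\boldsymbol\theta+1\), multiplication by \(z\) maps \(P^{[p]}_n\) into the space orthogonal to the first \(n\) functionals of \(\boldsymbol\theta\). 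This reproduces the order \(\widetilde L_1\cdots\widetilde L_p\widetilde U_1\).

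The main computational step is \eqref{eq:multiple-bessel-L-typeII-product}. I would insert \eqref{eq:multiple-bessel-polynomial-value} for \(P_{n+1}^{[a]}(0)\), \(P_{n+1}^{[a+1]}(0)\) and \(P_n^{[a+1]}(0)\), and expand each \(\mathscr D\) via Proposition~\ref{prop:ABV-determinant}. Because \(\theta^{[a+p]}_r=\theta^{[a]}_r+1\), the Vandermonde parts cancel within each quotient \(\mathscr D_n(\boldsymbol\theta^{[b+p]})/\mathscr D_n(\boldsymbol\theta^{[b]})\), leaving \(P^{[b]}_n(0)=(-1)^n\prod_{r=0}^{n-1}1/(\theta_{b+r}+n+1)\). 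Substituting, the common factors over \(\theta_{a+1},\ldots,\theta_{a+n}\) factor out. The difference of the remaining terms \(1/(\theta_a+n+2)\) and \(1/(\theta_{a+n+1}+n+2)\), with the sign worked out carefully, then produces the stated product. The main obstacle is bookkeeping the signs and the index shift \(a=b-1\); the rest is direct.
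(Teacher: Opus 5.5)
Your proposal is correct and follows essentially the paper's route. For the lower factors you use the monic connection relations \(P^{[b-1]}_{n+1}=P^{[b]}_{n+1}+(\widetilde L_b^{\mathrm{MB}})_{n+1,n}P^{[b]}_n\) evaluated at \(z=0\), for the single upper factor the full-cycle shift \(\boldsymbol\theta^{[p]}=\boldsymbol\theta+1\), and for the product forms the Gamma--Vandermonde evaluation. You go further than the paper by justifying the connection relations through shared orthogonality and normality, and by composing them explicitly into \(zP^{[0]}=\widetilde L_1^{\mathrm{MB}}\cdots\widetilde L_p^{\mathrm{MB}}\widetilde U_1^{\mathrm{MB}}P^{[0]}\); this makes your opening appeal to the general \(\tau^B\)-formula redundant, and note that the normality you invoke rests on the admissibility convention (nonvanishing of the \(1/\Gamma(\theta_r+n+1)\) factors), not on regularity alone.
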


\begin{proof}
	Since \(q=1\), every Christoffel transformation appearing in the factorization
	remains in the multiple Bessel family.  Therefore one uses the connection
	formulas for the transformed monic type-II polynomials rather than the
	finite \(\tau\)-determinant fallback.  For the elementary step
	\(\boldsymbol\theta^{[b-1]}\mapsto\boldsymbol\theta^{[b]}\), the monic
	connection has the form
\(P_{n+1}^{[b-1]}(z) = P_{n+1}^{[b]}(z) + \left(\widetilde L_b^{\mathrm{MB}}\right)_{n+1,n} P_n^{[b]}(z).\)
	Evaluating at \(z=0\) gives
	\eqref{eq:multiple-bessel-L-typeII-poly}.  Notice that the coefficient is a
	difference of two transformed values; replacing it by only
	\(-P_{n+1}^{[b-1]}(0)/P_n^{[b]}(0)\) would omit the
	term \(P_{n+1}^{[b]}(0)\) and gives the wrong lower factor.

	The product form follows as follows.  Applying
	Proposition~\ref{prop:multiple-bessel-transformed-values} and the
	Gamma--Vandermonde formula of Proposition~\ref{prop:ABV-determinant} gives
	\[
	-
	\frac{P_{n+1}^{[a]}(0)}{P_n^{[a+1]}(0)}
	=
	\frac{1}{\theta_a+n+2}
	\prod_{r=1}^{n}
	\frac{\theta_{a+r}+n+1}{\theta_{a+r}+n+2}
	\]
	and
	\[
	-
	\frac{P_{n+1}^{[a+1]}(0)}{P_n^{[a+1]}(0)}
	=
	\frac{1}{\theta_{a+n+1}+n+2}
	\prod_{r=1}^{n}
	\frac{\theta_{a+r}+n+1}{\theta_{a+r}+n+2}.
	\]
	Subtracting these two identities according to
	\eqref{eq:multiple-bessel-L-typeII-poly} gives
	\eqref{eq:multiple-bessel-L-typeII-product}.  The unique row step is the full
	column cycle \(\boldsymbol\theta^{[0]}\mapsto\boldsymbol\theta^{[p]}\), and
	the scalar Christoffel connection gives \eqref{eq:multiple-bessel-U-typeII-poly};
	the same determinant evaluation gives \eqref{eq:multiple-bessel-U-typeII}.
\end{proof}

The preceding formulas also give the signs and bounds of the entries in the
ordered real chamber.  In the present monic normalization the upper bidiagonal
parameters are positive, whereas the subdiagonal entries of the lower factors
are negative.

\begin{proposition}[Explicit signed bounded bidiagonal coefficients]
	\label{prop:multiple-bessel-positive-bounded-factors}
	Assume
	\begin{equation}
		\label{eq:ordered-positive-chamber}
		-1<\alpha_1<\cdots<\alpha_p<\alpha_1+1.
	\end{equation}
	Put
	\[
	u_n\coloneq
	\left(\widetilde U_1^{\mathrm{MB}}\right)_{n,n},
	\qquad
	\ell_{b,n}\coloneq
	-\left(\widetilde L_b^{\mathrm{MB}}\right)_{n+1,n}.
	\]
	Then
	\begin{equation}
		\label{eq:explicit-U-positive-product}
		u_n
		=
		\frac{1}{\theta_n+n+2}
		\prod_{r=0}^{n-1}
		\frac{\theta_r+n+1}{\theta_r+n+2},
	\end{equation}
	and, for \(b\in\{1,\ldots,p\}\), with \(a=b-1\),
	\begin{equation}
		\label{eq:explicit-L-positive-product}
		\ell_{b,n}
		=
		\left(
		\frac{1}{\theta_a+n+2}
		-
		\frac{1}{\theta_{a+n+1}+n+2}
		\right)
		\prod_{r=1}^{n}
		\frac{\theta_{a+r}+n+1}{\theta_{a+r}+n+2}
	\end{equation}
	or, equivalently,
	\[
		\ell_{b,n}
		=
		\frac{\theta_{a+n+1}-\theta_a}
		     {(\theta_a+n+2)(\theta_{a+n+1}+n+2)}
		\prod_{r=1}^{n}
		\frac{\theta_{a+r}+n+1}{\theta_{a+r}+n+2}.
	\]
	Consequently,
\(u_n>0,\) \(\ell_{b,n}>0,\) \(\left(\widetilde L_b^{\mathrm{MB}}\right)_{n+1,n}=-\ell_{b,n}<0,\)
	and
\(u_n=\mathrm{O}(n^{-1}),\) \(\ell_{b,n}=\mathrm{O}(n^{-1}),\) \(n\to\infty.\)
\end{proposition}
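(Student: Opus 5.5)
The plan is to derive everything from the determinant formulas of Theorem~\ref{thm:multiple-bessel-factorization}, namely \eqref{eq:multiple-bessel-U-typeII} and \eqref{eq:multiple-bessel-L-typeII-product}. Once the products are in closed form, the signs and the bounds follow from the chamber \eqref{eq:ordered-positive-chamber}.

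\textbf{Formula for \(u_n\).} Substitute Proposition~\ref{prop:ABV-determinant} into \eqref{eq:multiple-bessel-U-typeII}, using \(\boldsymbol\theta^{[p]}=\boldsymbol\theta+1\) componentwise.

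\begin{itemize}
\item The sign factors \((-1)^{n(n-1)/2}\) and the Vandermonde products cancel between \(\mathscr D_n(\boldsymbol\theta)\) and \(\mathscr D_n(\boldsymbol\theta^{[p]})\), since the differences are translation invariant. The same holds for the two \((n+1)\)-determinants.
\item What remains is the quotient of Gamma products
\[
\frac{\prod_{r=0}^{n}\Gamma(\theta_r+n+2)\,\prod_{r=0}^{n-1}\Gamma(\theta_r+n+2)}{\prod_{r=0}^{n-1}\Gamma(\theta_r+n+1)\,\prod_{r=0}^{n}\Gamma(\theta_r+n+3)}.
\]
\item Apply \(\Gamma(x+1)=x\Gamma(x)\) to this quotient. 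It reduces to \(\frac1{\theta_n+n+2}\prod_{r<n}\frac{\theta_r+n+1}{\theta_r+n+2}\), which is \eqref{eq:explicit-U-positive-product}.
\item The only thing to check is that the sign factors \((-1)^{n(n-1)/2}\) and \((-1)^{n(n+1)/2}\) appear once in the numerator and once in the denominator, so they cancel. This is the step where a bookkeeping slip is most likely.
\end{itemize}

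\textbf{Formula for \(\ell_{b,n}\).} The expression \eqref{eq:explicit-L-positive-product} is simply minus \eqref{eq:multiple-bessel-L-typeII-product}. Combining the two fractions over a common denominator gives the second form, with numerator \(\theta_{a+n+1}-\theta_a\).

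\textbf{Signs.} The cyclic extension \(\theta_{pu+j-1}=\alpha_j+u\) together with \eqref{eq:ordered-positive-chamber} gives two facts.

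\begin{itemize}
\item The sequence \((\theta_k)\) is strictly increasing. Within a cycle this is the ordering \(\alpha_1<\cdots<\alpha_p\). Across cycles it is \(\alpha_p<\alpha_1+1\).
\item Every term satisfies \(\theta_k>-1\).
\end{itemize}

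Hence every factor \(\theta_r+n+1\) and \(\theta_r+n+2\) is positive for \(n\ge0\), so \(u_n>0\). Also \(\theta_{a+n+1}-\theta_a>0\), because \(a+n+1>a\) and the sequence is strictly increasing. Therefore \(\ell_{b,n}>0\).

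\textbf{Bounds.}

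\begin{itemize}
\item Each product factor lies in \((0,1)\).
\item For \(u_n\) this gives \(u_n\le 1/(\theta_n+n+2)\le 1/(n+1)\), since \(\theta_n>-1\).
\item For \(\ell_{b,n}\) it gives \(\ell_{b,n}\le 1/(\theta_a+n+2)\), because the subtracted term \(1/(\theta_{a+n+1}+n+2)\) is positive. Since \(\theta_a>-1\), this is at most \(1/(n+1)\).
\end{itemize}

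Both bounds are \(\mathrm O(n^{-1})\).
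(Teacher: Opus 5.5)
Your argument is correct and follows the paper's own route. You insert the Gamma--Vandermonde evaluation of Proposition~\ref{prop:ABV-determinant} into \eqref{eq:multiple-bessel-U-typeII} using \(\boldsymbol\theta^{[p]}=(\theta_0+1,\theta_1+1,\ldots)\), negate \eqref{eq:multiple-bessel-L-typeII-product}, and read off the signs and the \(\mathrm{O}(n^{-1})\) bounds from the strict increase of \((\theta_k)\) and \(\theta_k>-1\) in the chamber \eqref{eq:ordered-positive-chamber}, which is exactly the paper's chain of steps. You give more detail than the paper (the explicit Gamma quotient and the bounds \(u_n,\ell_{b,n}\le 1/(n+1)\)), and your cancellations are legitimate there because the \(\theta_k\) are distinct and all Gamma arguments are positive.
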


\begin{proof}
	Formula \eqref{eq:explicit-L-positive-product} is the signed version of
	\eqref{eq:multiple-bessel-L-typeII-product}.  Substituting the
	Gamma--Vandermonde formula of Proposition~\ref{prop:ABV-determinant} into
	\eqref{eq:multiple-bessel-U-typeII} gives the upper factor.  The equality
	\(\boldsymbol\theta^{[p]}=(\theta_0+1,\theta_1+1,\ldots)\) gives the
	cancellations in the upper factor.  In the ordered chamber all factors in
	\eqref{eq:explicit-U-positive-product} are positive.  Also
	\(\theta_{a+n+1}>\theta_a\), so the displayed expression for
	\(\ell_{b,n}\) is positive.  The asymptotic bounds follow immediately from
	the product formulas.
\end{proof}

\begin{corollary}[Signed bidiagonal factorization in the ordered chamber]
	\label{cor:multiple-bessel-positive-factorization}
	Under \eqref{eq:ordered-positive-chamber}, let \(\Lambda\) be the scalar shift
	used throughout the paper, \(\Lambda X(z)=zX(z)\).  Set
\(E_b\coloneq\operatorname{diag}(\ell_{b,0},\ell_{b,1},\ldots),\) \(U\coloneq\operatorname{diag}(u_0,u_1,\ldots).\)
	Then
	\[
	T_{\mathrm{II}}^{\mathrm{MB}}
	=
	\left(I-\Lambda^{\top}E_1\right)\cdots
	\left(I-\Lambda^{\top}E_p\right)
	\left(U+\Lambda\right),
	\]
	where \(\ell_{b,n}>0\) and \(u_n>0\).
\end{corollary}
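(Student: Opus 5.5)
This corollary is a rewriting of Theorem~\ref{thm:multiple-bessel-factorization} in matrix notation, combined with the signs from Proposition~\ref{prop:multiple-bessel-positive-bounded-factors}. I will prove it by identifying each bidiagonal factor entrywise.

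First, the lower factors. By Theorem~\ref{thm:multiple-bessel-factorization}, \(\widetilde L_b^{\mathrm{MB}}\) has unit diagonal and its only other nonzero entries are \((\widetilde L_b^{\mathrm{MB}})_{n+1,n}=-\ell_{b,n}\). The matrix \(\Lambda^{\top}\) is the scalar lower shift, with \((\Lambda^{\top})_{n+1,n}=1\). Hence \(\Lambda^{\top}E_b\) has entry \(\ell_{b,n}\) at position \((n+1,n)\) and zero elsewhere. It follows that \(I-\Lambda^{\top}E_b=\widetilde L_b^{\mathrm{MB}}\). Here I must check the order: \(E_b\) multiplies columns, so the entry in column \(n\) is \(\ell_{b,n}\), as required.

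Second, the upper factor. \(\widetilde U_1^{\mathrm{MB}}\) has diagonal entries \(u_n\) and unit superdiagonal. Since \(\Lambda\) has ones on the superdiagonal, \(U+\Lambda=\widetilde U_1^{\mathrm{MB}}\).

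Substituting these identifications into the factorization of Theorem~\ref{thm:multiple-bessel-factorization} gives the displayed product. The positivity \(\ell_{b,n}>0\) and \(u_n>0\) is exactly the conclusion of Proposition~\ref{prop:multiple-bessel-positive-bounded-factors} in the chamber \eqref{eq:ordered-positive-chamber}.

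The only point needing care, and the one I expect to be the main obstacle, is confirming that all the Gamma factors and the denominators \(P_n^{[b]}(0)\) are finite and nonzero in the chamber, so that the theorem applies. By \eqref{eq:multiple-bessel-polynomial-value} and Proposition~\ref{prop:ABV-determinant}, \(P_n^{[b]}(0)\) is a ratio of Gamma–Vandermonde determinants. The Vandermonde factors \(\theta_s-\theta_r\) are nonzero because the chamber makes the sequence \(\theta\) strictly increasing. The arguments \(\theta_r+n+1\) are positive because \(\theta_r>-1\), so the Gamma factors are finite. Therefore all these quantities are nonzero and finite, which also yields the Gauss–Borel hypotheses needed for the Christoffel steps.
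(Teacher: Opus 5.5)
Your proposal is correct and matches the paper's proof. Both identify $I-\Lambda^{\top}E_b$ and $U+\Lambda$ entrywise with the bidiagonal factors of Theorem~\ref{thm:multiple-bessel-factorization} and take the signs from Proposition~\ref{prop:multiple-bessel-positive-bounded-factors}; your extra check that the Gamma--Vandermonde determinants are finite and nonzero in the chamber ($\theta$ strictly increasing, $\theta_r>-1$) is sound and makes explicit a normality hypothesis the paper leaves implicit.
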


\begin{proof}
	This is Theorem~\ref{thm:multiple-bessel-factorization} rewritten with the
	positive parameters of
	Proposition~\ref{prop:multiple-bessel-positive-bounded-factors}.  By
	definition,
	\[
	(\Lambda^{\top}E_b)_{n+1,n}=\ell_{b,n}.
	\]
	Hence the corresponding lower bidiagonal entries are \(-\ell_{b,n}\),
	while \(U+\Lambda\) has diagonal entries \(u_n\) and unit superdiagonal.
\end{proof}

\begin{remark}[Boundedness in the ordered chamber]
	The sequences \(u_n\) and \(\ell_{b,n}\) are uniformly bounded by
	Proposition~\ref{prop:multiple-bessel-positive-bounded-factors}. Since the
	matrix has finite bandwidth, \(T_{\mathrm{II}}^{\mathrm{MB}}\) defines a
	bounded banded operator, in the ordered chamber, on each of the spaces
	\[
	\ell^s(\mathbb N_0),\quad 1\le s\le\infty,
	\qquad\text{and}\qquad c_0(\mathbb N_0).
	\]
\end{remark}

\subsection{The two-row case \texorpdfstring{\(q=2\)}{q=2}}
\label{sec:q-two-arbitrary-p-bidiagonal-example}

The first genuinely mixed row case is made explicit.  For \(p\ge2\), it is the
closest case to the one-row multiple Bessel reduction, but it already contains
the two different constructions used in the general factorization: the closed
column-Christoffel chain for the lower factors and the finite row
tau-determinants for the upper factors.

Let \(q=2\), let \(p\) be arbitrary.  When \(p=1\) this section reduces to the
one-column Wolfs case; the genuinely mixed instances have \(p\ge2\).  Write
\[
\boldsymbol b=\begin{bNiceMatrix}b_1&b_2\end{bNiceMatrix},
\qquad
\overline j\coloneq3-j,
\qquad
\overline H\coloneq3-H.
\]
Thus the numerator length \(r\) can only be \(0\) or \(1\).  The weight matrix is
\[
W(z)=
\begin{bNiceMatrix}
	w_{1,1}(z)&w_{1,2}(z)&\Cdots&w_{1,p}(z)\\
	w_{2,1}(z)&w_{2,2}(z)&\Cdots&w_{2,p}(z)
\end{bNiceMatrix},
\]
where
\[
\begin{aligned}
	w_{1,i}(z)
	&=
	f_0\left(z;\boldsymbol a+\kappa_i\one_r,
	(b_1+\kappa_i+1,b_2+\kappa_i)\right),\\
	w_{2,i}(z)
	&=
	f_0\left(z;\boldsymbol a+\kappa_i\one_r,
	(b_1+\kappa_i,b_2+\kappa_i+1)\right).
\end{aligned}
\]
Equivalently, the reciprocal moments are
\[
\MB_{(u,j),(v,i)}
=
\frac{
	\Gamma((u+v+1+\kappa_i)\one_r+\boldsymbol a)
}{
	\Gamma((u+v+1+\kappa_i)\one_q+\boldsymbol b+\boldsymbol e_j)
},
\qquad
j\in\{1,2\},
\quad
 i\in\{1,\ldots,p\}.
\]

For the step-line one has
\(\boldsymbol n_N=\boldsymbol\sigma_p(N),\) \(\boldsymbol m_N=\boldsymbol\sigma_2(N+1).\)
Hence, for \(s\in\N_0\),
\[
\begin{aligned}
	\boldsymbol m_{2s}&=\begin{bNiceMatrix}s+1&s\end{bNiceMatrix},
	&r_{2s}&=1,&
	\boldsymbol m_{2s+1}&=\begin{bNiceMatrix}s+1&s+1\end{bNiceMatrix},
	&r_{2s+1}&=2.
\end{aligned}
\]
For \(q=2\), it is useful to write the raw two-row coefficient before
multiplication by the normalization constant in \eqref{eq:final-B-nu}.  It is
\begin{multline}
	\label{eq:q-two-B-coefficients}
	\mathsf b_j^{(2)}(\boldsymbol n,\boldsymbol m;d\mid\boldsymbol\kappa)
	=
	\frac{(\boldsymbol a-b_j\one_r)_1}{b_{\overline j}-b_j}
	\sum_{H=1}^{2}
	\sum_{\substack{K=0\\ d\le K-1+\delta_{j,H}}}^{m_H-1}
	\pi_{H,K}^{(2)}
	\frac{b_{\overline j}-b_H-K}
	{(\boldsymbol a-(b_H+K)\one_r)_1}
	\\
	\times
	\frac{
		(b_1-b_H-K+1-\delta_{j,1})_d
		(b_2-b_H-K+1-\delta_{j,2})_d
	}{
		(\boldsymbol a-(b_H+K)\one_r+\one_r)_d
	},
\end{multline}
where the two-row partial-fraction coefficients are
\begin{equation}
	\label{eq:q-two-pi-coefficients}
	\pi_{H,K}^{(2)}
	=
	-
	\frac{
		\prod_{i=1}^{p}(\kappa_i+b_H+K+1)_{n_i}
	}{
		(-1)^K K!(m_H-K-1)!
		(b_{\overline H}-b_H-K)_{m_{\overline H}}
	}.
\end{equation}
For later use, two pieces of finite data are separated.  First, using the
coefficients in \eqref{eq:q-two-pi-coefficients}, the raw constant term of the
explicit two-row \(B\)-polynomial is
\begin{equation}
	\label{eq:q-two-B-constant-data}
	\begin{aligned}
	\mathcal B_j^{(2)}(\boldsymbol n,\boldsymbol m\mid\boldsymbol\kappa)
	&\coloneq
	\frac{(\boldsymbol a-b_j\one_r)_1}{b_{\overline j}-b_j}
	\\[-1mm]
	&\quad\times
	\left[
	\sum_{K=0}^{m_j-1}
	\pi_{j,K}^{(2)}
	\frac{b_{\overline j}-b_j-K}
	{(\boldsymbol a-(b_j+K)\one_r)_1}
	+
	\sum_{K=1}^{m_{\overline j}-1}
	\pi_{\overline j,K}^{(2)}
	\frac{-K}
	{(\boldsymbol a-(b_{\overline j}+K)\one_r)_1}
	\right].
	\end{aligned}
\end{equation}
The second sum is empty if its upper limit is smaller than its lower limit.
This is just \eqref{eq:q-two-B-coefficients} with \(d=0\), written without
hidden hypergeometric notation.

Second, for the row selected by the step-line in
\eqref{eq:general-active-indices}, the top coefficient has only one contributing
partial-fraction block.  If \(j\) denotes this row, set
\begin{equation}
	\label{eq:q-two-active-leading-data}
	\begin{aligned}
		\mathcal G_j^{(2)}(\boldsymbol n,\boldsymbol m\mid\boldsymbol\kappa)
		&\coloneq
		\frac{(\boldsymbol a-b_j\one_r)_1}{b_{\overline j}-b_j}
		\pi_{j,m_j-1}^{(2)}
		\frac{b_{\overline j}-b_j-m_j+1}
		{(\boldsymbol a-(b_j+m_j-1)\one_r)_1}
		\\
		&\quad\times
		\frac{
			(-m_j+1)_{m_j-1}
			(b_{\overline j}-b_j-m_j+2)_{m_j-1}
		}{
			(\boldsymbol a-(b_j+m_j-1)\one_r+\one_r)_{m_j-1}
		}.
	\end{aligned}
\end{equation}
Thus the corresponding unnormalized leading coefficients are the finite products
\begin{equation}
	\label{eq:q-two-active-leading-even-odd}
	\begin{aligned}
		\mathcal G_{2s}
		&\coloneq
		\mathcal G_1^{(2)}
		\left(\boldsymbol\sigma_p(2s),(s+1,s)\mid\boldsymbol\kappa\right),&
		\mathcal G_{2s+1}
		&\coloneq
		\mathcal G_2^{(2)}
		\left(\boldsymbol\sigma_p(2s+1),(s+1,s+1)\mid\boldsymbol\kappa\right).
	\end{aligned}
\end{equation}
Similarly define the four constant-term sequences
\begin{equation}
	\label{eq:q-two-constant-even-odd}
	\begin{aligned}
		\mathcal B_{2s,j}
		&\coloneq
		\mathcal B_j^{(2)}
		\left(\boldsymbol\sigma_p(2s),(s+1,s)\mid\boldsymbol\kappa\right),
		\qquad j\in\{1,2\},\\
		\mathcal B_{2s+1,j}
		&\coloneq
		\mathcal B_j^{(2)}
		\left(\boldsymbol\sigma_p(2s+1),(s+1,s+1)\mid\boldsymbol\kappa\right),
		\qquad j\in\{1,2\}.
	\end{aligned}
\end{equation}
The values entering the row Christoffel determinants are the normalized values
obtained from the raw constants in \eqref{eq:q-two-constant-even-odd} and the
raw leading coefficients in \eqref{eq:q-two-active-leading-even-odd}:
\begin{equation}
	\label{eq:q-two-normalized-B-values-expanded}
	\mathfrak b_{N,j}^{[0]}
	=
	\frac{\mathcal B_{N,j}}{\mathcal G_N},
	\qquad
	N\in\N_0,
	\quad
	j\in\{1,2\}.
\end{equation}
For \(q=2\), the row tau-functions defined in \eqref{eq:Bessel-tau-B} reduce
to one scalar value and one \(2\times2\) determinant.  With
\begin{equation}
	\label{eq:q-two-Delta-row}
	\Delta_n^{(2)}
	\coloneq
	\mathcal B_{n,1}\mathcal B_{n+1,2}
	-
	\mathcal B_{n,2}\mathcal B_{n+1,1},
\end{equation}
one has
\begin{equation}
	\label{eq:q-two-tau-functions-expanded}
	\tau^B_{1,n}
	=
	\frac{\mathcal B_{n,1}}{\mathcal G_n},
	\qquad
	\tau^B_{2,n}
	=
	\frac{\Delta_n^{(2)}}{\mathcal G_n\mathcal G_{n+1}}.
\end{equation}
Thus the two upper bidiagonal factors, obtained from \eqref{eq:Bessel-finite-U-tau}, are finite-product/finite-sum expressions:
\begin{equation}
	\label{eq:q-two-upper-factors-expanded}
	\begin{aligned}
		(U_1)_{n,n}
		&=
		-
		\frac{\mathcal B_{n+1,1}\mathcal G_n}
		{\mathcal B_{n,1}\mathcal G_{n+1}},&
		(U_2)_{n,n}
		&=
		-
		\frac{
			\mathcal B_{n,1}\Delta_{n+1}^{(2)}\mathcal G_{n+1}
		}{
			\mathcal B_{n+1,1}\Delta_n^{(2)}\mathcal G_{n+2}
		}.
	\end{aligned}
\end{equation}
If \((U_b)_{n,n+1}=1\), then
\[
U_2U_1
=
\Lambda^2
+
\operatorname{diag}\left((U_2)_{n,n}+(U_1)_{n+1,n+1}\right)_{n\ge0}\Lambda
+
\operatorname{diag}\left((U_2)_{n,n}(U_1)_{n,n}\right)_{n\ge0}.
\]
The lower factors are obtained from the closed column chain.  The same cyclic
shift as in Section~\ref{sec:final-bidiagonal-bessel-data} is used:
\[
\boldsymbol\kappa^{\langle a\rangle}
=
(\kappa_{a+1},\ldots,\kappa_p,
\kappa_1+1,\ldots,\kappa_a+1),
\qquad
 a\in\{0,\ldots,p\}.
\]
Let
\(\boldsymbol n_N^{\,*}=\boldsymbol\sigma_p(N+1),\) \(\boldsymbol m_N^{\,*}=\boldsymbol\sigma_2(N),\) \(c_N=1+(N\bmod p).\)
For the transformed system with column parameters
\(\boldsymbol\kappa^{\langle a\rangle}\), define
\begin{equation}
	\label{eq:q-two-A-leading-data}
	\mathcal A_N^{[a]}
	\coloneq
	\LC\left(
	(A_{\mathrm L,N}^{[a]})^{(c_N)}
	\right).
\end{equation}
Equivalently,
\(\mathcal A_N^{[a]}
=
\mathsf a_{c_N}
\left(
\boldsymbol n_N^{\,*},\boldsymbol m_N^{\,*};
 (\boldsymbol n_N^{\,*})_{c_N}-1\mid
\boldsymbol\kappa^{\langle a\rangle}
\right),\)
where the coefficient is read in the compatible near-diagonal normalization
of the transformed step-line system.  The lower bidiagonal factors are
\begin{equation}
	\label{eq:q-two-lower-factors}
	(L_a)_{n+1,n}
	=
	\frac{\mathcal A_n^{[a]}}
	{\mathcal A_{n+1}^{[a-1]}},
	\qquad
	a\in\{1,\ldots,p\}.
\end{equation}
Combining \eqref{eq:q-two-upper-factors-expanded} and \eqref{eq:q-two-lower-factors}
gives the explicit first mixed factorization
\(\mathscr T_N = L_1^{[N]}\cdots L_p^{[N]}U_2^{[N]}U_1^{[N]}.\)
No \(A\)-side tau-determinants are needed in this case: the column chain is
closed and is evaluated by transformed polynomial leading coefficients, while
only the two row factors require the finite \(B\)-side tau-determinants.

\subsection{A numerical mixed example: \texorpdfstring{\(p=3\), \(q=2\)}{p=3, q=2}}
\label{sec:p-three-q-two-numerical-example}

The section closes with a concrete mixed example illustrating the first
recurrence coefficients and their Christoffel--Gauss--Borel factorization.
Take
\[
	p=3,
	\qquad
	q=2,
	\qquad
	r=1,
	\qquad
	\boldsymbol a=\begin{bNiceMatrix}\frac12\end{bNiceMatrix},
	\qquad
	\boldsymbol b=\begin{bNiceMatrix}\frac13&\frac23\end{bNiceMatrix},
	\qquad
	\boldsymbol\kappa=\begin{bNiceMatrix}0&\frac14&\frac34\end{bNiceMatrix}.
\]
The bimoments are
\[
	\MB_{(u,j),(v,i)}
	=
	\frac{
		\Gamma(u+v+\kappa_i+\frac32)
	}{
		\Gamma(u+v+\kappa_i+\frac43+\delta_{j,1})
		\Gamma(u+v+\kappa_i+\frac53+\delta_{j,2})
	},
	\qquad
	u,v\in\N_0.
\]
The first monic step-line \(B\)-vectors are
\[
	\mathbf B_0(z)
	\doteq
	\begin{bNiceMatrix}[small]1&0\end{bNiceMatrix},
	\qquad
	\mathbf B_1(z)
	\doteq
	\begin{bNiceMatrix}[small]-0.8&1\end{bNiceMatrix},
	\qquad
	\mathbf B_2(z)
	\doteq
	\begin{bNiceMatrix}[small]-1.376+z&1.237\end{bNiceMatrix}.
\]
With rows and columns indexed from zero, the first principal block of the
\((3,2)\)-banded recurrence matrix is
\[
	\mathscr T^{[5]}
	\doteq
	\begin{bNiceArray}[small,margin=2pt]{rrrrr}
		0.3857 & -1.237 & 1 & 0 & 0 \\
		0.02893 & 0.05236 & 0.1791 & 1 & 0 \\
		-0.004499 & -0.05155 & 0.02238 & -0.7294 & 1 \\
		-3.247\cdot10^{-5} & -0.001782 & 0.006488 & 0.01203 & 0.1221 \\
		0 & 8.156\cdot10^{-5} & -0.001244 & -0.02211 & 0.01037
	\end{bNiceArray}.
\]
The lower factors have unit diagonal. Listing their subdiagonal entries from
top to bottom gives
\[
\begin{aligned}
	\operatorname{subdiag}L_1^{[5]}
	&\doteq
	(0.02609,-0.1934,0.03815,-0.1852),\\
	\operatorname{subdiag}L_2^{[5]}
	&\doteq
	(0.03598,-0.1707,0.03351,-0.1957),\\
	\operatorname{subdiag}L_3^{[5]}
	&\doteq
	(0.01293,-0.09051,0.02572,-0.1417).
\end{aligned}
\]
The upper factors have unit superdiagonal, with diagonal entries
\[
\begin{aligned}
	\operatorname{diag}U_1^{[5]}
	&\doteq
	(0.8,-1.719,0.1886,-0.7065,0.09807),\\
	\operatorname{diag}U_2^{[5]}
	&\doteq
	(0.4821,-0.08442,0.4316,-0.07331,0.4009).
\end{aligned}
\]
Using the unrounded arbitrary-precision values, these truncations satisfy
\[
	\max_{0\le n,m\le4}
	\left|
	\mathscr T^{[5]}_{n,m}
	-
	\left(
	L_1^{[5]}L_2^{[5]}L_3^{[5]}U_2^{[5]}U_1^{[5]}
	\right)_{n,m}
	\right|
	<1.1\times10^{-73}.
\]
Thus the recurrence matrix obtained from the step-line polynomial basis agrees
with the Christoffel--Gauss--Borel factorization; the same computation verifies
the corresponding biorthogonality relations.

\section{Conclusions and outlook}

This article constructs a Bessel-like family of mixed-type multiple
orthogonal polynomials for a \(q\times p\) matrix of weights on the unit
circle. Unlike the previously known separable examples, this matrix is not a
rank-one product: for generic regular parameters it has rank
\(\min\{p,q\}\), except possibly at finitely many points of the circle. Its
moments are explicit quotients of Gamma functions. The specializations
\(q=1\) and \(p=1\) recover, respectively, the multiple Bessel system and
the Bessel-like system of Wolfs.

For balanced index pairs with a near-diagonal row multi-index, both polynomial
vectors are given explicitly by terminating generalized hypergeometric
formulas. The column multi-index is unrestricted; hence the formulas include
families far from the diagonal, under the stated regularity and admissibility
conditions on the parameters.
Their mixed orthogonality and uniqueness up to normalization are proved, and
the exact conditions under which each component attains its prescribed
degree are identified. The \(B\)-components also admit finite representations
by Kamp\'e de F\'eriet polynomials and a matrix Rodrigues-type formula. When
\(q=1\), the single Kamp\'e de F\'eriet block evaluated at \((-z,1)\) reduces
to one generalized hypergeometric polynomial; the additional parameter pairs
are determined by a reflected type-II multiple Hahn polynomial.

On the step-line, the two polynomial sequences satisfy dual recurrences whose
matrix has \(p\) subdiagonals and \(q\) superdiagonals. Every recurrence
coefficient is expressed by a finite Gamma--Pochhammer formula. Successive
Christoffel transformations give a bidiagonal factorization of this
recurrence matrix. The lower bidiagonal factors are explicit, whereas for
\(q>1\) the upper factors are expressed as determinants of shifted moment
minors. In the one-row case \(q=1\), Gamma--Vandermonde evaluations give all
the factors in closed form. Since the Bessel-like weights and recurrence
coefficients are generally complex, this is an algebraic bidiagonal
factorization, not a positive one.

The relation with the Jacobi-like system requires a transformation of the
orthogonality problem. The rank-one measures on the interval do not converge
to a finite matrix measure. After passing to the equivalent
Markov--Stieltjes contour representation and rescaling the variable, however,
the matrices of weights converge to the full-rank Bessel-like matrix. Thus
the increase of rank occurs at the level of the contour representation, not
as a pointwise limit of rank-one interval densities.

Several concrete questions remain open.
\begin{enumerate}[label=\textup{(\arabic*)},leftmargin=*]
	\item Determine explicitly the finite exceptional set on the unit circle
	where the matrix of weights loses maximal rank, and decide whether it is
	empty in natural real parameter regions.
	\item Analyze the exceptional hypersurfaces already identified for degree
	loss in the \(B\)-components: determine their real points, intersections,
	and connected parameter regions on which strong normality holds.
	\item Remove the near-diagonal restriction on the row multi-index and
	determine parameter conditions under which the terminating hypergeometric
	and Rodrigues-type formulas extend to arbitrary balanced index pairs.
	\item Evaluate the upper bidiagonal factors for \(q>1\) without unevaluated
	moment determinants, and determine their zeros, poles, and exceptional
	parameter sets.
	\item Establish the Jacobi-to-Bessel confluence directly for the polynomial
	vectors, recurrence coefficients, and bidiagonal factors, rather than only
	for the matrices of weights and their moments.
	\item Study the large-degree behavior of the polynomial vectors, their
	zeros, and the recurrence coefficients, and develop the corresponding
	spectral theory for the full-rank banded recurrence operator.
\end{enumerate}

\section*{Acknowledgements}
The author thanks Am\'ilcar Branquinho and Ana Foulqui\'e-Moreno for
valuable discussions on mixed-type orthogonality and Bessel polynomials.

\section*{Declarations}

\noindent\textbf{Funding.}
The author was supported by the research project PID2024-155133NB-I00,
\emph{Ortogonalidad, aproximaci\'on e integrabilidad: aplicaciones en procesos
estoc\'asticos cl\'asicos y cu\'anticos}.

\medskip
\noindent\textbf{Competing interests.}
The author declares no competing interests.

\medskip
\noindent\textbf{Data availability.}
No datasets were generated or analyzed during the current study.

\printbibliography

\end{document}